\DeclareRobustCommand{\greektext}{%
  \fontencoding{LGR}\selectfont\def\encodingdefault{LGR}}
\DeclareRobustCommand{\textgreek}[1]{\leavevmode{\greektext #1}}
\theoremstyle{plain}
\newtheorem{thm}{\protect\theoremname}[section]
\theoremstyle{remark}
\newtheorem{rem}[thm]{\protect\remarkname}
\theoremstyle{remark}
\newtheorem*{rem*}{\protect\remarkname}
\theoremstyle{definition}
\newtheorem{defn}[thm]{\protect\definitionname}
\theoremstyle{definition}
\newtheorem{example}[thm]{\protect\examplename}
\theoremstyle{plain}
\newtheorem{lem}[thm]{\protect\lemmaname}
\theoremstyle{plain}
\newtheorem{prop}[thm]{\protect\propositionname}
\theoremstyle{plain}
\newtheorem{cor}[thm]{\protect\corollaryname}
\newcommand{\Yo}{\text{\usefont{U}{min}{m}{n}\symbol{'110}}}
\DeclareFontFamily{U}{min}{}
\DeclareFontShape{U}{min}{m}{n}{<-> dmjhira}{}
\newlength{\myline}           
\newcommandx*{\triplearrow}[4][1=0, 2=1]{
  	\draw[line width=\myline,double distance=3\myline,#3] #4;
  	\draw[line width=\myline,shorten <=#1\myline,shorten >=#2\myline,#3] #4;
}
\newcommand{\xyR}[1]{
  \xydef@\xymatrixrowsep@{#1}}
\newcommand{\xyC}[1]{
  \xydef@\xymatrixcolsep@{#1}}
\newcommand{\pushoutcorner}[1][dr]{
  \save*!/#1+1.2pc/#1:(-1,1)@^{|-}\restore}
\newcommand{\tabld}[2]{\begin{pmatrix}#1^0_0 &\dots &#1^0_{#2-1}
		&#1^0_{#2}\cr\noalign{\vskip 3pt} #1^1_0 &\dots &#1^1_{#2-1}
		&#1^1_{#2}\end{pmatrix}}
\tikzstyle{adfsaf}=[fill={rgb,255: red,255; green,191; blue,191}, draw=none]
\tikzstyle{Tops}=[fill={rgb,255: red,255; green,234; blue,234}, draw=none]
\tikzstyle{Slanted Text}=[fill=none, draw=none, yslant=.25]
\tikzstyle{new style 0}=[fill=none, draw=black, shape=circle]
\tikzstyle{Dashed Edges No Fill}=[-, fill=none, dashed, shorten <=1cm, shorten >=1cm]
\tikzstyle{Solid arrow}=[->, shorten >=1cm, shorten <=1cm]
\tikzstyle{solid arrow}=[-, line width=0.75pt]
\tikzstyle{arrow}=[<-]
\tikzstyle{Fronts}=[-, fill={rgb,255: red,255; green,191; blue,191}, draw=black, fill opacity=.9]
\tikzstyle{Tops}=[-, fill={rgb,255: red,255; green,225; blue,225}, draw=black, fill opacity=0.75, line width=.75pt]
\tikzstyle{Dotted arrow}=[->, dotted, shorten <=1cm, shorten >=1cm, draw=none]
\tikzstyle{alignment lines}=[-, draw=black, dashdotted, fill=none, line width=0.25pt, shorten <=-50pt, shorten >=-50pt]
\tikzstyle{ghost lines}=[-, draw=none]
\tikzstyle{Left Insides}=[-, fill={rgb,255: red,237; green,178; blue,178}, draw=black]
\tikzstyle{obscured insides}=[-, fill={rgb,255: red,238; green,211; blue,211}, draw=black]
\tikzstyle{Thin Box}=[-, line width=0.25pt]
\tikzstyle{hidden edges}=[-, dashed]
\mathchardef\mhyphen="2D
\providecommand{\corollaryname}{Corollary}
\providecommand{\definitionname}{Definition}
\providecommand{\examplename}{Example}
\providecommand{\lemmaname}{Lemma}
\providecommand{\propositionname}{Proposition}
\providecommand{\remarkname}{Remark}
\providecommand{\theoremname}{Theorem}
\begin{document}
\begin{quotation}

\global\long\def\bA{\mathbf{A}}%

\global\long\def\sA{\mathscr{A}}%

\global\long\def\sB{\mathscr{B}}%

\global\long\def\C{\mathbf{C}}%

\global\long\def\bC{\mathbb{C}}%

\global\long\def\sC{\mathscr{C}}%

\global\long\def\sfC{\mathsf{C}}%

\global\long\def\sD{\mathscr{D}}%

\global\long\def\sfD{\mathsf{D}}%

\global\long\def\sE{\mathscr{E}}%

\global\long\def\bF{\mathbb{F}}%

\global\long\def\cF{\mathcal{F}}%

\global\long\def\sF{\mathscr{F}}%

\global\long\def\bG{\mathbf{G}}%

\global\long\def\bbG{\mathbb{G}}%

\global\long\def\cG{\mathcal{G}}%

\global\long\def\sG{\mathscr{G}}%

\global\long\def\sH{\mathscr{H}}%

\global\long\def\cI{\mathcal{I}}%

\global\long\def\sI{\mathcal{\mathscr{I}}}%

\global\long\def\fI{\mathfrak{I}}%

\global\long\def\bJ{\mathbf{J}}%

\global\long\def\cL{\mathcal{L}}%

\global\long\def\sL{\mathscr{L}}%

\global\long\def\fm{\mathfrak{m}}%

\global\long\def\sM{\mathscr{M}}%

\global\long\def\N{\mathbf{N}}%

\global\long\def\fN{\mathfrak{N}}%

\global\long\def\sN{\mathscr{N}}%

\global\long\def\cO{\mathcal{O}}%

\global\long\def\sO{\mathscr{O}}%

\global\long\def\bP{\mathbf{P}}%

\global\long\def\fp{\mathfrak{p}}%

\global\long\def\bQ{\mathbb{Q}}%

\global\long\def\fq{\mathfrak{q}}%

\global\long\def\sR{\mathscr{R}}%

\global\long\def\R{\mathbb{\mathbf{R}}}%

\global\long\def\RR{\overline{\sR}}%

\global\long\def\cS{\mathscr{\mathcal{S}}}%

\global\long\def\fS{\mathfrak{S}}%

\global\long\def\sS{\mathcal{\mathscr{S}}}%

\global\long\def\bU{\mathbf{U}}%

\global\long\def\sU{\mathfrak{\mathscr{U}}}%

\global\long\def\bV{\mathbf{V}}%

\global\long\def\sV{\mathscr{V}}%

\global\long\def\sfV{\mathsf{V}}%

\global\long\def\sW{\mathscr{W}}%

\global\long\def\sX{\mathcal{\mathscr{X}}}%

\global\long\def\bZ{\mathbb{\mathbf{Z}}}%

\global\long\def\Z{\mathbf{Z}}%

\global\long\def\frZ{\mathfrak{Z}}%


\global\long\def\e{\text{\ensuremath{\varepsilon}}}%

\global\long\def\G{\Gamma}%


\global\long\def\p{\mathcal{\prime}}%

\global\long\def\srn{\sqrt{-5}}%


\global\long\def\wh#1{\widehat{#1}}%


\global\long\def\nf#1#2{\nicefrac{#1}{#2}}%


\global\long\def\E{\mathsf{Ens}}%

\global\long\def\Fin{\mathsf{Fin}}%

\global\long\def\Ord{\mathsf{Ord}}%

\global\long\def\S{\mathsf{Set}}%

\global\long\def\sd#1{\left.#1\right|}%


\global\long\def\Aut{\text{\ensuremath{\mathsf{Aut}}}}%

\global\long\def\Arr#1{\mathsf{Arr}\left(#1\right)}%

\global\long\def\Cat{\mathsf{Cat}}%

\global\long\def\coker{\mathrm{coker}}%

\global\long\def\colim{\underset{\longrightarrow}{\lim}}%

\global\long\def\lcolim#1{\underset{#1}{\colim}}%

\global\long\def\End{\mathsf{End}}%

\global\long\def\Ext{{\rm Ext}}%

\global\long\def\id{\text{id}}%

\global\long\def\im{\text{im}}%

\global\long\def\Im{\text{Im}}%

\global\long\def\iso{\overset{\sim}{\rightarrow}}%
\global\long\def\osi{\overset{\sim}{\leftarrow}}%

\global\long\def\liso{\overset{\sim}{\longrightarrow}}%
\global\long\def\losi{\overset{\sim}{\longleftarrow}}%

\global\long\def\Hom{\mathsf{Hom}}%

\global\long\def\BHom{\mathbf{Hom}}%

\global\long\def\Homc{\text{\text{Hom}}_{\mathscr{C}}}%

\global\long\def\Homd{\text{\text{Hom}}_{\mathscr{D}}}%

\global\long\def\Homs{\text{\text{Hom}}_{\mathsf{Set}}}%

\global\long\def\Homt{\text{\text{Hom}}_{\mathsf{Top}}}%

\global\long\def\llim{\underleftarrow{\mathsf{lim}}}%
\global\long\def\clim{\underrightarrow{\mathsf{lim}}}%

\global\long\def\cllim#1{\underset{#1}{\clim}}%

\global\long\def\Mono{\mathsf{Mono}}%

\global\long\def\Mor{\text{\ensuremath{\mathsf{Mor}}}}%

\global\long\def\Ob{\mathsf{Ob}}%

\global\long\def\one{\mathbf{1}}%

\global\long\def\op{\mathsf{op}}%

\global\long\def\Pull{\mathsf{Pull}}%

\global\long\def\Push{\mathsf{Push}}%

\global\long\def\Psh#1{\mathsf{Psh}\left(#1\right)}%

\global\long\def\pt{\text{pt.}}%

\global\long\def\Sh#1{\mathsf{Sh}\left(#1\right)}%

\global\long\def\sh{\text{sh}}%

\global\long\def\Tor{\text{\text{Tor}}}%

\global\long\def\XoY{\nicefrac{X}{Y}}%

\global\long\def\XtY{X\times Y}%

\global\long\def\Yon{\Yo}%


\global\long\def\Ab{\mathsf{Ab}}%

\global\long\def\AbGrp{\text{\ensuremath{\mathsf{AbGrp}}}}%

\global\long\def\alg{\mathsf{alg}}%

\global\long\def\ann{\text{ann}}%

\global\long\def\Ass{{\rm Ass}}%

\global\long\def\Ch{\mathsf{Ch}}%

\global\long\def\CR{\mathsf{ComRing}}%

\global\long\def\Gal{\text{Gal}}%

\global\long\def\Grp{\mathsf{Grp}}%

\global\long\def\Inn{\mathsf{Inn}}%

\global\long\def\Frac{\text{Frac}}%

\global\long\def\mod{\text{\ensuremath{\mathsf{mod}}}}%

\global\long\def\norm{\text{Norm}}%

\global\long\def\Norm{\mathsf{Norm}}%

\global\long\def\Orb{\mathsf{Orb}}%

\global\long\def\rad{\text{\text{rad}}}%

\global\long\def\Ring{\mathbb{\mathsf{Ring}}}%

\global\long\def\sgn{\text{sgn}}%

\global\long\def\Stab{\mathsf{Stab}}%

\global\long\def\Syl{\text{Syl}}%

\global\long\def\ZpZ{\nicefrac{\mathbb{\mathbf{Z}}}{p\bZ}}%


\global\long\def\CompR{_{R}\mathsf{Comp}}%


\global\long\def\clm{\RR=\left(\sR,\mathbf{U},\cL,\bA\right)}%

\global\long\def\ebar{\overline{\sE}}%

\global\long\def\lrs{\mathsf{LocRngSpc}}%

\global\long\def\LRS{\mathsf{LRS}}%

\global\long\def\sHom{\mathscr{H}om}%

\global\long\def\RRing{\overline{\sR}\mathsf{-Ring}}%


\global\long\def\Ae{\nicefrac{A\left[\varepsilon\right]}{\left(\e^{2}\right)}}%

\global\long\def\Aff{\mathsf{Aff}}%

\global\long\def\AX{\nicefrac{\Aff}{X}}%

\global\long\def\AY{\nicefrac{\Aff}{Y}}%

\global\long\def\Der{\text{Der}}%

\global\long\def\et{\text{\ensuremath{\acute{e}}t}}%

\global\long\def\etale{\acute{\text{e}}\text{tale}}%

\global\long\def\Et{\mathsf{\acute{E}t}}%

\global\long\def\ke{\nf{k\left[\varepsilon\right]}{\varepsilon^{2}}}%

\global\long\def\Pic{\text{Pic}}%

\global\long\def\proj{\text{Proj}}%

\global\long\def\Qcoh#1{\mathsf{QCoh}\left(#1\right)}%

\global\long\def\rad{\text{\text{rad}}}%

\global\long\def\red{\text{red.}}%

\global\long\def\aScheme#1{\left(\spec\left(#1\right),\cO_{\spec\left(#1\right)}\right)}%

\global\long\def\resScheme#1#2{\left(#2,\sd{\cO_{#1}}_{#2}\right)}%

\global\long\def\Sch{\mathbb{\mathsf{Sch}}}%

\global\long\def\scheme#1{\left(#1,\cO_{#1}\right)}%

\global\long\def\SCR{\S^{\CR}}%

\global\long\def\Schs{\mathsf{\nicefrac{\Sch}{S}}}%

\global\long\def\ShAb#1{\mathsf{Sh}_{\AbGrp}\left(#1\right)}%

\global\long\def\Spec{\text{Spec}}%

\global\long\def\Sym{\text{Sym}}%

\global\long\def\Zar{\mathsf{Zar}}%


\global\long\def\co#1#2{\left[#1,#2\right)}%

\global\long\def\oc#1#2{\left(#1,#2\right]}%

\global\long\def\loc{\mathsf{\text{loc.}}}%

\global\long\def\nn#1{\left\Vert #1\right\Vert }%

\global\long\def\Re{\text{Re}}%

\global\long\def\supp{\text{\ensuremath{\mathsf{supp}}}}%

\global\long\def\XSM{\left(X,\cS,\mu\right)}%


\global\long\def\grad{\text{grad}}%

\global\long\def\Homrv{\text{\text{Hom}}_{\mathbb{R}-\mathsf{Vect}}}%



\global\long\def\Cech{{\rm \check{C}ech}}%
\global\long\def\cCC{\check{\mathcal{C}}}%
\global\long\def\CC{\check{C}}%

\global\long\def\CH{\mathsf{CHaus}}%

\global\long\def\Cov{\mathsf{Cov}}%

\global\long\def\CW{\mathsf{CW}}%

\global\long\def\HT{\mathsf{HTop}}%

\global\long\def\Homt{\text{\text{Hom}}_{\mathsf{Top}}}%

\global\long\def\Homrv{\text{\text{Hom}}_{\mathbb{R}-\mathsf{Vect}}}%

\global\long\def\MT{\text{\ensuremath{\mathsf{Mor}}}_{\T}}%

\global\long\def\Open{\text{\ensuremath{\mathsf{Open}}}}%

\global\long\def\PT{\mathsf{P-Top}}%

\global\long\def\T{\mathsf{Top}}%


\global\long\def\Ad{\mathsf{Ad}}%

\global\long\def\Cell{\mathsf{Cell}}%

\global\long\def\Cov{\mathsf{Cov}}%

\global\long\def\Sp{\mathsf{Sp}}%

\global\long\def\Spectra{\mathsf{Spectra}}%

\global\long\def\ss{\widehat{\triangle}}%

\global\long\def\Tn{\mathbb{T}^{n}}%

\global\long\def\Sk#1{\textrm{Sk}^{#1}}%

\global\long\def\smash{\wedge}%

\global\long\def\wp{\vee}%


\global\long\def\base{\mathsf{base}}%

\global\long\def\comp{\mathsf{comp}}%

\global\long\def\funext{\mathsf{funext}}%

\global\long\def\hfib{\text{\ensuremath{\mathsf{hfib}}}}%

\global\long\def\I{\mathbf{I}}%

\global\long\def\ind{\mathsf{ind}}%


\global\long\def\lp{\mathsf{loop}}%

\global\long\def\pair{\mathsf{pair}}%

\global\long\def\pr{\mathbf{\mathsf{pr}}}%

\global\long\def\rec{\mathsf{rec}}%

\global\long\def\refl{\mathsf{refl}}%

\global\long\def\transport{\mathsf{transport}}%


\global\long\def\is{\triangle\raisebox{2mm}{\mbox{\ensuremath{\infty}}}}%

\global\long\def\Cof{\mathsf{Cof}}%

\global\long\def\sfW{\mathsf{W}}%

\global\long\def\Cyl{\mathsf{Cyl}}%

\global\long\def\Mono{\mathsf{Mono}}%

\global\long\def\t{\triangle}%

\global\long\def\tl{\triangleleft}%

\global\long\def\tr{\triangleright}%

\global\long\def\Shift{\mathrm{Shift}_{+1}}%

\global\long\def\Shiftd{\mathrm{Shift}_{-1}}%

\global\long\def\out{\mathrm{out}}%

\global\long\def\cN{\mathcal{N}}%

\global\long\def\fC{\mathcal{\mathfrak{C}}}%

\global\long\def\ev{\mathsf{ev}}%

\global\long\def\Map{\mathsf{Map}}%

\global\long\def\whp#1{\wh{#1}_{\bullet}}%

\global\long\def\bfTwo{\mathbf{2}}%

\global\long\def\bfL{\mathbf{L}}%

\global\long\def\bfR{\mathbf{R}}%

\global\long\def\sJ{\mathscr{J}}%

\global\long\def\Sing{\mathsf{Sing}}%

\global\long\def\Sph{\mathsf{Sph}}%

\global\long\def\whfin#1{\widehat{#1}_{\mathrm{fin}}}%

\global\long\def\whpfin#1{\widehat{#1}_{\mathrm{\bullet fin}}}%

\global\long\def\fin{\mathsf{fin}}%

\global\long\def\cT{\mathcal{T}}%

\global\long\def\Alg{\mathsf{Alg}}%

\global\long\def\st{\mathsf{st}}%

\global\long\def\IN{\mathsf{in}}%

\global\long\def\hr{\mathsf{hr}}%

\global\long\def\Fun{\mathsf{Fun}}%

\global\long\def\Th{\mathsf{Th}}%

\global\long\def\sT{\mathscr{T}}%

\global\long\def\Lex{\mathsf{Lex}}%

\global\long\def\FinSet{\mathsf{FinSet}}%

\global\long\def\Fib{\mathsf{Fib}}%

\global\long\def\FPS{\mathsf{FinPos}}%

\global\long\def\Mod{\mathsf{Mod}}%

\global\long\def\sfA{\mathsf{A}}%
\global\long\def\sfV{\mathsf{V}}%
\global\long\def\inner{\mathsf{inner}}%
\global\long\def\bfI{\mathbf{I}}%
\global\long\def\Kan{\mathsf{Kan}}%
\global\long\def\Berger{\mathsf{Berger}}%

\global\long\def\LFP{\mathsf{LFP}}%

\global\long\def\Mnd{\mathsf{Mnd}}%

\global\long\def\sfS{\mathsf{S}}%

\global\long\def\Adj{\mathsf{Adj}}%

\global\long\def\RAdj{\mathsf{RAdj}}%

\global\long\def\LAdj{\mathsf{LAdj}}%

\global\long\def\conj{\mathsf{conj}}%

\global\long\def\exact{\mathsf{exact}}%

\global\long\def\CwA{\mathsf{CwA}}%

\global\long\def\sf#1{\mathsf{#1}}%

\global\long\def\fA{\sf A}%

\global\long\def\fB{\sf B}%

\global\long\def\fE{\sf E}%

\global\long\def\fF{\sf F}%

\global\long\def\fG{\sf G}%

\global\long\def\fD{\sf D}%

\global\long\def\fJ{\sf I}%

\global\long\def\fJ{\sf J}%
\global\long\def\sfJ{\mathsf{J}}%

\global\long\def\fX{\mathfrak{\sf X}}%

\global\long\def\fY{\sf Y}%

\global\long\def\fZ{\sf Z}%

\global\long\def\bf#1{\mathsf{#1}}%

\global\long\def\cc{\mathsf{cc}}%
\global\long\def\Arity{\sf{Arity}}%
 
\global\long\def\conj{\sf{conj}}%
\global\long\def\Ins{\sf{Ins}}%
\global\long\def\bfOne{\mathbf{1}}%
\global\long\def\oplaxlim{\underleftarrow{\sf{oplaxlim}}}%
\global\long\def\ff{\mathsf{ff}}%
\global\long\def\cc{\mathsf{cc}}%
\global\long\def\Prof{\mathsf{Prof}}%
\global\long\def\sfA{\mathsf{A}}%
\global\long\def\sfB{\mathsf{B}}%
\global\long\def\sfC{\mathsf{C}}%
\global\long\def\sfD{\mathsf{D}}%
\global\long\def\sfE{\mathsf{E}}%
\global\long\def\sfF{\mathsf{F}}%
\global\long\def\sfG{\mathsf{G}}%
\global\long\def\ls{\mathsf{ls}}%
\global\long\def\CAT{\mathsf{CAT}}%
\global\long\def\Equ{\mathsf{Equ}}%
\global\long\def\Par{\mathsf{Par}}%
\global\long\def\Ran{\mathsf{Ran}}%
\global\long\def\Lan{\mathsf{Lan}}%
 
\global\long\def\LG{\underset{\mathsf{LG}}{\otimes}}%
\global\long\def\PR{\mathsf{P.R.}}%
\global\long\def\CDA{\sfC_{\sf{DA}}}%
\global\long\def\StrCat{\omega\mhyphen\Cat}%
\global\long\def\omegaCat{\omega\mhyphen\Cat}%
 
\global\long\def\gray#1{\textcolor{gray}{#1}}%
 
\global\long\def\lightgray#1{\textcolor{lightgray}{#1}}%
\end{quotation}
\title{On the Combinatorics of the Gray Cylinder}
\author{Paul Roy Lessard}
\begin{abstract}
This paper develops some combinatorics of the lax Gray cylinder on
the cells of $\Theta$ understood as a full subcategory of the category
of strict $\omega$-categories. More, we construct a span relating
the Cartesian cylinder, the Gray cylinder, and the shift functor.
\end{abstract}

\maketitle
\tableofcontents{}

\section*{Acknowledgements}

The author is indebted to both Dominic Verity and Yuki Maehara for
their consultation and guidance during the development and preparation
of this work. The author was supported by Australian Research Council
Discovery Project grant: DP190102432 during the development of this
work.

\section*{Overview}

\subsubsection*{Berger's categorical wreath product and the cell categories $\Theta_{n}$
and $\Theta$}

As a preliminary we recall how the wreath product of \cite{Berger2}
provides an elegant and explicit description of the categories $\Theta_{n}$
and $\Theta$ in terms of the simplex category. We also recall Berger's
description of strict $n$-categories (resp. strict $\omega$-categories)
as an orthogonal subcategory of the category of $n$-cellular sets
$\wh{\Theta}$ (resp. cellular sets $\wh{\Theta}$).
\begin{rem}
Throughout this work we will be concerned only with \emph{strict }versions
of higher categories. As such, by $n$-category we will mean the strict
notion.
\end{rem}

\subsubsection*{The Gray cylinder}

The Gray tensor product for $2$-categories, first defined in \cite{Gray},
is the monoidal structure $\otimes$ for the bi-closed structure 
\[
\left(\sf{oplax\left(\_,\_\right)},\otimes,\sf{lax}\left(\_,\_\right)\right)
\]
where, for two $2$-categories $X$ and $Y$, $\sf{\sf{oplax}}\left(X,Y\right)$
is the $2$-category:
\begin{itemize}
\item whose objects are functors;
\item whose $1$-morphisms are oplax-natural transformations; and
\item whose $2$-morphisms are modifications
\end{itemize}
and similarly $\sf{lax}\left(X,Y\right)$ is essentially the same
but with lax-natural transformations instead of oplax ones. While
the Gray tensor product is not well known outside of the category
theory community, within it, it is a cornerstone of formal category
theory. The Gray (a.k.a. generalized Gray, Crans-Gray, etc.) tensor
product for $\omega$-categories fills the analogous role for those
entities. In this work we construct a pair of explicit combinatorial
descriptions of the lax Gray cylinder over cells $T$ of $\Theta$,
that is to say the tensor products $\left[1\right]\otimes T$.

\subsubsection*{The lax shuffle decomposition}

As first observed in \cite{Berger1}, the cartesian product of cells
$S$ and $T$ of $\Theta$, taken either in $\wh{\Theta}$ or $\StrCat$,
is covered by all of their ``shuffled bouquets'', which restrict
on the subcategory $\t$ in $\Theta$ to the usual shuffles of simplices.
For the case of the cartesian cylinder $\left[1\right]\times T$ this
yields a description of that object as a wide pushout. Indeed, given
a cell
\[
T=\left[n\right];\left(T_{1},\dots,T_{n}\right)
\]
of $\Theta$, the cellular set $\left[1\right]\times T$ enjoys the
universal property of the colimit below left. The lax shuffle decomposition
makes the gluing along copies of $\left[n\right];\left(T_{1},\dots,T_{n}\right)$
\emph{lax} by fattening the faces $\left[n\right];\left(T_{i}\right)$
along which we glue, replacing them with spans
\[
\left[n\right];\left(T_{i}\right)\longrightarrow\left[n\right];\left(T_{<k},\left[1\right]\otimes T_{k},T_{>k}\right)\longleftarrow\left[n\right];\left(T_{i}\right)
\]
 as in the colimit below right.

\begin{minipage}[c][1\totalheight][t]{0.45\textwidth}%
\begin{center}
\[
\clim\left\{
\vcenter{\vbox{
\xyR{1.5pc}\xyC{6pc}\xymatrix@!0{ & {\scriptstyle \left[n+1\right];\left(\left[0\right],T_{1},\dots,T_{n}\right)}\\\\{\scriptstyle \left[n\right];\left(T_{1},\dots,T_{n}\right)}\ar[uur]\ar[ddr]\\\\ & {\scriptstyle \left[n+1\right];\left(T_{1},\left[0\right],T_{2},\dots,T_{n}\right)}\\{\scriptstyle \vdots}\ar[ur]\ar[dr]\\ & {\scriptstyle \left[n+1\right];\left(T_{1},\dots,T_{n-1},\left[0\right],T_{n}\right)}\\\\{\scriptstyle \left[n\right];\left(T_{1},\dots,T_{n}\right)}\ar[uur]\ar[ddr]\\\\ & {\scriptstyle \left[n+1\right];\left(T_{1},\dots,T_{n},\left[0\right]\right)}}
}}
\right\}
\]
\par\end{center}%
\end{minipage}\hfill{}%
\begin{minipage}[c][1\totalheight][t]{0.45\textwidth}%
\[
\clim\left\{
\vcenter{\vbox{
\xyR{1.5pc}\xyC{6pc}\xymatrix@!0{ & {\scriptstyle \left[n+1\right];\left(\left[0\right],T_{1},\dots,T_{n}\right)}\\{\scriptstyle \left[n\right];\left(T_{i}\right)}\ar[ur]\ar[dr]\\ & {\scriptstyle \left[n\right];\left(\left[1\right]\otimes T_{1},T_{2},\dots,T_{n}\right)}\\{\scriptstyle \left[n\right];\left(T_{i}\right)}\ar[ur]\ar[dr]\\ & {\scriptstyle \left[n+1\right];\left(T_{1},\left[0\right],\dots,T_{n}\right)}\\{\scriptstyle \vdots}\ar[ur]\ar[dr]\\ & {\scriptstyle \left[n+1\right];\left(T_{1},,\dots T_{n-1},\left[0\right],T_{n}\right)}\\{\scriptstyle \left[n\right];\left(T_{i}\right)}\ar[ur]\ar[dr]\\ & {\scriptstyle \left[n\right];\left(T_{1},\dots,\left[1\right]\otimes T_{n}\right)}\\{\scriptstyle \left[n\right];\left(T_{i}\right)}\ar[ur]\ar[dr]\\ & {\scriptstyle \left[n+1\right];\left(T_{1},,\dots,T_{n},\left[0\right]\right)}}
}}
\right\}
\]%
\end{minipage}
\begin{rem}
Importantly the colimit computing the Gray cylinder needs to be performed
in $\StrCat$ and not on $\wh{\Theta}$ for reasons we discuss in
the text.
\end{rem}

\begin{figure}
\noindent\fbox{\begin{minipage}[t]{1\columnwidth - 2\fboxsep - 2\fboxrule}%
\bigskip{}
\bigskip{}
\bigskip{}

\centering
\resizebox{0.75\textwidth}{!}{
\begin{tikzpicture}
	\begin{pgfonlayer}{nodelayer}
		\node [style=none] (38) at (2, 5.5) {};
		\node [style=none] (40) at (8, 12.5) {};
		\node [style=none] (26) at (2, -2.5) {};
		\node [style=none] (27) at (8, -0.5) {};
		\node [style=none] (28) at (8, 4.5) {};
		\node [style=none] (29) at (14, 0.5) {};
		\node [style=none] (52) at (2, -2.5) {};
		\node [style=none] (54) at (8, 4.5) {};
		\node [style=none] (55) at (14, 0.5) {};
		\node [style=none] (56) at (2, -4.5) {};
		\node [style=none] (57) at (8, -2.5) {};
		\node [style=none] (58) at (14, -1.5) {};
		\node [style=none] (59) at (2, -4.5) {};
		\node [style=none] (60) at (8, -2.5) {};
		\node [style=none] (61) at (14, -1.5) {};
		\node [style=none] (62) at (2, -6.5) {};
		\node [style=none] (64) at (14, -3.5) {};
		\node [style=none] (65) at (2, -6.5) {};
		\node [style=none] (66) at (8, -4.5) {};
		\node [style=none] (67) at (14, -3.5) {};
		\node [style=none] (68) at (2, -6.5) {};
		\node [style=none] (69) at (8, -4.5) {};
		\node [style=none] (70) at (14, -3.5) {};
		\node [style=none] (71) at (2, -10.5) {};
		\node [style=none] (72) at (8, -8.5) {};
		\node [style=none] (73) at (14, -11.5) {};
		\node [style=none] (74) at (14, -7.5) {};
		\node [style=none] (22) at (2, -10.5) {};
		\node [style=none] (23) at (8, -8.5) {};
		\node [style=none] (25) at (14, -7.5) {};
		\node [style=none] (79) at (2, -0.5) {};
		\node [style=none] (80) at (8, 6.5) {};
		\node [style=none] (81) at (14, 2.5) {};
		\node [style=none] (46) at (2, 1.5) {};
		\node [style=none] (47) at (8, 8.5) {};
		\node [style=none] (48) at (14, 4.5) {};
		\node [style=none] (82) at (2, -0.5) {};
		\node [style=none] (83) at (8, 6.5) {};
		\node [style=none] (84) at (14, 2.5) {};
		\node [style=none] (85) at (2, 1.5) {};
		\node [style=none] (86) at (8, 8.5) {};
		\node [style=none] (87) at (14, 4.5) {};
		\node [style=none] (10) at (2, 1.5) {};
		\node [style=none] (13) at (14, 4.5) {};
		\node [style=none] (49) at (2, 1.5) {};
		\node [style=none] (50) at (8, 8.5) {};
		\node [style=none] (51) at (14, 4.5) {};
		\node [style=none] (91) at (2, 5.5) {};
		\node [style=none] (92) at (2, 9.5) {};
		\node [style=none] (93) at (8, 12.5) {};
		\node [style=none] (94) at (14, 8.5) {};
		\node [style=none] (42) at (2, 5.5) {};
		\node [style=none] (43) at (2, 9.5) {};
		\node [style=none] (45) at (14, 8.5) {};
		\node [style=none] (128) at (-14, 5.5) {};
		\node [style=none] (129) at (-8, 12.5) {};
		\node [style=none] (130) at (-14, -2.5) {};
		\node [style=none] (131) at (-8, -0.5) {};
		\node [style=none] (133) at (-2, 0.5) {};
		\node [style=none] (134) at (-14, -2.5) {};
		\node [style=none] (135) at (-8, 4.5) {};
		\node [style=none] (136) at (-2, 0.5) {};
		\node [style=none] (137) at (-14, -6.5) {};
		\node [style=none] (138) at (-8, -4.5) {};
		\node [style=none] (139) at (-2, -3.5) {};
		\node [style=none] (140) at (-14, -6.5) {};
		\node [style=none] (141) at (-8, -4.5) {};
		\node [style=none] (142) at (-2, -3.5) {};
		\node [style=none] (151) at (-14, -10.5) {};
		\node [style=none] (152) at (-2, -11.5) {};
		\node [style=none] (153) at (-2, -7.5) {};
		\node [style=none] (154) at (-14, -10.5) {};
		\node [style=none] (155) at (-8, -8.5) {};
		\node [style=none] (156) at (-2, -11.5) {};
		\node [style=none] (157) at (-2, -7.5) {};
		\node [style=none] (158) at (-14, -10.5) {};
		\node [style=none] (159) at (-8, -8.5) {};
		\node [style=none] (160) at (-2, -7.5) {};
		\node [style=none] (161) at (-14, 1.5) {};
		\node [style=none] (167) at (-14, 1.5) {};
		\node [style=none] (168) at (-8, 8.5) {};
		\node [style=none] (169) at (-2, 4.5) {};
		\node [style=none] (178) at (-14, 5.5) {};
		\node [style=none] (179) at (-14, 9.5) {};
		\node [style=none] (180) at (-8, 12.5) {};
		\node [style=none] (181) at (-2, 8.5) {};
		\node [style=none] (182) at (-14, 5.5) {};
		\node [style=none] (183) at (-14, 9.5) {};
		\node [style=none] (184) at (-2, 8.5) {};
		\node [style=none] (185) at (-14, 5.5) {};
		\node [style=none] (186) at (-14, 9.5) {};
		\node [style=none] (187) at (-2, 8.5) {};
		\node [style=none] (188) at (-14, 9.5) {};
		\node [style=none] (189) at (-2, 8.5) {};
		\node [style=none] (190) at (-8, 12.5) {};
		\node [style=none] (238) at (-8, 12.5) {};
		\node [style=none] (239) at (-8, -11.5) {};
		\node [style=none] (240) at (8, 12.5) {};
		\node [style=none] (241) at (8, -11.5) {};
		\node [style=none] (244) at (-8, 7.25) {};
		\node [style=none] (245) at (8, 7.25) {};
		\node [style=none] (246) at (-8, -7.5) {};
		\node [style=none] (247) at (8, -7.5) {};
		\node [style=none] (248) at (8, 5) {};
		\node [style=none] (250) at (8, -5) {};
		\node [style=none] (252) at (-8, 1) {};
		\node [style=none] (253) at (8, 1) {};
		\node [style=none] (254) at (8, 0) {};
		\node [style=none] (256) at (2, 3.5) {};
		\node [style=none] (257) at (8, 10.5) {};
		\node [style=none] (258) at (14, 6.5) {};
		\node [style=none] (35) at (2, 9.5) {};
		\node [style=none] (36) at (14, 8.5) {};
		\node [style=none] (37) at (8, 12.5) {};
		\node [style=none] (30) at (2, 5.5) {};
		\node [style=none] (31) at (2, 9.5) {};
		\node [style=none] (33) at (14, 8.5) {};
		\node [style=none] (259) at (2, -8.5) {};
		\node [style=none] (260) at (14, -5.5) {};
		\node [style=none] (261) at (2, -8.5) {};
		\node [style=none] (262) at (14, -5.5) {};
		\node [style=none] (263) at (2, -8.5) {};
		\node [style=none] (264) at (8, -6.5) {};
		\node [style=none] (265) at (14, -5.5) {};
		\node [style=none] (75) at (2, -10.5) {};
		\node [style=none] (77) at (14, -11.5) {};
		\node [style=none] (78) at (14, -7.5) {};
	\end{pgfonlayer}
	\begin{pgfonlayer}{edgelayer}
		\draw [style=hidden edges] (130.center) to (131.center);
		\draw [style=alignment lines] (238.center) to (159.center);
		\draw [style=alignment lines] (135.center) to (54.center);
		\draw [style=hidden edges] (135.center) to (131.center);
		\draw [style=hidden edges, bend left=15] (38.center) to (40.center);
		\draw [style=solid arrow] (42.center) to (43.center);
		\draw [style=solid arrow] (43.center) to (45.center);
		\draw [style=solid arrow, bend left=15] (42.center) to (45.center);
		\draw [style=hidden edges] (72.center) to (73.center);
		\draw [style=solid arrow] (68.center) to (69.center);
		\draw [style=solid arrow, bend right=15] (68.center) to (70.center);
		\draw [style=solid arrow] (69.center) to (70.center);
		\draw [style=solid arrow] (68.center) to (70.center);
		\draw [style=hidden edges, bend right=15, looseness=1.25] (69.center) to (70.center);
		\draw [style=Tops] (25.center)
			 to [bend left=15] (22.center)
			 to (23.center)
			 to [bend right=15, looseness=1.25] cycle;
		\draw [style=solid arrow] (28.center) to (29.center);
		\draw [style=hidden edges] (26.center) to (27.center);
		\draw [style=Dashed Edges No Fill] (27.center) to (28.center);
		\draw [style=hidden edges] (27.center) to (29.center);
		\draw [style=solid arrow] (26.center) to (28.center);
		\draw [style=solid arrow] (26.center) to (29.center);
		\draw [style=hidden edges] (46.center) to (47.center);
		\draw [style=Tops] (55.center)
			 to (54.center)
			 to (52.center)
			 to cycle;
		\draw [style=solid arrow] (56.center) to (57.center);
		\draw [style=solid arrow] (57.center) to (58.center);
		\draw [style=solid arrow] (56.center) to (58.center);
		\draw [style=Tops] (60.center)
			 to (61.center)
			 to (59.center)
			 to cycle;
		\draw [style=Fronts] (64.center)
			 to (62.center)
			 to [bend right=15] cycle;
		\draw [style=Tops] (66.center)
			 to (67.center)
			 to (65.center)
			 to cycle;
		\draw [style=solid arrow] (73.center) to (74.center);
		\draw [style=solid arrow] (71.center) to (73.center);
		\draw [style=solid arrow] (71.center) to (72.center);
		\draw [style=solid arrow, bend right=15] (71.center) to (74.center);
		\draw [style=solid arrow, bend right=15, looseness=1.25] (72.center) to (74.center);
		\draw [style=solid arrow] (80.center)
			 to (81.center)
			 to (79.center)
			 to cycle;
		\draw [style=Tops] (84.center)
			 to (83.center)
			 to (82.center)
			 to cycle;
		\draw [style=solid arrow] (85.center)
			 to [bend left=15] (87.center)
			 to (86.center)
			 to [bend right=15] cycle
			 to (87.center);
		\draw [style=Fronts] (13.center)
			 to (10.center)
			 to [bend left=15] cycle;
		\draw [style=Tops] (50.center)
			 to [bend right=15] (49.center)
			 to [bend left=15] (51.center)
			 to cycle;
		\draw [style=solid arrow] (92.center) to (91.center);
		\draw [style=solid arrow] (94.center) to (92.center);
		\draw [style=solid arrow] (92.center) to (93.center);
		\draw (37.center) to (36.center);
		\draw [style=hidden edges] (128.center) to (129.center);
		\draw [style=solid arrow] (185.center) to (186.center);
		\draw [style=solid arrow] (186.center) to (187.center);
		\draw [style=hidden edges] (155.center) to (156.center);
		\draw [style=Tops] (160.center)
			 to (158.center)
			 to (159.center)
			 to cycle;
		\draw [style=hidden edges] (131.center) to (133.center);
		\draw [style=solid arrow] (130.center) to (133.center);
		\draw [style=Tops] (136.center)
			 to (135.center)
			 to (134.center)
			 to cycle;
		\draw [style=solid arrow] (137.center) to (138.center);
		\draw [style=solid arrow] (138.center) to (139.center);
		\draw [style=solid arrow] (137.center) to (139.center);
		\draw [style=Tops] (141.center)
			 to (142.center)
			 to (140.center)
			 to cycle;
		\draw [style=solid arrow] (156.center) to (157.center);
		\draw [style=solid arrow] (154.center) to (156.center);
		\draw [style=solid arrow] (154.center) to (155.center);
		\draw [style=solid arrow] (154.center) to (157.center);
		\draw [style=solid arrow] (155.center) to (157.center);
		\draw [style=Fronts] (151.center)
			 to (152.center)
			 to (153.center)
			 to cycle;
		\draw [style=Tops] (169.center)
			 to (168.center)
			 to (167.center)
			 to cycle;
		\draw [style=solid arrow] (179.center) to (178.center);
		\draw [style=solid arrow] (178.center) to (181.center);
		\draw [style=solid arrow] (181.center) to (179.center);
		\draw [style=solid arrow] (179.center) to (180.center);
		\draw [style=Fronts] (184.center)
			 to (183.center)
			 to (182.center)
			 to cycle;
		\draw [style=Tops] (190.center)
			 to (188.center)
			 to (189.center)
			 to cycle;
		\draw (190.center) to (189.center);
		\draw [style=Tops] (257.center)
			 to [bend right=15] (256.center)
			 to [bend left=15] (258.center)
			 to cycle;
		\draw [style=Tops] (37.center)
			 to (35.center)
			 to (36.center)
			 to cycle;
		\draw [style=Fronts] (33.center)
			 to (31.center)
			 to (30.center)
			 to [bend left=15] cycle;
		\draw [style=solid arrow, bend left=15] (91.center) to (94.center);
		\draw [style=Tops] (265.center)
			 to [bend left=15] (263.center)
			 to (264.center)
			 to [bend right=15, looseness=1.25] cycle;
		\draw [style=solid arrow, bend right=15] (261.center) to (262.center);
		\draw [style=alignment lines] (156.center) to (73.center);
		\draw [style=alignment lines] (238.center) to (37.center);
		\draw [style=Fronts] (75.center)
			 to (77.center)
			 to (78.center)
			 to [bend left=15] cycle;
		\draw [style=alignment lines] (185.center) to (30.center);
		\draw [style=alignment lines] (134.center) to (52.center);
		\draw [style=alignment lines] (160.center) to (78.center);
		\draw [style=alignment lines] (37.center) to (23.center);
		\draw [style=alignment lines] (33.center) to (77.center);
		\draw [style=alignment lines] (31.center) to (75.center);
		\draw [style=alignment lines] (189.center) to (156.center);
		\draw [style=alignment lines] (188.center) to (158.center);
		\draw [style=alignment lines] (136.center) to (55.center);
		\draw [style=alignment lines] (142.center) to (70.center);
		\draw [style=alignment lines] (141.center) to (69.center);
		\draw [style=alignment lines] (140.center) to (68.center);
		\draw [style=alignment lines] (159.center) to (23.center);
		\draw [style=alignment lines] (158.center) to (75.center);
		\draw [style=alignment lines] (168.center) to (50.center);
	\end{pgfonlayer}
\end{tikzpicture}
}\bigskip{}
\bigskip{}
\caption{\label{fig: Comparing the  cells of the shuffle and lax-shuffle}Comparison
of the cells of the shuffle and lax-shuffle decompositions for $\left[1\right]\times\left[2\right]$
and $\left[1\right]\otimes\left[2\right]$ respectively.}
\bigskip{}
\end{minipage}}
\end{figure}

\subsubsection*{Steiner's $\omega$-categories and the correctness of the formula}

While the lax shuffle decomposition is a well formed combinatorial
construction it remains to shown that the construction describes the
Gray cylinder $\left[1\right]\otimes\left(\_\right)$. To do so we
show that the lax shuffle decomposition preserve globular sums - certain
wide pushouts of globes in $\StrCat$ - whence to prove that formula
correct for the Gray cylinder it suffices to prove that it correctly
computes the Gray cylinder over a single globe. To prove this in turn
we appeal to Steiner's theory.

Steiner, in \cite{Steiner1}\footnote{Although this author learned the material from \cite{AraMaltsiniotis}},
develops a treatment of strict treatment of $\omega$-categories as
directed augmented complexes. A directed augmented complex is a chain
complex of abelian groups $A_{\bullet}$, in the homological (positive
degree) convention, together with further data. These further data
rigidify the intuitive translation of chain complex $A_{\bullet}$
into a $\omega$-graph into providing an $\omega$-category. Specifically
the extra data are used to encode the orientation of cells as ``positivity''
- this is the directed in directed augmented complexes - and to pick
out which elements of the $0^{th}$ group are ``real'' objects as
opposed to merely formal linear combinations - this is the datum of
the augmentation. The strength of Steiner's theory on which we lean
is that the lax Gray tensor product of Steiner's $\omega$-categories
is easily written in terms of the tensor product of the underlying
chain complexes.

\subsubsection*{Shifted product rule}

We also develop a formula for $\left[1\right]\otimes T$ as an $\omega$-category
enriched category.%
{} We define, for any $n\in\N$ and objects $S_{1},\dots,S_{n}\in\Ob\left(\Theta\right)$,
the $\omega$-category $\mathsf{P.R.}\left(S_{1},\dots,S_{n}\right)$
as the colimit 
\[
\colim\left\{ \vcenter{\vbox{\xyR{1.5pc}\xyC{3pc}\xymatrix{\left(\left[1\right]\otimes S_{1}\right)\times S_{2}\times\cdots\times S_{n}\\
S_{1}\times S_{2}\times\cdots\times S_{n}\ar[u]^{\left\{ 1\right\} \otimes S_{1}\times\id\times\cdots\times\id}\ar[d]\\
\vdots\\
S_{1}\times\cdots\times S_{n-1}\times S_{n}\ar[d]_{\id\times\cdots\times\id\times\left\{ 0\right\} \otimes S_{n}}\ar[u]\\
S_{1}\times\cdots\times S_{n-1}\times\left(\left[1\right]\otimes S_{n}\right)
}
}}\right\} 
\]
taken in $\omega$-categories. We observe that
\[
\mathsf{P.R.}\left(\left(S_{i}\right)_{i\in\left\langle n\right\rangle }\right)=\left(\left(\left[1\right]\otimes S_{1}\right)\times\cdots\times S_{n}\right)\underset{\prod S_{i}}{\bigoplus}\cdots\underset{\prod S_{i}}{\bigoplus}\left(S_{1}\times\cdots\times\left(\left[1\right]\otimes S_{n}\right)\right)
\]
justifying the name of product rule - and use these formula to describe
the $\Hom$-categories of $\left[1\right]\otimes\left[n\right];\left(S_{1},S_{2},\dots,S_{n}\right)$
- hence the (dimensionally) shifted part of the name.

\subsubsection*{The Cartesian$\leftarrow$Gray$\rightarrow$Shift span}

Lastly we describe how the Gray cylinder serves as the apex of a span
\[
\left[1\right]\times\left(\_\right)\Longleftarrow\left[1\right]\otimes\left(\_\right)\Longrightarrow\left[1\right];\left(\_\right)
\]
We construct this span explicitly, by way of the shuffle decomposition
and shifted product rule. We illustrate this span at the $2$-simplex
$\left[2\right]$ in Figure \ref{fig:CartesianGrayShift-span-example-diagram}.

\begin{figure}[H]
\centering{}%
\noindent\fbox{\begin{minipage}[t]{1\columnwidth - 2\fboxsep - 2\fboxrule}%
\begin{center}
\bigskip{}
\adjustbox{scale=.25,center}{
\begin{tikzpicture}
	\begin{pgfonlayer}{nodelayer}
		\node [style=none] (62) at (2, -16.5) {};
		\node [style=none] (64) at (14, -13.5) {};
		\node [style=none] (151) at (-14, -32.5) {};
		\node [style=none] (152) at (-2, -33.5) {};
		\node [style=none] (153) at (-2, -29.5) {};
		\node [style=none] (75) at (2, -32.5) {};
		\node [style=none] (77) at (14, -33.5) {};
		\node [style=none] (78) at (14, -29.5) {};
		\node [style=none] (38) at (2, 29.5) {};
		\node [style=none] (40) at (8, 36.5) {};
		\node [style=none] (26) at (2, -2.5) {};
		\node [style=none] (27) at (8, -0.5) {};
		\node [style=none] (28) at (8, 4.5) {};
		\node [style=none] (29) at (14, 0.5) {};
		\node [style=none] (52) at (2, -2.5) {};
		\node [style=none] (54) at (8, 4.5) {};
		\node [style=none] (55) at (14, 0.5) {};
		\node [style=none] (56) at (2, -10.5) {};
		\node [style=none] (57) at (8, -8.5) {};
		\node [style=none] (58) at (14, -7.5) {};
		\node [style=none] (59) at (2, -10.5) {};
		\node [style=none] (60) at (8, -8.5) {};
		\node [style=none] (61) at (14, -7.5) {};
		\node [style=none] (65) at (2, -16.5) {};
		\node [style=none] (66) at (8, -14.5) {};
		\node [style=none] (67) at (14, -13.5) {};
		\node [style=none] (68) at (2, -16.5) {};
		\node [style=none] (69) at (8, -14.5) {};
		\node [style=none] (70) at (14, -13.5) {};
		\node [style=none] (71) at (2, -32.5) {};
		\node [style=none] (72) at (8, -30.5) {};
		\node [style=none] (73) at (14, -33.5) {};
		\node [style=none] (74) at (14, -29.5) {};
		\node [style=none] (22) at (2, -32.5) {};
		\node [style=none] (23) at (8, -30.5) {};
		\node [style=none] (25) at (14, -29.5) {};
		\node [style=none] (79) at (2, 5.5) {};
		\node [style=none] (80) at (8, 12.5) {};
		\node [style=none] (81) at (14, 8.5) {};
		\node [style=none] (46) at (2, 13.5) {};
		\node [style=none] (47) at (8, 20.5) {};
		\node [style=none] (48) at (14, 16.5) {};
		\node [style=none] (82) at (2, 5.5) {};
		\node [style=none] (83) at (8, 12.5) {};
		\node [style=none] (84) at (14, 8.5) {};
		\node [style=none] (85) at (2, 13.5) {};
		\node [style=none] (86) at (8, 20.5) {};
		\node [style=none] (87) at (14, 16.5) {};
		\node [style=none] (10) at (2, 13.5) {};
		\node [style=none] (13) at (14, 16.5) {};
		\node [style=none] (49) at (2, 13.5) {};
		\node [style=none] (50) at (8, 20.5) {};
		\node [style=none] (51) at (14, 16.5) {};
		\node [style=none] (91) at (2, 29.5) {};
		\node [style=none] (92) at (2, 33.5) {};
		\node [style=none] (93) at (8, 36.5) {};
		\node [style=none] (94) at (14, 32.5) {};
		\node [style=none] (42) at (2, 29.5) {};
		\node [style=none] (43) at (2, 33.5) {};
		\node [style=none] (45) at (14, 32.5) {};
		\node [style=none] (128) at (-14, 29.5) {};
		\node [style=none] (129) at (-8, 36.5) {};
		\node [style=none] (130) at (-14, -2.5) {};
		\node [style=none] (131) at (-8, -0.5) {};
		\node [style=none] (133) at (-2, 0.5) {};
		\node [style=none] (134) at (-14, -2.5) {};
		\node [style=none] (135) at (-8, 4.5) {};
		\node [style=none] (136) at (-2, 0.5) {};
		\node [style=none] (137) at (-14, -16.5) {};
		\node [style=none] (138) at (-8, -14.5) {};
		\node [style=none] (139) at (-2, -13.5) {};
		\node [style=none] (140) at (-14, -16.5) {};
		\node [style=none] (141) at (-8, -14.5) {};
		\node [style=none] (142) at (-2, -13.5) {};
		\node [style=none] (154) at (-14, -32.5) {};
		\node [style=none] (155) at (-8, -30.5) {};
		\node [style=none] (156) at (-2, -33.5) {};
		\node [style=none] (157) at (-2, -29.5) {};
		\node [style=none] (158) at (-14, -32.5) {};
		\node [style=none] (159) at (-8, -30.5) {};
		\node [style=none] (160) at (-2, -29.5) {};
		\node [style=none] (161) at (-14, 13.5) {};
		\node [style=none] (167) at (-14, 13.5) {};
		\node [style=none] (168) at (-8, 20.5) {};
		\node [style=none] (169) at (-2, 16.5) {};
		\node [style=none] (178) at (-14, 29.5) {};
		\node [style=none] (179) at (-14, 33.5) {};
		\node [style=none] (180) at (-8, 36.5) {};
		\node [style=none] (181) at (-2, 32.5) {};
		\node [style=none] (182) at (-14, 29.5) {};
		\node [style=none] (183) at (-14, 33.5) {};
		\node [style=none] (184) at (-2, 32.5) {};
		\node [style=none] (185) at (-14, 29.5) {};
		\node [style=none] (186) at (-14, 33.5) {};
		\node [style=none] (187) at (-2, 32.5) {};
		\node [style=none] (188) at (-14, 33.5) {};
		\node [style=none] (189) at (-2, 32.5) {};
		\node [style=none] (190) at (-8, 36.5) {};
		\node [style=none] (238) at (-8, 36.5) {};
		\node [style=none] (239) at (-8, -33.5) {};
		\node [style=none] (240) at (8, 36.5) {};
		\node [style=none] (241) at (8, -33.5) {};
		\node [style=none] (244) at (-8, 15.25) {};
		\node [style=none] (245) at (8, 13) {};
		\node [style=none] (246) at (-8, -29.75) {};
		\node [style=none] (247) at (8, -23.5) {};
		\node [style=none] (248) at (8, 11) {};
		\node [style=none] (250) at (8, -15) {};
		\node [style=none] (252) at (-8, 1) {};
		\node [style=none] (253) at (8, 1) {};
		\node [style=none] (254) at (8, 0) {};
		\node [style=none] (256) at (2, 21.5) {};
		\node [style=none] (257) at (8, 28.5) {};
		\node [style=none] (258) at (14, 24.5) {};
		\node [style=none] (35) at (2, 33.5) {};
		\node [style=none] (36) at (14, 32.5) {};
		\node [style=none] (37) at (8, 36.5) {};
		\node [style=none] (30) at (2, 29.5) {};
		\node [style=none] (31) at (2, 33.5) {};
		\node [style=none] (33) at (14, 32.5) {};
		\node [style=none] (259) at (2, -24.5) {};
		\node [style=none] (260) at (14, -21.5) {};
		\node [style=none] (261) at (2, -24.5) {};
		\node [style=none] (262) at (14, -21.5) {};
		\node [style=none] (263) at (2, -24.5) {};
		\node [style=none] (264) at (8, -22.5) {};
		\node [style=none] (265) at (14, -21.5) {};
		\node [style=none] (266) at (18, -2.5) {};
		\node [style=none] (267) at (30, 0.5) {};
		\node [style=none] (268) at (18, -2.5) {};
		\node [style=none] (269) at (30, 0.5) {};
		\node [style=none] (270) at (18, -16.5) {};
		\node [style=none] (271) at (30, -13.5) {};
		\node [style=none] (272) at (18, -16.5) {};
		\node [style=none] (273) at (30, -13.5) {};
		\node [style=none] (274) at (18, -16.5) {};
		\node [style=none] (275) at (30, -13.5) {};
		\node [style=none] (276) at (18, 13.5) {};
		\node [style=none] (277) at (30, 16.5) {};
		\node [style=none] (278) at (18, 13.5) {};
		\node [style=none] (279) at (30, 16.5) {};
		\node [style=none] (280) at (18, 13.5) {};
		\node [style=none] (281) at (30, 16.5) {};
		\node [style=none] (282) at (24, -23.5) {};
		\node [style=none] (283) at (24, 17) {};
		\node [style=none] (284) at (24, -15) {};
		\node [style=none] (285) at (24, 0) {};
		\node [style=none] (286) at (1.5, 33) {};
		\node [style=none] (287) at (-1.75, 33) {};
		\node [style=none] (288) at (-1.75, 17) {};
		\node [style=none] (289) at (3.5, 17) {};
		\node [style=none] (290) at (-1.75, 1) {};
		\node [style=none] (291) at (4.5, 1) {};
		\node [style=none] (292) at (-6.5, -15) {};
		\node [style=none] (293) at (5.75, -15) {};
		\node [style=none] (294) at (-1.5, -31.5) {};
		\node [style=none] (295) at (4, -31.5) {};
		\node [style=none] (296) at (12, -15) {};
		\node [style=none] (297) at (22.5, -15) {};
		\node [style=none] (298) at (9.75, -1) {};
		\node [style=none] (299) at (22, -1) {};
		\node [style=none] (300) at (10, 15) {};
		\node [style=none] (301) at (20, 15) {};
		\node [style=none] (302) at (-8, 21.25) {};
		\node [style=none] (303) at (-8, 30.5) {};
		\node [style=none] (304) at (-8, 14.5) {};
		\node [style=none] (305) at (-8, 5.25) {};
		\node [style=none] (306) at (-8, -1.5) {};
		\node [style=none] (307) at (-8, -13.75) {};
		\node [style=none] (308) at (8, -9.5) {};
		\node [style=none] (309) at (8, -13.75) {};
		\node [style=none] (310) at (8, -7.75) {};
		\node [style=none] (311) at (8, -1.5) {};
		\node [style=none] (312) at (-8, -15.75) {};
		\node [style=none] (313) at (8, -16.5) {};
		\node [style=none] (314) at (8, -21.75) {};
		\node [style=none] (315) at (8, -24.5) {};
		\node [style=none] (316) at (8, -29.75) {};
		\node [style=none] (317) at (8, 31.5) {};
		\node [style=none] (318) at (8, 29) {};
		\node [style=none] (319) at (8, 23.5) {};
		\node [style=none] (320) at (8, 21) {};
		\node [style=none] (321) at (8, 14.5) {};
		\node [style=none] (322) at (8, 6.5) {};
		\node [style=none] (323) at (8, 5.25) {};
		\node [style=none] (324) at (24, 14.5) {};
		\node [style=none] (325) at (24, -1.75) {};
		\node [style=none] (326) at (24, -14.25) {};
	\end{pgfonlayer}
	\begin{pgfonlayer}{edgelayer}
		\draw [style=hidden edges, bend right=15, looseness=1.25] (69.center) to (70.center);
		\draw [style=Fronts] (64.center)
			 to (62.center)
			 to [bend right=15] cycle;
		\draw [style=hidden edges] (155.center) to (156.center);
		\draw [style=Fronts] (151.center)
			 to (152.center)
			 to (153.center)
			 to cycle;
		\draw [style=hidden edges] (72.center) to (73.center);
		\draw [style=Fronts] (75.center)
			 to (77.center)
			 to (78.center)
			 to [bend left=15] cycle;
		\draw [style=ghost lines] (135.center) to (54.center);
		\draw [style=hidden edges] (135.center) to (131.center);
		\draw [style=hidden edges, bend left=15] (38.center) to (40.center);
		\draw [style=solid arrow] (42.center) to (43.center);
		\draw [style=solid arrow] (43.center) to (45.center);
		\draw [style=solid arrow, bend left=15] (42.center) to (45.center);
		\draw [style=solid arrow] (68.center) to (69.center);
		\draw [style=solid arrow, bend right=15] (68.center) to (70.center);
		\draw [style=solid arrow] (69.center) to (70.center);
		\draw [style=solid arrow] (68.center) to (70.center);
		\draw [style=Tops] (25.center)
			 to [bend left=15] (22.center)
			 to (23.center)
			 to [bend right=15, looseness=1.25] cycle;
		\draw [style=solid arrow] (28.center) to (29.center);
		\draw [style=hidden edges] (26.center) to (27.center);
		\draw [style=hidden edges] (27.center) to (28.center);
		\draw [style=hidden edges] (27.center) to (29.center);
		\draw [style=solid arrow] (26.center) to (28.center);
		\draw [style=solid arrow] (26.center) to (29.center);
		\draw [style=hidden edges] (46.center) to (47.center);
		\draw [style=Tops] (55.center)
			 to (54.center)
			 to (52.center)
			 to cycle;
		\draw [style=solid arrow] (56.center) to (57.center);
		\draw [style=solid arrow] (57.center) to (58.center);
		\draw [style=solid arrow] (56.center) to (58.center);
		\draw [style=Tops] (60.center)
			 to (61.center)
			 to (59.center)
			 to cycle;
		\draw [style=Tops] (66.center)
			 to (67.center)
			 to (65.center)
			 to cycle;
		\draw [style=solid arrow] (73.center) to (74.center);
		\draw [style=solid arrow] (71.center) to (73.center);
		\draw [style=solid arrow] (71.center) to (72.center);
		\draw [style=solid arrow, bend right=15] (71.center) to (74.center);
		\draw [style=solid arrow, bend right=15, looseness=1.25] (72.center) to (74.center);
		\draw [style=solid arrow] (80.center)
			 to (81.center)
			 to (79.center)
			 to cycle;
		\draw [style=Tops] (84.center)
			 to (83.center)
			 to (82.center)
			 to cycle;
		\draw [style=solid arrow] (85.center)
			 to [bend left=15] (87.center)
			 to (86.center)
			 to [bend right=15] cycle
			 to (87.center);
		\draw [style=Fronts] (13.center)
			 to (10.center)
			 to [bend left=15] cycle;
		\draw [style=Tops] (50.center)
			 to [bend right=15] (49.center)
			 to [bend left=15] (51.center)
			 to cycle;
		\draw [style=solid arrow] (92.center) to (91.center);
		\draw [style=solid arrow] (94.center) to (92.center);
		\draw [style=solid arrow] (92.center) to (93.center);
		\draw (37.center) to (36.center);
		\draw [style=hidden edges] (128.center) to (129.center);
		\draw [style=solid arrow] (185.center) to (186.center);
		\draw [style=solid arrow] (186.center) to (187.center);
		\draw [style=Tops] (160.center)
			 to (158.center)
			 to (159.center)
			 to cycle;
		\draw [style=hidden edges] (130.center) to (131.center);
		\draw [style=hidden edges] (131.center) to (133.center);
		\draw [style=solid arrow] (130.center) to (133.center);
		\draw [style=Tops] (136.center)
			 to (135.center)
			 to (134.center)
			 to cycle;
		\draw [style=solid arrow] (137.center) to (138.center);
		\draw [style=solid arrow] (138.center) to (139.center);
		\draw [style=solid arrow] (137.center) to (139.center);
		\draw [style=Tops] (141.center)
			 to (142.center)
			 to (140.center)
			 to cycle;
		\draw [style=solid arrow] (156.center) to (157.center);
		\draw [style=solid arrow] (154.center) to (156.center);
		\draw [style=solid arrow] (154.center) to (155.center);
		\draw [style=solid arrow] (154.center) to (157.center);
		\draw [style=solid arrow] (155.center) to (157.center);
		\draw [style=Tops] (169.center)
			 to (168.center)
			 to (167.center)
			 to cycle;
		\draw [style=solid arrow] (179.center) to (178.center);
		\draw [style=solid arrow] (178.center) to (181.center);
		\draw [style=solid arrow] (181.center) to (179.center);
		\draw [style=solid arrow] (179.center) to (180.center);
		\draw [style=Fronts] (184.center)
			 to (183.center)
			 to (182.center)
			 to cycle;
		\draw [style=Tops] (190.center)
			 to (188.center)
			 to (189.center)
			 to cycle;
		\draw (190.center) to (189.center);
		\draw [style=Tops] (257.center)
			 to [bend right=15] (256.center)
			 to [bend left=15] (258.center)
			 to cycle;
		\draw [style=Tops] (37.center)
			 to (35.center)
			 to (36.center)
			 to cycle;
		\draw [style=Fronts] (33.center)
			 to (31.center)
			 to (30.center)
			 to [bend left=15] cycle;
		\draw [style=solid arrow, bend left=15] (91.center) to (94.center);
		\draw [style=Tops] (265.center)
			 to [bend left=15] (263.center)
			 to (264.center)
			 to [bend right=15, looseness=1.25] cycle;
		\draw [style=solid arrow, bend right=15] (261.center) to (262.center);
		\draw [style=ghost lines] (156.center) to (73.center);
		\draw [style=ghost lines] (238.center) to (37.center);
		\draw [style=ghost lines] (185.center) to (30.center);
		\draw [style=ghost lines] (134.center) to (52.center);
		\draw [style=ghost lines] (160.center) to (78.center);
		\draw [style=ghost lines] (33.center) to (77.center);
		\draw [style=ghost lines] (31.center) to (75.center);
		\draw [style=ghost lines] (136.center) to (55.center);
		\draw [style=ghost lines] (142.center) to (70.center);
		\draw [style=ghost lines] (141.center) to (69.center);
		\draw [style=ghost lines] (140.center) to (68.center);
		\draw [style=ghost lines] (158.center) to (75.center);
		\draw [style=ghost lines] (168.center) to (50.center);
		\draw [style=solid arrow, bend right=15] (274.center) to (275.center);
		\draw [style=solid arrow] (274.center) to (275.center);
		\draw [style=solid arrow] (266.center) to (267.center);
		\draw [style=Tops] (268.center) to (269.center);
		\draw [style=Fronts] (271.center)
			 to (270.center)
			 to [bend right=15] cycle;
		\draw [style=Tops] (273.center) to (272.center);
		\draw [style=solid arrow, bend left=15] (278.center) to (279.center);
		\draw [style=solid arrow] (278.center) to (279.center);
		\draw [style=Fronts] (281.center)
			 to (280.center)
			 to [bend left=15] cycle;
		\draw [style=ghost lines] (49.center) to (280.center);
		\draw [style=ghost lines] (51.center) to (281.center);
		\draw [style=ghost lines] (52.center) to (268.center);
		\draw [style=ghost lines] (55.center) to (269.center);
		\draw [style=ghost lines] (68.center) to (274.center);
		\draw [style=ghost lines] (70.center) to (275.center);
		\draw [style=arrow] (287.center) to (286.center);
		\draw [style=arrow] (288.center) to (289.center);
		\draw [style=arrow] (290.center) to (291.center);
		\draw [style=arrow] (292.center) to (293.center);
		\draw [style=arrow] (294.center) to (295.center);
		\draw [style=arrow] (297.center) to (296.center);
		\draw [style=arrow] (299.center) to (298.center);
		\draw [style=arrow] (301.center) to (300.center);
		\draw [style=arrow] (303.center) to (302.center);
		\draw [style=arrow] (305.center) to (304.center);
		\draw [style=arrow] (307.center) to (306.center);
		\draw [style=arrow] (309.center) to (308.center);
		\draw [style=arrow] (311.center) to (310.center);
		\draw [style=arrow] (246.center) to (312.center);
		\draw [style=arrow] (313.center) to (314.center);
		\draw [style=arrow] (316.center) to (315.center);
		\draw [style=arrow] (317.center) to (318.center);
		\draw [style=arrow] (320.center) to (319.center);
		\draw [style=arrow] (321.center) to (245.center);
		\draw [style=arrow] (323.center) to (322.center);
		\draw [style=arrow] (324.center) to (285.center);
		\draw [style=arrow] (326.center) to (325.center);
	\end{pgfonlayer}
\end{tikzpicture}
}\caption{\label{fig:CartesianGrayShift-span-example-diagram}Cartesian$\leftarrow$Gray$\rightarrow$Shift
span at $\left[2\right]$}
\bigskip{}
\par\end{center}%
\end{minipage}}
\end{figure}
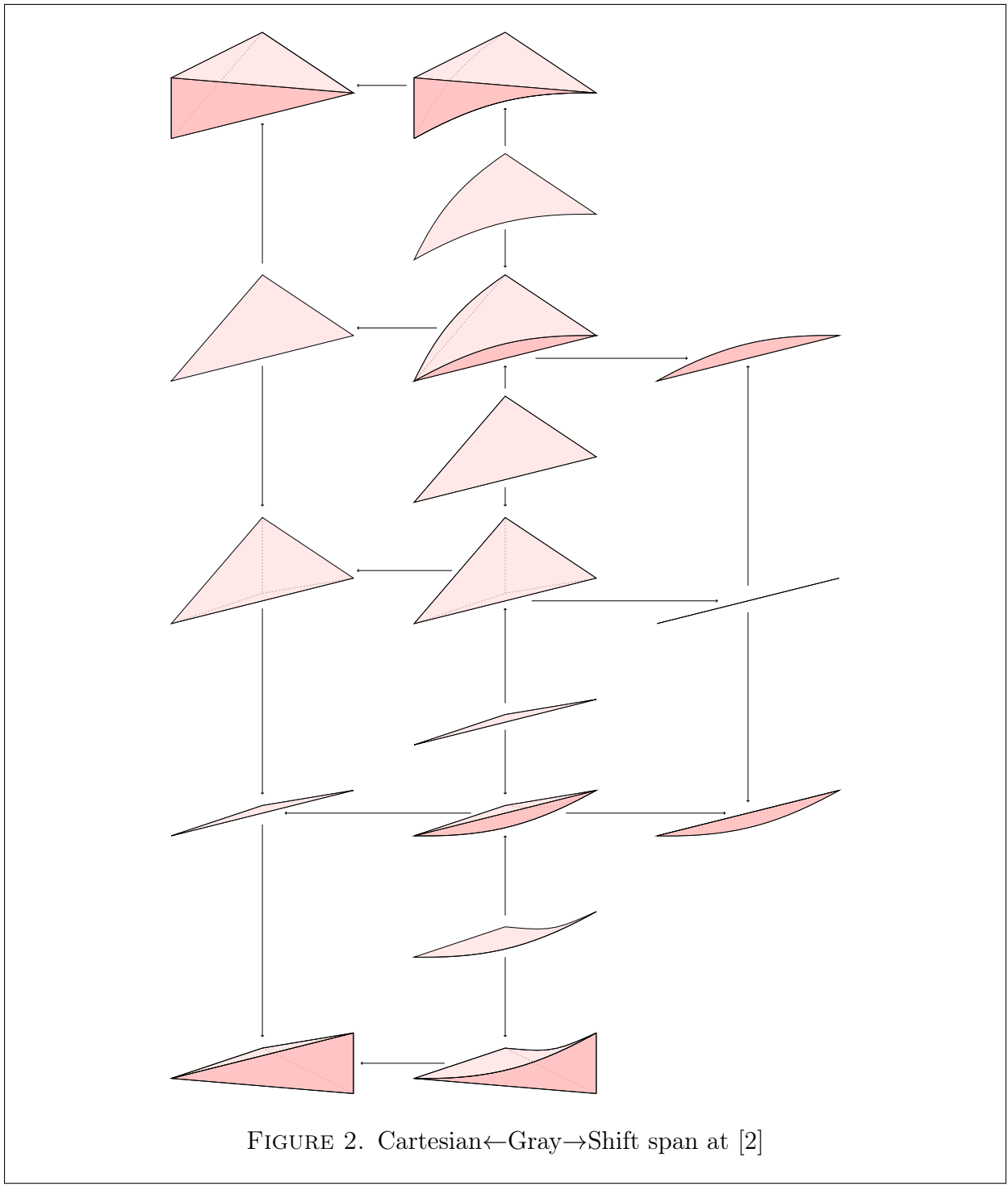

\begin{rem*}
While a pleasing construction in its own right, the Cartesian-Gray-shift
span, in the forthcoming $\Z$-cats II this span is seen to be and
or induce a span of natural weak equivalences for various model structures.

\pagebreak{}
\end{rem*}

\section{Berger's Categorical Wreath Product and the Cell Categories $\Theta$
and $\Theta_{n}$}

What follows was developed first in \cite{Berger2}.\footnote{However our presentation borrows more from \cite{CisinskiMaltsiniotis}.}

\subsection{Segal's category $\Gamma$}

Segal's category $\Gamma$ is a skeleton, in the sense of a small
category of chosen representatives for each isomorphism class, for
the opposite category of the category of finite pointed sets.
\begin{defn}
Let $\Gamma$, \textbf{Segal's gamma category}, be the category specified
thus: let
\[
\Ob\left(\Gamma\right)=\left\{ \sd{\left\langle k\right\rangle =\left\{ 1,\dots,k\right\} }k\geq1\right\} \cup\left\{ \left\langle 0\right\rangle =\varnothing\right\} ,
\]
 and let $\Gamma\left(\left\langle n\right\rangle ,\left\langle m\right\rangle \right)$
be defined by the expression
\[
\Gamma\left(\left\langle n\right\rangle ,\left\langle m\right\rangle \right)=\left\{ \sd{\varphi:\left\langle n\right\rangle \longrightarrow\mathsf{Sub}_{\S}\left(\left\langle m\right\rangle \right)}\forall i\neq j\in\left\langle m\right\rangle ,\ \varphi\left(i\right)\cap\varphi\left(j\right)=\varnothing\right\} 
\]
where, for any category $\sf A$ and object $a$ thereof, $\mathsf{Sub}_{\sf A}\left(a\right)$
is the category of subobjects of $a$. Define the composition of morphisms
in $\Gamma$ by setting
\[
\left\langle \ell\right\rangle \overset{\varphi}{\longrightarrow}\left\langle m\right\rangle \overset{\sigma}{\longrightarrow}\left\langle n\right\rangle 
\]
to be the map
\[
\sigma\circ\varphi:i\longmapsto\bigcup_{j\in\varphi\left(i\right)}\sigma\left(j\right).
\]
\end{defn}

\begin{rem}
The equivalence of categories between $\Gamma$ and $\FinSet_{\bullet}^{\op}$
is a particularly truncated analogue of the Grothendieck construction
- a map of finite pointed sets is replaced with the data of the fibres
it parameterizes (see Figure \ref{fig:Illustrating-the-equivalence- between-FinSet and Gamma}). 

\begin{figure}
\noindent\fbox{\begin{minipage}[t]{1\columnwidth - 2\fboxsep - 2\fboxrule}%
\begin{center}
\includegraphics[scale=0.75]{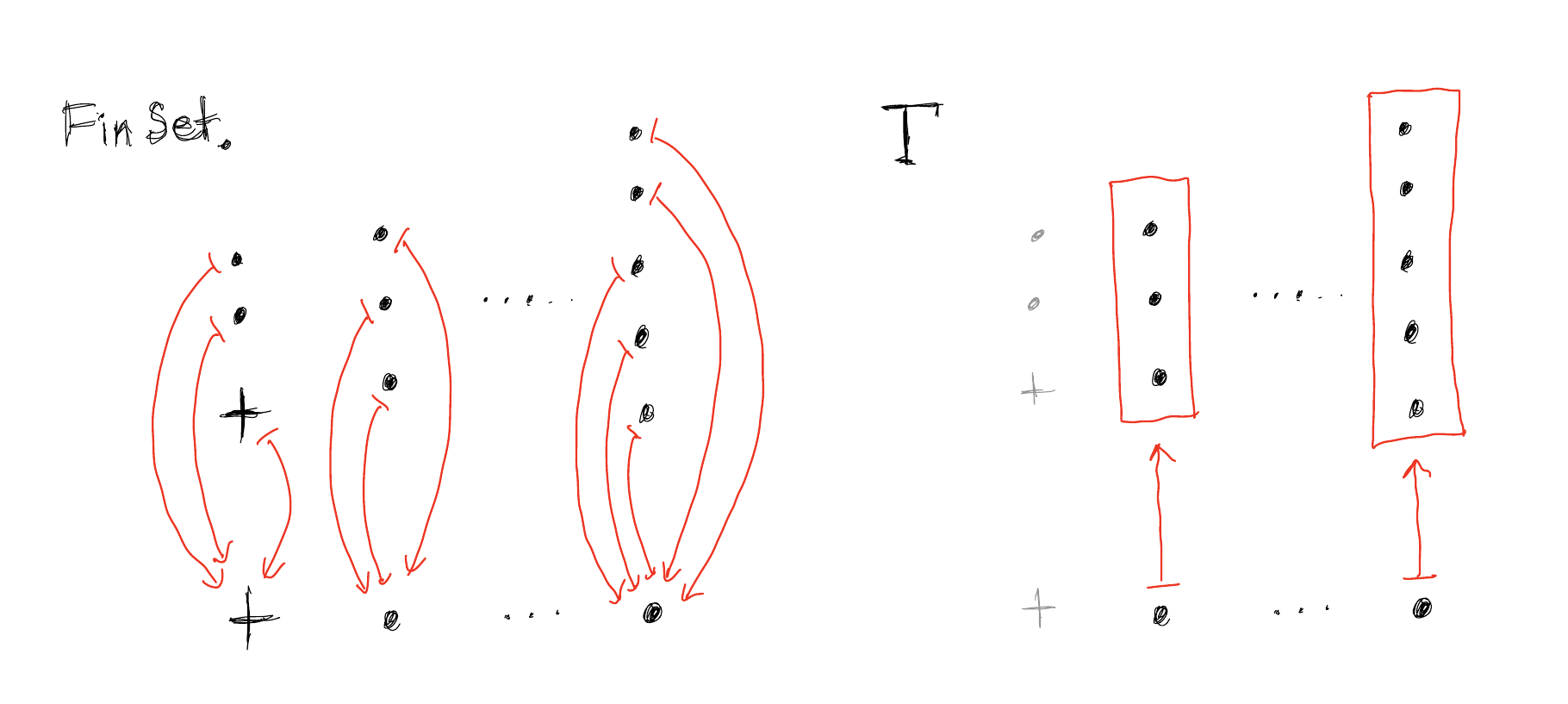}
\par\end{center}
\caption{\label{fig:Illustrating-the-equivalence- between-FinSet and Gamma}Illustrating
the equivalence between $\protect\sf{FinSet}_{\bullet}^{\protect\op}$
and $\Gamma$}

\bigskip{}
\end{minipage}}
\end{figure}
\end{rem}

\subsection{\label{subsec:The-Categorical-wreath}Berger's categorical wreath
product}
\begin{defn}
Let $A$ and $B$ be small categories. Given a functor $G:\sf B\longrightarrow\Gamma$,
we define $\sf B\int_{G}\sf A=\sf B\int\sf A$ (with the second notation
suppressing the functor $G$ when the meaning is clear) be the category
the objects of which are indexed tuples
\[
b;\left(a_{1},\dots,a_{m}\right)
\]
where:
\begin{itemize}
\item $b$ is an object of $\sf B$, $G\left(b\right)=\left\langle m\right\rangle $;
and
\item $\left(a_{1},\dots,a_{m}\right)$ describes a function $G\left(b\right)\longrightarrow\Ob\left(\sf A\right)$.
\end{itemize}
\end{defn}

The morphisms of the wreath product $\sf B\int\sf A$, denoted 
\[
g;\mathbf{f}:b;\left(a_{i}\right)_{i\in G\left(b\right)}\longrightarrow d;\left(c_{i}\right)_{i\in G\left(d\right)}
\]
 are comprised of a morphism
\[
g:b\longrightarrow d
\]
 of $\sf B$ and a morphism of $\wh A$, 
\[
\mathbf{f}=\left(\left(f_{ji}:a_{i}\rightarrow c_{j}\right)_{j\in G\left(g\right)\left(i\right)}\right)_{i\in G\left(b\right)}:\prod_{i\in G\left(b\right)}\left(\sf A\left[a_{i}\right]\longrightarrow\prod_{j\in G\left(g\right)\left(i\right)}\sf A\left[c_{j}\right]\right).
\]
 The composition
\[
b;\left(a_{i}\right)_{i\in G\left(b\right)}\overset{g;\mathbf{f}}{\longrightarrow}d;\left(c_{i}\right)_{i\in G\left(d\right)}\overset{r;\mathbf{q}}{\longrightarrow}\ell;\left(k_{i}\right)_{i\in G\left(\ell\right)}
\]
is denoted $r\circ g;\mathbf{q}\circ\mathbf{f}$ where the meaning
of $r\circ g$ is clear and
\[
\mathbf{q}\circ\mathbf{f}=\left(\left(q_{jk}\circ f_{ki}\right)_{j\in G\left(r\circ g\right)\left(i\right)}\right)_{i\in G\left(b\right)}
\]
 with the values for $k\in G\left(d\right)$ being those unique $k$
in $G\left(g\right)\left(i\right)$ such that $j\in G\left(r\right)\left(k\right)$.
\begin{rem}
We'll also use the more explicit notation $g;\left(f_{1},f_{2},\dots,f_{n}\right)$
where doing so simplifies the exposition.
\end{rem}

\begin{example}
\label{exa:from-simplex-category-to-segal's-gamma}

Define the functor $F:\t\longrightarrow\Gamma$ by setting 
\[
F\left(\left[n\right]\right)=\left\langle n\right\rangle 
\]
 and setting for each $\varphi:\left[m\right]\longrightarrow\left[n\right]$,
\[
F\left(\varphi\right):\left\langle m\right\rangle \longrightarrow\left\langle n\right\rangle 
\]
to be the function 
\[
F\left(\varphi\right):\left\langle m\right\rangle \longrightarrow\mathsf{Sub}_{\S}\left(\left\langle n\right\rangle \right)
\]
 given thus:
\[
F\left(\varphi\right)\left(i\right)=\left\{ \sd j\varphi\left(i-1\right)<j\leq\varphi\left(i\right)\right\} .
\]
\end{example}

\begin{figure}[H]
\noindent\fbox{\begin{minipage}[t]{1\columnwidth - 2\fboxsep - 2\fboxrule}%
\bigskip{}
\[
\xyR{3pc}\xyC{1pc}\xymatrix{0\ar@{|->}[d]\ar@{-}[r] & \lightgray 1\ar@[lightgray]@{|->}[d]\ar@[lightgray]@{|->}[drr]\ar[r] & 1\ar@{|->}[drr]\ar@{-}[r] & \lightgray{2\ar@[lightgray]@{|->}[drr]}\ar[r] & 2\ar@{|->}[drr]\\
0\ar@{-}[r] & \lightgray 1\ar[r] & 1\ar@{-}[r] & \lightgray 2\ar[r] & 2\ar@{-}[r] & \lightgray 3\ar[r] & 3
}
\]

\caption{Overlay of the $\protect\t$-map $d^{1}:\left[2\right]\rightarrow\left[3\right]$
and the $\Gamma$-map $F\left(d^{1}\right):F\left(\left[2\right]\right)\rightarrow F\left(\left[3\right]\right)$}
\bigskip{}
\end{minipage}}
\end{figure}

\begin{rem}
It is not hard to see that the wreath product defines a functor 
\[
\text{\ensuremath{\left(\_\right)}\ensuremath{\ensuremath{\int\left(\_\right)}}:}\left(\Cat\downarrow\Gamma\right)\times\Cat\longrightarrow\Cat.
\]
\end{rem}

\subsection{The Categories $\Theta$ and $\Theta_{n}$}
\begin{defn}
Let 
\[
\gamma:\t\longrightarrow\t\int\t
\]
 be the obvious functor extending the assignment $\gamma\left(\left[n\right]\right)=\left[n\right];\left(\left[0\right],\dots,\left[0\right]\right)$.
We define the categories $\Theta_{n}$ to be the $n^{th}$ wreath
product of $\t$ with itself, i.e. we set
\[
\Theta_{n}=\underbrace{\t\int\left(\cdots\int\t\right)}_{n-\mathrm{times}}.
\]
 We set $\Theta$ to be the conical colimit\footnote{An elementary argument about the projective-canonical and Reedy-canonical
model structures on $\CAT\left(\N,\Cat\right)$ provides that this
conical colimit enjoys the universal property of the pseudo-colimit.}
\[
\underset{\longrightarrow}{\lim}\left\{ \t\overset{\gamma}{\longrightarrow}\t\int\t\overset{\id\int\gamma}{\longrightarrow}\cdots\right\} .
\]
\end{defn}

\begin{rem}
It should be noted that 
\[
\Theta\iso\t\int\Theta\iso\t\int\t\int\Theta\iso\cdots
\]
 so we may denote cells - where cells are the objects of $\Theta$
- in many compatible ways. For example for any $T$ a cell of $\Theta$
we may also write $T=\left[n\right];\left(T_{1},\dots,T_{n}\right)$
for some unique $n\in\N$ and unique $T_{1},\dots,T_{n}$ cells of
$\Theta$.
\end{rem}

\subsection{Globular Sums}

The decompositions of the preceding remark terminate in the observation
that every cell $T$ of $\Theta$ admits a unique description as a
wide pushout of globes. Indeed, for every $T$ of $\Theta$, there
exist unique integers $n_{0}\geq m_{1}\leq n_{1}\geq\cdots\leq n_{\ell-1}\geq m_{\ell-1}\leq n_{\ell}$
for which we have a canonical comparison isomorphism
\[
\colim\left\{ \vcenter{\vbox{\xyR{1.5pc}\xyC{1.5pc}\xymatrix{\overline{n_{0}} &  & \overline{n_{1}} &  & \overline{n_{\ell-1}} &  & \overline{n_{\ell}}\\
 & \overline{m_{1}}\ar[ul]|-{t^{n_{0}-m_{1}}}\ar[ur]|-{s^{n_{1}-m_{1}}} &  & \cdots\ar[ur]\ar[ul] &  & \overline{m_{\ell-1}}\ar[ul]|-{t^{n_{\ell-1}-m_{\ell-1}}}\ar[ur]|-{s^{n_{\ell}-m_{\ell-1}}}
}
}}\right\} \liso T
\]
Codifying this relation between $T$ and integers $n_{0}\geq m_{1}\leq n_{1}\geq\cdots\leq n_{\ell-1}\geq m_{\ell-1}\leq n_{\ell}$
we write $T=\overline{n_{0}}\underset{m_{1}}{\oplus}\overline{n_{1}}\underset{\cdots}{\oplus}\cdots\underset{\cdots}{\oplus}\overline{n_{\ell-1}}\underset{m_{\ell-1}}{\oplus}\overline{n_{\ell}}$.
We will refer to the right-hand expression as a globular sum and the
equality as the globular-sum decomposition.

\pagebreak{}

\section{\label{sec:The-Gray-Cylinder-section}The Gray Cylinder $\left[1\right]\otimes\left(\_\right):\protect\wh{\Theta}\protect\longrightarrow\protect\wh{\Theta}$}

The lax Gray tensor product $\otimes$ for $\omega$-categories is
defined by its enjoyment of the following universal property.\footnote{For specific treatments in other models see \cite{Verity},\cite{CampionKapulkinMaehara},
or \cite{AraMaltsiniotis}}
\[
\StrCat\left(\sfB,\sf{oplax}\left(\sfA,\sfC\right)\right)\liso\omega\mhyphen\Cat\left(\sf A\otimes\sf B,\sf C\right)\liso\omega\mhyphen\Cat\left(\sf A,\text{\ensuremath{\mathsf{Lax}\left(\sf B,\sf C\right)}}\right)
\]
Since for any object $X\in\wh{\Theta}$, the functor
\[
X\otimes\left(\_\right):\Theta\longrightarrow\omega\mhyphen\Cat
\]
 is the restriction of a left adjoint endofunctor on $\StrCat$ to
the subcategory $\Theta$, and cells admit a canonical globular decomposition,
it follows that for any functor $F$, to prove that $F\liso X\otimes\left(\_\right)$
it suffices to prove that:
\begin{itemize}
\item $F\left(\overline{n}\right)\liso X\otimes\overline{n}$ for all $n\in\N$;
and
\item that $F$ preserves globular sums.
\end{itemize}
We'll use this observation, with $X=\left[1\right]$, as follows:
\begin{itemize}
\item in Section \ref{subsec:The-assignment-on-objects-LG} we provide a
recursive formula - a lax version of the shuffle decomposition - assigning
to every cell $T$ of $\Theta$, a wide push-out - which we'll denote
(in a subtle and local (in scope) abuse of notation) by $\left[1\right]\LG T$;
\item in Section \ref{subsec:The-assignment-LG-preserves-globular-sums}
we prove that our formula preserves globular sums;
\item in Section \ref{subsec:Comparison-to-Steiner} we prove - by way of
comparison to Steiner's treatment of $\omega$-categories - that $\left[1\right]\LG\overline{n}$
enjoys the universal property of the lax Gray cylinder on $\overline{n}$
for all $n\in\N$.
\end{itemize}
From these results it then follows that the lax shuffle decomposition
presents the lax Gray cylinder.

\subsection{\label{subsec:The-assignment-on-objects-LG} The Lax shuffle decomposition }

Recall that the cartesian cylinder admits the so-called shuffle decomposition.
The lax Gray tensor product enjoys a similar expression, but the gluing
along copies of $\left[n\right];\left(T_{i}\right)$ is made \emph{lax}.
\begin{rem}
As explained above in what follows we mean, by $\left[1\right]\LG T$,
a particular colimit and only later will we prove that the colimit
$\left[1\right]\LG T$ presents the Gray cylinder.
\end{rem}

\begin{defn}
\label{def:LG-on-objects}We define the function
\[
\left[1\right]\LG\left(\_\right):\Ob\left(\Theta\right)\longrightarrow\Ob\left(\StrCat\right)
\]
and a family of maps:

\[
\left\{ 0\right\} \LG T:T\longrightarrow\left[1\right]\LG T
\]
and 
\[
\left\{ 1\right\} \LG T:T\longrightarrow\left[1\right]\LG T
\]
recursively as follows.

Set
\[
\left[1\right]\LG\left[0\right]=\left[1\right]
\]
and set
\[
\left\{ 0\right\} \LG\left[0\right]=\left\{ 0\right\} :\left[0\right]\longrightarrow\left[1\right]=\left[1\right]\LG\left[0\right]
\]
and 
\[
\left\{ 1\right\} \LG\left[0\right]=\left\{ 0\right\} :\left[0\right]\longrightarrow\left[1\right]=\left[1\right]\LG\left[0\right].
\]
We then define $\left[1\right]\LG\left[\ell\right];\left(A_{i}\right)$
to be the colimit of the diagram below left, taken in $\mathsf{Str}\mhyphen\omega\mhyphen\Cat$,
with the morphisms of that diagram being given in the diagram below
right.

\begin{minipage}[c][1\totalheight][t]{0.45\textwidth}%
\begin{center}
\[\clim\left\{ \vcenter{\vbox{\xyR{1.5pc}\xyC{6pc}\xymatrix@!0{ & {\scriptstyle \left[\ell+1\right];\left(\left[0\right],A_{1},\dots,A_{n}\right)}\\{\scriptstyle \left[\ell\right];\left(A_{i}\right)}\ar[ur]\ar[dr]\\ & {\scriptstyle \left[\ell\right];\left(\left[1\right]\LG A_{1},A_{2},\dots,A_{n}\right)}\\{\scriptstyle \left[\ell\right];\left(A_{i}\right)}\ar[ur]\ar[dr]\\ & {\scriptstyle \left[\ell+1\right];\left(A_{1},\left[0\right],\dots,A_{n}\right)}\\{\scriptstyle \vdots}\ar[ur]\ar[dr]\\ & {\scriptstyle \left[\ell+1\right];\left(A_{1},,\dots A_{\ell-1},\left[0\right],A_{n}\right)}\\{\scriptstyle \left[\ell\right];\left(A_{i}\right)}\ar[ur]\ar[dr]\\ & {\scriptstyle \left[\ell\right];\left(A_{1},\dots,\left[1\right]\LG A_{n}\right)}\\{\scriptstyle \left[\ell\right];\left(A_{i}\right)}\ar[ur]\ar[dr]\\ & {\scriptstyle \left[\ell+1\right];\left(A_{1},,\dots,A_{n},\left[0\right]\right)}}}}\right\} \]
\par\end{center}%
\end{minipage}\hfill{}%
\begin{minipage}[c][1\totalheight][t]{0.45\textwidth}%
$\vcenter{\vbox{\xyR{1.5pc}\xyC{0pc}\xymatrix{{\scriptstyle \vdots} &  & {\scriptscriptstyle \vdots} &  &  &  & {\scriptscriptstyle \vdots} &  & \vdots\\
{\scriptstyle \left[\ell+1\right]} & {\scriptstyle ;} & {\scriptscriptstyle A_{1}} & {\scriptscriptstyle \cdots} & {\scriptscriptstyle \left[0\right]} &  & {\scriptscriptstyle A_{j}} & {\scriptscriptstyle \cdots} & A_{\ell}\\
{\scriptstyle \left[\ell\right]}\ar[d]_{\id}\ar[u]^{d^{j}} & {\scriptstyle ;} & {\scriptscriptstyle A_{1}}\ar[u]^{\id}\ar[d]^{\id} & {\scriptscriptstyle \cdots} &  & {\scriptscriptstyle A_{j}}\ar[ul]^{!}\ar[ur]^{\id}\ar[d]_{\id\LG\left\{ 0\right\} } &  & {\scriptscriptstyle \cdots} & A_{\ell}\ar[u]\ar[d]^{\id}\\
{\scriptstyle \left[\ell\right]} & {\scriptstyle ;} & {\scriptscriptstyle A_{1}} & {\scriptscriptstyle \cdots} &  & {\scriptscriptstyle A_{j}\LG\left[1\right]} &  & {\scriptscriptstyle \cdots} & A_{\ell}\\
{\scriptstyle \left[\ell\right]}\ar[u]^{\id}\ar[d]_{d^{j}} & {\scriptstyle ;} & {\scriptscriptstyle A_{1}}\ar[u]^{\id}\ar[d]^{\id} & {\scriptscriptstyle \cdots} &  & {\scriptscriptstyle A_{j}}\ar[u]^{\id\LG\left\{ 1\right\} }\ar[dl]_{\id}\ar[dr]_{!} &  & {\scriptscriptstyle \cdots} & A_{\ell}\ar[u]^{\id}\ar[d]^{\id}\\
{\scriptstyle \left[\ell+1\right]} & {\scriptstyle ;} & {\scriptscriptstyle A_{1}} & {\scriptscriptstyle \cdots} & {\scriptscriptstyle A_{j}} &  & {\scriptscriptstyle \left[0\right]} & {\scriptscriptstyle \cdots} & A_{\ell}\\
{\scriptstyle \vdots} &  & {\scriptscriptstyle \vdots} &  & {\scriptscriptstyle \vdots} &  &  &  & \vdots
}
}}$%
\end{minipage}

We then set 
\[
\left[\ell\right];\left(A_{i}\right)\xrightarrow{\left\{ 0\right\} \LG\left[\ell\right];\left(A_{i}\right)}\left[1\right]\LG\left[\ell\right];\left(A_{i}\right)
\]
 to be the inclusion 
\[
\left[\ell\right];\left(A_{i}\right)\xrightarrow{d^{0};\left(\id,\dots,\id\right)}\left[\ell+1\right];\left(\left[0\right],A_{1},\dots,A_{n}\right)\longrightarrow\left[1\right]\LG\left[\ell\right];\left(A_{i}\right)
\]
and similarly set
\[
\left[\ell\right];\left(A_{i}\right)\xrightarrow{\left\{ 1\right\} \LG\left[\ell\right];\left(A_{i}\right)}\left[1\right]\LG\left[\ell\right];\left(A_{i}\right)
\]
to be the inclusion
\[
\left[\ell\right];\left(A_{i}\right)\xrightarrow{d^{\ell+1};\left(\id,\dots,\id\right)}\left[\ell+1\right];\left(A_{1},\dots,A_{n},\left[0\right]\right)\longrightarrow\left[1\right]\LG\left[\ell\right];\left(A_{i}\right).
\]

\begin{figure}
\noindent\fbox{\begin{minipage}[t]{1\columnwidth - 2\fboxsep - 2\fboxrule}%
\bigskip{}

\centering
\resizebox{0.75\textwidth}{!}{
\begin{tikzpicture}
	\begin{pgfonlayer}{nodelayer}
		\node [style=none] (38) at (2, 5.5) {};
		\node [style=none] (40) at (8, 12.5) {};
		\node [style=none] (26) at (2, -2.5) {};
		\node [style=none] (27) at (8, -0.5) {};
		\node [style=none] (28) at (8, 4.5) {};
		\node [style=none] (29) at (14, 0.5) {};
		\node [style=none] (52) at (2, -2.5) {};
		\node [style=none] (54) at (8, 4.5) {};
		\node [style=none] (55) at (14, 0.5) {};
		\node [style=none] (56) at (2, -4.5) {};
		\node [style=none] (57) at (8, -2.5) {};
		\node [style=none] (58) at (14, -1.5) {};
		\node [style=none] (59) at (2, -4.5) {};
		\node [style=none] (60) at (8, -2.5) {};
		\node [style=none] (61) at (14, -1.5) {};
		\node [style=none] (62) at (2, -6.5) {};
		\node [style=none] (64) at (14, -3.5) {};
		\node [style=none] (65) at (2, -6.5) {};
		\node [style=none] (66) at (8, -4.5) {};
		\node [style=none] (67) at (14, -3.5) {};
		\node [style=none] (68) at (2, -6.5) {};
		\node [style=none] (69) at (8, -4.5) {};
		\node [style=none] (70) at (14, -3.5) {};
		\node [style=none] (71) at (2, -10.5) {};
		\node [style=none] (72) at (8, -8.5) {};
		\node [style=none] (73) at (14, -11.5) {};
		\node [style=none] (74) at (14, -7.5) {};
		\node [style=none] (22) at (2, -10.5) {};
		\node [style=none] (23) at (8, -8.5) {};
		\node [style=none] (25) at (14, -7.5) {};
		\node [style=none] (79) at (2, -0.5) {};
		\node [style=none] (80) at (8, 6.5) {};
		\node [style=none] (81) at (14, 2.5) {};
		\node [style=none] (46) at (2, 1.5) {};
		\node [style=none] (47) at (8, 8.5) {};
		\node [style=none] (48) at (14, 4.5) {};
		\node [style=none] (82) at (2, -0.5) {};
		\node [style=none] (83) at (8, 6.5) {};
		\node [style=none] (84) at (14, 2.5) {};
		\node [style=none] (85) at (2, 1.5) {};
		\node [style=none] (86) at (8, 8.5) {};
		\node [style=none] (87) at (14, 4.5) {};
		\node [style=none] (10) at (2, 1.5) {};
		\node [style=none] (13) at (14, 4.5) {};
		\node [style=none] (49) at (2, 1.5) {};
		\node [style=none] (50) at (8, 8.5) {};
		\node [style=none] (51) at (14, 4.5) {};
		\node [style=none] (91) at (2, 5.5) {};
		\node [style=none] (92) at (2, 9.5) {};
		\node [style=none] (93) at (8, 12.5) {};
		\node [style=none] (94) at (14, 8.5) {};
		\node [style=none] (42) at (2, 5.5) {};
		\node [style=none] (43) at (2, 9.5) {};
		\node [style=none] (45) at (14, 8.5) {};
		\node [style=none] (129) at (-8, 12.5) {};
		\node [style=none] (151) at (-14, -10.5) {};
		\node [style=none] (152) at (-2, -11.5) {};
		\node [style=none] (154) at (-14, -10.5) {};
		\node [style=none] (155) at (-8, -8.5) {};
		\node [style=none] (156) at (-2, -11.5) {};
		\node [style=none] (158) at (-14, -10.5) {};
		\node [style=none] (159) at (-8, -8.5) {};
		\node [style=none] (179) at (-14, 9.5) {};
		\node [style=none] (180) at (-8, 12.5) {};
		\node [style=none] (181) at (-2, 8.5) {};
		\node [style=none] (186) at (-14, 9.5) {};
		\node [style=none] (187) at (-2, 8.5) {};
		\node [style=none] (188) at (-14, 9.5) {};
		\node [style=none] (189) at (-2, 8.5) {};
		\node [style=none] (190) at (-8, 12.5) {};
		\node [style=none] (238) at (-8, 12.5) {};
		\node [style=none] (240) at (8, 12.5) {};
		\node [style=none] (241) at (8, -11.5) {};
		\node [style=none] (245) at (8, 7.25) {};
		\node [style=none] (247) at (8, -7.5) {};
		\node [style=none] (248) at (8, 5) {};
		\node [style=none] (250) at (8, -5) {};
		\node [style=none] (253) at (8, 1) {};
		\node [style=none] (254) at (8, 0) {};
		\node [style=none] (256) at (2, 3.5) {};
		\node [style=none] (257) at (8, 10.5) {};
		\node [style=none] (258) at (14, 6.5) {};
		\node [style=none] (35) at (2, 9.5) {};
		\node [style=none] (36) at (14, 8.5) {};
		\node [style=none] (37) at (8, 12.5) {};
		\node [style=none] (30) at (2, 5.5) {};
		\node [style=none] (31) at (2, 9.5) {};
		\node [style=none] (33) at (14, 8.5) {};
		\node [style=none] (259) at (2, -8.5) {};
		\node [style=none] (260) at (14, -5.5) {};
		\node [style=none] (261) at (2, -8.5) {};
		\node [style=none] (262) at (14, -5.5) {};
		\node [style=none] (263) at (2, -8.5) {};
		\node [style=none] (264) at (8, -6.5) {};
		\node [style=none] (265) at (14, -5.5) {};
		\node [style=none] (75) at (2, -10.5) {};
		\node [style=none] (77) at (14, -11.5) {};
		\node [style=none] (78) at (14, -7.5) {};
		\node [style=none] (272) at (-14, -10.5) {};
		\node [style=none] (273) at (-2, -11.5) {};
		\node [style=none] (274) at (-8, -7.5) {};
		\node [style=none] (275) at (-4.75, 11) {};
		\node [style=none] (276) at (3.75, 11) {};
		\node [style=none] (277) at (-8, -7.5) {};
		\node [style=none] (278) at (-14, -10.5) {};
		\node [style=none] (279) at (-8, -7.5) {};
		\node [style=none] (280) at (-2, -11.5) {};
		\node [style=none] (281) at (-14, -10.5) {};
		\node [style=none] (282) at (-2, -11.5) {};
		\node [style=none] (283) at (-14, -10.5) {};
		\node [style=none] (284) at (-2, -11.5) {};
		\node [style=none] (285) at (-8, -7.5) {};
		\node [style=none] (286) at (-8, -7.5) {};
		\node [style=none] (287) at (-4.75, -9.25) {};
		\node [style=none] (288) at (3.75, -9.25) {};
	\end{pgfonlayer}
	\begin{pgfonlayer}{edgelayer}
		\draw [style=hidden edges, bend left=15] (38.center) to (40.center);
		\draw [style=solid arrow] (42.center) to (43.center);
		\draw [style=solid arrow] (43.center) to (45.center);
		\draw [style=solid arrow, bend left=15] (42.center) to (45.center);
		\draw [style=hidden edges] (72.center) to (73.center);
		\draw [style=solid arrow] (68.center) to (69.center);
		\draw [style=solid arrow, bend right=15] (68.center) to (70.center);
		\draw [style=solid arrow] (69.center) to (70.center);
		\draw [style=solid arrow] (68.center) to (70.center);
		\draw [style=hidden edges, bend right=15, looseness=1.25] (69.center) to (70.center);
		\draw [style=Tops] (25.center)
			 to [bend left=15] (22.center)
			 to (23.center)
			 to [bend right=15, looseness=1.25] cycle;
		\draw [style=solid arrow] (28.center) to (29.center);
		\draw [style=hidden edges] (26.center) to (27.center);
		\draw [style=hidden edges] (27.center) to (28.center);
		\draw [style=hidden edges] (27.center) to (29.center);
		\draw [style=solid arrow] (26.center) to (28.center);
		\draw [style=solid arrow] (26.center) to (29.center);
		\draw [style=hidden edges] (46.center) to (47.center);
		\draw [style=Tops] (55.center)
			 to (54.center)
			 to (52.center)
			 to cycle;
		\draw [style=solid arrow] (56.center) to (57.center);
		\draw [style=solid arrow] (57.center) to (58.center);
		\draw [style=solid arrow] (56.center) to (58.center);
		\draw [style=Tops] (60.center)
			 to (61.center)
			 to (59.center)
			 to cycle;
		\draw [style=Fronts] (64.center)
			 to (62.center)
			 to [bend right=15] cycle;
		\draw [style=Tops] (66.center)
			 to (67.center)
			 to (65.center)
			 to cycle;
		\draw [style=solid arrow] (73.center) to (74.center);
		\draw [style=solid arrow] (71.center) to (73.center);
		\draw [style=solid arrow] (71.center) to (72.center);
		\draw [style=solid arrow, bend right=15] (71.center) to (74.center);
		\draw [style=solid arrow, bend right=15, looseness=1.25] (72.center) to (74.center);
		\draw [style=solid arrow] (80.center)
			 to (81.center)
			 to (79.center)
			 to cycle;
		\draw [style=Tops] (84.center)
			 to (83.center)
			 to (82.center)
			 to cycle;
		\draw [style=solid arrow] (85.center)
			 to [bend left=15] (87.center)
			 to (86.center)
			 to [bend right=15] cycle
			 to (87.center);
		\draw [style=Fronts] (13.center)
			 to (10.center)
			 to [bend left=15] cycle;
		\draw [style=Tops] (50.center)
			 to [bend right=15] (49.center)
			 to [bend left=15] (51.center)
			 to cycle;
		\draw [style=solid arrow] (92.center) to (91.center);
		\draw [style=solid arrow] (94.center) to (92.center);
		\draw [style=solid arrow] (92.center) to (93.center);
		\draw (37.center) to (36.center);
		\draw [style=solid arrow] (186.center) to (187.center);
		\draw [style=solid arrow] (181.center) to (179.center);
		\draw [style=solid arrow] (179.center) to (180.center);
		\draw [style=Tops] (190.center)
			 to (188.center)
			 to (189.center)
			 to cycle;
		\draw (190.center) to (189.center);
		\draw [style=Tops] (257.center)
			 to [bend right=15] (256.center)
			 to [bend left=15] (258.center)
			 to cycle;
		\draw [style=Tops] (37.center)
			 to (35.center)
			 to (36.center)
			 to cycle;
		\draw [style=Fronts] (33.center)
			 to (31.center)
			 to (30.center)
			 to [bend left=15] cycle;
		\draw [style=solid arrow, bend left=15] (91.center) to (94.center);
		\draw [style=Tops] (265.center)
			 to [bend left=15] (263.center)
			 to (264.center)
			 to [bend right=15, looseness=1.25] cycle;
		\draw [style=solid arrow, bend right=15] (261.center) to (262.center);
		\draw [style=Fronts] (75.center)
			 to (77.center)
			 to (78.center)
			 to [bend left=15] cycle;
		\draw [style=Tops] (274.center)
			 to (272.center)
			 to (273.center)
			 to cycle;
		\draw (274.center) to (273.center);
		\draw [style=arrow] (276.center) to (275.center);
		\draw [style=solid arrow] (281.center) to (282.center);
		\draw [style=solid arrow] (280.center) to (278.center);
		\draw [style=solid arrow] (278.center) to (279.center);
		\draw [style=Tops] (285.center)
			 to (283.center)
			 to (284.center)
			 to cycle;
		\draw (285.center) to (284.center);
		\draw [style=arrow] (288.center) to (287.center);
	\end{pgfonlayer}
\end{tikzpicture}
}\bigskip{}
\caption{\label{fig:Illustration-of-the-endpoints}Illustration of the endpoint
inclusions $\left\{ 0\right\} \otimes\left[2\right]$ and $\left\{ 1\right\} \otimes\left[2\right]$}
\bigskip{}
\end{minipage}}
\end{figure}
\end{defn}

\begin{rem}
An example of the maps $\left\{ 0\right\} \LG\left(\_\right)$ and
$\left\{ 1\right\} \LG\left(\_\right)$ is illustrated in Figure \ref{fig:Illustration-of-the-endpoints}.
The connection between these formulae and the pasting diagrams for
$\left[1\right]\times\left[n\right];\left(T_{1},T_{2},\dots,T_{n}\right)$
and $\left[1\right]\otimes\left[n\right];\left(T_{1},T_{2},\dots,T_{n}\right)$
can be seen in Figure \ref{fig:justifying the shuffle} 
\begin{figure}
\noindent\fbox{\begin{minipage}[t]{1\columnwidth - 2\fboxsep - 2\fboxrule}%
\bigskip{}
\adjustbox{scale=.75,center}{
\begin{tikzcd}[column sep=large, row sep=large, ampersand replacement=\&]
	\bullet \&\& \bullet \&\& \bullet \&\& \bullet \&\& \bullet \\
	\\
	\bullet \&\& \bullet \&\& \bullet \&\& \bullet \&\& \bullet
	\arrow["{[0]}"{description}, from=3-1, to=1-1] 	\arrow["{T_1}"{description}, from=1-1, to=1-3] 	\arrow["{T_1}"{description}, from=3-1, to=3-3] 	\arrow["{[0]}"{description}, from=3-3, to=1-3] 	\arrow["{T_2}"{description}, from=1-3, to=1-5] 	\arrow["{T_2}"{description}, from=3-3, to=3-5] 	\arrow["{T_1}"{description}, from=3-1, to=1-3] 	\arrow["{T_2}"{description}, from=3-3, to=1-5] 	\arrow["\cdots"{description}, from=3-5, to=3-7] 	\arrow["{[0]}"{description}, from=3-5, to=1-5] 	\arrow["\cdots"{description}, from=1-5, to=1-7] 	\arrow["{[0]}"{description}, from=3-7, to=1-7] 	\arrow["{T_n}"{description}, from=3-7, to=1-9] 	\arrow["{T_n}"{description}, from=1-7, to=1-9] 	\arrow["{T_n}"{description}, from=3-7, to=3-9] 	\arrow["{[0]}"{description}, from=3-9, to=1-9] 	\arrow["\ddots"{description}, from=3-5, to=1-7]
 \end{tikzcd}
}\caption{\label{fig:justifying the shuffle}$\Theta$-graph pasting diagram
for $\left[1\right]\times\left[n\right];\left(T_{1},T_{2},\dots,T_{n}\right)$}
\bigskip{}
\end{minipage}}
\end{figure}
\begin{figure}
\noindent\fbox{\begin{minipage}[t]{1\columnwidth - 2\fboxsep - 2\fboxrule}%
\bigskip{}
\adjustbox{scale=.75,center}{
\begin{tikzcd}[column sep=large, row sep=large, ampersand replacement=\&]
	\bullet \&\& \bullet \&\& \bullet \&\& \bullet \&\& \bullet \\
	\\
	\bullet \&\& \bullet \&\& \bullet \&\& \bullet \&\& \bullet
	\arrow["{[0]}"{description}, from=3-1, to=1-1] 	\arrow["{T_1}"{description}, from=1-1, to=1-3]
	\arrow[""{name=0, anchor=center, inner sep=0}, "{T_1}"{description}, curve={height=-24pt}, from=3-1, to=1-3]
	\arrow[""{name=1, anchor=center, inner sep=0}, "{T_1}"{description}, curve={height=24pt}, from=3-1, to=1-3]
	\arrow["{T_1}"{description}, from=3-1, to=3-3]
	\arrow["{[0]}"{description}, from=3-3, to=1-3]
	\arrow["{T_2}"{description}, from=1-3, to=1-5]
	\arrow["{T_2}"{description}, from=3-3, to=3-5]
	\arrow["{[0]}"{description}, from=3-5, to=1-5]
	\arrow[""{name=2, anchor=center, inner sep=0}, "{T_2}"{description}, curve={height=-24pt}, from=3-3, to=1-5] 	\arrow[""{name=3, anchor=center, inner sep=0}, "{T_2}"{description}, curve={height=24pt}, from=3-3, to=1-5] 	\arrow["\cdots"{description}, from=1-5, to=1-7] 	\arrow["\cdots"{description}, from=3-5, to=3-7] 	\arrow["{[0]}"{description}, from=3-7, to=1-7] 	\arrow["{T_n}"{description}, from=1-7, to=1-9] 	\arrow["{T_n}"{description}, from=3-7, to=3-9] 	\arrow["{[0]}"{description}, from=3-9, to=1-9] 	\arrow["\ddots"{description}, curve={height=-24pt}, from=3-5, to=1-7] 	\arrow["\ddots"{description}, curve={height=24pt}, from=3-5, to=1-7] 	\arrow[""{name=4, anchor=center, inner sep=0}, "{T_n}"{description}, curve={height=-24pt}, from=3-7, to=1-9]
	\arrow[""{name=5, anchor=center, inner sep=0}, "{T_n}"{description}, curve={height=24pt}, from=3-7, to=1-9]
	\arrow["{[1]\otimes T_1}"', shorten <=5pt, shorten >=5pt, Rightarrow, from=1, to=0]
	\arrow["{[1]\otimes T_2}"', shorten <=5pt, shorten >=5pt, Rightarrow, from=3, to=2]
	\arrow["{[1]\otimes T_n}"', shorten <=5pt, shorten >=5pt, Rightarrow, from=5, to=4]
\end{tikzcd}
}\caption{\label{fig:justifying the shuffle-1}$\Theta$-$2$-graph pasting
diagram for $\left[1\right]\otimes\left[n\right];\left(T_{1},T_{2},\dots,T_{n}\right)$}
\bigskip{}
\end{minipage}}
\end{figure}

\pagebreak{}
\end{rem}

\subsection{\label{subsec:The-assignment-LG-preserves-globular-sums}The Lax
shuffle decomposition preserves globular sums}
\begin{thm}
The lax shuffle decomposition, the assignment on objects 
\[
\left[1\right]\LG\left(\_\right):\Ob\left(\Theta\right)\longrightarrow\Ob\left(\mathsf{Str}\mhyphen\omega\mhyphen\Cat\right)
\]
 preserves globular sums in the sense that for any $\ell,n_{0},\dots,n_{\ell},m_{1},\dots,m_{\ell}$
with $n_{i-1}\geq m_{i}\leq n_{i}$ for all $1\leq i\leq\ell$, the
canonical comparison map between the $\omega$-categories
\[
\left[1\right]\LG\text{\ensuremath{\overline{n_{0}}}}\underset{\left[1\right]\LG\overline{m_{1}}}{\bigoplus}\left[1\right]\LG\overline{n_{1}}\bigoplus\cdots\bigoplus\left[1\right]\LG\overline{n_{\ell-1}}\underset{\left[1\right]\LG\overline{m_{\ell}}}{\bigoplus}\left[1\right]\LG\overline{n_{\ell}}
\]
 and 
\[
\left[1\right]\LG\left(\overline{n_{0}}\underset{\overline{m_{1}}}{\bigoplus}\overline{n_{1}}\cdots\overline{n_{\ell-1}}\underset{\overline{m_{\ell}}}{\bigoplus}\overline{n_{\ell}}\right)
\]
is an isomorphism.
\end{thm}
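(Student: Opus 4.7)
The plan is a two-stage induction. Stage one reduces the $\ell$-fold claim to the binary case: since every wide pushout is an iterated sequence of binary pushouts, if
\[
[1] \LG \bigl(\overline{n} \underset{\overline{m}}{\oplus} \overline{k}\bigr) \;\cong\; [1] \LG \overline{n} \underset{[1] \LG \overline{m}}{\bigoplus} [1] \LG \overline{k}
\]
holds for all binary globular sums, then a straightforward induction on $\ell$ delivers the general statement. Stage two proves the binary statement by induction on the dimension $m$ of the gluing globe.

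In the base case $m = 0$ (with $n, k \geq 1$, since the cases $n = 0$ or $k = 0$ trivialize the sum), I would use the wreath descriptions $\overline{n} = [1];(\overline{n-1})$, $\overline{k} = [1];(\overline{k-1})$, and $\overline{n} \oplus_{[0]} \overline{k} = [2];(\overline{n-1}, \overline{k-1})$. Unpacking Definition \ref{def:LG-on-objects} on both sides produces two colimit diagrams whose sandwich pieces $[\ell];(\ldots, [0], \ldots)$ and expanded pieces $[\ell];(\ldots, [1] \LG A_j, \ldots)$ can be matched term-by-term with matching gluing maps, exhibiting the isomorphism. In the inductive step $m \geq 1$, I would use the wreath identity $\overline{n} \oplus_{\overline{m}} \overline{k} = [1];(\overline{n-1} \oplus_{\overline{m-1}} \overline{k-1})$. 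Expanding the left-hand side via Definition \ref{def:LG-on-objects} produces a three-fold pushout whose central piece is $[1];([1] \LG X)$ for $X = \overline{n-1} \oplus_{\overline{m-1}} \overline{k-1}$. The inductive hypothesis on $m$ identifies $[1] \LG X$ with $[1] \LG \overline{n-1} \oplus_{[1] \LG \overline{m-1}} [1] \LG \overline{k-1}$, and since the wreath functor $[1];(\_) : \StrCat \to \StrCat$ is a left adjoint (its right adjoint sends $\sfC$ to $\coprod_{a, b \in \Ob\sfC} \Hom_{\sfC}(a, b)$), it preserves this pushout. Commutativity of colimits with colimits then rearranges the resulting pushout-of-pushouts into the target binary pushout $[1] \LG \overline{n} \oplus_{[1] \LG \overline{m}} [1] \LG \overline{k}$, with each individual cylinder recovered from its own recursive expansion.

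I expect the principal obstacle to be the final colimit rearrangement in the inductive step: verifying that the three-fold pushout expanding $[1] \LG [1];(X)$ collapses, after distributing $[1];(-)$ across the inductively decomposed $[1] \LG X$, onto the binary pushout $[1] \LG \overline{n} \oplus_{[1] \LG \overline{m}} [1] \LG \overline{k}$. This requires careful bookkeeping of the $d^j$-sandwich inclusions and the endpoint maps $\{0\} \LG (\_)$, $\{1\} \LG (\_)$, so as to verify that the gluing squares in the two presentations match under the source and target inclusions defining the globular-sum structure. Once this combinatorial matching is in place, the isomorphism follows by the universal properties of the constituent pushouts.
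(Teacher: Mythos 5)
Your overall strategy---reduce to the binary case, induct on the dimension of the gluing globe, and handle the inductive step by expanding $\left[1\right]\LG\left[1\right];\left(X\right)$ via Definition \ref{def:LG-on-objects} and commuting $\left[1\right];\left(\_\right)$ past the pushout supplied by the inductive hypothesis---is the same recursion the paper runs. But one of your justifications is false and one of your reductions has a genuine hole. First, $\left[1\right];\left(\_\right):\StrCat\longrightarrow\StrCat$ is \emph{not} a left adjoint: it preserves neither the initial object (the suspension $\left[1\right];\left(\varnothing\right)$ has two objects) nor coproducts ($\left[1\right];\left(\sfC\sqcup\sfD\right)$ has two objects while $\left[1\right];\left(\sfC\right)\sqcup\left[1\right];\left(\sfD\right)$ has four), and your candidate right adjoint $\sfC\mapsto\coprod_{a,b\in\Ob\left(\sfC\right)}\Hom_{\sfC}\left(a,b\right)$ fails the adjunction against any disconnected test object. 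What is true, and all your step actually needs, is that $\left[1\right];\left(\_\right)$ preserves \emph{connected} colimits, spans in particular---this is exactly the weaker fact the paper invokes at the corresponding point. So this part is repairable, but the stated reason must be replaced.

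Second, and more seriously, your stage one does not follow from your stage two. Bracketing an $\ell$-fold globular sum into binary pushouts produces binary sums whose terms are globular sums rather than globes: already $\left[3\right]=\left(\left[1\right]\oplus_{\overline{0}}\left[1\right]\right)\oplus_{\overline{0}}\left[1\right]$ requires the comparison for $\left[2\right]\oplus_{\overline{0}}\left[1\right]$, which your globe--globe binary case does not cover, and your recursion $\overline{n}\oplus_{\overline{m}}\overline{k}=\left[1\right];\left(\overline{n-1}\oplus_{\overline{m-1}}\overline{k-1}\right)$ has no analogue when a term is a general sum. This is precisely why the paper proves its two lemmata for arbitrary cells: the dimension-$0$ lemma asserts $\left[1\right]\LG\left[2\right];\left(A,B\right)\cong\left[1\right]\LG\left[1\right];\left(A\right)\oplus\left[1\right]\LG\left[1\right];\left(B\right)$ for arbitrary cells $A,B$ of $\Theta$, and the inductive lemma treats $\overline{n};X\oplus_{\overline{n}}\overline{n};Y$ for arbitrary cells $X,Y$, using the identity $\overline{n+1};Z\oplus_{\overline{n+1}}\overline{n+1};W=\left[1\right];\left(\overline{n};\left(Z\oplus_{0}W\right)\right)$; the theorem then follows by recursing through the wreath structure $\Theta\cong\t\int\Theta$. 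Your plan is fixed by running your stage-two induction with arbitrary cells in the wreath slots. Relatedly, your base case is not a term-by-term match: the two diagrams have different shapes (pieces such as $\left[2\right];\left(0,A\right)$ on one side versus $\left[3\right];\left(0,A,B\right)$ on the other), and identifying the colimits requires interpolating auxiliary pushouts, e.g. $\left[2\right];\left(0,A\right)\oplus_{\left[1\right];\left(A\right)}\left[2\right];\left(A,B\right)\cong\left[3\right];\left(0,A,B\right)$, before the two diagrams can be seen to share a colimit---this diagram-completion is where the real work of the paper's base-case lemma lies.
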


The proof proceeds by induction. The base case of $n=0$ is treated
as Lemma \ref{lem:globular-sum-lax-gray-proof!}, whereas the induction
is treated as Lemma \ref{cor:lax-gray-shuffle-preserves-all-binary-globular-sums}.
The proofs themselves are long because of numerous large diagrams,
though they are not difficult. Indeed the picture which underlies
the formalism is rather intuitive. For example the $0$-globular case
follows from seeing the consideration of two possible decompositions
of the pasting diagram for $\left[1\right]\otimes\left[2\right];\left(A,B\right)$
(See Figure \ref{Pasting Diagram for -----}).

\begin{figure}
\noindent\fbox{\begin{minipage}[t]{1\columnwidth - 2\fboxsep - 2\fboxrule}%
\bigskip{}
\adjustbox{scale=.65,center}{
\begin{tikzcd}[ampersand replacement=\&]
	{\;} \& \bullet \&\& \bullet \&\& \bullet \\
	\&\&\& \bullet \&\& \bullet \\
	\& \bullet \&\& \bullet \&\& \bullet \&\& \bullet \&\& \bullet \&\& \bullet \&\& \bullet \&\& \bullet \& \bullet \&\& \bullet \\
	\& \bullet \&\&\&\& \bullet \\
	\& \bullet \&\& \bullet \&\& \bullet \&\& \bullet \&\& \bullet \&\& \bullet \&\& \bullet \&\& \bullet \& \bullet \&\& \bullet \\
	\& \bullet \&\& \bullet \\
	\& \bullet \&\& \bullet \&\& \bullet
	\arrow[from=3-2, to=1-2]
	\arrow["A"{description}, from=1-2, to=1-4]
	\arrow[from=1-4, to=1-6]
	\arrow["A"{description}, curve={height=-12pt}, from=3-2, to=1-4] 	\arrow[from=2-4, to=2-6]
	\arrow["B"{description}, from=3-4, to=3-6]
	\arrow["A"{description}, curve={height=12pt}, from=5-2, to=3-4]
	\arrow[""{name=0, anchor=center, inner sep=0}, "A"{description}, curve={height=-12pt}, from=4-2, to=2-4]
	\arrow["A"{description}, from=5-2, to=5-4]
	\arrow[from=5-4, to=3-4]
	\arrow["B"{description}, curve={height=-12pt}, from=5-4, to=3-6]
	\arrow["A"{description}, from=6-2, to=6-4]
	\arrow[""{name=1, anchor=center, inner sep=0}, "B"{description}, curve={height=-12pt}, from=6-4, to=4-6]
	\arrow[""{name=2, anchor=center, inner sep=0}, "B"{description}, curve={height=12pt}, from=6-4, to=4-6]
	\arrow[""{name=3, anchor=center, inner sep=0}, "A"{description}, curve={height=12pt}, from=4-2, to=2-4]
	\arrow["A"{description}, from=7-2, to=7-4]
	\arrow["B"{description}, curve={height=12pt}, from=7-4, to=5-6] 	\arrow["B"{description}, from=7-4, to=7-6]
	\arrow[from=7-6, to=5-6]
	\arrow[from=5-8, to=3-8]
	\arrow["A"{description}, from=3-8, to=3-10]
	\arrow[""{name=4, anchor=center, inner sep=0}, "A"{description}, curve={height=-12pt}, from=5-8, to=3-10]
	\arrow[""{name=5, anchor=center, inner sep=0}, "A"{description}, curve={height=12pt}, from=5-8, to=3-10]
	\arrow["A"{description}, from=5-8, to=5-10]
	\arrow[from=5-10, to=3-10]
	\arrow["B"{description}, from=3-10, to=3-12]
	\arrow["B"{description}, from=5-10, to=5-12]
	\arrow[from=5-12, to=3-12]
	\arrow[""{name=6, anchor=center, inner sep=0}, "B"{description}, curve={height=-12pt}, from=5-10, to=3-12]
	\arrow[""{name=7, anchor=center, inner sep=0}, "B"{description}, curve={height=12pt}, from=5-10, to=3-12]
	\arrow[from=5-14, to=3-14]
	\arrow["A"{description}, from=3-14, to=3-16]
	\arrow[""{name=8, anchor=center, inner sep=0}, "A"{description}, curve={height=-12pt}, from=5-14, to=3-16]
	\arrow[""{name=9, anchor=center, inner sep=0}, "A"{description}, curve={height=12pt}, from=5-14, to=3-16]
	\arrow["A"{description}, from=5-14, to=5-16]
	\arrow[from=5-16, to=3-16]
	\arrow[from=5-17, to=3-17]
	\arrow["B"{description}, from=3-17, to=3-19]
	\arrow["B"{description}, from=5-17, to=5-19]
	\arrow[from=5-19, to=3-19]
	\arrow[""{name=10, anchor=center, inner sep=0}, "B"{description}, curve={height=-12pt}, from=5-17, to=3-19]
	\arrow[""{name=11, anchor=center, inner sep=0}, "B"{description}, curve={height=12pt}, from=5-17, to=3-19]
	\arrow["{[1] \otimes A}"', shorten <=4pt, shorten >=4pt, Rightarrow, from=3, to=0]
	\arrow["{[1] \otimes B}"', shorten <=4pt, shorten >=4pt, Rightarrow, from=2, to=1]
	\arrow["{[1] \otimes A}"', shorten <=4pt, shorten >=4pt, Rightarrow, from=5, to=4]
	\arrow["{[1] \otimes B}"', shorten <=4pt, shorten >=4pt, Rightarrow, from=7, to=6]
	\arrow["{[1] \otimes A}"', shorten <=4pt, shorten >=4pt, Rightarrow, from=9, to=8]
	\arrow["{[1] \otimes B}"', shorten <=4pt, shorten >=4pt, Rightarrow, from=11, to=10]
\end{tikzcd}
}

\caption{\label{Pasting Diagram for -----}Two pasting diagrams for $\left[1\right]\protect\LG\left[2\right];\left(A,B\right)$.
The ``vertical'' pasting diagram for $\left[1\right]\otimes\left[2\right];\left(A,B\right)$
and the pasting of $\left[1\right]\protect\LG\left[1\right];\left(A\right)$
with $\left[1\right]\protect\LG\left[1\right];\left(B\right)$.}
\bigskip{}
\end{minipage}}
\end{figure}

We now attend to the promised Lemmata and their proofs.
\begin{lem}
\label{lem:globular-sum-lax-gray-proof!} The functor $\left[1\right]\LG\left(\_\right)$
preserves $0$-globular sums in the following sense: for 
\[
\left[n\right];\left(A_{i}\right)=\left[1\right];A_{1}\underset{0}{\oplus}\cdots\underset{0}{\oplus}\left[1\right];A_{n}
\]
we have that the canonical comparison map
\[
\left(\left[1\right]\LG\left[1\right];\left(A_{1}\right)\right)\underset{1}{\oplus}\cdots\underset{1}{\oplus}\left(\left[1\right]\LG\left[1\right];\left(A_{n}\right)\right)\liso\left[1\right]\LG\left[n\right];\left(A_{i}\right)
\]
 is an isomorphism as indicated.
\end{lem}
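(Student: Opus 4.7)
The plan is to unpack both sides as iterated colimits in $\StrCat$, reorganize them by Fubini plus the globular-sum hypothesis, and then match the two resulting combined diagrams term-by-term.

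I first unpack the left-hand side. By Definition \ref{def:LG-on-objects}, each factor $[1]\LG[1];(A_k)$ is the colimit of its three-term lax-shuffle diagram, with base objects
\[
T^L_k := [2];([0], A_k),\qquad T^M_k := [1];([1]\LG A_k),\qquad T^R_k := [2];(A_k,[0]),
\]
glued along two copies of $[1];(A_k)$. The outer $n$-fold $\oplus_{\overline{1}}$-gluing is itself an iterated pushout, so by the Fubini theorem for colimits the LHS equals the colimit of a single combined diagram $D_L$ whose $3n$ base objects are the $T^L_k, T^M_k, T^R_k$ and whose identification spans are the $2n$ inner copies of $[1];(A_k)$ together with $n-1$ outer copies of $\overline{1}$ matching the rightmost vertical edge of the $k$-th cylinder with the leftmost vertical edge of the $(k+1)$-th.

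I next unpack the right-hand side. Applying Definition \ref{def:LG-on-objects} directly to $[n];(A_i)$, the RHS is the colimit of a diagram with $n+1$ long terms $U_k = [n+1];(A_1,\ldots,[0],\ldots,A_n)$ and $n$ short terms $V_k = [n];(A_1,\ldots,[1]\LG A_k,\ldots,A_n)$, glued along $2n$ copies of $[n];(A_i)$. Using the hypothesis $[n];(A_i) = [1];A_1 \oplus_0 \cdots \oplus_0 [1];A_n$ and the identity $[1];[0] = \overline{1}$, each long and short term admits a globular-sum decomposition. Explicitly, for $1 \le k \le n-1$,
\[
U_k \;=\; [1];A_1 \oplus_0 \cdots \oplus_0 [1];A_{k-1} \oplus_0 \left([2];(A_k,[0]) \oplus_{\overline{1}} [2];([0], A_{k+1})\right) \oplus_0 [1];A_{k+2} \oplus_0 \cdots \oplus_0 [1];A_n,
\]
with $U_0$ beginning with a single $[2];([0], A_1)$ and $U_n$ ending with a single $[2];(A_n,[0])$, while $V_k$ has $[1];([1]\LG A_k)$ in slot $k$ and $[1];A_j$ in the other slots. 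The identifications along $[n];(A_i)$ similarly split into $n$ identifications along the constituent $[1];A_j$. Another application of Fubini then presents the RHS as the colimit of a refined combined diagram $\widetilde{D}_R$.

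The crux is the identification $D_L \cong \widetilde{D}_R$. The match is term-by-term: $T^L_k$ appears as a globular summand of $U_{k-1}$ (the leftmost factor of the interior $\overline{1}$-pushout, or the initial summand of $U_0$ when $k=1$); $T^R_k$ appears as a globular summand of $U_k$ (the right factor of the interior $\overline{1}$-pushout, or the final summand of $U_n$ when $k=n$); and $T^M_k$ is the central $[1];([1]\LG A_k)$-slot of $V_k$. The $\overline{1}$-gluings of $D_L$ are precisely the $\overline{1}$-pushouts sitting inside the middle $U_k$ of $\widetilde{D}_R$. The remaining auxiliary $[1];A_j$-pieces and $\overline{0}$-identifications present in both refined diagrams line up: those in $D_L$ come from the inner $[1];(A_k)$-spans, while those in $\widetilde{D}_R$ come from splitting the $[n];(A_i)$-spans. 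The canonical comparison map is then visibly the induced map between isomorphic colimit diagrams, hence an isomorphism.

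The main obstacle is purely the bookkeeping in the final matching step: each refined span in one diagram must be traced to exactly one refined span in the other, and the endpoint-inclusion data $\{0\}\LG(\_)$ and $\{1\}\LG(\_)$ at the interfaces between consecutive cylinders must be checked to coincide with the corresponding face inclusions into the $U_k$. Conceptually, Figure \ref{Pasting Diagram for -----} displays precisely this: the horizontal decomposition of $[1]\LG[n];(A_i)$ into cylinders glued along $\overline{1}$'s agrees with the direct lax-shuffle decomposition of the whole, up to rearrangement of pasting.
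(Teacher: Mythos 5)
Your proposal is correct in substance and trades in the same currency as the paper's proof---surgery on wide-pushout presentations in $\StrCat$---but it is organized quite differently. The paper argues the binary case $\left[1\right]\LG\left[2\right];\left(A,B\right)$ by a chain of local moves: it first collapses the central span $\left[2\right];\left(A,0\right)\leftarrow\left[1\right];\left(0\right)\rightarrow\left[2\right];\left(0,B\right)$ to $\left[3\right];\left(A,0,B\right)$, then repeatedly completes the diagram with auxiliary objects and partially computes pushouts (the dotted and dashed arrows), absorbing each side term into a fattened one (e.g.\ $\left[2\right];\left(\left[0\right],A\right)$ into $\left[3\right];\left(\left[0\right],A,B\right)$ and $\left[1\right];\left(\left[1\right]\LG A\right)$ into $\left[2\right];\left(\left[1\right]\LG A,B\right)$) until the lax-shuffle diagram for $\left[1\right]\LG\left[2\right];\left(A,B\right)$ appears, delegating the $n$-ary case to a remark; you instead handle general $n$ at once by maximally refining both sides (Fubini on the left, globular decomposition of each $U_{k}$, $V_{k}$ and of the identification copies of $\left[n\right];\left(A_{i}\right)$ on the right) and matching. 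Your key identifications are correct: $U_{k}$ does contain $\left[2\right];\left(A_{k},\left[0\right]\right)\oplus_{\overline{1}}\left[2\right];\left(\left[0\right],A_{k+1}\right)$ as a summand, the outer $\overline{1}$-gluings of $D_{L}$ do factor through $T^{R}_{k}$ and $T^{L}_{k+1}$, and the essential spans match---provided one is careful that the legs of the refined spans into the $\left[2\right]$-shaped pieces are the $d^{1}$-type \emph{diagonal} faces rather than edge inclusions, exactly mirroring the $d^{j}$'s of Definition \ref{def:LG-on-objects}. One caveat: your closing claim that the comparison is ``the induced map between isomorphic colimit diagrams'' overstates matters, since $\widetilde{D}_{R}$ is strictly larger than $D_{L}$, carrying the auxiliary $\left[1\right];A_{j}$ summands and their $\overline{0}$-gluings; the final step is therefore not an isomorphism of diagrams but a reduction, in which each auxiliary copy is contracted along a zigzag of identity legs onto an edge of a main object (a cofinality-type argument), after which one checks that the residual $\overline{0}$-identifications are already forced by the gluings of $D_{L}$ (they are, via the chains through the inner spans and the $\overline{1}$-gluings). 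That is precisely the bookkeeping you flag as the main obstacle, and it is the same content the paper discharges through its stepwise pushout completions; your route buys uniformity in $n$ at the price of concentrating all the work in one heavier matching step.
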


\begin{proof}
We'll explicitly show that we've a canonical isomorphism
\[
\left[1\right]\LG\left[2\right];\left(A,B\right)\liso\left[1\right]\LG\left[1\right];\left(A\right)\underset{1}{\bigoplus}\left[1\right]\LG\left[1\right];\left(B\right)
\]
A nearly identical argument provides that canonical map between
\[
\left[1\right]\LG\left[n\right];\left(A_{1},\dots,A_{n}\right)\underset{1}{\bigoplus}\left[1\right]\LG\left[m\right];\left(B_{1},\dots,B_{m}\right)
\]
 and 
\[
\left[1\right]\LG\left[n+m\right];\left(A_{1},\dots,A_{n},B_{1},\dots,B_{m}\right)
\]
 is an isomorphism, and the general case $k$-ary, instead of binary
case, follows therefrom.

We begin with $\left[1\right]\LG\left[1\right];\left(A\right)\underset{1}{\bigoplus}\left[1\right]\LG\left[1\right];\left(A\right)$.
This $\omega$-category is but the colimit of the diagram below left.
We observe that the colimit of the central span in the diagram is
but $\left[3\right];\left(A,0,B\right)$ so the diagram above has
the very-same colimit as the diagram below right.%
\[
\vcenter{\vbox{\xyR{1pc}\xyC{1pc}\xymatrix{{\scriptstyle \left[2\right];\left(0,A\right)} &  &  &  & {\scriptstyle \left[2\right];\left(0,A\right)}\\
{\scriptstyle \left[1\right];\left(A\right)}\ar[u]\ar[d] &  &  &  & {\scriptstyle \left[1\right];\left(A\right)}\ar[u]\ar[d]\\
{\scriptstyle \left[1\right];\left(\left[1\right]\LG A\right)} &  &  &  & {\scriptstyle \left[1\right];\left(\left[1\right]\LG A\right)}\\
{\scriptstyle \left[1\right];\left(A\right)}\ar[u]\ar[d] &  &  &  & {\scriptstyle \left[1\right];\left(A\right)}\ar[u]\ar[ddr]\\
{\scriptstyle \left[2\right];\left(A,0\right)}\\
 & {\scriptstyle \left[1\right];\left(0\right)}\ar[ul]\ar[dr] &  &  &  & {\scriptstyle \left[3\right];\left(A,0,B\right)}\\
 &  & {\scriptstyle \left[2\right];\left(0,B\right)}\\
 &  & {\scriptstyle \left[1\right];\left(B\right)}\ar[u]\ar[d] &  &  &  & {\scriptstyle \left[1\right];\left(B\right)}\ar[uul]\ar[d]\\
 &  & {\scriptstyle \left[1\right];\left(\left[1\right]\LG B\right)} &  &  &  & {\scriptstyle \left[1\right];\left(\left[1\right]\LG B\right)}\\
 &  & {\scriptstyle \left[1\right];\left(B\right)}\ar[u]\ar[d] &  &  &  & {\scriptstyle \left[1\right];\left(B\right)}\ar[u]\ar[d]\\
 &  & {\scriptstyle \left[2\right];\left(B,0\right)} &  &  &  & {\scriptstyle \left[2\right];\left(B,0\right)}
}
}}
\]
But this diagram may be partially completed (dotted arrows) into the
commutative diagram below left which, by the universal property of
the push-out can be further completed (dashed arrows). But we then
find that the diagram below left shares it's colimit with the diagram
(solid arrows) below right.

\[
\xyR{1pc}\xyC{1pc}\xymatrix{{\scriptstyle \left[2\right];\left(0,A\right)} &  & {\scriptstyle \left[0\right]}\ar@/_{2pc}/@{..>}[dddll]\ar@{..>}[ddd] & {\scriptstyle \left[2\right];\left(0,A\right)}\ar@{-->}[r] & {\scriptstyle \left[3\right];\left(0,A,B\right)}\pushoutcorner & {\scriptstyle \left[0\right]}\ar@/_{3pc}/@{..>}[dll]\ar@{..>}[d]\\
{\scriptstyle \left[1\right];\left(A\right)}\ar[u]\ar[d] &  &  & {\scriptstyle \left[1\right];\left(A\right)}\ar[u]\ar[dr]\ar@{-->}[r] & {\scriptstyle \left[2\right];\left(A,B\right)}\ar@{-->}[u]\ar@{-->}[d]\pushoutcorner & {\scriptstyle \left[1\right];\left(B\right)}\ar@{..>}[dl]\ar@{-->}[l]\\
{\scriptstyle \left[1\right];\left(\left[1\right]\LG A\right)}\ar@{-->}[r] & {\scriptstyle \left[2\right];\left(\left[1\right]\LG A,B\right)}\pushoutcorner &  &  & {\scriptstyle \left[2\right];\left(\left[1\right]\LG A,B\right)}\\
{\scriptstyle \left[1\right];\left(A\right)}\ar[u]\ar[dr]\ar@{-->}[r] & {\scriptstyle \left[2\right];\left(A,B\right)}\ar@{-->}[u]\ar@{-->}[d]\pushoutcorner & {\scriptstyle \left[1\right];\left(B\right)}\ar@{=}[dd]\ar[dl]\ar@{-->}[l] &  & {\scriptstyle \left[2\right];\left(A,B\right)}\ar[u]\ar[d]\\
 & {\scriptstyle \left[3\right];\left(A,0,B\right)} &  &  & {\scriptstyle \left[3\right];\left(A,0,B\right)}\\
{\scriptstyle \left[1\right];\left(A\right)}\ar@{=}[uu]\ar[ur]\ar@{-->}[r] & {\scriptstyle \left[2\right];\left(A,B\right)}\ar@{-->}[u]\ar@{-->}[d]\pushoutcorner & {\scriptstyle \left[1\right];\left(B\right)}\ar[ul]\ar[d]\ar@{-->}[l] &  & {\scriptstyle \left[2\right];\left(A,B\right)}\ar[u]\ar[d]\\
 & {\scriptstyle \left[2\right];\left(A,\left[1\right]\LG B\right)}\pushoutcorner & {\scriptstyle \left[1\right];\left(\left[1\right]\LG B\right)}\ar@{-->}[l] &  & {\scriptstyle \left[2\right];\left(A,\left[1\right]\LG B\right)}\\
 &  & {\scriptstyle \left[1\right];\left(B\right)}\ar[u]\ar[d] & {\scriptstyle \left[1\right];\left(A\right)}\ar@{..>}[ur]\ar@{-->}[r] & {\scriptstyle \left[2\right];\left(A,B\right)}\ar@{-->}[u]\ar@{-->}[d]\pushoutcorner & {\scriptstyle \left[1\right];\left(B\right)}\ar[ul]\ar[d]\ar@{-->}[l]\\
{\scriptstyle \left[0\right]}\ar@/_{1pc}/@{..>}[uuurr]\ar@{..>}[uuu] &  & {\scriptstyle \left[2\right];\left(B,0\right)} & {\scriptstyle \left[0\right]}\ar@/_{2pc}/@{..>}[urr]\ar@{..>}[u] & {\scriptstyle \left[3\right];\left(A,B,0\right)}\pushoutcorner & {\scriptstyle \left[2\right];\left(B,0\right)}\ar@{-->}[l]
}
\]

But that above right diagram can be similarly completed (dotted arrows),
and partially computed (dashed arrows), thereby sharing a colimit
with the diagram

\[
\xyR{1pc}\xyC{1pc}\xymatrix{{\scriptstyle \left[3\right];\left(0,A,B\right)}\\
{\scriptstyle \left[2\right];\left(A,B\right)}\ar[u]\ar[d]\\
{\scriptstyle \left[2\right];\left(\left[1\right]\LG A,B\right)}\\
{\scriptstyle \left[2\right];\left(A,B\right)}\ar[u]\ar[d]\\
{\scriptstyle \left[3\right];\left(A,0,B\right)}\\
{\scriptstyle \left[2\right];\left(A,B\right)}\ar[d]\ar[u]\\
{\scriptstyle \left[2\right];\left(A,\left[1\right]\LG B\right)}\\
{\scriptstyle \left[2\right];\left(B\right)}\ar[u]\ar[d]\\
{\scriptstyle \left[3\right];\left(A,B,0\right)}
}
\]

which is but $\left[1\right]\LG\left[2\right];\left(A,B\right)$.
\end{proof}
Prior to proving that $\left[1\right]\LG\left(\_\right)$ preserves
$1,2,\dots$-globular sum, we introduce some simplifying notation.
\begin{defn}
Given an cell $T$ of $\Theta$ and some $n\in\N$, let $\overline{n};T$
denote the cell 
\[
\underbrace{\left[1\right];\left[1\right];\cdots;\left[1\right]}_{n-\mathrm{many}};T
\]
\end{defn}

\begin{lem}
\label{cor:lax-gray-shuffle-preserves-all-binary-globular-sums}\emph{The
assignment 
\[
\left[1\right]\LG\left(\_\right):\Ob\left(\Theta\right)\longrightarrow\Ob\left(\mathsf{Str}\mhyphen\omega\mhyphen\Cat\right)
\]
 preserves $n$-globular sums for all $n\geq1$ in the sense that
for any such $n$ and any cells $X,Y,\dots,Z$ of $\Theta$, the canonical
comparison map between 
\[
\left[1\right]\LG\left(\overline{n};X\right)\underset{\left[1\right]\LG\overline{n}}{\bigoplus}\left[1\right]\LG\left(\overline{n};Y\right)\underset{\left[1\right]\LG\overline{n}}{\bigoplus}\cdots\underset{\left[1\right]\LG\overline{n}}{\bigoplus}\left[1\right]\LG\left(\overline{n};Z\right)
\]
and
\[
\left[1\right]\LG\left(\overline{n};X\underset{\overline{n}}{\bigoplus}\overline{n};Y\underset{\overline{n}}{\bigoplus}\cdots\underset{\overline{n}}{\bigoplus}\overline{n};Z\right)
\]
 is an isomorphism.}
\end{lem}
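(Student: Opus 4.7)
The plan is to induct on $n \geq 1$ and reduce the $k$-ary claim to the binary case. For the reduction, use the identity
\[
\overline{n};X \oplus_{\overline{n}} \overline{n};Y \;=\; \overline{n};(X \oplus_{[0]} Y),
\]
which holds because the inclusion $\overline{n} \hookrightarrow \overline{n};X_i$ is induced by the source/target map $[0] \to X_i$ through the wreath functor $[1];[1];\cdots;[1];(-)$; iterating this and invoking associativity of pushouts then bootstraps the $k$-ary claim from its binary case.

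For the binary case, both the base step $n = 1$ and the inductive step $n \to n+1$ follow the same template. Setting $W = X \oplus_{[0]} Y$, write $\overline{n};W = [1];(\overline{n-1};W)$ and expand $[1] \LG \overline{n};W$ via Definition \ref{def:LG-on-objects} with $\ell = 1$ and $A_1 = \overline{n-1};W$. This presents $[1] \LG \overline{n};W$ as a colimit over a span diagram $D$ whose vertices are $[2];([0], A_1)$, $[1];A_1$, $[1];([1] \LG A_1)$, and $[2];(A_1, [0])$. The observation driving the proof is that each vertex of $D$ decomposes as a pushout over the corresponding vertex of the analogous diagram $D_{[0]}$ computing $[1] \LG \overline{n}$: the outer vertices decompose because the wreath-product slots preserve pushouts (they are built from the left adjoint $[1];(-)$ composed with $0$-globular sums along fixed maps), and the middle vertex $[1];([1] \LG A_1)$ decomposes by applying Lemma \ref{lem:globular-sum-lax-gray-proof!} in the base case (to write $[1] \LG W \cong [1] \LG X \oplus_{[1]} [1] \LG Y$) or the inductive hypothesis in the inductive step, followed by colimit-preservation of $[1];(-)$. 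Provided these vertex-pushouts are compatible with the arrows of the lax shuffle diagram, $D$ itself is a pushout of diagrams over $D_{[0]}$, and taking colimits delivers the required isomorphism
\[
[1] \LG \overline{n};W \;\cong\; [1] \LG \overline{n};X \oplus_{[1] \LG \overline{n}} [1] \LG \overline{n};Y.
\]

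The main obstacle is combinatorial rather than conceptual: one must verify that every face inclusion $d^j$, every endpoint map $\{0\} \LG$ and $\{1\} \LG$, and every span-apex map of the lax shuffle diagram is natural with respect to the pushout decomposition of its source and target vertices. This is the ``large diagram'' bookkeeping the authors flag in the paragraph preceding the statement; individually each compatibility is just the naturality of the relevant structure map, but globally it produces voluminous commutative diagrams that are tedious to draw out explicitly.
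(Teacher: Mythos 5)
Your proposal is correct and takes essentially the same route as the paper's own proof: the paper likewise inducts on $n$, rewrites $\overline{n+1};Z\underset{\overline{n+1}}{\oplus}\overline{n+1};W$ as $\left[1\right];\left(\overline{n};\left(Z\underset{0}{\oplus}W\right)\right)$, expands $\left[1\right]\LG$ of this via the lax shuffle decomposition with $\ell=1$, decomposes each vertex as a colimit of a span over the corresponding vertex of the diagram computing $\left[1\right]\LG\overline{n+1}$ (the outer vertices being ``clear,'' the middle vertex $\left[1\right];\left(\left[1\right]\LG\left(\overline{n};\left(Z\underset{0}{\oplus}W\right)\right)\right)$ handled exactly as you say, by $\left[1\right];\left(\_\right)$ preserving connected colimits together with the inductive hypothesis, with Lemma \ref{lem:globular-sum-lax-gray-proof!} anchoring the base), and concludes by interchanging colimits. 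The only cosmetic difference is that the paper proves the binary case and leaves $k$-ary sums to the reader, whereas you bootstrap them from the binary case via $\overline{n};X\underset{\overline{n}}{\oplus}\overline{n};Y=\overline{n};\left(X\underset{0}{\oplus}Y\right)$ and associativity of pushouts, which is a valid equivalent.
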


\begin{rem}
The sophisticated reader may wonder why there is much left to prove
here. Indeed as $\overline{n};\left(\_\right)$ takes $0$-globular
sums to $n$-globular ones and preserves colimits and $\left[1\right]\LG\left(\_\right)$
is defined by way of colimits. We cannot however naively commute one
past the other as the choice of diagram over which $\left[1\right]\LG\left(\_\right)$
is a colimit is \emph{not }functorial over all of $\Theta$, instead
it is only functorial for the non-full subcategory $\left[1\right];\Theta\hookrightarrow\Theta$.
It should be possible for one to continue down this path and arrive
at another proof of the lemma, but it is likely more trouble than
it is worth.
\end{rem}

\begin{proof}
As with the previous lemma, we'll prove the claim for binary sums.
The general case of $k$-ary sums merely requires larger diagrams,
whence we leave it to the reader.

Let $n\geq1$ be given and assume that for any cells $X$ and $Y$of
$\Theta$ and any $m\leq n$ that we have canonical isomorphisms 
\[
\left[1\right]\LG\left(\overline{m};X\underset{\overline{m}}{\bigoplus}\overline{m};Y\right)\osi\left[1\right]\LG\left(\overline{m};X\right)\underset{\left[1\right]\LG\overline{m}}{\bigoplus}\left[1\right]\LG\left(\overline{m};Y\right)
\]
we will prove that, for any cells $Z$ and $W$ of $\Theta$, there
is a canonical isomorphism
\[
\left[1\right]\LG\left(\overline{n+1};Z\underset{\overline{n+1}}{\bigoplus}\overline{n+1};W\right)\osi\left[1\right]\LG\left(\overline{n+1};Z\right)\underset{\left[1\right]\LG\overline{n+1}}{\bigoplus}\left[1\right]\LG\left(\overline{n+1};W\right)
\]
To that end see that the colimit of the diagram below left is $\left[1\right]\LG\left(\overline{n+1};Z\underset{\overline{n+1}}{\bigoplus}\overline{n+1};W\right)$,
since $\overline{n+1};Z\underset{\overline{n+1}}{\bigoplus}\overline{n+1};W=\left[1\right];\left(\overline{n};\left(Z\underset{0}{\oplus}W\right)\right)$,
and the colimit of the diagram below right is $\left[1\right]\LG\left(\overline{n+1};Z\right)\underset{\left[1\right]\LG\overline{n+1}}{\bigoplus}\left[1\right]\LG\left(\overline{n+1};W\right)$.
\[
\vcenter{\vbox{\xyR{1.5pc}\xyC{1.5pc}\xymatrix{{\scriptstyle \left[2\right];\left(0,\left(\overline{n};Z\underset{0}{\bigoplus}W\right)\right)} & {\scriptstyle \left[2\right];\left(0,\left(\overline{n};Z\right)\right)} & {\scriptstyle \left[2\right];\left(0,\overline{n}\right)}\ar[l]\ar[r] & {\scriptstyle \left[2\right];\left(0,\left(\overline{n};W\right)\right)}\\
{\scriptstyle \left[1\right];\left(\overline{n};Z\underset{0}{\bigoplus}W\right)}\ar[d]\ar[u] & {\scriptstyle \left[1\right];\left(\overline{n};Z\right)}\ar[d]\ar[u] & {\scriptstyle \left[1\right];\left(\overline{n}\right)}\ar[l]\ar[r]\ar[d]\ar[u] & {\scriptstyle \left[1\right];\left(\overline{n};W\right)}\ar[d]\ar[u]\\
{\scriptstyle \left[1\right];\left(\left[1\right]\LG\left(\overline{n};Z\underset{0}{\bigoplus}W\right)\right)} & {\scriptstyle \left[1\right];\left(\left[1\right]\LG\left(\overline{n};Z\right)\right)} & {\scriptstyle \left[1\right];\left(\left[1\right]\LG\overline{n}\right)}\ar[l]\ar[r] & {\scriptstyle \left[1\right];\left(\left[1\right]\LG\left(\overline{n};W\right)\right)}\\
{\scriptstyle \left[1\right];\left(\overline{n};Z\underset{0}{\bigoplus}W\right)}\ar[d]\ar[u] & {\scriptstyle \left[1\right];\left(\overline{n};Z\right)}\ar[d]\ar[u] & {\scriptstyle \left[1\right];\left(\overline{n}\right)}\ar[l]\ar[r]\ar[d]\ar[u] & {\scriptstyle \left[1\right];\left(\overline{n};W\right)}\ar[d]\ar[u]\\
{\scriptstyle \left[2\right];\left(\left(\overline{n};Z\underset{0}{\bigoplus}W\right),0\right)} & {\scriptstyle \left[2\right];\left(\left(\overline{n};Z\right),0\right)} & {\scriptstyle \left[2\right];\left(\overline{n},0\right)}\ar[l]\ar[r] & {\scriptstyle \left[2\right];\left(\left(\overline{n};W\right),0\right)}
}
}}
\]
The induction step is then completed by observing that the colimits
of the rows of the diagram on the right are the entries in the diagram
on the left. This is clear for the first, second, fourth, and fifth
rows so it suffice to prove it for the third row. Since $\left[1\right];\left(\_\right)$
preserves connected colimits, such as spans, the colimit of the span
\[
\vcenter{\vbox{\xyR{1.5pc}\xyC{1.5pc}\xymatrix{\left[1\right];\left(\left[1\right]\LG\left(\overline{n};Z\right)\right) & \left[1\right];\left(\left[1\right]\LG\overline{n}\right)\ar[l]\ar[r] & \left[1\right];\left(\left[1\right]\LG\left(\overline{n};W\right)\right)}
}}
\]
is but $\left[1\right];\left(\_\right)$ of the colimit of the span
\[
\vcenter{\vbox{\xyR{1.5pc}\xyC{1.5pc}\xymatrix{\left[1\right]\LG\left(\overline{n};Z\right) & \left[1\right]\LG\overline{n}\ar[l]\ar[r] & \left[1\right]\LG\left(\overline{n};W\right)}
}}
\]
which is to say it is $\left[1\right]\LG\left(\overline{n};Z\right)\underset{\left[1\right]\LG\overline{n}}{\oplus}\left[1\right]\LG\left(\overline{n};W\right)$
which is but $\left[1\right]\LG\left(\overline{n};Z\underset{0}{\oplus}W\right)$
by hypothesis.
\end{proof}

\subsection{\label{subsec:Comparison-to-Steiner}Steiner's theory }

We now develop enough of Steiner's theory to use its elegant description
of the Gray tensor product to prove the lax Gray shuffle decomposition
correct.

\subsubsection{Steiner Complexes and their relation to $\omega$-categories}

In \cite{Steiner1} (and nicely recovered in \cite{AraMaltsiniotis}
whose exposition we largely follow here) we find developed a treatment
of $\omega$-categories as chain complexes of abelian groups, in the
homological (positive degree) convention, together with further data
required to encode the orientation of cells as ``positivity''. Of
particular utility to us here is that the lax gray tensor product
of $\omega$-categories in this treatment is easily written in terms
of the tensor product of the underlying chain complexes.
\begin{defn}
A \textbf{directed augmented complex $\left(K,K^{*},e\right)$ }is
comprised of:
\begin{itemize}
\item a chain complex of abelian groups $K$, 
\[
\cdots\xrightarrow{d_{n+1}}K_{n}\xrightarrow{d_{n}}K_{n-1}\xrightarrow{d_{n-1}}\cdots\xrightarrow{d_{2}}K_{1}\xrightarrow{d_{1}}K_{0}
\]
\item a set of submonoids 
\[
\left\{ K_{n}^{*}\subset K_{n}\right\} _{n\in\N}
\]
 (no compatibility between these submonoids and the differentials
of $K$ is assumed); and
\item a morphism of groups $e:K_{0}\rightarrow\Z$ such that $e\circ d_{1}=0$.
\end{itemize}
\begin{rem}
The submonoids, which will define ``positivity'', encode the direction
of cells/group-elements, whereas the map $e$, commonly known as an
``augmentation'', identify the objects, as oppose to the formal
sums of objects, as we will see.
\end{rem}

A \textbf{morphism of directed} \textbf{augmented complexes
\[
a:\left(K,K^{*},e\right)\longrightarrow\left(L,L^{*},f\right)
\]
}is a morphism of augmented chain complexes $a:\left(K,e\right)\longrightarrow\left(L,f\right)$
which respects the positivity sub-monoids, i.e. $a_{n}\left(K_{n}^{*}\right)\subset L_{n}^{*}$
for each $n\in\N$. Let $\mathsf{C_{DA}}$ denote the category of
directed augmented complexes. 
\end{defn}

Steiner further defines functors
\[
\omega\mhyphen\Cat\longrightarrow\sfC_{\sf{DA}}
\]
and 
\[
\CDA\longrightarrow\StrCat
\]

\begin{defn}
\label{def:Steiner's-lambda-and-nu}Let 
\[
\lambda:\omega\mhyphen\Cat\longrightarrow\sfC_{\sf{DA}}
\]
be the functor which sends a $\omega$-category $X$, to the directed
augmented chain complex 
\[
\left(\lambda\left(X\right),\lambda^{*}\left(X^{*}\right),e_{X}\right)
\]
where:
\begin{itemize}
\item the abelian groups $\lambda\left(X\right)_{n}$ are:
\begin{itemize}
\item generated by elements $\left[x\right]$ for each $n$-cells $x:\overline{n}\longrightarrow X$; 
\item subject to the minimal relation such that, for any permissible composition
of cells $x,y:\overline{n}\longrightarrow X$, 
\[
x\bigoplus_{m}y:\overline{n}\bigoplus_{m}\overline{n}\longrightarrow X
\]
with composition cell 
\[
x\underset{m}{\star}y:\overline{n}\longrightarrow\overline{n}\bigoplus_{m}\overline{n}\longrightarrow X
\]
 we have 
\[
\left[x\underset{m}{\star}y\right]=\left[x\right]+\left[y\right]
\]
\end{itemize}
\item the differentials are generated by setting, for a generating element
$\left[x\right]$, 
\[
d\left(\left[x\right]\right)=\left[t\left(x\right)\right]-\left[s\left(x\right)\right]
\]
\item the positivity sub-monoids are the sub-monoids generated by those
generating elements $\left[x\right]$; and
\item the augmentation $e_{X}$ is the unique map $\lambda\left(X\right)_{0}\longrightarrow\Z$
which sends the generating elements $\left[x\right]\in\lambda\left(X\right)_{0}$,
corresponding to objects of $X$, to $1\in\Z$
\end{itemize}
\begin{rem}
note that in our notation the composition cell $x\underset{m}{\star}y$
is to be read left to right - not in the usual composition order -
it is the composition
\[
\vcenter{\vbox{\xyR{.0pc}\xyC{1.5pc}\xymatrix{s\left(x\right)\ar@{=>}[r]|-{x} & t\left(x\right)=s\left(y\right)\ar@{=>}[r]|-{y} & t\left(y\right)}
}}
\]
\end{rem}

The action of $\lambda$ on morphisms is precisely what one would
expect:
\begin{itemize}
\item given a functor $a:X\longrightarrow Y$, an element $\left[x\right]\in\lambda\left(X\right)_{n}$
is sent by $\lambda\left(a\right)_{n}$ to the element $\left[a\left(x\right)\right]\in\lambda\left(Y\right)_{n}$.
\end{itemize}
We define the functor 
\[
\nu:\CDA\longrightarrow\StrCat
\]
as follows. Given a directed augmented complex $\left(K,K^{*},e\right)$:
\begin{itemize}
\item the $n$-morphisms of 
\[
\nu\left(K,K^{*},e\right)
\]
 are tables of graded group elements
\[
\left(\begin{array}{cccc}
x_{0}^{0} & \cdots & x_{i-1}^{0} & x_{i}^{0}\\
x_{0}^{1} & \cdots & x_{i-1}^{1} & x_{i}^{1}
\end{array}\right)
\]
where:
\end{itemize}
\begin{enumerate}
\item $x_{k}^{\e}\in K_{k}^{*}\subset K_{k}$ for $\e=0,1$ and $0\leq k\leq i$;
\item $d\left(x_{k}^{\e}\right)=x_{k-1}^{1}-x_{k-1}^{0}$ for $\e=0,1$
and $0\leq k\leq i$;
\item $e\left(x_{0}^{\e}\right)=1$ for $\e=0,1$; and
\item $x_{i}^{1}=x_{i}^{0}$
\end{enumerate}
\begin{itemize}
\item the targets and sources of an $i$-morphism 
\[
\left(\begin{array}{cccc}
x_{0}^{0} & \cdots & x_{i-1}^{0} & x_{i}^{0}\\
x_{0}^{1} & \cdots & x_{i-1}^{1} & x_{i}^{1}
\end{array}\right)
\]
 are given:
\begin{itemize}
\item 
\[
t\left(\begin{array}{cccc}
x_{0}^{0} & \cdots & x_{i-1}^{0} & x_{i}^{0}\\
x_{0}^{1} & \cdots & x_{i-1}^{1} & x_{i}^{1}
\end{array}\right)=\left(\begin{array}{cccc}
x_{0}^{0} & \cdots & x_{i-2}^{0} & x_{i-1}^{1}\\
x_{0}^{1} & \cdots & x_{i-2}^{1} & x_{i-1}^{1}
\end{array}\right)
\]
\item 
\[
s\left(\begin{array}{cccc}
x_{0}^{0} & \cdots & x_{i-1}^{0} & x_{i}^{0}\\
x_{0}^{1} & \cdots & x_{i-1}^{1} & x_{i}^{1}
\end{array}\right)=\left(\begin{array}{cccc}
x_{0}^{0} & \cdots & x_{i-2}^{0} & x_{i-1}^{0}\\
x_{0}^{1} & \cdots & x_{i-2}^{1} & x_{i-1}^{0}
\end{array}\right)
\]
\end{itemize}
\item the identity $\left(i+1\right)$-cell for a table 
\[
\left(\begin{array}{cccc}
x_{0}^{0} & \cdots & x_{i-1}^{0} & x_{i}^{0}\\
x_{0}^{1} & \cdots & x_{i-1}^{1} & x_{i}^{1}
\end{array}\right)
\]
is the table
\[
\left(\begin{array}{ccccc}
x_{0}^{0} & \cdots & x_{i-1}^{0} & x_{i}^{0} & 0\\
x_{0}^{1} & \cdots & x_{i-1}^{1} & x_{i}^{1} & 0
\end{array}\right)
\]
\item the $j\leq i$-composition of $i$-cells 
\[
\left(\begin{array}{cccc}
x_{0}^{0} & \cdots & x_{i-1}^{0} & x_{i}^{0}\\
x_{0}^{1} & \cdots & x_{i-1}^{1} & x_{i}^{1}
\end{array}\right)\mathrm{\ and\ }\left(\begin{array}{cccc}
y_{0}^{0} & \cdots & y_{i-1}^{0} & y_{i}^{0}\\
y_{0}^{1} & \cdots & y_{i-1}^{1} & y_{i}^{1}
\end{array}\right)
\]
is the table
\[
\left(\begin{array}{ccccccc}
x_{0}^{0} & \cdots & x_{j-1}^{0} & x_{j}^{0} & x_{j+1}^{0}+y_{j+1}^{0} & \cdots & x_{i}^{0}+y_{i}^{0}\\
y_{0}^{1} & \cdots & y_{j-1}^{1} & y_{j}^{1} & x_{j+1}^{1}+y_{j+1}^{1} & \cdots & x_{i}^{1}+y_{i}^{1}
\end{array}\right)
\]
\end{itemize}
\end{defn}

\begin{thm}
\label{thm:steiner's-adjunction}(Theorem 2.11 of \cite{Steiner1})
The functor 
\[
\lambda:\StrCat\longrightarrow\CDA
\]
 is left adjoint to the functor 
\[
\StrCat\longleftarrow\CDA:\nu
\]
\end{thm}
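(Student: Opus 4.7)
The plan is to exhibit a natural bijection
\[
\CDA\left(\lambda(X),K\right)\;\cong\;\StrCat\left(X,\nu(K)\right)
\]
by constructing explicit maps in both directions and checking they are mutually inverse. The unit and counit fall out of the construction but the explicit hom-bijection is more economical to verify.

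For the forward direction, given $\phi\colon\lambda(X)\to K$ in $\CDA$, define a strict $\omega$-functor $\phi^{\sharp}\colon X\to\nu(K)$ by sending an $i$-cell $x$ of $X$ to the table
\[
\phi^{\sharp}(x)\;=\;\begin{pmatrix}\phi[s^{i}x] & \phi[s^{i-1}x] & \cdots & \phi[sx] & \phi[x]\\ \phi[t^{i}x] & \phi[t^{i-1}x] & \cdots & \phi[tx] & \phi[x]\end{pmatrix}.
\]
The four axioms defining an $i$-cell of $\nu(K)$ must be checked: positivity holds because $\phi$ respects the positivity submonoids and each $[s^{k}x],[t^{k}x]$ is a generator; the differential condition $d(\phi[s^{i-k}x])=\phi[t^{i-k+1}x]-\phi[s^{i-k+1}x]$ follows from $\phi$ being a chain map together with the globularity identities $ts=tt$, $st=ss$ in the strict $\omega$-category $X$; the augmentation condition is immediate since $s^{i}x$ and $t^{i}x$ are objects and $\phi$ preserves $e$; and the terminal equality is by construction.

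For the reverse direction, given $f\colon X\to\nu(K)$, define $f^{\flat}\colon\lambda(X)\to K$ on generators by $f^{\flat}[x]=f(x)^{0}_{i}=f(x)^{1}_{i}$ for each $i$-cell $x$, extended additively. Chain-map and augmentation-preservation are automatic from the table conditions, and positivity follows from the fact that $f$ lands in $\nu(K)$. The nontrivial point is well-definedness: for cells $a,b$ composable at level $j$, the composition formula in $\nu(K)$ yields $f(a\underset{j}{\star}b)^{0}_{i}=f(a)^{0}_{i}+f(b)^{0}_{i}$, matching the defining relation $[a\underset{j}{\star}b]=[a]+[b]$ of $\lambda(X)$; identity cells are sent to $0$ because the $(i+1)$-level entry of an identity table is $0$.

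The main obstacle is the functoriality of $\phi^{\sharp}$: one must verify that for a level-$j$ composite $a\underset{j}{\star}b$, the table $\phi^{\sharp}(a\underset{j}{\star}b)$ equals the formula of Definition~\ref{def:Steiner's-lambda-and-nu} applied to $\phi^{\sharp}(a)$ and $\phi^{\sharp}(b)$. This reduces, level by level, to the identities $[s^{i-k}(a\underset{j}{\star}b)]=[s^{i-k}a]+[s^{i-k}b]$ for $k>j$, $[s^{i-j}(a\underset{j}{\star}b)]=[s^{i-j}a]$, and $[s^{i-k}(a\underset{j}{\star}b)]=[s^{i-k}a]=[s^{i-k}b]$ for $k<j$ (and their target-analogues), all of which hold in the strict $\omega$-category $X$ and are transported by $\phi$ using additivity of $\phi$ on generators. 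Once functoriality is in hand, $(\phi^{\sharp})^{\flat}=\phi$ follows from reading off the top entry, and $(f^{\flat})^{\sharp}=f$ follows from the fact that a functor of strict $\omega$-categories is determined by its values on cells together with the iterated source/target data encoded in a table. Naturality in both variables is a diagram chase left to the reader.
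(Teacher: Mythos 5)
The paper offers no proof of this statement---it is imported verbatim as Theorem 2.11 of \cite{Steiner1}---so your proposal can only be measured against Steiner's original argument, which it reconstructs correctly and along essentially the same lines: the adjunction is exhibited as the hom-bijection sending $\phi\colon\lambda(X)\to K$ to the $\omega$-functor whose value on an $i$-cell $x$ is the table of $\phi$-images of the iterated sources and targets of $x$. You have isolated exactly the load-bearing verifications: the globular identities $ts=tt$ and $ss=st$ are what make the differential condition on the table hold, and functoriality of $\phi^{\sharp}$ does reduce to the level-by-level boundary identities you list, where the case $k>j$ rests on the strict $\omega$-category axiom $s^{i-k}\left(a\underset{j}{\star}b\right)=s^{i-k}\left(a\right)\underset{j}{\star}s^{i-k}\left(b\right)$ (and its target analogue) combined with the defining relation $\left[u\underset{j}{\star}v\right]=\left[u\right]+\left[v\right]$ of $\lambda\left(X\right)$; note also that your indexing is consistent with the paper's left-to-right composition convention, since the top row of the composite table at level $j$ comes from the first factor and the bottom row from the second. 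Two points deserve a line each, though neither is a gap. First, in the forward direction $\phi^{\sharp}$ must also preserve identity cells; this requires $\left[\mathrm{id}_{w}\right]=0$ in $\lambda\left(X\right)$, which follows from the defining relation applied to $\mathrm{id}_{w}\underset{i-1}{\star}\mathrm{id}_{w}=\mathrm{id}_{w}$---you invoke this observation only in the reverse direction, but it is needed in both. Second, in verifying $\left(f^{\flat}\right)^{\sharp}=f$ you should state explicitly that the $k$-th column of the table $f\left(x\right)$ consists of the top entries of $f\left(s^{i-k}x\right)$ and $f\left(t^{i-k}x\right)$, because $f$ commutes with source and target and the source/target of a table truncate it; that is the precise content of your closing remark that a functor is ``determined by its values on cells together with the iterated source/target data.'' With those two lines added, the proof is complete modulo the routine naturality check.
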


\begin{example}
\label{exa:steiner-ness of globes}Consider the $n$-globe $\overline{n}$
as an $\omega$-category. The directed augmented complex 
\[
\left(\lambda\left(\overline{n}\right),\lambda^{*}\left(\overline{n}\right),e_{\overline{n}}\right)
\]
is comprised of:
\begin{itemize}
\item the chain complex
\[
\vcenter{\vbox{\xyR{0pc}\xyC{2pc}\xymatrix{\cdots\ar[r] & 0\ar[r] & \Z\ar[r] & \Z\oplus\Z\ar[r]^{{\scriptscriptstyle }} & \Z\oplus\Z\ar[r]^{{\scriptscriptstyle }} & \cdots\ar[r]^{{\scriptscriptstyle }} & \Z\oplus\Z\\
 & n+1 & n & n-1 & n-2 &  & 0
}
}}
\]
with the constituent morphisms
\[
\xyR{0pc}\xyC{2pc}\xymatrix{{\scriptscriptstyle \left[\begin{array}{cc}
1 & -1\end{array}\right]}:\Z\ar[r] & \Z\oplus\Z}
\]
and 
\[
\xyR{0pc}\xyC{2pc}{\scriptscriptstyle \left[\begin{array}{cc}
1 & -1\\
1 & -1
\end{array}\right]}:\xymatrix{\Z\oplus\Z\ar[r] & \Z\oplus\Z}
\]
\item the obvious sub-monoids $\N\subset\Z$ and $\N\oplus\N\subset\Z\oplus\Z$;
and
\item the augmentation 
\[
\vcenter{\vbox{\xyR{0pc}\xyC{2pc}\xymatrix{{\scriptscriptstyle \left[\begin{array}{c}
1\\
1
\end{array}\right]}:\Z\oplus\Z\ar[r] & \Z\\
0 & -1
}
}}
\]
\end{itemize}
A particularly nice fact about $\overline{n}$ is that $\overline{n}=\nu\circ\lambda\left(\overline{n}\right)$.
Indeed, consider that a non-degenerate $n$-cell of $\nu\circ\lambda\left(\overline{n}\right)$
is a table
\[
\left(\begin{array}{cccc}
\left(x_{0}^{0},y_{0}^{0}\right) & \cdots & \left(x_{n-1}^{0},y_{n-1}^{0}\right) & z_{n}\\
\left(x_{0}^{1},y_{0}^{1}\right) & \cdots & \left(x_{n-1}^{1},y_{n-1}^{1}\right) & z_{n}
\end{array}\right)
\]
where (following Definition \ref{def:Steiner's-lambda-and-nu}):
\begin{enumerate}
\item where:
\begin{enumerate}
\item $z_{n}^{0}=z_{n}^{1}\geq0$ and
\item $\left(x_{n-1}^{0},y_{n-1}^{0}\right),\left(x_{n-1}^{1},y_{n-1}^{1}\right),\dots,\left(x_{0}^{0},y_{0}^{0}\right),\left(x_{0}^{1},y_{0}^{1}\right)\in\N^{2}$
\end{enumerate}
\item \label{enu:differential thing}$d\left(z_{n}\right)=\left(x_{n-1}^{1},y_{n-1}^{1}\right)-\left(x_{n-1}^{0},y_{n-1}^{0}\right)$
and $d\left(x_{k}^{\e},y_{k}^{\e}\right)=x_{k-1}^{1}-x_{k-1}^{0}$
for $\e=0,1$ and $0\leq k<n$;
\item \label{enu:positive-1-0-things}$e\left(x_{0}^{\e},y_{0}^{\e}\right)=1$
for $\e=0,1$.
\end{enumerate}
In light of (\ref{enu:positive-1-0-things}) we find that $\left(x_{0}^{\e},y_{0}^{\e}\right)$
is either $\left(1,0\right)$ or $\left(0,1\right)$ for $\e=0,1$,
and then in light of (\ref{enu:differential thing}) and the hypothesis
on non-degeneracy we find that $\left(x_{0}^{0},y_{0}^{0}\right)=\left(1,0\right)$
and $\left(x_{0}^{1},y_{0}^{1}\right)=\left(0,1\right)$. Likewise
from (\ref{enu:differential thing}), for $1\leq k<n$ we may deduce
that $\left(x_{k}^{e},y_{k}^{\e}\right)$ is either $\left(0,1\right)$
or $\left(1,0\right)$, and if the table is to correspond to a non-degenerate
cell, $\left(x_{k}^{1},y_{k}^{1}\right)=\left(0,1\right)$ and $\left(x_{k}^{0},y_{k}^{0}\right)=\left(1,0\right)$.
Lastly, since $d_{n}$ is the map 
\[
\left[\begin{array}{cc}
1 & -1\end{array}\right]:\Z\rightarrow\Z\oplus\Z
\]
it follows that $z=1$. To wit, the unique non-degenerate $n$-cell
of $\nu\circ\lambda\left(\overline{n}\right)$ is the table
\[
\left(\begin{array}{cccc}
\left(1,0\right) & \cdots & \left(1,0\right) & 1\\
\left(0,1\right) & \cdots & \left(0,1\right) & 1
\end{array}\right)
\]
Similar arguments demonstrate that the only two non-degenerate $k$-cells,
for $0<k<n$ are
\[
\left(\begin{array}{cccc}
\left(1,0\right) & \cdots & \left(1,0\right) & \left(1,0\right)\\
\left(0,1\right) & \cdots & \left(0,1\right) & \left(1,0\right)
\end{array}\right)
\]
and 
\[
\left(\begin{array}{cccc}
\left(1,0\right) & \cdots & \left(1,0\right) & \left(0,1\right)\\
\left(0,1\right) & \cdots & \left(0,1\right) & \left(0,1\right)
\end{array}\right)
\]
and that there are two $0$-cells, 
\[
\left(\begin{array}{c}
\left(1,0\right)\\
\left(1,0\right)
\end{array}\right)
\]
and 
\[
\left(\begin{array}{c}
\left(0,1\right)\\
\left(0,1\right)
\end{array}\right)
\]
\end{example}

\subsubsection{(Strong) Steiner complexes, (strong) Steiner $\omega$-categories,
Berger's Wreath product, and the tensor product}

The co-unit $\eta:\lambda\circ\nu\Longrightarrow\id$ of the adjunction
$\lambda\dashv\nu$ (Theorem \ref{thm:steiner's-adjunction}) is not
in general invertible, i.e. $\nu$ is not full-and-faithful. We can
however identify a sub-category of $\sf{St}\CDA\longrightarrow\CDA$,
the so-called \emph{Steiner complexes, }on which $\nu$ restricts
to a full-and-faithful functor.
\begin{thm}
(Steiner\footnote{as cited in \cite{AraMaltsiniotis}, see Theorem 5.6 of \cite{Steiner1}
and Paragraph 2.15 of \cite{AraMaltsiniotis}}) For all Steiner complexes $K$, the co-unit 
\[
\eta_{K}:\lambda\circ\nu\left(K\right)\longrightarrow K
\]
is an isomorphism. In particular, $\nu$ restricted to the full subcategory
$\mathsf{StC}_{\sf{DA}}$ of $\CDA$, subtended by the Steiner complexes,
is full-and-faithful.

More, on the subcategory of \emph{strong Steiner complexes}, $\sf{StrSt}\CDA$,
the adjunction $\lambda\dashv\nu$ restricts to an equivalence of
categories 
\[
\sf{StrSt}\CDA\liso\sf{StrSt}\StrCat
\]
 between the subcategory $\sf{StrSt}\CDA\longrightarrow\sf{St}\CDA\longrightarrow\CDA$
of strong Steiner complexes and the subcategory
\[
\sf{StrSt}\StrCat\longrightarrow\sf{St}\StrCat\longrightarrow\StrCat
\]
 of strong Steiner $\omega$-categories.
\end{thm}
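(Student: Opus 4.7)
The plan is to leverage the existence, built into the definition of a (strong) Steiner complex, of a distinguished free $\N$-basis $B=\bigsqcup_{n\in\N}B_{n}$ with $B_{n}\subset K_{n}^{*}$, subject to a positivity/unitality condition on iterated boundaries. The strategy is to use this basis to set up a bijection between basis elements and non-degenerate cells of $\nu(K)$, thereby computing $\lambda\circ\nu(K)$ directly.

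For the first assertion, I would associate to each basis element $b\in B_{n}$ the table
\[
\overline{b}=\begin{pmatrix} d_{0}^{-}(b) & \cdots & d_{n-1}^{-}(b) & b \\ d_{0}^{+}(b) & \cdots & d_{n-1}^{+}(b) & b \end{pmatrix},
\]
where $d_{k}^{\pm}(b)$ are the iterated positive and negative parts of the boundary. Precisely as in Example \ref{exa:steiner-ness of globes} for $\overline{n}$, the basis axioms force this table to satisfy the four conditions of Definition \ref{def:Steiner's-lambda-and-nu}, so $\overline{b}\in\nu(K)$ is a non-degenerate $n$-cell. A dimension induction, again using the basis, would show that every cell of $\nu(K)$ is an iterated $\omega$-composition of such $\overline{b}$. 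Granted this, $\lambda\circ\nu(K)$ is freely $\Z$-generated by the $[\overline{b}]$ in each dimension, and the co-unit $\eta_{K}:[\overline{b}]\longmapsto b$ is a bijection on generators which, by construction of the table, respects differentials, preserves positivity submonoids, and matches augmentations — hence an isomorphism of directed augmented complexes.

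For the second assertion, it remains to show that $\nu$ is essentially surjective onto strong Steiner $\omega$-categories and that the unit $X\longrightarrow\nu\circ\lambda(X)$ is invertible for $X$ strong Steiner. The extra \emph{loop-freeness} axiom of strong Steiner complexes rules out any non-trivial algebraic relations among formal $\omega$-categorical composites of atoms, so the free $\omega$-category on the atoms of $\lambda(X)$ maps isomorphically onto $\nu\circ\lambda(X)$; combined with the first assertion, this produces the claimed equivalence $\sf{StrSt}\CDA\liso\sf{StrSt}\StrCat$.

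I expect the main obstacle to lie in the uniqueness of the atomic decomposition: that every cell of $\nu(K)$ admits a \emph{unique} expression as an iterated $\omega$-composite of atoms $\overline{b}$. The induction must use the unital-basis condition — and, for the second assertion, loop-freeness — to peel off an outermost atom from any given cell without ambiguity. This is precisely where Steiner's combinatorial hypotheses earn their keep: without loop-freeness two distinct formal composites can yield identical tables, and the adjunction fails to restrict to an equivalence even when the co-unit happens to remain an isomorphism.
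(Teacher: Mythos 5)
Note first that the paper does not prove this theorem at all: it is quoted from Steiner (Theorem 5.6 of \cite{Steiner1}) via Paragraphs 2.10--2.15 of \cite{AraMaltsiniotis}, so your proposal must be judged against the cited argument. Your skeleton for the first assertion is the right one --- unitality makes each atom $\langle b\rangle$ a genuine cell of $\nu\left(K\right)$, the co-unit sends atom classes to the basis $B$, and since $B$ is a $\Z$-basis of $K$, injectivity of $\eta_{K}$ is automatic once the atom classes generate $\lambda\circ\nu\left(K\right)$; in particular \emph{uniqueness} of the atomic decomposition, which you flag as the crux, is not actually needed for the co-unit to be an isomorphism (mere existence of a decomposition suffices, uniqueness being the content of the stronger polygraph-freeness statement). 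The genuine gap is that you have mislocated the hypotheses. A Steiner complex is, by definition, unitally based \emph{and} loop-free (the preorders $\leq_{i}$ of the appendix are partial orders); the strong condition is \emph{strong} loop-freeness of $\leq_{\N}$, which by Proposition 2.14 of \cite{AraMaltsiniotis} implies, and is strictly stronger than, loop-freeness. Your ``dimension induction'' for the first assertion --- peeling an atom off an arbitrary table --- is precisely where loop-freeness is consumed: Steiner's induction uses the orders $\leq_{i}$ to select an extremal basis element in the support of a table and split the table as a composite, and for a merely unitally based complex both the decomposition and the co-unit isomorphism can fail. By reserving loop-freeness for the second assertion you leave the first assertion, which is the only substantive one, unproven.

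Conversely, your second assertion requires no combinatorics whatsoever, so the work you propose there is misplaced. Strong Steiner $\omega$-categories are \emph{defined} as the essential image of $\nu$ restricted to strong Steiner complexes, full-and-faithfulness of $\nu$ on Steiner complexes is formal from the co-unit isomorphism via the adjunction
\[
\StrCat\left(\nu\left(K\right),\nu\left(L\right)\right)\liso\CDA\left(\lambda\circ\nu\left(K\right),L\right)\liso\CDA\left(K,L\right),
\]
and $\lambda$ carries the essential image back into $\sf{StrSt}\CDA$ since $\lambda\left(\nu\left(K\right)\right)\liso K$; the invertibility of the unit on the essential image then follows by the triangle identities. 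In particular your closing claim --- that without the stronger hypothesis ``the adjunction fails to restrict to an equivalence even when the co-unit happens to remain an isomorphism'' --- cannot occur: a co-unit isomorphism on a full subcategory always yields an equivalence onto the essential image, and indeed Theorem 2.11 of \cite{AraMaltsiniotis} already gives the equivalence at the level of (non-strong) Steiner complexes. Strong loop-freeness earns its keep elsewhere in the paper --- closure under the tensor product of Definition \ref{def:tensor-product-of-directed-augmented-complexes} and under Berger's wreath product (Theorem \ref{thm:(Wreath-Product-of-Steiner}) --- not in the equivalence asserted here.
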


Furthermore the tensor product of chain complexes, together with the
obvious choice of positivity sub-monoids, defines a bi-closed monoidal
product on $\sf{StrSt}\CDA$ which, under $\nu$, passes to the lax
Gray tensor product of $\StrCat$. Indeed the adjunction
\[
\lambda\dashv\nu:\sf{StrSt}\CDA\liso\sf{StrSt}\StrCat
\]
 is a monoidal equivalence of monoidal categories.
\begin{defn}
\label{def:tensor-product-of-directed-augmented-complexes}Let $\left(K,K^{*},e\right)$
and $\left(L,L^{*},f\right)$ be directed augmented complexes. We
recall that $K\otimes L$ is the chain complex with 
\[
\left(K\otimes L\right)_{n}=\bigoplus_{i+j=n}K_{i}\otimes L_{j}
\]
and differentials
\[
d_{n}=\bigoplus_{i+j=n}\left(d_{i}^{K}\otimes\id_{L}+\left(-1\right)^{i}\left(\id_{K}\otimes d_{j}^{L}\right)\right)
\]
We extend this to augmented chain complexes by picking the map $e\otimes f:K_{0}\otimes L_{0}\longrightarrow\Z$
as the augmentation.
\end{defn}

The promised result regarding the lax Gray tensor product follows.
\begin{thm}
(Steiner - See A.13 and A.14 of \cite{AraMaltsiniotis}) For strong
Steiner complexes $K$ and $L$ and $\omega$-categories $\sf X$
we have isomorphisms
\[
\StrCat\left(\nu\left(K\right),\mathsf{\mathsf{Oplax^{\omega}}}\left(\nu\left(L\right),\sf X\right)\right)\losi\StrCat\left(\nu\left(K\otimes L\right),\sf X\right)
\]
and 
\[
\StrCat\left(\nu\left(K\otimes L\right),\sf X\right)\liso\StrCat\left(\nu\left(L\right),\mathsf{Lax}^{\omega}\left(\nu\left(K\right),\sf X\right)\right)
\]
\end{thm}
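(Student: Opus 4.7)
The plan is to deduce both isomorphisms from the adjunction $\lambda \dashv \nu$ of Theorem \ref{thm:steiner's-adjunction}, after first establishing the auxiliary fact that the tensor product $K \otimes L$ of two strong Steiner complexes is again a strong Steiner complex when equipped with the positivity sub-monoids generated by simple tensors $a \otimes b$ with $a \in K^{*}$, $b \in L^{*}$. Once this is in place, the equivalence $\sf{StrSt}\CDA \simeq \sf{StrSt}\StrCat$ furnishes a natural bijection $\StrCat(\nu(K \otimes L), \sf X) \cong \CDA(K \otimes L, \lambda \sf X)$, reducing the question to a combinatorial description of morphisms of directed augmented complexes out of $K \otimes L$.

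Verifying that $K \otimes L$ is strong Steiner is largely a bookkeeping exercise: one identifies the atomic basis for $K \otimes L$ as the simple tensors of basis cells, checks via the Leibniz differential $d(a \otimes b) = d(a) \otimes b + (-1)^{|a|} a \otimes d(b)$ that the expected positivity structure is preserved, and confirms the loop-freeness condition on the induced partial order. With this in hand, a morphism $K \otimes L \to \lambda \sf X$ is freely determined by its values on simple tensors of basis cells subject to the boundary relations imposed by the Leibniz rule. When reorganized, these boundary relations are precisely the compatibility axioms for oplax-natural transformations and their higher analogues, yielding on one hand a map $\nu(K) \to \mathsf{Oplax}^\omega(\nu(L), \sf X)$, and, under the opposite sign convention, the corresponding data for lax-natural transformations and their higher modifications, yielding $\nu(L) \to \mathsf{Lax}^\omega(\nu(K), \sf X)$.

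The main obstacle is producing explicit enough descriptions of the $\omega$-categories $\mathsf{Oplax}^\omega(\nu(L), \sf X)$ and $\mathsf{Lax}^\omega(\nu(K), \sf X)$ to execute this repackaging at all dimensions. The cleanest approach is to \emph{define} a $k$-cell of $\mathsf{Oplax}^\omega(\nu(L), \sf X)$ directly as a morphism $\lambda(\overline{k}) \otimes L \to \lambda \sf X$ (and dually for the lax side), and then to verify that this definition agrees with the usual description in low dimensions (functors, oplax-natural transformations, modifications) and is correctly propagated to higher cells by the tabular formalism of $\nu$ from Definition \ref{def:Steiner's-lambda-and-nu}. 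The verification proceeds by induction on the globular filtration, in the same spirit as the reduction-to-globular-sums strategy employed earlier in this paper for the lax shuffle decomposition: once the tensor product is presented as an iterated globular colimit on each factor, both sides of the claimed isomorphism reduce to the same explicit tabular data, and naturality in $\sf X$ is immediate from the construction.
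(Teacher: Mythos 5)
This statement is not proved in the paper at all: it is imported as a black box from Steiner via A.13 and A.14 of \cite{AraMaltsiniotis}, so your proposal must be judged on its own, and it contains a directional error in the use of the adjunction that sinks the central reduction. In the adjunction of Theorem \ref{thm:steiner's-adjunction}, $\lambda$ is the \emph{left} adjoint and $\nu$ the right adjoint, so what it computes is $\StrCat\left(\mathsf{A},\nu\left(M\right)\right)\cong\CDA\left(\lambda\left(\mathsf{A}\right),M\right)$ --- morphisms \emph{into} objects in the image of $\nu$. Your pivotal step asserts a natural bijection $\StrCat\left(\nu\left(K\otimes L\right),\mathsf{X}\right)\cong\CDA\left(K\otimes L,\lambda\left(\mathsf{X}\right)\right)$ for an \emph{arbitrary} $\omega$-category $\mathsf{X}$; the equivalence of the strong Steiner subcategories does not give this, because $\mathsf{X}$ is not assumed strong Steiner and $\lambda$ is neither full nor faithful in general. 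The failure is visible already for $K\otimes L\cong\lambda\left(\overline{1}\right)$ (take $L$ the unit): let $\mathsf{X}$ have objects $p,q,r$, an arrow $f:p\rightarrow q$, and a free non-identity endo-arrow $\ell:r\rightarrow r$. A morphism $\lambda\left(\overline{1}\right)\rightarrow\lambda\left(\mathsf{X}\right)$ may send the degree-$1$ generator to the positive chain $z=\left[f\right]+\left[\ell\right]$, which satisfies $d\left(z\right)=\left[q\right]-\left[p\right]$ (since $d\left[\ell\right]=0$) but is the class of no single $1$-cell of $\mathsf{X}$; hence $\CDA\left(\lambda\left(\overline{1}\right),\lambda\left(\mathsf{X}\right)\right)$ is strictly larger than $\StrCat\left(\overline{1},\mathsf{X}\right)$, and your claimed bijection is false. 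The same wrong-direction error recurs in your proposed definition of a $k$-cell of $\mathsf{Oplax}^{\omega}\left(\nu\left(L\right),\mathsf{X}\right)$ as a morphism $\lambda\left(\overline{k}\right)\otimes L\rightarrow\lambda\left(\mathsf{X}\right)$: the correct cells are $\omega$-functors $\nu\left(\lambda\left(\overline{k}\right)\otimes L\right)\rightarrow\mathsf{X}$ taken in $\StrCat$, and for general $\mathsf{X}$ these do not coincide with chain-level data.

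With the cells defined correctly, the substantive work --- which your outline does not supply --- is to show that the globular set $k\mapsto\StrCat\left(\nu\left(\lambda\left(\overline{k}\right)\otimes L\right),\mathsf{X}\right)$ carries an $\omega$-category structure representing $\StrCat\left(\nu\left(\left(\_\right)\otimes L\right),\mathsf{X}\right)$; this requires knowing that $\nu$ commutes with the relevant colimits (in Steiner's and Ara--Maltsiniotis's treatment, colimits of rigid Steiner systems), which is precisely the kind of $\nu$-compatibility issue this paper is careful about when it insists the lax shuffle colimits be formed in $\StrCat$ rather than $\wh{\Theta}$. Your auxiliary claim that strong Steiner complexes are closed under $\otimes$, with basis the simple tensors of basis elements, is true but is not mere bookkeeping: plain loop-freeness is \emph{not} stable under the tensor product --- this is exactly why the theorem is restricted to \emph{strong} Steiner complexes --- so the strong loop-freeness of the tensor basis is where a correct proof must genuinely argue, and it is the representability route just described, not a reduction to chain maps into $\lambda\left(\mathsf{X}\right)$, that yields the two isomorphisms of the statement.
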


We leave the definitions of Steiner complexes and strong Steiner complexes
to Appendix \ref{sec:Steiner-Complexes-and-Steiner =00005Comega-categories}
as they are rather long and involved - a Steiner complex is a directed
augmented complex which admits a unital loop-free basis, and a strong
Steiner complex is a Steiner complex whose unital loop-free basis
is strongly so. Fortunate for us however is the fact that we require
the theory only to prove that our formula for $\left[1\right]\LG\overline{n}$
is correct and, as it happens, all the objects of $\Theta$ are strong
Steiner $\omega$-categories. Indeed, in \cite{Steiner2} we find
the following far stronger claim, which relates Berger's wreath product
to Steiner's treatment of $\omega$-categories.
\begin{thm}
\label{thm:(Wreath-Product-of-Steiner}(Theorem 5.6 of \cite{Steiner2})
The functor 
\[
\t\int\CDA\longrightarrow\CDA
\]
 restricts to a full-and-faithful functor 
\[
\t\int\sf{StrSt}\CDA\longrightarrow\sf{StrSt}\CDA
\]
Equivalently 
\[
\t\int\sf{StrSt}\StrCat\longrightarrow\sf{StrSt\StrCat}
\]
is full-and-faithful.
\end{thm}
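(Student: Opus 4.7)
My plan is to prove the theorem by explicitly computing the underlying directed augmented complex $\lambda([n];(A_1,\ldots,A_n))$ as a pushout assembled from the $\lambda(A_i)$'s, exhibiting a strong Steiner basis on it built from the chosen bases of the $A_i$'s, and then extracting full-and-faithfulness from this combinatorial description. Since $\lambda$ is a left adjoint by Theorem \ref{thm:steiner's-adjunction} and the wreath product cell $[n];(A_1,\ldots,A_n)$ is by construction a wide pushout of suspensions of the $A_i$'s glued along their zero-cells, $\lambda([n];(A_i))$ is the matching pushout of directed augmented complexes. Concretely it is the complex with $\Z^{n+1}$ in degree $0$ (basis vertices $v_0,\ldots,v_n$) and $\bigoplus_{i=1}^n \lambda(A_i)_{k-1}$ in each degree $k\geq 1$; the differential $d_1$ sends a $0$-generator $a\in\lambda(A_i)_0$ to $e_{A_i}(a)(v_i-v_{i-1})$; higher differentials are block-diagonal in the $\lambda(A_i)$'s with appropriate suspension signs; the augmentation sends every $v_i$ to $1$; and positivity is inherited coordinate-wise.

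The candidate strong Steiner basis is $B=\{v_0,\ldots,v_n\}\sqcup\bigsqcup_{i=1}^n B_i$, where each $B_i$ is the given strong Steiner basis of $A_i$ shifted up in degree by one. Unitality is verified by direct computation: each vertex has trivial atom table $(v_i;v_i)$, and a basis element $b\in B_i$ with atom table $\bigl(\begin{smallmatrix}a^0_\bullet\\a^1_\bullet\end{smallmatrix}\bigr)$ in $A_i$ acquires the extended atom table obtained by appending the degree-$0$ row $(v_{i-1},v_i)$; the required identities descend from unitality of $B_i$. For loop-freeness I would impose on $B$ the order in which $v_0<v_1<\cdots<v_n$, each $B_i$ inherits the order from $A_i$, elements of $B_i$ sit strictly between $v_{i-1}$ and $v_i$, and $B_i,B_j$ are incomparable for $i\neq j$. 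Any putative cycle would be confined to a single $B_i$ (since any traverse between $B_i$ and $B_j$ must pass through the vertex $v_{i-1}$ or $v_i$, which are sources/sinks of the wreath structure), contradicting loop-freeness of $B_i$. The strong loop-free condition transfers component-wise by the same separation argument.

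For full-and-faithfulness, the adjunction's equivalence on strong Steiner objects reduces the question to directed augmented complex morphisms. Given such a morphism $f\colon\lambda([n];(A_i))\to\lambda([m];(C_j))$, the augmentation identity combined with positivity forces $f(v_i)=w_{\phi(i)}$ for some function $\phi\colon\{0,\ldots,n\}\to\{0,\ldots,m\}$, and the $d_1$ identity applied to the degree-$0$ generators of each $\lambda(A_i)$ forces $\phi$ to be order-preserving and $f(\lambda(A_i))$ to land in $\bigoplus_{j\in F(\phi)(i)}\lambda(C_j)$ (this last step is a short telescoping-coefficient computation using augmentation positivity). Projecting onto each summand yields the individual components $f_{ji}\colon A_i\to C_j$ for $j\in F(\phi)(i)$, and the pair $(\phi,\mathbf{f})$ so extracted is manifestly a wreath morphism mapping to $f$; faithfulness is immediate since $f$ is determined by its values on $B$. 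The principal obstacle is verifying the strong loop-free condition across the gluing---ensuring that the component orders glued to the vertex spine assemble into a bona fide strong loop-free order, rather than merely a loop-free one.
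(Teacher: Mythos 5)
The paper offers no proof of this statement---it is quoted verbatim as Theorem 5.6 of \cite{Steiner2}---so your proposal must stand on its own merits, and its overall architecture is the right one: the wreath complex you describe (vertices $v_{0},\dots,v_{n}$ in degree $0$, the shifted $\lambda\left(A_{i}\right)$ in higher degrees, $d_{1}\left(a\right)=e\left(a\right)\left(v_{i}-v_{i-1}\right)$) is exactly Steiner's construction, and your unitality check is correct. Interestingly, the step you flag as ``the principal obstacle'' is in fact the easy part: the only generating $\le_{\N}$-relations touching a vertex $v_{i}$ are $v_{i}\le_{\N}b'$ for degree-one elements $b'$ of block $i+1$ and $b\le_{\N}v_{i}$ for degree-one elements $b$ of block $i$, so every passage through a vertex strictly increases the vertex index, no chain can leave a block and return, intra-block relations are precisely the shifted relations of $B_{i}$, and antisymmetry follows from strong loop-freeness of each $B_{i}$. (One small debt you do owe here: the identification of $\lambda\left(\left[1\right];A\right)$ with the two-vertex suspension of $\lambda\left(A\right)$ is not ``by construction''---$\lambda$ is defined by generators and composition relations, and one must check that the compositions of $\left[1\right];A$ are the suspended compositions of $A$ together with whiskering by identities---but this verification is routine.)

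The genuine gap is in fullness. Your telescoping-augmentation computation controls only degree $1$: there, positivity of $f\left(a\right)$ and $d\left(f\left(a\right)\right)=w_{\phi\left(i\right)}-w_{\phi\left(i-1\right)}$ do force monotonicity of $\phi$ and the correct block support. But in wreath degree $k\ge2$ the differential is block-diagonal and the augmentation sees nothing, so induction on degree only tells you that the component $y_{j}$ of $f\left(b\right)$ in a foreign block $j\notin F\left(\phi\right)\left(i\right)$ satisfies $d\left(y_{j}\right)=0$; nothing in your argument excludes a nonzero \emph{positive cycle} sitting in a foreign block, which would produce a morphism of complexes not induced by any wreath morphism. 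You need the lemma that in a strong Steiner complex $K_{k}^{*}\cap\ker\left(d\right)=0$ for $k\ge1$. This is provable from the structure you already have: unitality forces $d\left(b\right)_{-}\neq0\neq d\left(b\right)_{+}$ for every positive-degree basis element $b$ (else the atom table degenerates to $0$ and its augmentation cannot be $1$), so from a positive cycle $y=\sum\mu_{b}b$ the identity $\sum\mu_{b}d\left(b\right)_{+}=\sum\mu_{b}d\left(b\right)_{-}$ yields, with no cancellation among positive terms, an alternating chain $b_{0}\le_{\N}c_{0}\le_{\N}b_{1}\le_{\N}c_{1}\le_{\N}\cdots$ through $\supp\left(y\right)$, which by finiteness of the support closes up and violates antisymmetry of $\le_{\N}$ (the $b$'s and $c$'s live in different degrees, so they cannot be equal). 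With that lemma inserted, your blockwise extraction of $\left(\phi,\mathbf{f}\right)$ and hence full-and-faithfulness go through. One last caveat: your monotonicity step quietly assumes each $\lambda\left(A_{i}\right)_{0}\neq0$; the zero complex is vacuously strong Steiner, and with an empty slot the vertex swap is a legitimate morphism of complexes induced by no monotone $\phi$, so the statement should be read for augmentation-nonempty complexes---automatic for the cells of $\Theta$ to which this paper applies the theorem.
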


\begin{rem}
\label{rem:Theta-is-strong-steiner}To clarify precisely how this
result provides the strong Steiner-ness of all of the objects in $\Theta$
, consider that $\overline{0}$ is strong Steiner, so $\left[n\right];\left(\overline{0},\overline{0},\dots,\overline{0}\right)$
is strong Steiner by the theorem above for every $n\in\N$, so all
of $\t$ is strong Steiner. Iterating this it follows that all of
$\Theta$ is strong Steiner.

\pagebreak{}
\end{rem}

\subsection{The lax shuffle decomposition computes the Gray cylinder}
\begin{prop}
For all $n\geq0$, the diagram 
\[
\vcenter{\vbox{\xyR{1.5pc}\xyC{1.5pc}\xymatrix{ & \left[2\right];\left(\overline{n},\overline{0}\right)\ar[ddr]\\
\left[1\right];\left(\overline{n}\right)\ar@{-->}[ur]\ar@{-->}[dr]\\
 & \left[1\right];\left(\left[1\right]\otimes\overline{n}\right)\ar[r] & \nu\left(\lambda\left(\overline{1}\right)\otimes\lambda\left(\overline{n+1}\right)\right)\\
\left[1\right];\left(\overline{n}\right)\ar@{-->}[ur]\ar@{-->}[dr]\\
 & \left[2\right];\left(\overline{0},\overline{n}\right)\ar[uur]
}
}}
\]
is a gluing diagram, meaning:
\begin{itemize}
\item the solid morphisms into $\lambda\left(\overline{1}\right)\otimes\lambda\left(\overline{n+1}\right)$
are monomorphisms;
\item the solid morphisms cover $\lambda\left(\overline{1}\right)\otimes\lambda\left(\overline{n+1}\right)$;
and
\item the two squares are pullbacks.
\end{itemize}
Moreover, as a consequence, the induced map
\[
\clim\left\{ \vcenter{\vbox{\xyR{.5pc}\xyC{.5pc}\xymatrix{ & \left[2\right];\left(\overline{n},\overline{0}\right)\\
\left[1\right];\left(\overline{n}\right)\ar[ur]\ar[dr]\\
 & \left[1\right];\left(\left[1\right]\otimes\overline{n}\right)\\
\left[1\right];\left(\overline{n}\right)\ar[ur]\ar[dr]\\
 & \left[2\right];\left(\overline{0},\overline{n}\right)
}
}}\right\} \liso\nu\left(\lambda\left(\overline{1}\right)\otimes\lambda\left(\overline{n+1}\right)\right)
\]
is an isomorphism (as indicated).
\end{prop}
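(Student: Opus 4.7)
The plan is to use Steiner's theory (Section~\ref{subsec:Comparison-to-Steiner}) to reduce the assertion to an explicit computation at the level of directed augmented chain complexes. By Remark~\ref{rem:Theta-is-strong-steiner}, every cell of $\Theta$ appearing in the diagram is a strong Steiner $\omega$-category, so by Theorem~\ref{thm:(Wreath-Product-of-Steiner}, the functor $\nu$ is an equivalence on the relevant subcategory; in particular, subcomplex inclusions of directed augmented complexes pass under $\nu$ to $\omega$-categorical monomorphisms, and colimits of such inclusions can be computed in $\CDA$. It therefore suffices to exhibit the analogous gluing structure in $\CDA$ between $\lambda\left(\overline{1}\right)\otimes\lambda\left(\overline{n+1}\right)$ and the $\lambda$'s of the five cells in the diagram.

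First I would write out the Steiner bases of $\lambda\left(\overline{1}\right)$ and $\lambda\left(\overline{n+1}\right)$ explicitly, as in Example~\ref{exa:steiner-ness of globes}, and use Definition~\ref{def:tensor-product-of-directed-augmented-complexes} to compute the basis and differentials of the tensor product. The resulting basis naturally partitions into three families: elements of the form $[0]\otimes g$, $[1]\otimes g$, and $e\otimes g$ for $g$ ranging over the Steiner basis of $\lambda\left(\overline{n+1}\right)$. Second, I would identify three sub-directed-augmented-complexes of $\lambda\left(\overline{1}\right)\otimes\lambda\left(\overline{n+1}\right)$ matching the three vertices $\left[2\right];\left(\overline{n},\overline{0}\right)$, $\left[1\right];\left(\left[1\right]\otimes\overline{n}\right)$, and $\left[2\right];\left(\overline{0},\overline{n}\right)$ of the diagram. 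Concretely, the ``left'' and ``right'' pieces are generated by the basis elements of the form $[1]\otimes g$ and $[0]\otimes g$ respectively (together with the necessary boundary terms), while the ``middle'' piece is generated by elements $e\otimes g$ for $g$ a basis element of $\lambda\left(\overline{n+1}\right)$, together with their source and target boundary terms from the other two families. That these subcomplexes are isomorphic to $\lambda$ of the corresponding cells is checked by matching generators and differentials, recognizing the middle piece inductively as essentially a suspension of $\lambda\left(\overline{1}\right)\otimes\lambda\left(\overline{n}\right)$.

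Third, I would verify the three gluing conditions directly on bases. Each of the three inclusions is by construction a subcomplex inclusion, hence a monomorphism in $\CDA$, hence --- via $\nu$ --- a monomorphism in $\StrCat$. The covering property is immediate: every basis element of $\lambda\left(\overline{1}\right)\otimes\lambda\left(\overline{n+1}\right)$ has precisely one of the three forms above, and each lies in at least one of the three subcomplexes. The two pullback squares follow from checking that the pairwise intersections of the three subcomplexes recover $\lambda\left(\left[1\right];\left(\overline{n}\right)\right)$ under the appropriate face maps --- this is a direct bookkeeping exercise in matching basis elements. The isomorphism from the colimit then follows formally from these three properties, because in $\CDA$ (and hence, for strong Steiner complexes, in $\StrCat$ via $\nu$) a cover by monomorphic subobjects with the correct pullback structure exhibits the total object as the colimit of that diagram of subobjects.

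The main obstacle is the second step: the inclusion from $\lambda$ of the middle piece into the tensor product is \emph{not} basis-preserving --- for instance, a $k$-generator of the hom-piece of $\left[1\right];\left(\left[1\right]\otimes\overline{n}\right)$ maps to a sum of two elementary tensors --- so some care is required to match the Steiner presentations. Managing this requires recognizing the middle piece's Steiner complex as the suspension-style shift of $\lambda\left(\overline{1}\right)\otimes\lambda\left(\overline{n}\right)$ that results from the wreath product construction of Section~\ref{subsec:The-Categorical-wreath}, and using induction on $n$ to appeal to the lower-dimensional case of the shuffle decomposition.
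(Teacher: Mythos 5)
Your proposal is essentially the paper's own proof: it reduces the claim via the adjoint equivalence $\lambda\dashv\nu$ on strong Steiner subcategories (using Remark \ref{rem:Theta-is-strong-steiner} and Theorem \ref{thm:(Wreath-Product-of-Steiner}) to a gluing diagram of explicitly computed subcomplexes of $\lambda\left(\overline{1}\right)\otimes\lambda\left(\overline{n+1}\right)$, verifying monomorphy, covering, and the pullback squares by basis-level bookkeeping, and it correctly isolates the one genuine subtlety, namely that the middle inclusion is not basis-preserving. The only minor deviations are cosmetic: the non-elementary images occur only for the degree-$1$ generators arising from the objects of the hom (the paper's ``subtraction in degree $1$''), with all generators in degrees $\geq 2$ mapping to single elementary tensors, and the paper handles this by directly displaying the subcomplex for $n\geq3$ rather than by your proposed induction on $n$.
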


\begin{proof}
The case for $n\geq3$ is fully general, the cases for $n=0,1,2$
require minimal modifications. As such, we will show only that 
\[
\vcenter{\vbox{\xyR{1.5pc}\xyC{1.5pc}\xymatrix{ & \left[2\right];\left(\overline{n},\overline{0}\right)\ar[ddr]\\
\left[1\right];\left(\overline{n}\right)\ar@{-->}[ur]\ar@{-->}[dr]\\
 & \left[1\right];\left(\left[1\right]\otimes\overline{n}\right)\ar[r] & \nu\left(\lambda\left(\overline{1}\right)\otimes\lambda\left(\overline{n+1}\right)\right)\\
\left[1\right];\left(\overline{n}\right)\ar@{-->}[ur]\ar@{-->}[dr]\\
 & \left[2\right];\left(\overline{0},\overline{n}\right)\ar[uur]
}
}}
\]
is a gluing diagram for all $n\geq3$.

Observe first that $\left[2\right];\left(\overline{n},0\right)$,
$\left[1\right];\left(\left[1\right]\otimes\overline{n}\right)$,
and $\left[2\right];\left(0,\overline{n}\right)$ are all strong Steiner
$\omega$-categories; the first and last as $\Theta$ is a full subcategory
of the category of strong Steiner categories (see Remark \ref{rem:Theta-is-strong-steiner})
and the middle one since $\left[1\right];\left(\_\right)$ preserves
the strong Steiner-ness of $\omega$-categories (as with Remark \ref{rem:Theta-is-strong-steiner},
this a consequence of Theorem \ref{thm:(Wreath-Product-of-Steiner}).
Thus to prove the diagram above to be a gluing diagram is to prove
that
\[
\vcenter{\vbox{\xyR{1.5pc}\xyC{1.5pc}\xymatrix{ & \nu\left(\lambda\left(\left[2\right];\left(\overline{n},\overline{0}\right)\right)\right)\ar[ddr]\\
\nu\left(\lambda\left(\left[1\right];\left(\overline{n}\right)\right)\right)\ar@{-->}[ur]\ar@{-->}[dr]\\
 & \nu\left(\lambda\left(\left[1\right];\left(\left[1\right]\otimes\overline{n}\right)\right)\right)\ar[r] & \nu\left(\lambda\left(\overline{1}\right)\otimes\lambda\left(\overline{n+1}\right)\right)\\
\nu\left(\lambda\left(\left[1\right];\left(\overline{n}\right)\right)\right)\ar@{-->}[ur]\ar@{-->}[dr]\\
 & \nu\left(\lambda\left(\left[2\right];\left(\overline{0},\overline{n}\right)\right)\right)\ar[uur]
}
}}
\]
is a gluing diagram. What's more, since the adjunction $\nu\dashv\lambda$
restricts to an adjoint equivalence of categories on the strong Steiner
subcategories, the diagram above is a gluing diagram if and only if
\[
\vcenter{\vbox{\xyR{1.5pc}\xyC{1.5pc}\xymatrix{ & \lambda\left(\left[2\right];\left(\overline{n},\overline{0}\right)\right)\ar[ddr]\\
\lambda\left(\left[1\right];\left(\overline{n}\right)\right)\ar@{-->}[ur]\ar@{-->}[dr]\\
 & \lambda\left(\left[1\right];\left(\left[1\right]\otimes\overline{n}\right)\right)\ar[r] & \lambda\left(\overline{1}\right)\otimes\lambda\left(\overline{n+1}\right)\\
\lambda\left(\left[1\right];\left(\overline{n}\right)\right)\ar@{-->}[ur]\ar@{-->}[dr]\\
 & \lambda\left(\left[2\right];\left(\overline{0},\overline{n}\right)\right)\ar[uur]
}
}}
\]
is gluing diagram. It is this claim which we will now prove.

Let the copy of $\lambda\left(\overline{1}\right)$ in $\lambda\left(\overline{1}\right)\otimes\lambda\left(\overline{n+1}\right)$
be the obvious directed augmented complex on the chain complex
\[
\vcenter{\vbox{\xyR{0pc}\xyC{1.5pc}\xymatrix{\left\langle h\right\rangle \ar[r] & \left\langle \ell\right\rangle \oplus\left\langle r\right\rangle \\
h\ar@{|->}[r] & -\ell+r
}
}}
\]
and let the copy of $\lambda\left(\overline{n+1}\right)=\lambda\left(\left[1\right];\left(\overline{n}\right)\right)$
be the obvious directed augmented complex on the chain complex
\[
\vcenter{\vbox{\xyR{0pc}\xyC{1.5pc}\xymatrix{\left\langle v_{n+1}\right\rangle \ar[r] & \left\langle b_{n}\right\rangle \oplus\left\langle t_{n}\right\rangle \ar[r] & \left\langle b_{n-1}\right\rangle \oplus\left\langle t_{n-1}\right\rangle \ar[r] & \cdots\ar[r] & \left\langle b_{1}\right\rangle \oplus\left\langle t_{1}\right\rangle \ar[r] & \left\langle b_{0}\right\rangle \oplus\left\langle t_{0}\right\rangle \\
v_{n+1}\ar@{|->}[r] & -b_{n}+t_{n}\\
 & b_{n},t_{n}\ar@{|->}[r] & -b_{n-1}+t_{n-1}\\
 &  &  & \ddots\\
 &  &  &  & b_{1},t_{1}\ar@{|->}[r] & -b_{0}+t_{0}
}
}}
\]
Then we find that $\lambda\left(\overline{1}\right)\otimes\lambda\left(\left[1\right];\overline{n}\right)$
is the obvious directed augmented complex on the chain complex
\[
\xyR{0pc}\xyC{0pc}\xymatrix{{\scriptstyle n+2} &  & \left\langle h\otimes v_{n+1}\right\rangle \ar[dd]\save"1,3"."1,3"*[F]\frm{}\restore\\
 &  & \ \\
 & \save"3,2"."5,4"*[F]\frm{}\restore\phantom{\left\langle \ell\otimes v_{n+1}\right\rangle } & \left\langle h\otimes t_{n}\right\rangle \\
{\scriptstyle n+1} & \left\langle \ell\otimes v_{n+1}\right\rangle \ar@{}[rr]|-{\oplus} &  & \left\langle r\otimes v_{n+1}\right\rangle \\
 &  & \left\langle h\otimes b_{n}\right\rangle \ar[dd] & \phantom{\left\langle \ell\otimes v_{n+1}\right\rangle }\\
 & \save"7,2"."8,4"*[F]\frm{}\restore & \ \\
\ar@{}[d]|-{n} & \left\langle \ell\otimes t_{n}\right\rangle \ar@{}[r]|-{\oplus}\ar@{}[d]|-{\oplus} & \left\langle h\otimes t_{n-1}\right\rangle \ar@{}[r]|-{\oplus}\ar@{}[d]|-{\oplus} & \left\langle r\otimes t_{n}\right\rangle \ar@{}[d]|-{\oplus}\\
 & \left\langle \ell\otimes b_{n}\right\rangle \ar@{}[r]|-{\oplus} & \left\langle h\otimes b_{n-1}\right\rangle \ar@{}[r]|-{\oplus}\ar[dd] & \left\langle r\otimes b_{n}\right\rangle \\
 & \save"10,2"."11,4"*[F]\frm{}\restore & \ \\
\ar@{}[d]|-{n-1} & \left\langle \ell\otimes t_{n-1}\right\rangle \ar@{}[r]|-{\oplus}\ar@{}[d]|-{\oplus} & \left\langle h\otimes t_{n-2}\right\rangle \ar@{}[r]|-{\oplus}\ar@{}[d]|-{\oplus} & \left\langle r\otimes t_{n-1}\right\rangle \ar@{}[d]|-{\oplus}\\
 & \left\langle \ell\otimes b_{n-1}\right\rangle \ar@{}[r]|-{\oplus} & \left\langle h\otimes b_{n-2}\right\rangle \ar@{}[r]|-{\oplus}\ar[ddd]|-{\vdots} & \left\langle r\otimes b_{n-1}\right\rangle \\
 &  & \ \\
 & \save"14,2"."15,4"*[F]\frm{}\restore & \ \\
\ar@{}[d]|-{2} & \left\langle \ell\otimes t_{2}\right\rangle \ar@{}[r]|-{\oplus}\ar@{}[d]|-{\oplus} & \left\langle h\otimes t_{1}\right\rangle \ar@{}[r]|-{\oplus}\ar@{}[d]|-{\oplus} & \left\langle r\otimes t_{2}\right\rangle \ar@{}[d]|-{\oplus}\\
 & \left\langle \ell\otimes b_{2}\right\rangle \ar@{}[r]|-{\oplus} & \left\langle h\otimes b_{1}\right\rangle \ar@{}[r]|-{\oplus}\ar[dd]\ar[ddd]|-{\vdots} & \left\langle r\otimes b_{2}\right\rangle \\
 &  & \ \\
 & \save"18,2"."19,4"*[F]\frm{}\restore & \ \\
\ar@{}[d]|-{1} & \left\langle \ell\otimes t_{1}\right\rangle \ar@{}[r]|-{\oplus}\ar@{}[d]|-{\oplus} & \left\langle h\otimes t_{0}\right\rangle \ar@{}[r]|-{\oplus}\ar@{}[d]|-{\oplus} & \left\langle r\otimes t_{1}\right\rangle \ar@{}[d]|-{\oplus}\\
 & \left\langle \ell\otimes b_{1}\right\rangle \ar@{}[r]|-{\oplus} & \left\langle h\otimes b_{0}\right\rangle \ar@{}[r]|-{\oplus}\ar[dd] & \left\langle r\otimes b_{1}\right\rangle \\
 & \save"21,2"."22,4"*[F]\frm{}\restore & \ \\
\ar@{}[d]|-{0} & \left\langle \ell\otimes t_{0}\right\rangle \ar@{}[rr]|-{\oplus}\ar@{}[d]|-{\oplus} & \  & \left\langle r\otimes t_{0}\right\rangle \ar@{}[d]|-{\oplus}\\
 & \left\langle \ell\otimes b_{0}\right\rangle \ar@{}[rr]|-{\oplus} &  & \left\langle r\otimes b_{0}\right\rangle 
}
\]

Using this description we find that $\lambda\left(\left[1\right];\left(\left[1\right]\otimes\overline{n}\right)\right)$
is the obvious directed augmented complex on 
\[
\xyR{0pc}\xyC{0pc}\xymatrix{{\scriptstyle m+2} & \save"1,3"."1,3"*[F]\frm{}\restore & \left\langle h\otimes v_{m}^{\p}\right\rangle \ar[dd]\\
 &  & \ \\
 & \save"3,2"."5,4"*[F]\frm{}\restore\phantom{\left\langle \ell\otimes v_{n+1}\right\rangle } & \left\langle h^{\p}\otimes t_{m-1}^{\p}\right\rangle \\
{\scriptstyle m+1} & \left\langle \ell^{\p}\otimes v_{m}^{\p}\right\rangle \ar@{}[rr]|-{\oplus} &  & \left\langle r^{\p}\otimes v_{m}^{\p}\right\rangle \\
 &  & \left\langle h^{\p}\otimes b_{m-1}^{\p}\right\rangle \ar[dd] & \phantom{\left\langle \ell\otimes v_{n+1}\right\rangle }\\
 & \save"7,2"."8,4"*[F]\frm{}\restore & \ \\
\ar@{}[d]|-{m} & \left\langle \ell^{\p}\otimes t_{m-1}^{\p}\right\rangle \ar@{}[r]|-{\oplus}\ar@{}[d]|-{\oplus} & \left\langle h^{\p}\otimes t_{m-2}^{\p}\right\rangle \ar@{}[r]|-{\oplus}\ar@{}[d]|-{\oplus} & \left\langle r^{\p}\otimes t_{m-1}^{\p}\right\rangle \ar@{}[d]|-{\oplus}\\
 & \left\langle \ell^{\p}\otimes b_{m-1}\right\rangle \ar@{}[r]|-{\oplus} & \left\langle h^{\p}\otimes b_{m-2}^{\p}\right\rangle \ar@{}[r]|-{\oplus}\ar[dd] & \left\langle r^{\p}\otimes b_{m-1}^{\p}\right\rangle \\
 & \save"10,2"."11,4"*[F]\frm{}\restore & \ \\
\ar@{}[d]|-{m-1} & \left\langle \ell^{\p}\otimes t_{m-2}^{\p}\right\rangle \ar@{}[r]|-{\oplus}\ar@{}[d]|-{\oplus} & \left\langle h^{\p}\otimes t_{m-3}^{\p}\right\rangle \ar@{}[r]|-{\oplus}\ar@{}[d]|-{\oplus} & \left\langle r^{\p}\otimes t_{m-2}^{\p}\right\rangle \ar@{}[d]|-{\oplus}\\
 & \left\langle \ell^{\p}\otimes b_{m-2}^{\p}\right\rangle \ar@{}[r]|-{\oplus} & \left\langle h^{\p}\otimes b_{m-3}^{\p}\right\rangle \ar@{}[r]|-{\oplus}\ar[ddd]|-{\vdots} & \left\langle r^{\p}\otimes b_{m-2}^{\p}\right\rangle \\
 &  & \ \\
 & \save"14,2"."15,4"*[F]\frm{}\restore & \ \\
\ar@{}[d]|-{1} & \left\langle \ell^{\p}\otimes t_{0}^{\p}\right\rangle \ar@{}[rr]|-{\oplus}\ar@{}[d]|-{\oplus} &  & \left\langle r^{\p}\otimes t_{0}^{\p}\right\rangle \ar@{}[d]|-{\oplus}\\
 & \left\langle \ell^{\p}\otimes b_{0}^{\p}\right\rangle \ar@{}[rr]|-{\oplus} & \ \ar[dd] & \left\langle r^{\p}\otimes b_{0}^{\p}\right\rangle \\
 & \save"17,2"."17,4"*[F]\frm{}\restore & \ \\
{\scriptstyle 0} & \left\langle e\right\rangle \ar@{}[rr]|-{\oplus} & \  & \left\langle s\right\rangle 
}
\]
and we identify the map $\lambda\left(\left[1\right];\left(\left[1\right]\otimes\overline{n}\right)\right)\longrightarrow\lambda\left(\overline{1}\right)\otimes\lambda\left(\left[1\right];\overline{n}\right)$
with the obvious directed augmented sub-complex on
\[
\xyR{0pc}\xyC{0pc}\xymatrix{{\scriptstyle n+2} &  & \left\langle h\otimes v_{n+1}\right\rangle \save"1,3"."1,3"*[F]\frm{}\restore\ar[dd]\\
 &  & \ \\
 & \save"3,2"."5,4"*[F]\frm{}\restore\phantom{\left\langle \ell\otimes v_{n+1}\right\rangle } & \left\langle h\otimes t_{n}\right\rangle \\
{\scriptstyle n+1} & \left\langle \ell\otimes v_{n+1}\right\rangle \ar@{}[rr]|-{\oplus} &  & \left\langle r\otimes v_{n+1}\right\rangle \\
 &  & \left\langle h\otimes b_{n}\right\rangle \ar[dd] & \phantom{\left\langle \ell\otimes v_{n+1}\right\rangle }\\
 & \save"7,2"."8,4"*[F]\frm{}\restore & \ \\
\ar@{}[d]|-{n} & \left\langle \ell\otimes t_{n}\right\rangle \ar@{}[r]|-{\oplus}\ar@{}[d]|-{\oplus} & \left\langle h\otimes t_{n-1}\right\rangle \ar@{}[r]|-{\oplus}\ar@{}[d]|-{\oplus} & \left\langle r\otimes t_{n}\right\rangle \ar@{}[d]|-{\oplus}\\
 & \left\langle \ell\otimes b_{n}\right\rangle \ar@{}[r]|-{\oplus} & \left\langle h\otimes b_{n-1}\right\rangle \ar@{}[r]|-{\oplus}\ar[dd] & \left\langle r\otimes b_{n}\right\rangle \\
 & \save"10,2"."11,4"*[F]\frm{}\restore & \ \\
\ar@{}[d]|-{n-1} & \left\langle \ell\otimes t_{n-1}\right\rangle \ar@{}[r]|-{\oplus}\ar@{}[d]|-{\oplus} & \left\langle h\otimes t_{n-2}\right\rangle \ar@{}[r]|-{\oplus}\ar@{}[d]|-{\oplus} & \left\langle r\otimes t_{n-1}\right\rangle \ar@{}[d]|-{\oplus}\\
 & \left\langle \ell\otimes b_{n-1}\right\rangle \ar@{}[r]|-{\oplus} & \left\langle h\otimes b_{n-2}\right\rangle \ar@{}[r]|-{\oplus}\ar[ddd]|-{\vdots} & \left\langle r\otimes b_{n-1}\right\rangle \\
 &  & \ \\
 & \save"14,2"."15,4"*[F]\frm{}\restore & \ \\
\ar@{}[d]|-{2} & \left\langle \ell\otimes t_{2}\right\rangle \ar@{}[r]|-{\oplus}\ar@{}[d]|-{\oplus} & \left\langle h\otimes t_{1}\right\rangle \ar@{}[r]|-{\oplus}\ar@{}[d]|-{\oplus} & \left\langle r\otimes t_{2}\right\rangle \ar@{}[d]|-{\oplus}\\
 & \left\langle \ell\otimes b_{2}\right\rangle \ar@{}[r]|-{\oplus} & \left\langle h\otimes b_{1}\right\rangle \ar@{}[r]|-{\oplus}\ar[dd]\ar[dd] & \left\langle r\otimes b_{2}\right\rangle \\
 & \save"17,2"."18,4"*[F]\frm{}\restore & \ \\
\ar@{}[d]|-{1} & \left\langle \ell\otimes t_{1}+h\otimes t_{0}\right\rangle \ar@{}[rr]|-{\oplus}\ar@{}[d]|-{\oplus} & \  & \left\langle h\otimes b_{0}+r\otimes t_{1}\right\rangle \ar@{}[d]|-{\oplus}\\
 & \left\langle \ell\otimes b_{1}+h\otimes t_{0}\right\rangle \ar@{}[rr]|-{\oplus} & \ \ar[ddd] & \left\langle h\otimes b_{0}+r\otimes b_{1}\right\rangle \\
 &  & \ \\
 & \save"21,2"."21,4"*[F]\frm{}\restore & \ \\
0 & \left\langle \ell\otimes b_{0}\right\rangle \ar@{}[rr]|-{\oplus} & \  & \left\langle r\otimes t_{0}\right\rangle 
}
\]
We may similarly identify the maps $\lambda\left(\left[2\right];\left(\overline{n},0\right)\right)\longrightarrow\lambda\left(\overline{1}\right)\otimes\lambda\left(\overline{n+1}\right)$
and $\lambda\left(\left[2\right];\left(0,\overline{n}\right)\right)\longrightarrow\lambda\left(\overline{1}\right)\otimes\lambda\left(\overline{n+1}\right)$
with the obvious directed augmented sub-complexes
\[
\xyR{0pc}\xyC{0pc}\xymatrix{{\scriptstyle n+2} & \save"1,3"."1,3"*[F]\frm{}\restore & \lightgray{\left\langle h\otimes v_{n+1}\right\rangle }\ar[dd] &  &  & \save"1,7"."1,7"*[F]\frm{}\restore & \lightgray{\left\langle h\otimes v_{n+1}\right\rangle }\ar[dd]\\
 &  & \  &  &  &  & \ \\
 & \save"3,2"."5,4"*[F]\frm{}\restore\phantom{\left\langle \ell\otimes v_{n+1}\right\rangle } & \lightgray{\left\langle h\otimes t_{n}\right\rangle } &  &  & \save"3,6"."5,8"*[F]\frm{}\restore\phantom{\left\langle \ell\otimes v_{n+1}\right\rangle } & \lightgray{\left\langle h\otimes t_{n}\right\rangle }\\
{\scriptstyle n+1} & \left\langle \ell\otimes v_{n+1}\right\rangle \ar@{}[rr]|-{\oplus} &  & \lightgray{\left\langle r\otimes v_{n+1}\right\rangle } &  & \lightgray{\left\langle \ell\otimes v_{n+1}\right\rangle }\ar@{}[rr]|-{\oplus} &  & \left\langle r\otimes v_{n+1}\right\rangle \\
 &  & \lightgray{\left\langle h\otimes b_{n}\right\rangle }\ar[dd] & \phantom{\left\langle \ell\otimes v_{n+1}\right\rangle } &  &  & \lightgray{\left\langle h\otimes b_{n}\right\rangle }\ar[dd] & \phantom{\left\langle \ell\otimes v_{n+1}\right\rangle }\\
 & \save"7,2"."8,4"*[F]\frm{}\restore & \  &  &  & \save"7,6"."8,8"*[F]\frm{}\restore & \ \\
\ar@{}[d]|-{n} & \left\langle \ell\otimes t_{n}\right\rangle \ar@{}[r]|-{\oplus}\ar@{}[d]|-{\oplus} & \lightgray{\left\langle h\otimes t_{n-1}\right\rangle }\ar@{}[r]|-{\oplus}\ar@{}[d]|-{\oplus} & \lightgray{\left\langle r\otimes t_{n}\right\rangle }\ar@{}[d]|-{\oplus} &  & \lightgray{\left\langle \ell\otimes t_{n}\right\rangle }\ar@{}[r]|-{\oplus}\ar@{}[d]|-{\oplus} & \lightgray{\left\langle h\otimes t_{n-1}\right\rangle }\ar@{}[r]|-{\oplus}\ar@{}[d]|-{\oplus} & \left\langle r\otimes t_{n}\right\rangle \ar@{}[d]|-{\oplus}\\
 & \left\langle \ell\otimes b_{n}\right\rangle \ar@{}[r]|-{\oplus} & \lightgray{\left\langle h\otimes b_{n-1}\right\rangle }\ar@{}[r]|-{\oplus}\ar[dd] & \lightgray{\left\langle r\otimes b_{n}\right\rangle } &  & \lightgray{\left\langle \ell\otimes b_{n}\right\rangle }\ar@{}[r]|-{\oplus} & \lightgray{\left\langle h\otimes b_{n-1}\right\rangle }\ar@{}[r]|-{\oplus}\ar[dd] & \left\langle r\otimes b_{n}\right\rangle \\
 & \save"10,2"."11,4"*[F]\frm{}\restore & \  &  &  & \save"10,6"."11,8"*[F]\frm{}\restore & \ \\
\ar@{}[d]|-{n-1} & \left\langle \ell\otimes t_{n-1}\right\rangle \ar@{}[r]|-{\oplus}\ar@{}[d]|-{\oplus} & \lightgray{\left\langle h\otimes t_{n-2}\right\rangle }\ar@{}[r]|-{\oplus}\ar@{}[d]|-{\oplus} & \lightgray{\left\langle r\otimes t_{n-1}\right\rangle }\ar@{}[d]|-{\oplus} &  & \lightgray{\left\langle \ell\otimes t_{n-1}\right\rangle }\ar@{}[r]|-{\oplus}\ar@{}[d]|-{\oplus} & \lightgray{\left\langle h\otimes t_{n-2}\right\rangle }\ar@{}[r]|-{\oplus}\ar@{}[d]|-{\oplus} & \left\langle r\otimes t_{n-1}\right\rangle \ar@{}[d]|-{\oplus}\\
 & \left\langle \ell\otimes b_{n-1}\right\rangle \ar@{}[r]|-{\oplus} & \lightgray{\left\langle h\otimes b_{n-2}\right\rangle }\ar@{}[r]|-{\oplus}\ar[ddd]|-{\vdots} & \lightgray{\left\langle r\otimes b_{n-1}\right\rangle } &  & \lightgray{\left\langle \ell\otimes b_{n-1}\right\rangle }\ar@{}[r]|-{\oplus} & \lightgray{\left\langle h\otimes b_{n-2}\right\rangle }\ar@{}[r]|-{\oplus}\ar[ddd]|-{\vdots} & \left\langle r\otimes b_{n-1}\right\rangle \\
 &  & \  &  &  &  & \ \\
 & \save"14,2"."15,4"*[F]\frm{}\restore & \  &  &  & \save"14,6"."15,8"*[F]\frm{}\restore & \ \\
\ar@{}[d]|-{2} & \left\langle \ell\otimes t_{2}\right\rangle \ar@{}[r]|-{\oplus}\ar@{}[d]|-{\oplus} & \lightgray{\left\langle h\otimes t_{1}\right\rangle }\ar@{}[r]|-{\oplus}\ar@{}[d]|-{\oplus} & \lightgray{\left\langle r\otimes t_{2}\right\rangle }\ar@{}[d]|-{\oplus} &  & \lightgray{\left\langle \ell\otimes t_{2}\right\rangle }\ar@{}[r]|-{\oplus}\ar@{}[d]|-{\oplus} & \lightgray{\left\langle h\otimes t_{1}\right\rangle }\ar@{}[r]|-{\oplus}\ar@{}[d]|-{\oplus} & \left\langle r\otimes t_{2}\right\rangle \ar@{}[d]|-{\oplus}\\
 & \left\langle \ell\otimes b_{2}\right\rangle \ar@{}[r]|-{\oplus} & \lightgray{\left\langle h\otimes b_{1}\right\rangle }\ar@{}[r]|-{\oplus}\ar[dd]\ar[dd] & \lightgray{\left\langle r\otimes b_{2}\right\rangle } &  & \lightgray{\left\langle \ell\otimes b_{2}\right\rangle }\ar@{}[r]|-{\oplus} & \lightgray{\left\langle h\otimes b_{1}\right\rangle }\ar@{}[r]|-{\oplus}\ar[dd]\ar[dd] & \left\langle r\otimes b_{2}\right\rangle \\
 & \save"17,2"."18,4"*[F]\frm{}\restore & \  &  &  & \save"17,6"."18,8"*[F]\frm{}\restore & \ \\
\ar@{}[d]|-{1} & \left\langle \ell\otimes t_{1}\right\rangle \ar@{}[r]|-{\oplus}\ar@{}[d]|-{\oplus} & \left\langle h\otimes t_{0}\right\rangle \ar@{}[r]|-{\oplus}\ar@{}[d]|-{\oplus} & \lightgray{\left\langle r\otimes t_{1}\right\rangle }\ar@{}[d]|-{\oplus} &  & \lightgray{\left\langle \ell\otimes t_{1}\right\rangle }\ar@{}[r]|-{\oplus}\ar@{}[d]|-{\oplus} & \lightgray{\left\langle h\otimes t_{0}\right\rangle }\ar@{}[r]|-{\oplus}\ar@{}[d]|-{\oplus}\  & \left\langle r\otimes t_{1}\right\rangle \ar@{}[d]|-{\oplus}\\
 & \left\langle \ell\otimes b_{1}\right\rangle \ar@{}[r]|-{\oplus} & \lightgray{\left\langle h\otimes b_{0}\right\rangle }\ar@{}[r]|-{\oplus}\ar[ddd] & \lightgray{\left\langle r\otimes b_{1}\right\rangle } &  & \lightgray{\left\langle \ell\otimes b_{1}\right\rangle }\ar@{}[r]|-{\oplus} & \left\langle h\otimes b_{0}\right\rangle \ar@{}[r]|-{\oplus}\ar[ddd] & \left\langle r\otimes b_{1}\right\rangle \\
 &  & \ \\
 & \save"21,2"."22,4"*[F]\frm{}\restore & \  &  &  & \save"21,6"."22,8"*[F]\frm{}\restore & \ \\
\ar@{}[d]|-{0} & \left\langle \ell\otimes t_{0}\right\rangle \ar@{}[rr]|-{\oplus}\ar@{}[d]|-{\oplus} & \  & \left\langle r\otimes t_{0}\right\rangle \ar@{}[d]|-{\oplus} &  & \lightgray{\left\langle \ell\otimes t_{0}\right\rangle }\ar@{}[rr]|-{\oplus}\ar@{}[d]|-{\oplus} & \  & \left\langle r\otimes t_{0}\right\rangle \ar@{}[d]|-{\oplus}\\
 & \left\langle \ell\otimes b_{0}\right\rangle \ar@{}[rr]|-{\oplus} &  & \lightgray{\left\langle r\otimes b_{0}\right\rangle } &  & \left\langle \ell\otimes b_{0}\right\rangle \ar@{}[rr]|-{\oplus} &  & \left\langle r\otimes b_{0}\right\rangle 
}
\]
Now, we have belabored their description so as to make it evident
that:
\begin{itemize}
\item the maps are monomorphisms;
\item the maps cover the target - obvious in degrees strictly greater than
$1$, requiring a subtraction in degree $1$, and obvious again in
degree $0$; and
\item that the requisite squares are indeed pullbacks - this is not hard
to see: in terms of generators we simply add $h\otimes t_{0}$ (resp.
$h\otimes b_{0}$) to the other generators in degree $1$ and remove
$\left\langle \ell\otimes t_{0}\right\rangle $ (resp. $\left\langle r\otimes b_{0}\right\rangle $)
in degree 0).
\end{itemize}
which concludes the proof.
\end{proof}
Then, synthesizing the proposition above, and our work regarding the
globular sum preservation, we prove the following.
\begin{thm}
For all cells $T$ of $\Theta$, the canonical morphism $\left[1\right]\LG T\liso\left[1\right]\otimes T$
is an isomorphism, as indicated.
\end{thm}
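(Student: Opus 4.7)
The plan is to combine the two principal ingredients developed in this section. As noted at the start of Section \ref{sec:The-Gray-Cylinder-section}, the functor $[1]\otimes(\_)$ is the restriction of a left adjoint endofunctor on $\StrCat$, so it preserves all colimits, and in particular it preserves globular sums. Consequently, to show that a candidate $F$ realises $[1]\otimes(\_)$ on cells, it suffices to verify that $F$ preserves globular sums and that $F(\overline{n})\liso [1]\otimes\overline{n}$ for every $n\ge 0$. I will apply this criterion to $F = [1]\LG(\_)$.

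First I would invoke Lemmas \ref{lem:globular-sum-lax-gray-proof!} and \ref{cor:lax-gray-shuffle-preserves-all-binary-globular-sums}, which together assert that $[1]\LG(\_)$ preserves all globular sums. Second, I would invoke the preceding proposition, which yields, for every $n\ge 0$, the canonical isomorphism $[1]\LG\overline{n} \liso [1]\otimes\overline{n}$. Then, given any cell $T$ with its unique globular-sum decomposition $T=\overline{n_{0}}\underset{\overline{m_{1}}}{\oplus}\overline{n_{1}}\underset{\cdots}{\oplus}\cdots\underset{\overline{m_{\ell}}}{\oplus}\overline{n_{\ell}}$, the three-step chain
\begin{align*}
[1]\LG T &\;\cong\; \bigl([1]\LG\overline{n_{0}}\bigr)\underset{[1]\LG\overline{m_{1}}}{\bigoplus}\cdots\underset{[1]\LG\overline{m_{\ell}}}{\bigoplus}\bigl([1]\LG\overline{n_{\ell}}\bigr) \\
&\;\cong\; \bigl([1]\otimes\overline{n_{0}}\bigr)\underset{[1]\otimes\overline{m_{1}}}{\bigoplus}\cdots\underset{[1]\otimes\overline{m_{\ell}}}{\bigoplus}\bigl([1]\otimes\overline{n_{\ell}}\bigr) \\
&\;\cong\; [1]\otimes T
\end{align*}
delivers the desired isomorphism: the first step is preservation of globular sums by $[1]\LG(\_)$, the second applies the globes case term-by-term, and the third is preservation of globular sums by the left adjoint $[1]\otimes(\_)$.

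There is no genuine obstacle here; the only thing left to check is that the composite isomorphism so constructed agrees with the \emph{canonical} comparison $[1]\LG T \longrightarrow [1]\otimes T$ supplied by the universal property of the defining colimit. This is a diagram chase: both morphisms are determined by how they receive the structure maps out of the slabs $[\ell{+}1];(T_{1},\dots,[0],\dots,T_{n})$ and $[\ell];(T_{1},\dots,[1]\otimes T_{k},\dots,T_{n})$ appearing in the lax shuffle colimit, and on each such slab the agreement is immediate from the preceding proposition together with naturality of the globular-sum decomposition in its summands. Hence the canonical comparison $[1]\LG T\liso [1]\otimes T$ is an isomorphism for every cell $T$ of $\Theta$.
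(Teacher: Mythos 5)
Your proposal is correct and is essentially the paper's own argument: the paper likewise reduces to the globe case via the globular-sum preservation of $\left[1\right]\LG\left(\_\right)$ (Section \ref{subsec:The-assignment-LG-preserves-globular-sums}) together with the observation that $\left[1\right]\otimes\left(\_\right)$, being the restriction of a left adjoint on $\StrCat$, preserves globular sums, and then cites the Steiner-theoretic proposition for $\left[1\right]\LG\overline{n}\liso\left[1\right]\otimes\overline{n}$. Your added remark verifying that the composite agrees with the canonical comparison map is a reasonable explicitation of what the paper leaves implicit.
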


\begin{proof}
Since $\left[1\right]\LG\left(\_\right)$ preserves globular sums
(see Section \ref{subsec:The-assignment-LG-preserves-globular-sums}),
it suffices to show that $\left[1\right]\LG\overline{n}\liso\left[1\right]\otimes\overline{n}$
and this is implied by the proposition above.
\end{proof}

\subsection{\label{sec:Shifted-Product-Rule-=00005Comega-categories}Shifted
Product Rule and a $\protect\StrCat$-enriched categorical treatment
of the lax shuffle decomposition}

Its not hard to see that the objects of the $\omega$-category $\left[1\right]\otimes\left[n\right];\left(S_{i}\right)$,
the Gray cylinder on $\left[n\right];\left(S_{i}\right)$, are in
bijection with the objects of $\left[1\right]\times\left[n\right];\left(S_{i}\right)$,
the cartesian cylinder on $\left[n\right];\left(S_{i}\right)$. Indeed
both are in bijection with the set $\left\{ 0,1\right\} \times\left\{ 0,1,\cdots,n\right\} $.
This observation in turn gives us easy access to a description of
the cellular sets $\left[1\right]\otimes\left[n\right];\left(S_{i}\right)$
as $\omega$-category enriched categories.
\begin{defn}
For any $n\in\N$ and objects $S_{1},\dots,S_{n}\in\Ob\left(\Theta\right)$,
define $\mathsf{P.R.}\left(S_{1},\dots,S_{n}\right)$ to be the colimit
\[
\colim\left\{ \vcenter{\vbox{\xyR{1.5pc}\xyC{3pc}\xymatrix{\left(\left[1\right]\otimes S_{1}\right)\times S_{2}\times\cdots\times S_{n}\\
S_{1}\times S_{2}\times\cdots\times S_{n}\ar[u]^{\left\{ 1\right\} \otimes S_{1}\times\id\times\cdots\times\id}\ar[d]\\
\vdots\\
S_{1}\times\cdots\times S_{n-1}\times S_{n}\ar[d]_{\id\times\cdots\times\id\times\left\{ 0\right\} \otimes S_{n}}\ar[u]\\
S_{1}\times\cdots\times S_{n-1}\times\left(\left[1\right]\otimes S_{n}\right)
}
}}\right\} 
\]
taken in $\omega$-categories.
\end{defn}

\begin{rem}
The reader should observe that we could well have written the following
expression.
\[
\mathsf{P.R.}\left(\left(S_{i}\right)_{i\in\left\langle n\right\rangle }\right)=\left(\left(\left[1\right]\otimes S_{1}\right)\times\cdots\times S_{n}\right)\underset{\prod S_{i}}{\bigoplus}\cdots\underset{\prod S_{i}}{\bigoplus}\left(S_{1}\times\cdots\times\left(\left[1\right]\otimes S_{n}\right)\right)
\]
\end{rem}

\begin{lem}
\label{lem:PR-is-PR-isPR}For any $A_{1},A_{2},\dots,A_{n}$ and $B_{1},B_{2},\dots,B_{m}$
of $\Theta$ there is a canonical isomorphism from the colimit
\[
\colim\left\{ \vcenter{\vbox{\xyR{1.5pc}\xyC{3pc}\xymatrix{\PR\left(A_{1},A_{2},\dots,A_{n}\right)\times B_{1}\times B_{2}\times\cdots\times B_{m}\\
A_{1}\times A_{2}\times\cdots\times A_{n}\times B_{1}\times B_{2}\times\cdots\times B_{m}\ar[u]\ar[d]\\
A_{1}\times A_{2}\times\cdots\times A_{n}\times\PR\left(B_{1}\times B_{2}\times\cdots\times B_{m}\right)
}
}}\right\} 
\]
to the object $\PR\left(A_{1},A_{2},\dots,A_{n},B_{1},B_{2},\dots,B_{m}\right)$.
\end{lem}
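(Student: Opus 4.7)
The plan is to unfold both sides of the asserted isomorphism into a common iterated wide pushout, and then observe that the two reorganizations coincide.

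First I would expand the defining wide pushouts for $\PR(A_1,\dots,A_n)$ and $\PR(B_1,\dots,B_m)$ inside the top and bottom terms of the outer span. Under the hypothesis that cartesian product with a fixed $\omega$-category preserves these particular connected colimits, the term $\PR(A_1,\dots,A_n)\times B_1\times\cdots\times B_m$ identifies with the wide pushout of the objects
\[
A_1\times\cdots\times([1]\otimes A_k)\times\cdots\times A_n\times B_1\times\cdots\times B_m
\]
for $k=1,\dots,n$, glued along the common copy of $A_1\times\cdots\times A_n\times B_1\times\cdots\times B_m$, and a dual statement holds for the term $A_1\times\cdots\times A_n\times\PR(B_1,\dots,B_m)$.

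Next I would invoke the Fubini property for colimits to collapse the outer span together with the two inner wide pushouts into a single wide pushout indexed by the concatenation of the two zigzag diagrams, sharing the central vertex $A_1\times\cdots\times A_n\times B_1\times\cdots\times B_m$. Relabelling $(A_1,\dots,A_n,B_1,\dots,B_m)$ as a single sequence $(S_1,\dots,S_{n+m})$, the combined diagram is visibly the defining wide pushout of $\PR(A_1,\dots,A_n,B_1,\dots,B_m)$; the canonical comparison map induced by the matching universal properties is the desired isomorphism, and the endpoint maps $\{1\}\otimes A_1\times\id\times\cdots$ etc. match up with those prescribed by the definition of $\PR$ on the concatenated sequence.

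The principal obstacle is justifying the preservation of the defining wide pushout of $\PR$ under cartesian product with a fixed object of $\StrCat$, since $\StrCat$ with its cartesian structure is not manifestly closed. Every $\omega$-category appearing in these colimits is a product of cells of $\Theta$, hence a strong Steiner $\omega$-category by Remark \ref{rem:Theta-is-strong-steiner} and Theorem \ref{thm:(Wreath-Product-of-Steiner}. I would verify the preservation by transporting the wide pushout across the equivalence $\sf{StrSt}\StrCat\simeq\sf{StrSt}\CDA$ and observing that, on directed augmented complexes, both the cartesian product and the relevant pushout (a pushout along inclusions of subcomplexes) admit explicit descriptions whose interaction can be checked degree-wise; alternatively, a direct comparison of universal properties, exploiting that the legs of the defining wide pushout of $\PR$ are monomorphisms of cells glued along the central face, should suffice.
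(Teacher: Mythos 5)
Your overall route is the right one, and it is essentially the argument the paper leaves implicit: Lemma \ref{lem:PR-is-PR-isPR} is stated there without proof, being regarded as colimit bookkeeping from the reformulation $\PR\left(\left(S_{i}\right)\right)=\left(\left(\left[1\right]\otimes S_{1}\right)\times\cdots\times S_{n}\right)\underset{\prod S_{i}}{\bigoplus}\cdots\underset{\prod S_{i}}{\bigoplus}\left(S_{1}\times\cdots\times\left(\left[1\right]\otimes S_{n}\right)\right)$. Expanding the two inner wide pushouts, applying Fubini to reassemble the concatenated zigzag with central vertex $\prod A_{i}\times\prod B_{j}$, and matching the endpoint inclusions is exactly what is needed, and your handling of those steps is fine.

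However, your final paragraph, which carries the only real mathematical weight, contains two genuine errors. First, it is false that every $\omega$-category appearing in these colimits is strong Steiner: the results you cite (Remark \ref{rem:Theta-is-strong-steiner} and the wreath-product theorem) concern $\t\int\left(\_\right)$, not cartesian products, and cartesian products of cells are in general not even based --- already $\lambda\left(\left[1\right]\times\left[1\right]\right)$ admits no basis, since the four nonidentity generating arrows $l,b,t,r$ are precisely the minimal elements of the positivity submonoid in degree $1$ yet satisfy $\left[l\right]+\left[t\right]=\left[b\right]+\left[r\right]$ in a group of rank three, so they cannot be free generators. Second, even on strong Steiner objects the equivalence $\sf{StrSt}\CDA\liso\sf{StrSt}\StrCat$ intertwines the tensor product of complexes with the \emph{Gray} tensor product, not with the cartesian product; the cartesian product of $\omega$-categories has no degreewise description on $\CDA$, and pushing colimits through $\nu$ would in any case require the rigidity hypotheses of Ara--Maltsiniotis on the diagram. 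So the Steiner-theoretic verification you sketch would fail as stated. Fortunately, the worry it was meant to address is unfounded: $\StrCat$ \emph{is} cartesian closed (a standard fact; the internal hom has strict $\omega$-functors as objects, strict transformations as $1$-cells, and higher strict modifications above, and one may see it inductively from the cartesian closedness of $\sfV$-enriched categories for cartesian closed $\sfV$), so $\left(\_\right)\times X$ is a left adjoint and preserves \emph{all} colimits, not merely the connected ones you need. Substituting that single observation for your last paragraph makes your proof complete.
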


\begin{lem}
\label{cor:=00005Comega-cat-descrption-of-lax-tensor}Let $n\in\N$
and let 
\begin{eqnarray*}
S_{1} & = & \left[s_{1}\right];\left(R_{1}^{1},R_{2}^{1},\dots,R_{s_{1}}^{1}\right)\\
S_{2} & = & \left[s_{2}\right];\left(R_{1}^{2},R_{2}^{2},\dots,R_{s_{2}}^{2}\right)\\
\vdots & \vdots & \vdots\\
S_{n-1} & = & \left[s_{n-1}\right];\left(R_{1}^{n-1},R_{2}^{n-1},\dots,R_{s_{n-1}}^{n-1}\right)\\
S_{n} & = & \left[s_{n}\right];\left(R_{1}^{n},R_{2}^{n},\dots,R_{s_{n}}^{n}\right)
\end{eqnarray*}
 be cells of $\Theta.$ Then, for each $1\leq\ell\leq k\leq n$ the
$\omega$-categories
\[
\mathsf{P.R.}\left(\left(S_{i}\right)_{i\in\left\langle \ell,k\right\rangle }\right)=\mathsf{P.R.}\left(\left(\left[s_{i}\right];\left(R_{j}^{i}\right)_{j\in\left\langle s_{i}\right\rangle }\right)_{i\in\left\langle \ell,k\right\rangle }\right)
\]
 admit the following description as categories enriched in $\omega$-categories.
\begin{itemize}
\item on objects we find 
\begin{eqnarray*}
\Ob\left(\mathsf{P.R.}\left(\left(S_{i}\right)_{i\in\left\langle \ell,k\right\rangle }\right)\right) & = & \Ob\left(\PR\left(\left(\left[s_{i}\right]\right)_{i\in\left\langle \ell,k\right\rangle }\right)\right)\\
 & \iso & \left\{ \ell,\ell+1,\dots,k\right\} \times\left(\underset{i\in\left\langle \ell,k\right\rangle }{\prod}\left\{ 1,2,\dots,s_{i}\right\} \right)
\end{eqnarray*}
\item and the $\Hom$-$\omega$-categories 
\[
\Hom_{\mathsf{P.R.}\left(\left(S_{i}\right)_{i\in\left\langle \ell,k\right\rangle }\right)}\left(\left(x,\left(z_{i}\right)_{i\in\left\langle \ell,k\right\rangle }\right),\left(y,\left(w_{i}\right)_{i\in\left\langle \ell,k\right\rangle }\right)\right)
\]
 are given by the following formula(e):
\begin{itemize}
\item if $\ell\leq k$ then 
\[
\Hom_{\mathsf{P.R.}\left(\left(S_{i}\right)_{i\in\left\langle \ell,k\right\rangle }\right)}\left(\left(x,\left(z_{i}\right)_{i\in\left\langle \ell,k\right\rangle }\right),\left(y,\left(w_{i}\right)_{i\in\left\langle \ell,k\right\rangle }\right)\right)
\]
 is the $\omega$-category
\[
\left(\prod_{a\in\left\langle \ell,x\right\rangle }\prod_{q\in\left\langle z_{a},w_{a}\right\rangle }R_{q}^{a}\right)\times\PR\left(\left(\left(R_{q}^{b}\right)_{q\in\left\langle z_{b},w_{b}\right\rangle }\right)_{b\in\left\langle x,y\right\rangle }\right)\times\left(\prod_{c\in\left\langle y,k\right\rangle }\prod_{q\in\left\langle z_{c},w_{c}\right\rangle }R_{q}^{c}\right)
\]
\item if $\ell\not\leq k$ then 
\[
\Hom_{\mathsf{P.R.}\left(\left(S_{i}\right)_{i\in\left\langle \ell,k\right\rangle }\right)}\left(\left(x,\left(z_{i}\right)_{i\in\left\langle \ell,k\right\rangle }\right),\left(y,\left(w_{i}\right)_{i\in\left\langle \ell,k\right\rangle }\right)\right)=\varnothing
\]
\end{itemize}
\end{itemize}
\end{lem}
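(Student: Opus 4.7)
The plan is to proceed by induction on the number of cells $N = k - \ell + 1$. For the base case $N = 1$ we need only compute the Hom-$\omega$-categories of $[1] \otimes S_\ell$ where $S_\ell = [s_\ell];(R_1^\ell,\dots,R_{s_\ell}^\ell)$. Using the identification $[1]\LG T \iso [1]\otimes T$ already established and the colimit formula of Definition \ref{def:LG-on-objects}, $[1]\otimes S_\ell$ is a wide pushout of cells of the form $[s_\ell+1];(\dots,[0],\dots)$ and $[s_\ell];(\dots,[1]\otimes R_q^\ell,\dots)$. The Hom-$\omega$-category of a cell $[n];(T_1,\dots,T_n)$ between objects $i \le j$ is the product $\prod_{q \in \langle i+1,j\rangle} T_q$ by Berger's wreath-product formalism; reading off these products along the colimit yields the claimed formula in the case $N=1$, with the middle $\PR$-factor collapsing to either a single $[1]\otimes R_q^\ell$ or to a point depending on whether $x = y$.

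For the inductive step, fix an intermediate index $\ell \le m < k$ and invoke Lemma \ref{lem:PR-is-PR-isPR} to express $\PR((S_i)_{i\in\langle \ell,k\rangle})$ as the pushout
\[
\Bigl(\PR((S_i)_{i\in\langle \ell,m\rangle}) \times \!\!\!\prod_{i \in \langle m+1,k\rangle}\!\!\! S_i\Bigr) \bigoplus_{\prod_{i\in\langle \ell,k\rangle} S_i} \Bigl(\prod_{i\in\langle \ell,m\rangle} S_i \times \PR((S_i)_{i\in\langle m+1,k\rangle})\Bigr).
\]
Enriched Homs distribute over products of $\omega$-categories, so the Hom-$\omega$-categories of each piece are products whose factors are computed by the inductive hypothesis (for the $\PR$-factor) and by the Berger product formula for cells of $\Theta$ (for the remaining factors). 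Since the gluing is along the wide sub-$\omega$-category $\prod_i S_i$ (which has the same objects), the Hom-$\omega$-category of the pushout between fixed endpoints $(x, z_\bullet)$ and $(y, w_\bullet)$ is the pushout of the two piece-Homs along the Hom in $\prod_i S_i$; a case analysis on whether $x,y \le m$, both $x,y > m$, or $x \le m < y$ (the latter handled by a further application of Lemma \ref{lem:PR-is-PR-isPR} in the middle factor) reassembles this pushout into the three-factor product in the statement.

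I expect the main obstacle to be the bookkeeping at the ``seam'' $b = m$, together with verifying that the Hom-$\omega$-category of the pushout above genuinely equals the pushout of Hom-$\omega$-categories. The latter holds because the gluing is along a wide subobject and the composition in $\PR$ is freely generated from the cell structure, so no non-trivial composites are introduced in the pushout that would distort the Hom-computation. Once this enriched-categorical interchange is established, the appearance of the three factors in the statement follows directly: the outer factors $\prod_{a \in \langle \ell, x\rangle}\prod_{q\in\langle z_a,w_a\rangle} R_q^a$ and $\prod_{c \in \langle y, k\rangle}\prod_{q \in \langle z_c,w_c\rangle} R_q^c$ record, respectively, the ``already crossed'' and ``not yet crossed'' portions of the cylinder where the Homs reduce to pure Berger products, while the middle $\PR$-factor records the sub-range $\langle x,y\rangle$ in which the crossing is still being negotiated, and its indexing by $(R_q^b)_{q \in \langle z_b, w_b \rangle}$ arises from iterating the inductive hypothesis and the Berger Hom-formula inside each $S_b$.
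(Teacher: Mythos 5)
Your overall architecture---induction on the number of cells, splitting via Lemma \ref{lem:PR-is-PR-isPR}, with the wreath-product Hom formula $\Hom_{\left[n\right];\left(T_{i}\right)}\left(x,y\right)\iso\prod_{q\in\left\langle x,y\right\rangle }T_{q}$ as input---is considerably more formal than the paper's own proof, which simply reads the formula off the pasting diagram of Figure \ref{fig:justifying the shuffle} and declares the enriched case ``essentially the same''. But your central step fails as stated. You assert that the Hom-$\omega$-category of the pushout $\bigl(\PR\left(\left(S_{i}\right)_{i\leq m}\right)\times\prod_{i>m}S_{i}\bigr)\oplus_{\prod_{i}S_{i}}\bigl(\prod_{i\leq m}S_{i}\times\PR\left(\left(S_{i}\right)_{i>m}\right)\bigr)$ between fixed endpoints is the pushout of the two piece-Homs along the Hom in $\prod_{i}S_{i}$, on the grounds that ``no non-trivial composites are introduced in the pushout''. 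In the crossing case $x\leq m<y$ this is exactly backwards: the object $\left(y,w_{\bullet}\right)$ does not lie in the first piece and $\left(x,z_{\bullet}\right)$ does not lie in the second, so both piece-Homs (and a fortiori their pushout) are empty, whereas the true Hom is nonempty and consists \emph{entirely} of newly created composites through seam objects $\left(m,u_{\bullet}\right)$. The correct computation is a coend (composite of Hom-profunctors) $\int^{u}\Hom_{2}\left(\left(m,u\right),\left(y,w\right)\right)\times\Hom_{1}\left(\left(x,z\right),\left(m,u\right)\right)$, and to evaluate it you must argue (i) that zigzags of length greater than two do not occur---this follows from the monotonicity of the level coordinate, which forbids re-entering the first piece once the level exceeds $m$, so every crossing morphism factors as a piece-one morphism followed by a piece-two morphism---and (ii) that the coend collapses to the three-factor product of the statement, using fullness of the seam inclusion, the co-Yoneda reduction $\Hom\left(z,-\right)\otimes\Hom\left(-,w\right)\iso\Hom\left(z,w\right)$ in the spectator coordinates, and, in the middle coordinate, an identification of $\PR$ over $\left\langle x,m\right\rangle $ glued to $\PR$ over $\left\langle m,y\right\rangle $ with $\PR$ over $\left\langle x,y\right\rangle $. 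Your appeal to Lemma \ref{lem:PR-is-PR-isPR} ``in the middle factor'' for this last point is circular as written: that lemma identifies the glued \emph{object}, and extracting its Hom-$\omega$-categories is precisely the problem at hand; the induction must instead be arranged so that the seam-gluing of Homs is an instance of the inductive hypothesis for a strictly shorter range.

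The same defect infects your base case: the lax shuffle decomposition of $\left[1\right]\otimes S_{\ell}$ is itself a wide pushout in $\StrCat$, so ``reading off these products along the colimit'' already requires the factorization-through-the-seam analysis for morphisms spanning several of the glued faces, not a termwise inspection of the diagram. None of this is fatal---the level-monotonicity argument repairs each instance uniformly, and the repaired proof is arguably a more honest account than the paper's picture-proof---but as written the key enriched-categorical interchange on which your induction rests is false in exactly the case (the crossing case) that produces the interesting middle $\PR$-factor, so the proposal does not go through without it.
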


\begin{rem}
See that in the case $n=1$ this formula computes $\left[1\right]\otimes S_{1}$
as $\left[1\right]\otimes S=\mathsf{P.R.}\left(S\right)$. We note
too that, since we consider only cells of finite height, the recursion
in the definition above terminates.
\end{rem}

\begin{proof}
The proof of the first claim comes from consideration of the pasting
diagram for $\left[1\right]\otimes\left(\left[n\right];\left(T_{1},T_{2},\dots,T_{n}\right)\right)$
given in Figure \ref{fig:justifying the shuffle}. The second is more
complicated, but is 'essentially' the same.
\end{proof}
What's more, this shifted product rule description allows for a completely
explicit description of the $\omega$-category enriched functors $\left[1\right]\otimes f$
for each $f:A\longrightarrow B$ of $\Theta$.

\subsubsection{The $\omega$-functors $\left[1\right]\otimes f:\left[1\right]\otimes T\protect\longrightarrow\left[1\right]\otimes S$
as $\omega$-category enriched functors}

As we'll see, to characterize the action of $\left[1\right]\otimes\left(\_\right)$
on the morphisms of $\Theta$, it will suffice to recurse to full
subcategory $\t\int\t\subset\Theta$.

Suppose

\[
f;\mathbf{g}:\left[n\right];\left(T_{i}\right)_{i\in\left\langle n\right\rangle }\longrightarrow\left[m\right];\left(S_{j}\right)_{j\in\left\langle m\right\rangle }
\]
where 
\[
\mathbf{g}=\left(\left(g_{i\rightarrow j}\right)_{j\in F\left(f\right)\left(i\right)}\right)_{i\in\left\langle n\right\rangle }
\]
 with 
\[
g_{i\rightarrow j}:T_{i}\longrightarrow S_{j},
\]
be a morphism of $\Theta\liso\t\int\Theta$. Since
\[
\Ob\left(\left[n\right];\left(T_{i}\right)_{i\in\left\langle n\right\rangle }\right)\iso\Ob\left(\left[1\right]\otimes\left[n\right]\right)\iso\Ob\left(\left[1\right]\times\left[n\right]\right)\iso\Ob\left(\left[1\right]\right)\times\Ob\left(\left[n\right]\right)
\]
and the two functors $\left[1\right]\otimes\left(\_\right)$ and $\left[1\right]\times\left(\_\right)$
coincide on objects, we characterize the $\omega$-category enriched
functor 
\[
\left[1\right]\otimes f;\mathbf{g}:\left[1\right]\otimes\left[n\right];\left(T_{i}\right)_{i\in\left\langle n\right\rangle }\longrightarrow\left[1\right]\otimes\left[m\right];\left(S_{j}\right)_{j\in\left\langle m\right\rangle }
\]
 as follows.

We observe that $\Ob\left(\left[1\right]\otimes f;\mathbf{g}\right)$
is given
\[
\vcenter{\vbox{\xyR{.25pc}\xyC{6pc}\xymatrix{\Ob\left(\left[1\right]\otimes\left[n\right];\left(T_{i}\right)\right)\ar[r]\ar@{=}[d] & \Ob\left(\left[1\right]\otimes\left[m\right];\left(S_{j}\right)\right)\ar@{=}[d]\\
\left[1\right]\times\left[n\right]\ar[r]^{\id\times f} & \left[1\right]\times\left[m\right]
}
}}
\]
The types of the constituent $\omega$-functors
\[
\vcenter{\vbox{\xyR{.25pc}\xyC{6pc}\xymatrix{\Hom_{\left[1\right]\otimes\left[n\right];\left(T_{i}\right)}\left(\left(y,x\right),\left(w,z\right)\right)\ar[r] & \Hom_{\left[1\right]\otimes\left[m\right];\left(S_{j}\right)}\left(\left(y,f\left(x\right)\right),\left(w,f\left(z\right)\right)\right)}
}}
\]
are then given case-wise by Corollary \ref{cor:=00005Comega-cat-descrption-of-lax-tensor},
depending on $\left\langle y,w\right\rangle $:
\begin{itemize}
\item $1\in\left\langle y,w\right\rangle $, 
\[
\vcenter{\vbox{\xyR{.25pc}\xyC{6pc}\xymatrix{\PR\left(\left(T_{i}\right)_{i\in\left\langle x,z\right\rangle }\right)\ar[r]^{\PR\left(f;\mathbf{g},\left\langle x,z\right\rangle \right)} & \underset{i\in\left\langle x,z\right\rangle }{\prod}\PR\left(\left(S_{j}\right)_{j\in F\left(f\right)\left(i\right)}\right)}
}}
\]
\item $1\notin\left\langle y,w\right\rangle =\varnothing$
\[
\vcenter{\vbox{\xyR{.25pc}\xyC{6pc}\xymatrix{\underset{i\in\left\langle x,z\right\rangle }{\prod}\left(T_{i}\right)\ar[r]^{\underset{i\in\left\langle x,z\right\rangle }{\prod}\left(g_{i\rightarrow j}\right)_{j\in F\left(f\right)\left(i\right)}} & \underset{i\in\left\langle x,z\right\rangle }{\prod}\underset{j\in F\left(f\right)\left(i\right)}{\prod}S_{j}}
}}
\]
\item $y\not\leq w$
\[
\vcenter{\vbox{\xyR{.25pc}\xyC{6pc}\xymatrix{\varnothing\ar[r] & \varnothing}
}}
\]
\end{itemize}
with the meaning of $\PR\left(f;\mathbf{g},\left\langle x,z\right\rangle \right)$
given by recursion to the following case.
\begin{defn}
Suppose 
\[
f;\mathbf{g}:\left[n\right];\left(\left[t_{i}\right]\right)_{i\in\left\langle n\right\rangle }\longrightarrow\left[m\right];\left(\left[s_{j}\right]\right)_{j\in\left\langle m\right\rangle }
\]
to be a morphism of $\t\int\t$. Then set 
\[
\vcenter{\vbox{\xyR{.25pc}\xyC{6pc}\xymatrix{\PR\left(\left(\left[t_{i}\right]\right)_{i\in\left\langle x,z\right\rangle }\right)\ar[r]^{\PR\left(f;\mathbf{g},\left\langle x,z\right\rangle \right)} & \underset{i\in\left\langle x,z\right\rangle }{\prod}\PR\left(\left(\left[s_{j}\right]\right)_{j\in F\left(f\right)\left(i\right)}\right)}
}}
\]
to be the functor given as follows. We set

\[
\vcenter{\vbox{\xyR{.25pc}\xyC{6pc}\xymatrix{\Ob\left(\PR\left(\left(\left[t_{i}\right]\right)_{i\in\left\langle x,z\right\rangle }\right)\right)\ar@{=}[d]\ar[r]^{\Ob\left(\PR\left(f;\mathbf{g},\left\langle x,z\right\rangle \right)\right)} & \underset{i\in\left\langle x,z\right\rangle }{\prod}\Ob\left(\PR\left(\left(\left[s_{j}\right]\right)_{j\in F\left(f\right)\left(i\right)}\right)\right)\ar@{=}[d]\\
\left[x,z\right]\times\left(\underset{i\in\left\langle x,z\right\rangle }{\prod}\left[t_{i}\right]\right)\ar[r] & \prod_{i\in\left\langle x,z\right\rangle }\left[f\left(i-1\right),f\left(i\right)\right]\times\prod_{j\in F\left(f\right)\left(i\right)}\left[s_{j}\right]
}
}}
\]
to be the product of:
\begin{itemize}
\item the composite multimorphism 
\[
\vcenter{\vbox{\xyR{0pc}\xyC{6pc}\xymatrix{\left[x,z\right]\ar[r] & \underset{i\in\left\langle x,z\right\rangle }{\prod}\left[i-1,i\right]\ar[r] & \underset{i\in\left\langle x,z\right\rangle }{\prod}\left[f\left(i-1\right),f\left(i\right)\right]\\
p<i\ar@{|->}[r] & i-1\\
p\geq i\ar@{|->}[r] & i\\
 & \left(w_{i}\right)_{i\in\left\langle x,z\right\rangle }\ar@{|->}[r] & \left(f\vert_{{\scriptscriptstyle \left[i-1,i\right]}}\left(w_{i}\right)\right)_{i\in\left\langle x,z\right\rangle }
}
}}
\]
which locates a $0-$globular sum inside a product; and
\item the product of multimorphisms
\[
\vcenter{\vbox{\xyR{.25pc}\xyC{6pc}\xymatrix{\underset{i\in\left\langle x,z\right\rangle }{\prod}\left(g_{i\rightarrow j}\right)_{j\in F\left(j\right)\left(i\right)}:\underset{i\in\left\langle x,z\right\rangle }{\prod}\left[t_{i}\right]\ar[r] & \underset{i\in\left\langle x,z\right\rangle }{\prod}\left(\prod_{j\in F\left(f\right)\left(i\right)}\left[s_{j}\right]\right)}
}}
\]
\end{itemize}
and on morphisms we define the constituent functors
\[
\xyR{1pc}\xyC{1pc}\xymatrix{\Hom\left(\left(b,\left(a_{i}\right)_{i\in\left\langle x,z\right\rangle }\right),\left(d,\left(c_{i}\right)_{i\in\left\langle x,z\right\rangle }\right)\right)\ar[d]\\
\underset{i\in\left\langle x,z\right\rangle }{\prod}\Hom\left(\left(f\vert_{{\scriptscriptstyle \left[i-1,i\right]}}\left(b\right),\left(g_{i\rightarrow j}\left(a_{i}\right)\right)\right),\left(f\vert_{{\scriptscriptstyle \left[i-1,i\right]}}\left(d\right),\left(g_{i\rightarrow j}\left(c_{i}\right)\right)\right)\right)
}
\]
by way of the following reductions.

The source of the $2$-functor computes to the product
\[
\underset{i\in\left\langle x,z\right\rangle }{\prod}\begin{cases}
\PR\left(\left(\left[0\right]\right)_{j\in\left\langle a_{i},c_{i}\right\rangle }\right) & i\in\left\langle b,d\right\rangle \neq\varnothing\\
\prod_{i\in\left\langle a_{i},b_{i}\right\rangle }\left[0\right] & i\notin\left\langle b,d\right\rangle \neq\varnothing\\
\varnothing & i\notin\left\langle b,d\right\rangle =\varnothing
\end{cases}
\]
which reduces to 
\[
\underset{i\in\left\langle x,z\right\rangle }{\prod}\begin{cases}
\left[a_{i},c_{i}\right] & i\in\left\langle b,d\right\rangle \neq\varnothing\\
\left[0\right] & i\notin\left\langle b,d\right\rangle \neq\varnothing\\
\varnothing & i\notin\left\langle b,d\right\rangle =\varnothing
\end{cases}
\]
Similarly, the target computes to the product 
\[
\underset{i\in\left\langle x,z\right\rangle }{\prod}\underset{j\in F\left(f\right)\left(i\right)}{\prod}\begin{cases}
\PR\left(\left(\left[0\right]\right)_{k\in\left\langle g_{i\rightarrow j}\left(a_{i}\right),g_{i\rightarrow j}\left(c_{i}\right)\right\rangle }\right) & j\in\left\langle f\vert_{{\scriptscriptstyle \left[i-1,i\right]}}\left(b\right),f\vert_{{\scriptscriptstyle \left[i-1,i\right]}}\left(d\right)\right\rangle \neq\varnothing\\
\prod_{k\in\left\langle g_{i\rightarrow j}\left(a_{i}\right),g_{i\rightarrow j}\left(c_{i}\right)\right\rangle }\left[0\right] & j\not\in\left\langle f\vert_{{\scriptscriptstyle \left[i-1,i\right]}}\left(b\right),f\vert_{{\scriptscriptstyle \left[i-1,i\right]}}\left(d\right)\right\rangle \neq\varnothing\\
\varnothing & j\notin\left\langle f\vert_{{\scriptscriptstyle \left[i-1,i\right]}}\left(b\right),f\vert_{{\scriptscriptstyle \left[i-1,i\right]}}\left(d\right)\right\rangle =\varnothing
\end{cases}
\]
which reduces to 
\[
\underset{i\in\left\langle x,z\right\rangle }{\prod}\underset{j\in F\left(f\right)\left(i\right)}{\prod}\begin{cases}
\left[g_{i\rightarrow j}\left(a_{i}\right),g_{i\rightarrow j}\left(c_{i}\right)\right] & j\in\left\langle f\vert_{{\scriptscriptstyle \left[i-1,i\right]}}\left(b\right),f\vert_{{\scriptscriptstyle \left[i-1,i\right]}}\left(d\right)\right\rangle \neq\varnothing\\
\left[0\right] & j\not\in\left\langle f\vert_{{\scriptscriptstyle \left[i-1,i\right]}}\left(b\right),f\vert_{{\scriptscriptstyle \left[i-1,i\right]}}\left(d\right)\right\rangle \neq\varnothing\\
\varnothing & j\notin\left\langle f\vert_{{\scriptscriptstyle \left[i-1,i\right]}}\left(b\right),f\vert_{{\scriptscriptstyle \left[i-1,i\right]}}\left(d\right)\right\rangle =\varnothing
\end{cases}
\]
Thus, we define 
\[
\xyR{1pc}\xyC{1pc}\xymatrix{\Hom\left(\left(b,\left(a_{i}\right)_{i\in\left\langle x,z\right\rangle }\right),\left(d,\left(c_{i}\right)_{i\in\left\langle x,z\right\rangle }\right)\right)\ar[d]\\
\underset{i\in\left\langle x,z\right\rangle }{\prod}\Hom\left(\left(f\vert_{{\scriptscriptstyle \left[i-1,i\right]}}\left(b\right),\left(g_{i\rightarrow j}\left(a_{i}\right)\right)\right),\left(f\vert_{{\scriptscriptstyle \left[i-1,i\right]}}\left(d\right),\left(g_{i\rightarrow j}\left(c_{i}\right)\right)\right)\right)
}
\]
to be the obvious map which is constituted of, component-wise, restrictions
of maps $g_{i\rightarrow j}$ and canonical maps $\left[0\right]\rightarrow\left[0\right]$
or $\varnothing\rightarrow\varnothing$.
\end{defn}

\begin{example}
While the formulae above are straightforward, it is not necessarily
easy to see exactly what they are doing. As such it may be hard to
convince one's self that these formulae are: (1) correct and/or (2)
interesting. As an example then, which should both clarify and justify
the formulae, we'll consider the map
\[
d^{1};\left(s^{1},s^{0}\right)=\left\{ 0,2\right\} ;\left(\left\{ 0,1,1\right\} ,\left\{ 0,0,1\right\} \right):\left[1\right];\left(\left[2\right]\right)\longrightarrow\left[2\right];\left(\left[1\right],\left[1\right]\right)
\]
and describe the map
\[
\mathsf{P.R.}\left(d^{1};\left(s^{1},s^{0}\right),\left\langle 0,1\right\rangle \right):\mathsf{P.R.}\left(\left[2\right]\right)\longrightarrow\mathsf{P.R.}\left(\left[1\right],\left[1\right]\right).
\]
 In Figure \ref{fig:2-lg-1} we sketch $\mathsf{P.R.}\left(\left[2\right]\right)$
and in Figure \ref{fig:1-times-1-lg-1} we sketch a pasting diagram
for $\PR\left(\left[1\right],\left[1\right]\right)$\footnote{We hope that the reader will forgive us for only providing a picture
of the pasting diagram in this second case; including the copy of
$\left[1\right]\times\left[1\right]$ as the $\Hom$-category between
$\left(0,0,0\right)$ and $\left(1,1,2\right)$ is difficult.}. Then, by our formula, the functor 
\[
\mathsf{P.R.}\left(d^{1};\left(s^{1},s^{0}\right),\left\langle 0,1\right\rangle \right):\PR\left(\left[2\right]\right)\longrightarrow\PR\left(\left[1\right],\left[1\right]\right)
\]
 can be given explicitly by
\[
\vcenter{\vbox{\xyR{.25pc}\xyC{6pc}\xymatrix{\Ob\left(\PR\left(\left[2\right]\right)\right)\ar[r]\ar@{=}[d] & \Ob\left(\mathsf{P.R.}\left(\left[1\right],\left[1\right]\right)\right)\ar@{=}[d]\\
\left[1\right]\times\left[2\right]\ar[r]^{d^{1}\times\left(s^{1},s^{0}\right)} & \left[2\right]\times\left[1\right]\times\left[1\right]
}
}}
\]
We first compute the functors between $\Hom$-categories of the first
sort in Lemma \ref{cor:=00005Comega-cat-descrption-of-lax-tensor}.
We compute
\[
\vcenter{\vbox{\xyR{.25pc}\xyC{6pc}\xymatrix{\Hom_{\PR\left(\left[2\right]\right)}\left(\left(0,0\right),\left(1,1\right)\right)\ar[r]\ar@{=}[d] & \Hom_{\PR\left(\left[0\right]_{1},\left[0\right]_{2}\right)}\left(\left(0,0,0\right),\left(2,1,0\right)\right)\ar@{=}[d]\\
\PR\left(\left[0\right]_{1}\right)\ar@{=}[d] & \underbrace{\PR\left(\overbrace{\left[0\right]_{1}}^{0<1}\right)}_{p=1}\times\underbrace{\PR\left(\overbrace{}^{0\not<0}\right)}_{p=2}\ar@{=}[d]\\
\left[0,1\right]\ar[r]^{\left\{ 0,1\right\} ,\left\{ 0,0\right\} } & \underbrace{\left[0,1\right]}_{p=1}\times\underbrace{\left[0,0\right]}_{p=2}
}
}}
\]
where we note that $\left\{ 0,1\right\} =\left\{ 0,1,1\right\} \restriction_{\left[0,1\right]}$
and $\left\{ 0,0\right\} =\left\{ 0,0,1\right\} \restriction_{\left[0,1\right]}$.
We compute
\[
\vcenter{\vbox{\xyR{.25pc}\xyC{6pc}\xymatrix{\Hom\left(\left(1,0\right),\left(2,1\right)\right)\ar[r]\ar@{=}[d] & \Hom\left(\left(0,1,0\right),\left(2,1,1\right)\right)\ar@{=}[d]\\
\PR\left(\left[0\right]_{2}\right)\ar@{=}[d] & \underbrace{\PR\left(\overbrace{}^{1\not<1}\right)}_{p=1}\times\underbrace{\PR\left(\overbrace{\left[0\right]_{1}}^{0<1}\right)}_{p=2}\ar@{=}[d]\\
\left[1,2\right]\ar[r]^{\left\{ 1,1\right\} ,\left\{ 0,1\right\} } & \left[1,1\right]\times\left[0,1\right]
}
}}
\]
where we note that $\left\{ 1,1\right\} =\left\{ 0,1,1\right\} \restriction_{\left[1,2\right]}$
and $\left\{ 0,1\right\} =\left\{ 0,0,1\right\} \restriction_{\left[1,2\right]}$.
Lastly, we compute the following.
\[
\vcenter{\vbox{\xyR{.25pc}\xyC{6pc}\xymatrix{\Hom\left(\left(0,0\right),\left(1,2\right)\right)\ar[r]\ar@{=}[d] & \Hom\left(\left(0,0,0\right),\left(2,1,1\right)\right)\ar@{=}[d]\\
\PR\left(\left[0\right]_{1},\left[0\right]_{2}\right)\ar@{=}[d] & \underbrace{\PR\left(\overbrace{\left[0\right]_{1}}^{0<1}\right)}_{p=1}\times\underbrace{\PR\left(\overbrace{\left[0\right]_{1}}^{0<1}\right)}_{p=2}\ar@{=}[d]\\
\left[0,2\right]\ar[r]^{\left(\left\{ 0,0,1\right\} ,\left\{ 0,1,1\right\} \right)} & \left[0,1\right]\times\left[0,1\right]
}
}}
\]
We next compute the functors between $\Hom$-categories of the second
sort from Lemma \ref{cor:=00005Comega-cat-descrption-of-lax-tensor}.
We compute
\[
\vcenter{\vbox{\xyR{.25pc}\xyC{6pc}\xymatrix{\Hom_{\PR\left(\left[2\right]\right)}\left(\left(0,0\right),\left(0,1\right)\right)\ar[r]\ar@{=}[d] & \Hom_{\PR\left(\left[0\right]_{1},\left[0\right]_{2}\right)}\left(\left(0,0,0\right),\left(0,1,0\right)\right)\ar@{=}[d]\\
\left[0\right]_{1}\ar[r] & \text{\ensuremath{\underbrace{\left[0\right]_{1}}_{p=1}}}\times\text{\ensuremath{\underbrace{\bullet}_{p=2}}},
}
}}
\]
we compute

\[
\vcenter{\vbox{\xyR{.25pc}\xyC{6pc}\xymatrix{\Hom_{\PR\left(\left[2\right]\right)}\left(\left(0,1\right),\left(0,2\right)\right)\ar[r]\ar@{=}[d] & \Hom_{\PR\left(\left[0\right]_{1},\left[0\right]_{2}\right)}\left(\left(0,1,0\right),\left(0,1,1\right)\right)\ar@{=}[d]\\
\left[0\right]_{2}\ar[r] & \text{\ensuremath{\underbrace{\bullet}_{p=1}}}\times\text{\ensuremath{\underbrace{\left[0\right]_{1}}_{p=2}}}
}
}}
\]
and we compute
\[
\vcenter{\vbox{\xyR{.25pc}\xyC{6pc}\xymatrix{\Hom_{\PR\left(\left[2\right]\right)}\left(\left(0,0\right),\left(0,2\right)\right)\ar[r]\ar@{=}[d] & \Hom_{\PR\left(\left[0\right]_{1},\left[0\right]_{2}\right)}\left(\left(0,0,0\right),\left(0,1,1\right)\right)\ar@{=}[d]\\
\left[0\right]_{1}\times\left[0\right]_{2}\ar[r] & \text{\ensuremath{\underbrace{\left[0\right]_{1}}_{p=1}}}\times\text{\ensuremath{\underbrace{\left[0\right]_{1}}_{p=2}}}
}
}}
\]
Changing the second index repeats the pattern and in all other cases
the source $\Hom$-category is the initial object.
\end{example}

\begin{rem}
That these formulae correct express $\left[1\right]\otimes f;\mathbf{g}$
is, while complicated, completely formal and thus left to the reader.

\pagebreak{}
\end{rem}

\begin{figure}[H]
\noindent\fbox{\begin{minipage}[t]{1\columnwidth - 2\fboxsep - 2\fboxrule}%
\bigskip{}
\adjustbox{scale=.75,center}{
$$
\begin{tikzcd}[ampersand replacement=\&]
	\&
		\&
			\&
				\&
					{(1,0)}
					\arrow[rrrrd]
					\arrow[ddd]
					\arrow[
						rrrrdddd,
						bend left=15,
						""{name=SFT, below, pos=.45}
					]
					\arrow[
						rrrrdddd,
						bend right=15,
						""{name=SFB, above}
					]
					\arrow[
						from = SFT,
						to = SFB,
						Rightarrow
					]
					\&
						\&
							\&
								\&
									\\
{(0,0)}
\arrow[rrrru]
\arrow[ddd]
\arrow[
	rrrrdd,
	bend left=20,
	""{name=FFT, below}
]
\arrow[
	rrrrdd,
	bend right=20,
	""{name=FFB, above}
]
\arrow[
	from = FFT,
	to = FFB,
	Rightarrow
]
\arrow[
	rrrrrrrrddd,
	bend left=20,
	crossing over,
	""{name=LFT, below},
]
\arrow[rrrrrrrr]
	\&
		\&
			\&
				\&
					\&
						\&
							\&
								\&
									{(2,0)}
									\arrow[ddd]
									\\
	\&
		\&
			\&
				\&
					\&
						\&
							\&
								\&
									\\
	\&
		\&
			\&
				\&
				{(1,1)}
				\arrow[rrrrd]
					\&
						\&
							\&
								\&
									\\
{(0,1)}
\arrow[rrrru]
	\&
		\&
			\&
				\&
					\&
						\&
							\&
								\&
								{(2,1)}
			 				   \arrow[
									from=lllllllluuu,
									crossing over,
									""{name=LFMU, above, pos=0.4},
									""{name=LFMD, below, pos=0.4},
								]
								\arrow[
									from=lllllllluuu,
									bend right=20,
									crossing over,
									""{name=LFB, above}
								]
								\arrow[
									from=LFT,
									to=LFMU,
									Rightarrow,
									bend right=20,
									crossing over
								]
								\arrow[
									from=LFMD,
									to=LFB,
									Rightarrow,
									bend right=20,
									crossing over
								]
								\arrow[
									Rightarrow,
									from=LFT,
									to=LFB,
									crossing over,
									bend left=10
								]
								\arrow[from=llllllll]
\end{tikzcd}
$$
}

\caption{\label{fig:2-lg-1}The $\omega$-category $\left[1\right]\protect\LG\left[2\right]$}
\bigskip{}
\end{minipage}}
\end{figure}

\begin{figure}[H]
\noindent\fbox{\begin{minipage}[t]{1\columnwidth - 2\fboxsep - 2\fboxrule}%
\bigskip{}
\adjustbox{scale=.75,center}{
$$
\begin{tikzcd}[ampersand replacement=\&]
	\&
		\&
			\&
			{(1,0,0)}
			\arrow[rrrrd]
			\arrow[dddd]
			\arrow[rrrrddddd]
				\&
					\&
						\&
							\&
								\\
	\&
		\&
			\&
				\&
					\&
						\&
							\&  							{(1,1,0)}  							\arrow[dddd] 								\\ 
{(0,0,0)}
\arrow[dddd]
\arrow[rrruu]
\arrow[
	rrrdd,
	bend left,
	""{name=C,below, pos=0.4}
]
\arrow[
	rrrdd,
	bend right,
	""{name=D, above, pos=0.6}
]
\arrow[
	Rightarrow,
	from=C,
	to=D
]
	\&
		\&
			\&
				\&
					\&
						\&
							\&
								\\
	\&
		\&
			\&
				\&
				{(0,1,0)}
					\&
						\&
							\&
								\\
	\&
		\&
			\&
			{(1,0,1)}
			\arrow[rrrrd]
			\arrow[dddd]
			\arrow[
				rrrrddddd,
				bend left,
				""{name=G, below, pos=0.4}
			]
			\arrow[
				rrrrddddd,
				bend right,
				""{name=H, above, pos=0.6}
			]
			\arrow[
				Rightarrow,
				from=G,
				to=H
			]
				\&
					\&
						\&
							\&
								\\
	\&
		\&
			\&
				\&
					\&
						\&
							\&
							{(1,1,1)}
							\arrow[dddd]
								\\
{(0,0,1)}
\arrow[dddd]
\arrow[rrruu]
\arrow[rrrdd]
	\&
		\&
			\&
				\&
					\&
						\&
							\&
								\\
	\&
		\&
			\&
				\&
				{(0,1,1)}
					\&
						\&
							\&
								\\
	\&
		\&
			\&
			{(1,0,2)}
			\arrow[rrrrd]
				\&
					\&
						\&
							\&
								\\
	\&
		\&
			\&
				\&
					\&
						\&
							\&
							{(1,1,2)}
								\\
{(0,0,2)}
\arrow[rrrrd]
\arrow[rrruu]
\arrow[rrrrrrru]
	\&
		\&
			\&
				\&
					\&
						\&
							\&
								\\
	\&
		\&
			\&
				\&
				{(0,1,2)}
				\arrow[rrruu]
					\&
						\&
							\&
								\\
\arrow[
	from=uuuuuuuuuu,
	to=uuuuuuurrrrrrr,
	bend left,
	crossing over,
	""{name=A, below}
]
\arrow[
	from=uuuuuuuuuu,
	to=uuuuuuurrrrrrr,
	bend right,
	crossing over,
	""{name=B, above}
]
\arrow[
	from=A,
	to=B,
	Rightarrow,
	crossing over
]
\arrow[
	from=uuuuuuuuuu,
	to=rrrrrrruuuuuuuuuuu,
	crossing over
]
\arrow[
	from=uuuuuu,
	to=uuuuuuurrrrrrr,
	crossing over
]
\arrow[
	from=uuuuuu,
	to=uuurrrrrrr,
	bend left,
	crossing over,
	""{name=I, below}
]
\arrow[
	from=uuuuuu,
	to=uuurrrrrrr,
	bend right,
	crossing over,
	""{name=J, above}
]
\arrow[
	Rightarrow,
	from=I,
	to=J,
	crossing over
]
%
%
\arrow[
	from=uuuuuuuuuu,
	to=uuuuuuuuurrrr,
	crossing over
]
\arrow[
	from=uuuuuuuuuu,
	to=uuuuurrrr,
	crossing over
]
\arrow[
	from=uuuuuu,
	to=uuuuurrrr,
	crossing over
]
\arrow[
	from=uuuuuu,
	to=urrrr,
	crossing over,
	bend left,
	""{name=X, below, pos=0.4}
]
\arrow[
	from=uuuuuu,
	to=urrrr,
	crossing over,
	bend right,
	""{name=Y, above, pos=0.6}
]
\arrow[
	Rightarrow,
	from=X,
	to=Y,
	crossing over
]
\arrow[
	from=uuuuuuuuurrrr,
	to=uuuuuuuuuuurrrrrrr,
	crossing over
]
\arrow[
	from=uuuuuuuuurrrr,
	to=uuuuurrrr,
	crossing over
]
\arrow[
	from=uuuuuuuuurrrr,
	to=uuuuuuurrrrrrr,
	bend left,
	crossing over,
	""{name=E, below, pos=0.4}
]
\arrow[
	from=uuuuuuuuurrrr,
	to=uuuuuuurrrrrrr,
	bend right,
	crossing over,
	""{name=F,above, pos=0.6}
]
\arrow[
	Rightarrow,
	from=E,
	to=F,
	crossing over
]
\arrow[
	from=uuuuurrrr,
	to=uuuuuuurrrrrrr,
	crossing over
]
\arrow[
	from=uuuuurrrr,
	to=urrrr,
	crossing over
]
%
%
%
\end{tikzcd}
$$
}

\caption{\label{fig:1-times-1-lg-1}A pasting diagram for the strict-$\omega$-category
$\left[1\right]\protect\LG\left(\left[1\right]\times\left[1\right]\right)$}
\bigskip{}
\end{minipage}}
\end{figure}

\pagebreak{}

\subsection{An even more explicit decomposition for the lax-Gray Cylinder}

Now, the lax-shuffle decomposition for $\left[1\right]\otimes\left(\_\right)$
enjoys only some of the useful properties that the shuffle decomposition
does for $\text{\ensuremath{\left[1\right]}\ensuremath{\ensuremath{\times\left(\_\right)}} }$.
The formula requires taking colimits in the category of $\omega$-categories
and not in the larger category of cellular sets precisely because
not every cell of $\left[1\right]\otimes\left(\_\right)$ factors
through one of objects in that diagram. In light of the lemma of the
previous section however, we can cover $\left[1\right]\otimes T$
with cellular sets in the image of $\t\int\StrCat$. 
\begin{cor}
For any $\left[n\right];\left(A_{i}\right)$ of $\Theta$, $\left[1\right]\otimes\left[n\right];\left(A_{i}\right)$,
as a cellular set, is the colimit of diagram in Figure \ref{fig:Complete-wide-pushout-for-Gray-cylinder}
in cellular sets.
\begin{figure}
\noindent\fbox{\begin{minipage}[t]{1\columnwidth - 2\fboxsep - 2\fboxrule}%
\bigskip{}
\centering
\rotatebox{90}{
\resizebox{.9\textheight}{!}{
\begin{tikzpicture}[xscale=.75, yscale=.75,yslant=.25]
	\begin{pgfonlayer}{nodelayer}
		\node [style=Slanted Text] (22) at (-63, 40) {};
		\node [style=Slanted Text] (23) at (-45, 40) {};
		\node [style=Slanted Text] (24) at (-63, 10) {};
		\node [style=Slanted Text] (25) at (-45, 10) {};
		\node [style=Slanted Text] (26) at (-55, 38) {$\scriptstyle{[n]; \left( A_{\leq n} \right)}$};
		\node [style=Slanted Text] (33) at (-55, 12) {$\scriptstyle{[n]; \left( A_{\leq n} \right)}$};
		\node [style=Slanted Text] (37) at (-55, 22) {$\scriptstyle{[n]; \left( A_{\leq n} \right)}$};
		\node [style=Slanted Text] (38) at (-55, 28) {$\scriptstyle{[n]; \left( A_{\leq n} \right)}$};
		\node [style=Slanted Text] (39) at (-55, 20) {$\scriptstyle{\vdots}$};
		\node [style=Slanted Text] (40) at (-55, 26) {$\scriptstyle{[n]; \left( A_{\leq n} \right)}$};
		\node [style=Slanted Text] (41) at (-55, 32) {$\scriptstyle{\vdots}$};
		\node [style=Slanted Text] (42) at (-55, 36) {$\scriptstyle{[n]; \left( A_{\leq n} \right)}$};
		\node [style=Slanted Text] (43) at (-55, 34) {$\scriptstyle{[n]; \left( A_{\leq n} \right)]}$};
		\node [style=Slanted Text] (44) at (-55, 14) {$\scriptstyle{[n]; \left( A_{\leq n} \right)}$};
		\node [style=Slanted Text] (45) at (-55, 16) {$\scriptstyle{[n]; \left( A_{\leq n} \right)}$};
		\node [style=Slanted Text] (46) at (-55, 24) {$\scriptstyle{[n]; \left( A_{\leq n} \right)}$};
		\node [style=Slanted Text] (47) at (-55, 18) {$\scriptstyle{\vdots }$};
		\node [style=Slanted Text] (48) at (-55, 30) {$ \scriptstyle{\vdots }$};
		\node [style=Slanted Text] (101) at (-55, 25) {};
		\node [style=Slanted Text] (4) at (-40, 32) {$ \scriptstyle{[n+1]; \left( [0], A_{\leq n} \right)}$};
		\node [style=Slanted Text] (5) at (-40, 30) {$ \scriptstyle{[n]; \left( [1] \otimes A_1,  A_{\geq 2} \right)}$};
		\node [style=Slanted Text] (6) at (-40, 28) {$ \scriptstyle{[n+1]; \left( A_1,  [0] ,  A_{\geq 2} \right)}$};
		\node [style=Slanted Text] (7) at (-40, 26) {$ \scriptstyle{[n]; \left( A_1,  [1] \otimes A_2,  A_{\geq 3} \right)}$};
		\node [style=Slanted Text] (8) at (-40, 20) {$ \scriptstyle{[n+1]; \left( A_{\leq k-1},  [0],  A_{\geq k} \right)}$};
		\node [style=Slanted Text] (9) at (-40, 18) {};
		\node [style=Slanted Text] (10) at (-40, 16) {$\scriptstyle{[n+1]; \left( A_{\leq k}, [0],  A_{\geq k+1} \right)}$};
		\node [style=Slanted Text] (11) at (-40, 4) {$\scriptstyle{[n+1]; \left( A_{\leq n} ,  [0]  \right)}$};
		\node [style=Slanted Text] (12) at (-40, 6) {$\scriptstyle{[n]; \left( A_{\leq n-1},  [1] \otimes A_n \right)}$};
		\node [style=Slanted Text] (13) at (-40, 8) {$\scriptstyle{[n+1]; \left( A_{\leq n-1}, [0], A_{n} \right)}$};
		\node [style=Slanted Text] (14) at (-40, 10) {$\scriptstyle{[n]; \left( A_{\leq n},[1]\otimes A_{n-1}, A_{\geq n} \right)}$};
		\node [style=Slanted Text] (15) at (-40, 14) {$\scriptstyle{[n]; \left( A_{\leq k},  [1] \otimes A_{k+1},  A_{\geq k+2} \right)}$};
		\node [style=Slanted Text] (17) at (-40, 22) {$\scriptstyle{[n+1]; \left(A_{\leq k-2},  [1] \otimes A_{k-1},  A_{\geq k} \right)}$};
		\node [style=Slanted Text] (18) at (-40, 12) {$\scriptstyle{\vdots }$};
		\node [style=Slanted Text] (20) at (-40, 18) {$ \scriptstyle{[n]; \left( A_{\leq k-1},  [1] \otimes A_k,  A_{\geq k+1} \right) }$};
		\node [style=Slanted Text] (21) at (-40, 24) {$ \scriptstyle{\vdots }$};
		\node [style=Slanted Text] (0) at (-50, 33) {};
		\node [style=Slanted Text] (1) at (-30, 33) {};
		\node [style=Slanted Text] (2) at (-50, 3) {};
		\node [style=Slanted Text] (3) at (-30, 3) {};
		\node [style=Slanted Text] (116) at (-45, 40) {};
		\node [style=Slanted Text] (118) at (-55, 38) {};
		\node [style=Slanted Text] (119) at (-55, 12) {};
		\node [style=Slanted Text] (120) at (-55, 22) {};
		\node [style=Slanted Text] (121) at (-55, 28) {};
		\node [style=Slanted Text] (122) at (-55, 20) {};
		\node [style=Slanted Text] (123) at (-55, 26) {};
		\node [style=Slanted Text] (124) at (-55, 32) {};
		\node [style=Slanted Text] (125) at (-55, 36) {};
		\node [style=Slanted Text] (126) at (-55, 34) {};
		\node [style=Slanted Text] (127) at (-55, 14) {};
		\node [style=Slanted Text] (128) at (-55, 16) {};
		\node [style=Slanted Text] (129) at (-55, 24) {};
		\node [style=Slanted Text] (130) at (-55, 18) {};
		\node [style=Slanted Text] (131) at (-55, 30) {};
		\node [style=Slanted Text] (134) at (-40, 32) {};
		\node [style=Slanted Text] (135) at (-40, 30) {};
		\node [style=Slanted Text] (136) at (-40, 28) {};
		\node [style=Slanted Text] (137) at (-40, 26) {};
		\node [style=Slanted Text] (138) at (-40, 20) {};
		\node [style=Slanted Text] (139) at (-40, 18) {};
		\node [style=Slanted Text] (140) at (-40, 16) {};
		\node [style=Slanted Text] (141) at (-40, 4) {};
		\node [style=Slanted Text] (142) at (-40, 6) {};
		\node [style=Slanted Text] (143) at (-40, 8) {};
		\node [style=Slanted Text] (144) at (-40, 10) {};
		\node [style=Slanted Text] (145) at (-40, 14) {};
		\node [style=Slanted Text] (146) at (-40, 22) {};
		\node [style=Slanted Text] (147) at (-40, 12) {};
		\node [style=Slanted Text] (148) at (-40, 18) {};
		\node [style=Slanted Text] (149) at (-40, 24) {};
		\node [style=Slanted Text] (95) at (-15, 26) {};
		\node [style=Slanted Text] (56) at (-30, 19) {$ \scriptstyle{[n-1]; \left( A_1 \times [1] \otimes A_2,  A_{\geq 3} \right)}$};
		\node [style=Slanted Text] (61) at (-30, -1) {$ \scriptstyle{[n-1]; \left( A_{\leq n-2}, A_{n-1} \times [1] \otimes A_n \right)}$};
		\node [style=Slanted Text] (63) at (-30, 3) {$ \scriptstyle{[n-1]; \left( A_{n-1}, A_{n-2}\times[1]\otimes A_{n-1}, A_n \right)}$};
		\node [style=Slanted Text] (64) at (-30, 7) {$ \scriptstyle{[n-1]; \left( A_{\leq k-1},  A_{k} \times [1] \otimes A_{k+1},  A_{\geq k+2}\right)}$};
		\node [style=Slanted Text] (65) at (-30, 15) {$ \scriptstyle{[n-1]; \left( A_{\leq k-3},  A_{k-2} \times [1] \otimes A_{k-1},  A_{\geq k} \right)}$};
		\node [style=Slanted Text] (66) at (-30, 5) {$ \scriptstyle{ \vdots }$};
		\node [style=Slanted Text] (67) at (-30, 11) {$ \scriptstyle{ [n-1]; \left( A_{\leq k-2},  A_{k-1} \times [1] \otimes A_k,  A_{\geq k+1} \right) }$};
		\node [style=Slanted Text] (68) at (-30, 17) {$ \scriptstyle{ \vdots }$};
		\node [style=Slanted Text] (70) at (-20, 23) {$ \scriptstyle{ [n-1]; \left( [1] \otimes A_1 \times A_2,  A_{\geq 3} \right)}$};
		\node [style=Slanted Text] (72) at (-20, 19) {$ \scriptstyle{[n-1]; \left( A_1,  [1] \otimes A_2 \times A_3,  , A_{\geq 4} \right) }$};
		\node [style=Slanted Text] (74) at (-20, 11) {};
		\node [style=Slanted Text] (79) at (-20, 3) {$\scriptstyle{ [n-1]; \left( A_{n-2},[1]\otimes A_{n-1} \times A_n \right)}$};
		\node [style=Slanted Text] (80) at (-20, 7) {$\scriptstyle{[n-1]; \left( A_{\leq k},  [1] \otimes A_{k+1} \times A_{k+2},  A_{\geq k+3} \right)}$};
		\node [style=Slanted Text] (81) at (-20, 15) {$\scriptstyle{[n-1]; \left( A_{\leq k-2},  [1] \otimes A_{k-1},\times  A_{k},   A_{\geq k+1} \right)}$};
		\node [style=Slanted Text] (82) at (-20, 5) {$\scriptstyle{ \vdots }$};
		\node [style=Slanted Text] (83) at (-20, 11) {$\scriptstyle{ [n-1]; \left( A_{\leq k-1},  [1] \otimes A_k \times A_{k+1},   A_{\geq k+2 } \right) }$};
		\node [style=Slanted Text] (84) at (-20, 17) {$\scriptstyle{ \vdots }$};
		\node [style=Slanted Text] (58) at (-30, 11) {};
		\node [style=Slanted Text] (49) at (-35, 26) {};
		\node [style=Slanted Text] (50) at (-15, 26) {};
		\node [style=Slanted Text] (51) at (-35, -4) {};
		\node [style=Slanted Text] (52) at (-15, -4) {};
		\node [style=Slanted Text] (151) at (0, 19) {};
		\node [style=Slanted Text] (174) at (-10, 14) {$ \scriptstyle{[n-1];\left( \mathsf{P.R.}\left( A_{1},A_{2} \right),A_{\geq 3} \right)} $};
		\node [style=Slanted Text] (175) at (-10, 10) {$\ddots$};
		\node [style=Slanted Text] (176) at (-10, 6) {$ \scriptstyle{[n-1];\left( A_{\leq k-2}, \mathsf{P.R.}\left( A_{k-1},A_{k} \right),A_{\geq k+1} \right)} $};
		\node [style=Slanted Text] (177) at (-10, 2) {$ \scriptstyle{[n-1];\left( A_{\leq k-1}, \mathsf{P.R.}\left( A_{k},A_{k+1} \right),A_{\geq k+2} \right)} $};
		\node [style=Slanted Text] (178) at (-10, -2) {$\ddots$};
		\node [style=Slanted Text] (179) at (-10, -6) {$ \scriptstyle{[n-1];\left( A_{\leq n-2}, \mathsf{P.R.}\left( A_{n-1},A_{n} \right) \right)} $};
		\node [style=Slanted Text] (180) at (-15, 26) {};
		\node [style=Slanted Text] (182) at (0, 19) {};
		\node [style=Slanted Text] (170) at (-20, 19) {};
		\node [style=Slanted Text] (171) at (0, 19) {};
		\node [style=Slanted Text] (172) at (-20, -11) {};
		\node [style=Slanted Text] (173) at (0, -11) {};
		\node [style=Slanted Text] (183) at (0, 19) {};
		\node [style=Slanted Text] (184) at (0, 19) {};
		\node [style=Slanted Text] (186) at (0, 19) {};
		\node [style=Slanted Text] (187) at (15, 12) {};
		\node [style=Slanted Text] (190) at (15, 12) {};
		\node [style=Slanted Text] (213) at (15, 12) {};
		\node [style=Slanted Text] (221) at (10, 7) {};
		\node [style=Slanted Text] (225) at (10, 7) {$ \scriptstyle{[n-2]; \left( \mathsf{P.R.} \left( A_1, A_2 \right) \times A_3, A_{\geq 4} \right) }$};
		\node [style=Slanted Text] (226) at (0, -1) {$ \scriptstyle{[n-2];\left( A_{\leq k-3}, A_{k-2} \times \mathsf{P.R.}\left( A_{k-1},A_{k} \right),A_{\geq k+1} \right)} $};
		\node [style=Slanted Text] (227) at (10, -1) {$ \scriptstyle{[n-2];\left( A_{\leq k-2}, \mathsf{P.R.}\left( A_{k-1},A_{k} \right)\times A_{k+1},A_{\geq k+2} \right)} $};
		\node [style=Slanted Text] (228) at (0, -5) {$ \scriptstyle{[n-2];\left( A_{\leq k-1}, \mathsf{P.R.}\left( A_{k},A_{k+1} \right),A_{\geq k+2},A_{\geq k+3} \right)} $};
		\node [style=Slanted Text] (229) at (10, -5) {$ \scriptstyle{[n-2];\left( A_{\leq k-1}, \mathsf{P.R.}\left( A_{k},A_{k+1} \right) \times A_{2},A_{\geq k+3} \right)} $};
		\node [style=Slanted Text] (232) at (0, -13) {$ \scriptstyle{[n-2];\left( A_{\leq n-3},A_{n-2} \times \mathsf{P.R.}\left( A_{n-1},A_{n} \right) \right)} $};
		\node [style=Slanted Text] (233) at (10, 3) {$ \cdots $};
		\node [style=Slanted Text] (234) at (0, 3) {$ \cdots $};
		\node [style=Slanted Text] (235) at (10, -9) {$ \cdots $};
		\node [style=Slanted Text] (236) at (0, -9) {$ \cdots $};
		\node [style=Slanted Text] (209) at (-5, 12) {};
		\node [style=Slanted Text] (210) at (15, 12) {};
		\node [style=Slanted Text] (211) at (-5, -18) {};
		\node [style=Slanted Text] (212) at (15, -18) {};
		\node [style=Slanted Text] (237) at (30, 5) {};
		\node [style=Slanted Text] (238) at (30, 5) {};
		\node [style=Slanted Text] (239) at (30, 5) {};
		\node [style=Slanted Text] (242) at (15, -8) {};
		\node [style=Slanted Text] (244) at (15, -12) {};
		\node [style=Slanted Text] (246) at (15, -20) {};
		\node [style=Slanted Text] (248) at (15, -4) {$ \cdots $};
		\node [style=Slanted Text] (250) at (15, -16) {$ \cdots $};
		\node [style=Slanted Text] (255) at (20, -2) {$ \scriptstyle{[n-2]; \left( A_1,  [1] \otimes A_2 \times A_3,  , A_{\geq 4} \right) }$};
		\node [style=Slanted Text] (256) at (20, -6) {$ \ddots $};
		\node [style=Slanted Text] (257) at (20, -10) {$ \scriptstyle{[n-2];\left( A_{\leq k-1}, \mathsf{P.R.}\left(A_{k-1}, A_{k},A_{k+1} \right),A_{\geq k+2},A_{\geq k+3} \right)} $};
		\node [style=Slanted Text] (258) at (20, -14) {$ \ddots $};
		\node [style=Slanted Text] (259) at (20, -18) {$ \scriptstyle{[n-2];\left( A_{\leq n-3},  \mathsf{P.R.}\left(A_{n-2}, A_{n-1},A_{n} \right) \right)} $};
		\node [style=Slanted Text] (251) at (10, 5) {};
		\node [style=Slanted Text] (252) at (30, 5) {};
		\node [style=Slanted Text] (253) at (10, -25) {};
		\node [style=Slanted Text] (254) at (30, -25) {};
		\node [style=Slanted Text] (260) at (15, 12) {};
		\node [style=Slanted Text] (261) at (15, 12) {};
		\node [style=Slanted Text] (262) at (15, 12) {};
		\node [style=Slanted Text] (263) at (15, 12) {};
		\node [style=Slanted Text] (264) at (15, 12) {};
		\node [style=Slanted Text] (265) at (15, 12) {};
		\node [style=Slanted Text] (266) at (30, 5) {};
		\node [style=Slanted Text] (267) at (30, 5) {};
		\node [style=Slanted Text] (268) at (30, 5) {};
		\node [style=Slanted Text] (269) at (30, 5) {};
		\node [style=Slanted Text] (274) at (30, -25) {};
		\node [style=Slanted Text] (275) at (45, -2) {};
		\node [style=Slanted Text] (276) at (45, -2) {};
		\node [style=Slanted Text] (277) at (45, -2) {};
		\node [style=Slanted Text] (278) at (40, -9) {$\ddots$};
		\node [style=Slanted Text] (279) at (30, -17) {$\ddots$};
		\node [style=Slanted Text] (280) at (40, -17) {$\ddots$};
		\node [style=Slanted Text] (281) at (30, -19) {$ \ddots $};
		\node [style=Slanted Text] (282) at (40, -15) {$ \ddots $};
		\node [style=Slanted Text] (283) at (30, -27) {};
		\node [style=Slanted Text] (284) at (40, -13) {$ \ddots $};
		\node [style=Slanted Text] (285) at (30, -13) {$ \ddots $};
		\node [style=Slanted Text] (288) at (35, -9) {};
		\node [style=Slanted Text] (289) at (35, -13) {$ \ddots $};
		\node [style=Slanted Text] (290) at (35, -17) {$ \ddots $};
		\node [style=Slanted Text] (291) at (35, -21) {$ \ddots $};
		\node [style=Slanted Text] (292) at (35, -25) {};
		\node [style=Slanted Text] (297) at (45, -2) {};
		\node [style=Slanted Text] (298) at (45, -2) {};
		\node [style=Slanted Text] (299) at (45, -2) {};
		\node [style=Slanted Text] (300) at (45, -2) {};
		\node [style=Slanted Text] (301) at (30, -9) {$ \ddots $};
		\node [style=Slanted Text] (302) at (30, -21) {$\ddots$};
		\node [style=Slanted Text] (303) at (40, -21) {$\ddots$};
		\node [style=Slanted Text] (304) at (30, -25) {$\ddots$};
		\node [style=Slanted Text] (305) at (40, -25) {$\ddots$};
		\node [style=Slanted Text] (293) at (25, -2) {};
		\node [style=Slanted Text] (294) at (45, -2) {};
		\node [style=Slanted Text] (295) at (25, -32) {};
		\node [style=Slanted Text] (296) at (45, -32) {};
		\node [style=Slanted Text] (306) at (45, -2) {};
		\node [style=Slanted Text] (307) at (60, -9) {};
		\node [style=Slanted Text] (308) at (45, -32) {};
		\node [style=Slanted Text] (309) at (45, -32) {};
		\node [style=Slanted Text] (310) at (60, -9) {};
		\node [style=Slanted Text] (311) at (60, -9) {};
		\node [style=Slanted Text] (312) at (60, -9) {};
		\node [style=Slanted Text] (313) at (55, -16) {$\ddots$};
		\node [style=Slanted Text] (314) at (45, -24) {$\ddots$};
		\node [style=Slanted Text] (315) at (55, -24) {$\ddots$};
		\node [style=Slanted Text] (316) at (45, -26) {};
		\node [style=Slanted Text] (317) at (55, -22) {};
		\node [style=Slanted Text] (318) at (45, -34) {};
		\node [style=Slanted Text] (319) at (55, -20) {$ \ddots $};
		\node [style=Slanted Text] (320) at (45, -20) {$ \ddots $};
		\node [style=Slanted Text] (321) at (50, -16) {};
		\node [style=Slanted Text] (322) at (50, -20) {$ \ddots $};
		\node [style=Slanted Text] (323) at (50, -24) {$ [1];\left( \mathsf{P.R.} \left( A_{\leq n} \right) \right) $};
		\node [style=Slanted Text] (324) at (50, -28) {$ \ddots $};
		\node [style=Slanted Text] (325) at (50, -32) {};
		\node [style=Slanted Text] (326) at (60, -9) {};
		\node [style=Slanted Text] (327) at (60, -9) {};
		\node [style=Slanted Text] (328) at (60, -9) {};
		\node [style=Slanted Text] (329) at (60, -9) {};
		\node [style=Slanted Text] (330) at (45, -16) {$ \ddots $};
		\node [style=Slanted Text] (331) at (45, -28) {$\ddots$};
		\node [style=Slanted Text] (332) at (55, -28) {$\ddots$};
		\node [style=Slanted Text] (333) at (45, -32) {$\ddots$};
		\node [style=Slanted Text] (334) at (55, -32) {$\ddots$};
		\node [style=Slanted Text] (339) at (60, -9) {};
		\node [style=Slanted Text] (335) at (40, -9) {};
		\node [style=Slanted Text] (336) at (60, -9) {};
		\node [style=Slanted Text] (337) at (40, -39) {};
		\node [style=Slanted Text] (338) at (60, -39) {};
	\end{pgfonlayer}
	\begin{pgfonlayer}{edgelayer}
		\draw [style=Fronts] (22.center)
			 to (24.center)
			 to (25.center)
			 to (23.center)
			 to cycle;
		\draw [style=Solid arrow] (26) to (5);
		\draw [style=Solid arrow] (26) to (4);
		\draw [style=Solid arrow] (42) to (5);
		\draw [style=Solid arrow] (42) to (6);
		\draw [style=Solid arrow] (43) to (6);
		\draw [style=Solid arrow] (43) to (7);
		\draw [style=Dashed Edges No Fill] (41) to (7);
		\draw [style=Dashed Edges No Fill] (41) to (21);
		\draw [style=Dashed Edges No Fill] (48) to (21);
		\draw [style=Dashed Edges No Fill] (48) to (17);
		\draw [style=Solid arrow] (38) to (17);
		\draw [style=Solid arrow] (38) to (8);
		\draw [style=Solid arrow] (40) to (8);
		\draw [style=Solid arrow] (40) to (20);
		\draw [style=Solid arrow] (46) to (20);
		\draw [style=Solid arrow] (46) to (10);
		\draw [style=Solid arrow] (37) to (10);
		\draw [style=Solid arrow] (37) to (15);
		\draw [style=Dashed Edges No Fill] (39) to (15);
		\draw [style=Dashed Edges No Fill] (39) to (18);
		\draw [style=Dashed Edges No Fill] (47) to (18);
		\draw [style=Dashed Edges No Fill] (47) to (14);
		\draw [style=Solid arrow] (45) to (14);
		\draw [style=Solid arrow] (45) to (13);
		\draw [style=Solid arrow] (44) to (13);
		\draw [style=Solid arrow] (44) to (12);
		\draw [style=Solid arrow] (33) to (12);
		\draw [style=Solid arrow] (33) to (11);
		\draw [style=Fronts] (0.center)
			 to (2.center)
			 to (3.center)
			 to (1.center)
			 to cycle;
		\draw [style=Dotted arrow] (118) to (135);
		\draw [style=Dotted arrow] (118) to (134);
		\draw [style=Dotted arrow] (125) to (135);
		\draw [style=Dotted arrow] (125) to (136);
		\draw [style=Dotted arrow] (126) to (136);
		\draw [style=Dotted arrow] (126) to (137);
		\draw [style=Dotted arrow] (124) to (137);
		\draw [style=Dotted arrow] (124) to (149);
		\draw [style=Dotted arrow] (131) to (149);
		\draw [style=Dotted arrow] (131) to (146);
		\draw [style=Dotted arrow] (121) to (146);
		\draw [style=Dotted arrow] (121) to (138);
		\draw [style=Dotted arrow] (123) to (138);
		\draw [style=Dotted arrow] (123) to (148);
		\draw [style=Dotted arrow] (129) to (148);
		\draw [style=Dotted arrow] (129) to (140);
		\draw [style=Dotted arrow] (120) to (140);
		\draw [style=Dotted arrow] (120) to (145);
		\draw [style=Dotted arrow] (122) to (145);
		\draw [style=Dotted arrow] (122) to (147);
		\draw [style=Dotted arrow] (130) to (147);
		\draw [style=Dotted arrow] (130) to (144);
		\draw [style=Dotted arrow] (128) to (144);
		\draw [style=Dotted arrow] (128) to (143);
		\draw [style=Dotted arrow] (127) to (143);
		\draw [style=Dotted arrow] (127) to (142);
		\draw [style=Dotted arrow] (119) to (142);
		\draw [style=Dotted arrow] (119) to (141);
		\draw [style=Solid arrow] (70) to (135);
		\draw [style=Solid arrow] (72) to (137);
		\draw [style=Solid arrow] (56) to (137);
		\draw [style=Solid arrow] (81) to (146);
		\draw [style=Solid arrow] (65) to (146);
		\draw [style=Solid arrow] (83) to (148);
		\draw [style=Solid arrow] (58) to (148);
		\draw [style=Solid arrow] (64) to (145);
		\draw [style=Solid arrow] (80) to (145);
		\draw [style=Solid arrow] (61) to (142);
		\draw [style=Solid arrow] (63) to (144);
		\draw [style=Solid arrow] (79) to (144);
		\draw [style=Fronts] (49.center)
			 to (51.center)
			 to (52.center)
			 to [in=270, out=90] (50.center)
			 to cycle;
		\draw [style=Solid arrow] (70) to (174);
		\draw [style=Solid arrow] (56) to (174);
		\draw [style=Solid arrow] (72) to (175);
		\draw [style=Solid arrow] (65) to (175);
		\draw [style=Solid arrow] (58) to (176);
		\draw [style=Solid arrow] (65) to (176);
		\draw [style=Solid arrow] (64) to (177);
		\draw [style=Solid arrow] (83) to (177);
		\draw [style=Solid arrow] (63) to (178);
		\draw [style=Solid arrow] (80) to (178);
		\draw [style=Solid arrow] (79) to (179);
		\draw [style=Solid arrow] (61) to (179);
		\draw [style=Fronts] (170.center)
			 to (172.center)
			 to (173.center)
			 to [in=270, out=90] (171.center)
			 to cycle;
		\draw [style=Solid arrow] (221) to (174);
		\draw [style=Solid arrow] (226) to (176);
		\draw [style=Solid arrow] (227) to (176);
		\draw [style=Solid arrow] (228) to (177);
		\draw [style=Solid arrow] (229) to (177);
		\draw [style=Solid arrow] (232) to (179);
		\draw [style=Solid arrow] (233) to (175);
		\draw [style=Solid arrow] (234) to (175);
		\draw [style=Solid arrow] (236) to (178);
		\draw [style=Solid arrow] (235) to (178);
		\draw [style=Fronts] (209.center)
			 to (211.center)
			 to (212.center)
			 to [in=270, out=90] (210.center)
			 to cycle;
		\draw [style=Solid arrow] (225) to (255);
		\draw [style=Solid arrow] (234) to (255);
		\draw [style=Solid arrow] (226) to (256);
		\draw [style=Solid arrow] (233) to (256);
		\draw [style=Solid arrow] (227) to (257);
		\draw [style=Solid arrow] (228) to (257);
		\draw [style=Solid arrow] (229) to (258);
		\draw [style=Solid arrow] (236) to (258);
		\draw [style=Solid arrow] (235) to (259);
		\draw [style=Solid arrow] (232) to (259);
		\draw [style=Fronts] (251.center)
			 to (253.center)
			 to (254.center)
			 to [in=270, out=90] (252.center)
			 to cycle;
		\draw [style=Solid arrow] (301) to (255);
		\draw [style=Solid arrow] (278) to (255);
		\draw [style=Solid arrow] (285) to (256);
		\draw [style=Solid arrow] (284) to (256);
		\draw [style=Solid arrow] (280) to (257);
		\draw [style=Solid arrow] (279) to (257);
		\draw [style=Solid arrow] (303) to (258);
		\draw [style=Solid arrow] (302) to (258);
		\draw [style=Solid arrow] (305) to (259);
		\draw [style=Solid arrow] (304) to (259);
		\draw [style=Fronts] (294.center)
			 to (293.center)
			 to (295.center)
			 to (296.center)
			 to [in=270, out=90] cycle;
		\draw [style=Solid arrow] (281) to (323);
		\draw [style=Solid arrow] (282) to (323);
		\draw [style=Fronts] (336.center)
			 to (335.center)
			 to (337.center)
			 to (338.center)
			 to [in=270, out=90] cycle;
	\end{pgfonlayer}
\end{tikzpicture}
}
}

\caption{\label{fig:Complete-wide-pushout-for-Gray-cylinder}Complete wide-pushout
for the Gray cylinder $\left[1\right]\otimes\left[n\right];\left(A_{1},A_{2},\dots,A_{n}\right)$}
\bigskip{}
\end{minipage}}
\end{figure}
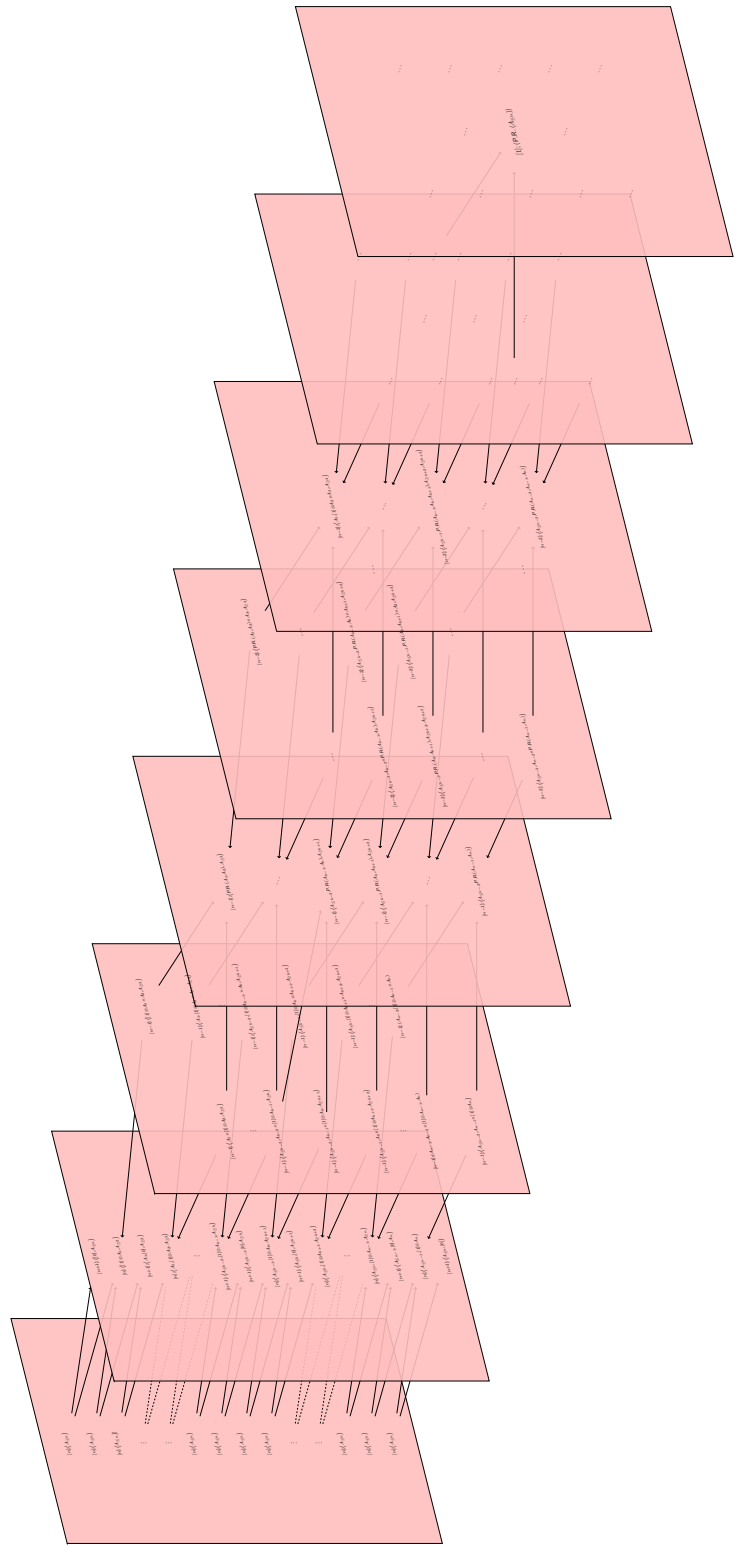
\end{cor}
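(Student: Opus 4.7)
The approach is to take the lax shuffle decomposition of $\left[1\right]\otimes\left[n\right];\left(A_{i}\right)$ established in Section~\ref{sec:The-Gray-Cylinder-section} and iteratively refine each ``non-cellular'' entry of that colimit diagram by means of the shifted product rule from Section~\ref{sec:Shifted-Product-Rule-=00005Comega-categories}. The lax shuffle decomposition presents the apex as a colimit in $\StrCat$, and the ``cell-type'' entries $\left[n+1\right];\left(\ldots,\left[0\right],\ldots\right)$ are already cells of $\Theta$; the obstruction to interpreting the colimit in $\wh\Theta$ is concentrated in the entries of the form $\left[n\right];\left(A_{1},\ldots,\left[1\right]\otimes A_{k},\ldots,A_{n}\right)$.

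First, I would rewrite each such entry using the observation from Corollary~\ref{cor:=00005Comega-cat-descrption-of-lax-tensor} that $\left[1\right]\otimes A_{k}=\mathsf{P.R.}\left(A_{k}\right)$, so the entry becomes $\left[n\right];\left(A_{1},\ldots,\mathsf{P.R.}\left(A_{k}\right),\ldots,A_{n}\right)$. Since the wreath product $\left[n\right];\left(\_,\ldots,\_\right)$, extended by Yoneda-type left Kan extension to all cellular arguments, commutes with colimits in each argument, we may replace this entry by the colimit of the span obtained by factoring the wreath product across the adjacent arguments. Concretely, using the pushout expression underlying $\mathsf{P.R.}$, the entry is the colimit of a span of shape $\left[n-1\right];\left(\ldots,A_{k-1}\times\mathsf{P.R.}\left(A_{k}\right),\ldots\right)\leftarrow\left[n-1\right];\left(\ldots,A_{k-1}\times A_{k},\ldots\right)\rightarrow\left[n-1\right];\left(\ldots,\mathsf{P.R.}\left(A_{k}\right)\times A_{k+1},\ldots\right)$, whose pieces become entries in the next column of Figure~\ref{fig:Complete-wide-pushout-for-Gray-cylinder}.

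Second, I would iterate this refinement. The associativity Lemma~\ref{lem:PR-is-PR-isPR} allows each product $\mathsf{P.R.}\left(A_{j},\ldots\right)\times A_{j+k}$ (or $A_{j-1}\times\mathsf{P.R.}\left(A_{j},\ldots\right)$) to be combined into a larger $\mathsf{P.R.}\left(A_{j},\ldots,A_{j+k}\right)$ via a further pushout; this yields the alternating ``$\times$''/``$\mathsf{P.R.}$'' columns in the figure. After $n-1$ iterations of expanding and then combining, the rightmost column collapses to the single entry $\left[1\right];\left(\mathsf{P.R.}\left(A_{\leq n}\right)\right)$. A Fubini argument on the iterated colimits---justified by the commutation of $\left[n\right];\left(\_,\ldots,\_\right)$ with connected colimits in each coordinate---shows the total colimit remains $\left[1\right]\otimes\left[n\right];\left(A_{i}\right)$ throughout the refinement.

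Finally, I would verify that at every stage all the gluing maps are monomorphisms of cellular sets and that every entry has already been expressed as a colimit of representables (cells of $\Theta$), so the overall colimit may be computed in $\wh\Theta$ rather than merely $\StrCat$. The main obstacle will be bookkeeping the compatibility of the various iterated pushouts with the wreath product structure---concretely, confirming that replacing one entry by a span does not affect the legs of the original lax shuffle decomposition and that the resulting multi-column diagram really has $\left[1\right]\otimes\left[n\right];\left(A_{i}\right)$ as its colimit. This amounts to a careful but routine diagram chase, building on the structural results already proved earlier in the paper.
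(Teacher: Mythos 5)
Your construction of the diagram itself is fine, and the first two stages of your plan (rewriting $\left[1\right]\otimes A_{k}$ as $\mathsf{P.R.}\left(A_{k}\right)$, expanding via the pushout underlying $\mathsf{P.R.}$, and recombining with Lemma \ref{lem:PR-is-PR-isPR}) correctly reproduce the columns of Figure \ref{fig:Complete-wide-pushout-for-Gray-cylinder} and show that the refined diagram has the right colimit \emph{in $\StrCat$}. But the content of the corollary is concentrated entirely in your final step, and the criterion you offer there is not valid: knowing that all the gluing maps are monomorphisms of cellular sets and that every entry is a colimit of representables does \emph{not} imply that the colimit may be computed in $\wh{\Theta}$. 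The lax shuffle decomposition itself is a counterexample to your criterion --- its legs are monomorphisms between objects of $\Theta$ (hence representables), yet, as the paper stresses in the remark following the overview and again at the start of the subsection containing this corollary, its colimit must be taken in $\StrCat$ precisely because not every cell of $\left[1\right]\otimes T$ factors through one of its entries. A colimit of cellular sets along monomorphisms computes the ``naive'' union, and the question is exactly whether that union exhausts all the cells of the nerve of $\left[1\right]\otimes\left[n\right];\left(A_{i}\right)$; monomorphy and representability say nothing about this.

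What is actually needed --- and what the paper's own (one-line) proof invokes --- is the $\Hom$-$\omega$-category characterization of Lemma \ref{cor:=00005Comega-cat-descrption-of-lax-tensor}, used not merely to rewrite $\left[1\right]\otimes A_{k}$ but to establish a \emph{covering} statement: any map $T'\longrightarrow\left[1\right]\otimes\left[n\right];\left(A_{i}\right)$ from a cell $T'$ of $\Theta$ sends the objects of $T'$ into $\left\{ 0,1\right\} \times\left\{ 0,\dots,n\right\} $ and its $\Hom$-data into the product/$\mathsf{P.R.}$ $\Hom$-categories listed in that lemma, and one checks case-wise that any such configuration factors through one of the entries of the figure, with the lower entries accounting for the pairwise intersections. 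That factorization argument is the missing idea in your proposal; once it is supplied, the passage from the $\StrCat$-colimit to the $\wh{\Theta}$-colimit follows, but without it your ``careful but routine diagram chase'' does not close the gap, since the chase you describe only manipulates colimits in $\StrCat$.
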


\begin{proof}
Follows from the prior Lemma characterizing the $\Hom$ $\omega$-categories
of $\left[1\right]\otimes\left[n\right];\left(A_{i}\right)$.
\end{proof}

\subsection{The Gray cylinder of hyperfaces}

The hyperfaces of cells $T$ of $\Theta$ hold a particularly important
place in the Cisinski model categories which $\wh{\Theta}$ admits.
In this section we use the formulae we've developed to describe how
the Gray cylinder $\left[1\right]\otimes\left(\_\right)$ acts on
hyperfaces. Unsurprisingly, this action can be defined recursively
on the height of the hyperface's target.

\subsubsection{Vertical hyperfaces}

The action of $\left[1\right]\otimes\left(\_\right)$ on the vertical
hyperfaces 
\[
\left[n\right];\left(A_{<k},A_{k}^{\p},A_{>k}\right)\longrightarrow\left[n\right];\left(A_{<k},A_{k},A_{>k}\right)
\]
 is easily described in terms of the lax shuffle decomposition.
\begin{cor}
Given a vertical hyperface 
\[
\id;\left(\id_{<k},\nu,\id_{>k}\right):\left[n\right];\left(A_{<k},A_{k}^{\p},A_{>k}\right)\longrightarrow\left[n\right];\left(A_{<k},A_{k},A_{>k}\right)
\]
 the map 
\[
\left[1\right]\otimes\left[n\right];\left(A_{<k},A_{k}^{\p},A_{>k}\right)\longrightarrow\left[1\right]\otimes\left[n\right];\left(A_{<k},A_{k},A_{>k}\right)
\]
is induced by the diagram
\[
\xyR{3pc}\xyC{4.5pc}\xymatrix{\vdots\ar[d]\ar[rr] &  & \vdots\ar[d]\\
\left[n+1\right];\left(A_{<k},\underline{0},A_{k}^{\p},A_{>k}\right)\ar[rr]|-{\id;\left(\id_{<k},\id_{\underline{0}},\nu,\id_{>k}\right)} &  & \left[n+1\right];\left(A_{<k},\underline{0},A_{k},A_{>k}\right)\\
\left[n\right];\left(A_{<k},A_{k}^{\p},A_{>k}\right)\ar[u]|-{d^{k};\left(\id_{<k},\left(!,\id_{k}\right),\id_{>k}\right)}\ar[d]|-{\id;\left(\id_{<k},\left\{ 1\right\} \otimes A_{k}^{\p},\id_{>k}\right)}\ar[rr]|-{\id;\left(\id_{<k},\nu,\id_{>k}\right)} &  & \left[n\right];\left(A_{<k},A_{k},A_{>k}\right)\ar[u]|-{d^{k};\left(\id_{<k},\left(!,\id_{k}\right),\id_{>k}\right)}\ar[d]|-{\id;\left(\id_{<k},\left\{ 1\right\} \otimes A_{k},\id_{>k}\right)}\\
\left[n\right]\left(A_{<k},\left[1\right]\otimes A_{k}^{\p},A_{>K}\right)\ar[rr]|-{\id;\left(\id_{<k},\left[1\right]\otimes\nu,\id_{>k}\right)} &  & \left[n\right];\left(A_{<k},\left[1\right]\otimes A_{k},A_{>k}\right)\\
\left[n\right];\left(A_{<k},A_{k}^{\p},A_{>k}\right)\ar[d]|-{d^{k};\left(\id_{<k},\left(\id_{k},!\right),\id_{>k}\right)}\ar[u]|-{\id;\left(\id_{<k},\left\{ 0\right\} \otimes A_{k}^{\p},\id_{>k}\right)}\ar[rr]|-{\id;\left(\id_{<k},\nu,\id_{>k}\right)} &  & \left[n\right];\left(A_{<k},A_{k}^{\p},A_{>k}\right)\ar[u]|-{\id;\left(\id_{<k},\left\{ 0\right\} \otimes A_{k}^{\p},\id_{>k}\right)}\ar[d]|-{d^{k};\left(\id_{<k},\left(\id_{k},!\right),\id_{>k}\right)}\\
\left[n+1\right];\left(A_{<k},A_{k}^{\p},\underline{0},A_{>k}\right)\ar[rr]|-{\id;\left(\id_{<k},\nu,\id_{\underline{0}},\id_{>k}\right)} &  & \left[n+1\right];\left(A_{<k},A_{k},\underline{0},A_{>k}\right)\\
\vdots\ar[u]\ar[rr] &  & \vdots\ar[u]
}
\]
\end{cor}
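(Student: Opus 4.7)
The plan is to derive this corollary as an almost immediate consequence of the lax shuffle decomposition together with the naturality of that colimit formula in the entries $A_i$. Since the theorem of Section~\ref{subsec:Comparison-to-Steiner} identifies $[1] \otimes T$ with $[1] \LG T$, it suffices to construct and verify the induced map at the level of the lax shuffle colimit diagrams.

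First, I would write out the defining colimit diagrams for both $[1] \LG [n];(A_{<k}, A_k^{\p}, A_{>k})$ and $[1] \LG [n];(A_{<k}, A_k, A_{>k})$ as in Definition~\ref{def:LG-on-objects}. Both diagrams have precisely the same shape, indexed by the positions $1, \ldots, n$ at which we can ``insert'' either a copy of $\left[0\right]$ (producing an $[n+1]$-level entry) or a ``fattening'' $[1] \LG A_i$ (producing an $[n]$-level entry). The vertical hyperface $\nu : A_k^{\p} \to A_k$ affects only those entries involving the $k$-th coordinate: it acts by $\id$ on all components involving $A_i$ for $i \neq k$, by $\nu$ itself on components involving $A_k^{\p}$ or $A_k$, and by $[1] \otimes \nu = [1] \LG \nu$ on the one entry $[n]; (A_{<k}, [1]\LG A_k^{\p}, A_{>k})$. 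Thus each row of the target diagram in the corollary is just the componentwise application of $\nu$ (or $[1] \LG \nu$ in the middle row) to the corresponding row of the source.

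The main verification is that this assignment constitutes a morphism of diagrams, i.e.\ that each square commutes. All the squares involving only the structural maps $d^j$ and $\{0\} \LG, \{1\} \LG$ of the lax shuffle diagram commute because $d^j$ and the endpoint inclusions are natural in the entries: any map of the form $\id; (\id_{<k}, \nu, \id_{>k})$ commutes with the insertion of $\underline{0}$ at a position other than $k$, and with the structural maps $\{0\} \LG A_k$ and $\{1\} \LG A_k$ by the naturality of the endpoint inclusion already built into Definition~\ref{def:LG-on-objects} (here using that $[1] \LG (\_)$ is itself recursively defined and hence functorial in each entry). The one genuinely non-structural square is the middle one, which asks that
\[
\id;(\id_{<k}, \{1\} \LG \nu, \id_{>k}) \text{ and } \id;(\id_{<k}, \{0\} \LG \nu, \id_{>k})
\]
are the images of the endpoint inclusions $\{0\} \LG A_k, \{1\} \LG A_k$ under $[1] \LG \nu$; this is tautological since $[1] \LG \nu$ is defined precisely so that the endpoint inclusions are natural in the argument.

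Having produced a morphism of diagrams, the induced map on colimits is by construction the map $[1] \LG \nu$ arising from the functoriality of the lax shuffle decomposition. I expect the main (modest) obstacle to be purely bookkeeping: ensuring that the various insertions $\underline{0}$ at positions $j \neq k$ and the compositions of $d^j$ maps really do strictly commute with the vertical hyperface action, which amounts to a cofinality/index-shifting check in the underlying $\t$-maps. Once these commutations are tabulated, the corollary follows by the universal property of colimits.
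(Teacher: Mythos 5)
Your proposal is correct and takes essentially the route the paper intends: the paper offers no separate proof of this corollary, presenting it as an immediate consequence of the lax shuffle decomposition, the theorem identifying $\left[1\right]\LG T$ with $\left[1\right]\otimes T$, and the entrywise naturality of the structural maps $d^{j}$ and the endpoint inclusions $\left\{ 0\right\} \otimes\left(\_\right),\left\{ 1\right\} \otimes\left(\_\right)$ --- precisely the diagram-morphism-plus-universal-property argument you spell out. The one cosmetic slip is your appeal to a map ``$\left[1\right]\LG\nu$'': the paper defines $\LG$ on objects only (and explicitly warns the diagram choice is not functorial over all of $\Theta$), so the middle square commutes not tautologically but by bifunctoriality of $\otimes$ applied to the genuine map $\left[1\right]\otimes\nu$ appearing in the corollary's diagram; since this entrywise functoriality does hold for vertical hyperfaces, your argument stands as written.
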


\subsubsection{Horizontal hyperfaces}

Above we used the fact that inner and outer \emph{vertical} hyperfaces
admit the same description in terms of the lax shuffle decomposition.
This however is not the case for inner and outer horizontal hyperfaces
- the formulae are subtly different.
\begin{cor}
For outer hyperfaces 
\[
d^{0};\left(\left(!,\id_{1}\right),\id_{>1}\right):\left[n-1\right];\left(A_{0<i\leq n-1}\right)\longrightarrow\left[n\right];\left(\underline{0},A_{0<i\leq n-1}\right)
\]
 and 
\[
d^{n};\left(\id_{<n-1},\left(\id_{n-1},!\right)\right):\left[n-1\right];\left(A_{0<i\leq n-1}\right)\longrightarrow\left[n\right];\left(A_{0<i\leq n-1},\underline{0}\right)
\]
the maps 
\[
\left[1\right]\otimes d^{0};\left(\left(!,\id_{1}\right),\id_{>1}\right):\left[n-1\right];\left(A_{0<i\leq n-1}\right)\longrightarrow\left[n\right];\left(\underline{0},A_{0<i\leq n-1}\right)
\]
and
\[
\left[1\right]\otimes d^{n};\left(\id_{<n-1},\left(\id_{n-1},!\right)\right):\left[n-1\right];\left(A_{0<i\leq n-1}\right)\longrightarrow\left[n\right];\left(A_{0<i\leq n-1},\underline{0}\right)
\]
are induced by the diagrams below, with $\left[1\right]\otimes d^{0};\left(\left(!,\id_{1}\right),\id_{>1}\right)$
induced by the diagram on the left in Figure \ref{fig:Diagrams-inducing-outer horizontal hyperfaces}
and $\left[1\right]\otimes d^{n};\left(\id_{<n-1},\left(\id_{n-1},!\right)\right)$
induced by the diagram on the right in Figure \ref{fig:Diagrams-inducing-outer horizontal hyperfaces}.

\begin{figure}
\noindent\fbox{\begin{minipage}[t]{1\columnwidth - 2\fboxsep - 2\fboxrule}%
\bigskip{}
\adjustbox{scale=.5, center}{
	$$
	\xymatrix{
		& \scriptstyle{\left[n+1\right];\left(\underline{0},\left[0\right],A_{0<i\leq n-1}\right)}
			& \scriptstyle{ \left[n\right];\left(\underline{0},A_{0<i\leq n-1}\right)} 			\ar[r]|{d^{n+1};\left(\mathrm{id}\right)}
				& \scriptstyle{\left[n+1\right];\left(\underline{0},A_{0<i\leq n-1},\left[0\right]\right)} 					\\
		& \scriptstyle{\left[n\right];\left(\left[0\right],A_{0<i\leq n-1}\right)}
		\ar[u]\ar[d]
			& \scriptstyle{\left[n-1\right];\left(A_{0<i\leq n-1}\right)}
			\ar[u]\ar[d]
			\ar[r]|{d^{n};\left(\mathrm{id}\right)}
				& \scriptstyle{\left[n\right];\left(A_{0<i\leq n-1},\left[0\right]\right)}
				\ar[u]\ar[d]
					\\
		& \scriptstyle{\left[n\right];\left(\left[1\right],A_{0<i\leq n-1}\right)}
			& \scriptstyle{\left[n-1\right];\left(\left[1\right]\otimes A_{1},A_{1<i\leq n-1}\right)} 			\ar[r]|{d^{n};\left(\mathrm{id}\right)}
				& \scriptstyle{\left[n\right];\left(\left[1\right],A_{0<i\leq n-1},\left[0\right]\right)} 					\\
		& \scriptstyle{ \left[n\right];\left(\left[0\right],A_{0<i\leq n-1}\right) }
		\ar[u]\ar[d]
			& \scriptstyle{ \left[n-1\right];\left(A_{0<i\leq n-1}\right)}
			\ar[u]\ar[d]
			\ar[r]|{d^{n};\left(\mathrm{id}\right)}
				& \scriptstyle{\left[n\right];\left(A_{0<i\leq n-1},\left[0\right]\right)}
				\ar[u]\ar[d]
					\\
		\scriptstyle{ \left[n\right];\left(\underline{0},A_{0<i\leq n-1}\right)} 		\ar[r]|{d^{0};\left(\mathrm{id}\right)}
			& \scriptstyle{ \left[n+1\right];\left(\left[0\right],\underline{0},A_{0<i\leq n-1}\right)} 				& \scriptstyle{ \left[n\right];\left(A_{1},\underline{0},A_{1<i\leq n-1}\right)}
				\ar[r]|{d^{n+1};\left(\mathrm{id}\right)}
					& \scriptstyle{ \left[n+1\right];\left(A_{1},\underline{0},A_{1<i\leq n-1},\left[0\right]\right)}
						\\
		\scriptstyle{ \left[n-1\right];\left(A_{0<i\leq n-1}\right)}
		\ar[u]\ar[d]
		\ar[r]|{d^{0};\left(\mathrm{id}\right)}
			& \scriptstyle{\left[n\right];\left(\left[0\right],A_{1},A_{2<i\leq n-1}\right)}
			\ar[u]\ar[d]
				& \vdots
				\ar[u]\ar[d]\ar[r]
					& \scriptstyle{\vdots}
					\ar[u]\ar[d]
						\\
		\scriptstyle{\left[n-1\right];\left(\left[1\right]\otimes A_{1},A_{1<i\leq n-1}\right)} 		\ar[r]|{d^{0};\left(\mathrm{id}\right)}
			& \scriptstyle{\left[n\right];\left(\left[0\right],\left[1\right]\otimes A_{1},A_{2<i\leq n-1}\right)} 				& \scriptstyle{\left[n\right];\left(A_{0<i<n-1},\underline{0},A_{n-1}\right)}
				\ar[r]|{d^{n+1};\left(\mathrm{id}\right)}
					& \scriptstyle{\left[n+1\right];\left(A_{0<i<n-1},\underline{0},A_{n-1},\left[0\right]\right)} 						\\
		\scriptstyle{\left[n-1\right];\left(A_{0<i\leq n-1}\right)}
		\ar[u]\ar[d]
		\ar[r]|{d^{0};\left(\mathrm{id}\right)}
			& \scriptstyle{\left[n\right];\left(\left[0\right],A_{1},A_{2<i\leq n-1}\right)}
			\ar[u]\ar[d]
				& \scriptstyle{\left[n-1\right];\left(A_{0<i\leq n-1}\right)}
				\ar[u]\ar[d]
				\ar[r]|{d^{n};\left(\mathrm{id}\right)}
					& \scriptstyle{\left[n\right];\left(A_{0<i\leq n-1},\left[0\right]\right)} 					\ar[u]\ar[d] 						\\ 		\scriptstyle{\left[n\right];\left(A_{1},\underline{0},A_{1<i\leq n-1}\right)} 		\ar[r]|{d^{0};\left(\mathrm{id}\right)}
			& \scriptstyle{\left[n\right];\left(\left[0\right],A_{1},\underline{0},A_{1<i\leq n-1}\right)}	 				& \scriptstyle{ \left[n-1\right];\left(A_{0<i<n-1},\left[1\right]\otimes A_{n-1}\right)} 				\ar[r]|{d^{n};\left(\mathrm{id}\right)}
					& \scriptstyle{\left[n\right];\left(A_{1<i<n-1},\left[1\right]\otimes A_{n-1},\left[0\right]\right)}
						\\
		\scriptstyle{\vdots}
		\ar[u]\ar[d]\ar[r]
			& \scriptstyle{\vdots}
			\ar[u]\ar[d]
				& \scriptstyle{\left[n-1\right];\left(A_{0<i\leq n-1}\right)}
				\ar[u]\ar[d]
				\ar[r]|{d^{n};\left(\mathrm{id}\right)}
					& \scriptstyle{\left[n\right];\left(A_{0<i\leq n-1},\left[0\right]\right)} 					\ar[u]\ar[d] 						\\ 		\scriptstyle{\left[n\right];\left(A_{0<i<n-1},\underline{0},A_{n-1}\right)} 		\ar[r]|{d^{0};\left(\mathrm{id}\right)}
			& \scriptstyle{\left[n+1\right];\left(\left[0\right],A_{0<i<n-1},\underline{0},A_{n-1}\right)} 				& \scriptstyle{\left[n\right];\left(A_{0<i\leq n-1},\underline{0}\right)} 				\ar[r]|{d^{n+1};\left(\mathrm{id}\right)}
					& \scriptstyle{\left[n+1\right];\left(\left[0\right],A_{0<i<n-1},\underline{0},A_{n-1}\right)} 						\\
		\scriptstyle{\left[n-1\right];\left(A_{0<i\leq n-1}\right)}
		\ar[u]\ar[d]
		\ar[r]|{d^{0};\left(\mathrm{id}\right)}
			& \scriptstyle{\left[n\right];\left(\left[0\right],A_{0<i\leq n-1}\right)}
			\ar[u]\ar[d]
				&
					& \scriptstyle{\left[n\right];\left(\left[0\right],A_{0<i\leq n-1}\right)} 					\ar[u]\ar[d]
						\\
		\scriptstyle{\left[n-1\right];\left(A_{0<i<n-1},\left[1\right]\otimes A_{n-1}\right)} 		\ar[r]|{d^{0};\left(\mathrm{id}\right)}
			& \scriptstyle{\left[n\right];\left(\left[0\right],A_{0<i<n-1},\left[1\right]\otimes A_{n-1}\right)} 				&
 					& \scriptstyle{\left[n\right];\left(\left[0\right],A_{0<i<n-1},\left[1\right]\otimes A_{n-1}\right)} 						\\
		\scriptstyle{\left[n-1\right];\left(A_{0<i\leq n-1}\right)}
		\ar[u]\ar[d]
		\ar[r]|{d^{0};\left(\mathrm{id}\right)}
			& \scriptstyle{\left[n\right];\left(\left[0\right],A_{0<i\leq n-1}\right)}
			\ar[u]\ar[d]
				&
					& \scriptstyle{\left[n\right];\left(\left[0\right],A_{0<i\leq n-1}\right)} 					\ar[u]\ar[d]
						\\
		\scriptstyle{\left[n\right];\left(A_{0<i\leq n-1},\underline{0}\right)} 		\ar[r]|{d^{0};\left(\mathrm{id}\right)}
			& \scriptstyle{\left[n+1\right];\left(\left[0\right],A_{0<i\leq n-1},\underline{0}\right)} 				&
 					& \scriptstyle{\left[n+1\right];\left(\left[0\right],A_{0<i\leq n-1},\underline{0}\right)} 		}
	$$
}\bigskip{}

\caption{\label{fig:Diagrams-inducing-outer horizontal hyperfaces}Diagrams
inducing the Gray cylinders of outer horizontal hyperfaces}
\bigskip{}
\end{minipage}}
\end{figure}

\end{cor}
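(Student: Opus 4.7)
The plan is to exploit the fact, established in Section \ref{subsec:Comparison-to-Steiner}, that $\left[1\right]\otimes T$ is the colimit of the lax shuffle diagram $D_T$ for every cell $T$ of $\Theta$, and that $\left[1\right]\otimes\left(\_\right)$ is functorial. So to describe $\left[1\right]\otimes d^{0};\left(\left(!,\id_{1}\right),\id_{>1}\right)$ as a map of colimits, it suffices to produce a morphism of diagrams $\varphi:D_{\left[n-1\right];\left(A_{i}\right)}\longrightarrow D_{\left[n\right];\left(\underline{0},A_{i}\right)}$ and then verify that the induced map between the two colimits recovers $\left[1\right]\otimes d^{0};\left(\left(!,\id_{1}\right),\id_{>1}\right)$ (and symmetrically for $d^{n}$).

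First, I would define $\varphi$ componentwise as precisely the horizontal arrows displayed in Figure \ref{fig:Diagrams-inducing-outer horizontal hyperfaces}. On vertices $\left[n-1\right];\left(A_{\leq n-1}\right)$ of the form ``equalizer copies of the cell'', $\varphi$ acts by the obvious $d^{0};\left(\id\right)$ (respectively $d^{n};\left(\id\right)$ in the $d^{n}$ case); on vertices of the form $\left[n\right];\left(A_{<k},\left[1\right]\otimes A_{k},A_{>k}\right)$, $\varphi$ acts by $d^{0};\left(\id,\dots,\id\right)$, prepending the new $\underline{0}$ factor; and on vertices of the form $\left[n\right];\left(A_{<k},\underline{0},A_{>k}\right)$, $\varphi$ acts again by $d^{0};\left(\id\right)$, producing $\left[n+1\right];\left(\underline{0},A_{<k},\underline{0},A_{>k}\right)$. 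The only subtlety is that the source diagram has strictly fewer vertices than the target diagram: the target diagram contains a ``new'' top pair of rows corresponding to the freshly introduced first factor $\underline{0}$ and the corresponding $\left[1\right]\otimes\underline{0}=\left[1\right]$ row, and the morphism of diagrams simply does not meet these additional vertices — they receive no incoming arrow from the source.

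Second, I would verify that $\varphi$ is a morphism of diagrams, i.e.~that each square formed between two consecutive vertices in $D_{\left[n-1\right];\left(A_{i}\right)}$ and their images commutes. These squares are all of the form
\[
\xymatrix@R=1pc@C=2pc{\left[n-1\right];\left(A_{i}\right)\ar[r]^-{d^{0};\left(\id\right)}\ar[d] & \left[n\right];\left(\underline{0},A_{i}\right)\ar[d]\\
V\ar[r]^-{d^{0};\left(\id\right)} & W}
\]
where the vertical maps are the inclusions from the lax shuffle decomposition (the $d^{k};\left(\id\right)$'s and the $\left\{ 0\right\} \otimes A_{k},\left\{ 1\right\} \otimes A_{k}$ inclusions). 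Commutativity is then a matter of routine verification in $\t\int\t\int\t\cdots$: prepending an extra $\underline{0}$ at the front via $d^{0}$ commutes with every face insertion and every endpoint inclusion that does not touch the initial slot. This is one of those ``large but straightforward'' diagram chases.

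Third, to identify the induced map between colimits with $\left[1\right]\otimes d^{0};\left(\left(!,\id_{1}\right),\id_{>1}\right)$, I would use the explicit $\omega$-category enriched description of Section \ref{sec:Shifted-Product-Rule-=00005Comega-categories}. On objects, both the induced map and $\left[1\right]\otimes d^{0};\ldots$ coincide with $\id_{\left[1\right]}\times d^{0}$ on $\Ob\left(\left[1\right]\right)\times\Ob\left(\left[n-1\right]\right)\to\Ob\left(\left[1\right]\right)\times\Ob\left(\left[n\right]\right)$; on $\Hom$-$\omega$-categories the relevant comparison reduces to the compatibility of the $\mathsf{P.R.}$-construction with the insertion of an initial $\underline{0}$, which is immediate from Lemma \ref{lem:PR-is-PR-isPR}. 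The $d^{n}$ case is dual, with $d^{0};\left(\id\right)$ replaced throughout by $d^{n+1};\left(\id\right)$ and with the new rows appearing at the bottom of the target diagram.

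The main obstacle will be purely organizational: the lax shuffle diagram for $\left[n\right];\left(\underline{0},A_{i}\right)$ is strictly longer than that for $\left[n-1\right];\left(A_{i}\right)$ — specifically it contains the extra span involving $\left[1\right]\otimes\underline{0}=\left[1\right]$ at the top — and one needs to be careful that the unmatched vertices (and the spans attached to them) contribute nothing to the induced map beyond what is already forced by the universal property. Everything else is bookkeeping in the iterated wreath product and the commutativity of prepending/appending a $\underline{0}$ with the various face and endpoint maps, none of which presents a conceptual difficulty.
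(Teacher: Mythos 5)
Your proposal is correct and is essentially the paper's own (largely implicit) argument: the paper gives this corollary no separate proof, its content being exactly the diagram morphism you construct --- horizontal arrows between the two lax shuffle diagrams, with the extra span through $\left[n\right];\left(\left[1\right]\otimes\underline{0},A_{0<i\leq n-1}\right)=\left[n\right];\left(\left[1\right],A_{0<i\leq n-1}\right)$ left unmatched --- whose induced map on colimits is then identified with $\left[1\right]\otimes d^{0};\left(\left(!,\id_{1}\right),\id_{>1}\right)$ via the shifted-product-rule description of the $\Hom$-$\omega$-categories, just as you outline (note that for an outer face one has $F\left(d^{0}\right)\left(i\right)=\left\{ i+1\right\}$ with all $g_{i\rightarrow i+1}=\id$, so the $\Hom$-level comparison is the identity on each $\PR$-term and your appeal to Lemma \ref{lem:PR-is-PR-isPR} is if anything more than is needed). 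One small correction to your closing remark: in the $d^{n}$ case the horizontal arrows are not $d^{n+1};\left(\id\right)$ throughout --- appending $\underline{0}$ acts as $d^{n};\left(\id\right)$ on the height-$\left[n-1\right]$ vertices and as $d^{n+1};\left(\id\right)$ on the height-$\left[n\right]$ vertices, since the index of the final face shifts with the simplicial length, and it is precisely the identity $d^{n+1}d^{k}=d^{k}d^{n}$ for $k\leq n$ that makes those squares commute.
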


Lastly we may attend to inner horizontal hyperfaces.
\begin{cor}
Let 
\[
d^{k};\left(\id_{0<i<k},\left(\id,!\right),\id_{k<i<n}\right):\left[n-1\right];\left(A_{0<i<n}\right)\longrightarrow\left[n\right];\left(A_{0<i<k},A_{k},\left[0\right],A_{k<i<n}\right)
\]
be an inner hyperface. Then the map
\[
\left[1\right]\otimes d^{k};\left(\id_{0<i<k},\left(\id,!\right),\id_{k<i<n}\right):\left[1\right]\otimes:\left[n-1\right];\left(A_{0<i<n}\right)\longrightarrow\left[1\right]\otimes\left[n\right];\left(A_{0<i<k},A_{k},\left[0\right],A_{k<i<n}\right)
\]
 is induced by the diagram in Figure \ref{fig:Diagram-inducing-the-gray-cylinder-inner-hyperfaces}
(in which we leave the denotation of all but the simplicial aspects
of maps implicit).

\begin{figure}
\noindent\fbox{\begin{minipage}[t]{1\columnwidth - 2\fboxsep - 2\fboxrule}%
\bigskip{}

\adjustbox{scale=.5,center}{
	$$
	\xymatrix{
		{\scriptstyle \left[n\right];\left(\underline{0},A_{0<i<n}\right)}
		\ar[rr]|{d^{k+1}}
			&
				& {\scriptstyle \left[n+1\right];\left(\underline{0},A_{0<i\leq k},\left[0\right],A_{k<i<n}\right)}
					\\
		{\scriptstyle \left[n-1\right];\left(A_{0<i<n}\right)}
		\ar[u]
		\ar[d]
			&
				& {\scriptstyle \left[n\right];\left(A_{0<i\leq k},\left[0\right],A_{k<i<n}\right)}
				\ar[u]
				\ar[d]
					\\
		{\scriptstyle \left[n-1\right];\left(\left[1\right]\otimes A_{1},A_{1<i<n}\right)}
		\ar[rr]|{d^{k}}
			&
				& {\scriptstyle \left[n\right];\left(\left[1\right]\otimes A_{1},A_{1<i\leq k},\left[0\right],A_{k<i<n}\right)}
					\\
		{\scriptstyle \left[n-1\right];\left(A_{0<i<n}\right)}
		\ar[u] 		\ar[d] 		& 			& {\scriptstyle \left[n\right];\left(A_{0<i\leq k},\left[0\right],A_{k<i<n}\right)} 			\ar[u]
			\ar[d]
				\\
		{\scriptstyle \left[n\right];\left(A_{1},\underline{0},A_{1<i<n}\right)}
		\ar[rr]|{d^{k+1}}
			&
				& {\scriptstyle \left[n+1\right];\left(A_{1},\underline{0},A_{1<i\leq n-1},\left[0\right]\right)}
					\\
			&
				& {\scriptstyle \vdots}
				\ar[u]
				\ar[d]
					\\
	{\scriptstyle \vdots}
	\ar[uu]
	\ar[dd]
		&
			& {\scriptstyle \left[n+1\right];\left(A_{0<i<k},\underline{0},A_{k},\left[0\right],A_{k<i<n}\right)}
				\\
		&
			& {\scriptstyle \left[n\right];\left(A_{0<i<k},A_{k},\left[0\right],A_{k<i<n}\right)}
			\ar[u]
			\ar[d]
				\\ 	{\scriptstyle \left[n\right];\left(A_{0<i<k},\underline{0},A_{k},A_{k<i<n}\right)}
	\ar[uurr]|{d^{k+1}}
		&
			& {\scriptstyle \left[n\right];\left(A_{0<i<k},\left[1\right]\otimes A_{k},\left[0\right],A_{k<i<n}\right)}
				\\
	{\scriptstyle \left[n-1\right];\left(A_{0<i<n}\right)}
	\ar[u]
	\ar[d] 		& {\scriptstyle \left[n-1\right];\left(A_{0<i<k},\left[1\right]\otimes A_{k}\times\left[0\right],A_{k<i<n}\right)} 		\ar[d] 		\ar[ur]|{d^{k}}
			& {\scriptstyle \left[n\right];\left(A_{0<i<k},A_{k},\left[0\right],A_{k<i<n}\right)}
			\ar[u]
			\ar[d]
				\\
	{\scriptstyle \left[n-1\right];\left(A_{0<i<k},\left[1\right]\otimes A_{k},A_{k<i<n}\right)}
	\ar[r]
		& {\scriptstyle \left[n-1\right];\left(A_{0<i<k},\mathsf{P.R.}\left(A_{k},\left[0\right]\right),A_{k<i<n}\right)}
			& {\scriptstyle \left[n+1\right];\left(A_{0<i<k},A_{k},\underline{0},\left[0\right],A_{k<i<n}\right)}
				\\
	{\scriptstyle \left[n-1\right];\left(A_{0<i<n}\right)}
	\ar[u]
	\ar[d]
		& {\scriptstyle \left[n-1\right];\left(A_{0<i<k},A_{k}\times\left[1\right]\otimes\left[0\right],A_{k<i<n}\right)}
		\ar[u]
		\ar[dr]|{d^{k}}
			& {\scriptstyle \left[n\right];\left(A_{0<i<k},A_{k},\left[0\right],A_{k<i<n}\right)}
			\ar[u]
			\ar[d]
			\\
	{\scriptstyle \left[n\right];\left(A_{0<i<k},A_{k},\underline{0},A_{k<i<n}\right)}
	\ar[ddrr]|{d^{k}}
		&
			& {\scriptstyle \left[n\right];\left(A_{0<i<k},A_{k},\left[1\right]\otimes\left[0\right],A_{k<i<n}\right)}
				\\
		&
			& {\scriptstyle \left[n\right];\left(A_{0<i<k},A_{k},\left[0\right],A_{k<i<n}\right)}
			\ar[u]
			\ar[d]
				\\
	{\scriptstyle \vdots}
	\ar[dd]
	\ar[uu]
		&
			& {\scriptstyle \left[n+1\right];\left(A_{0<i<k},A_{k},\left[0\right],\underline{0},A_{k<i<n}\right)}
			\\
		&
			& {\scriptstyle \vdots}
			\ar[u]
			\ar[d]
			\\
	{\scriptstyle \left[n\right];\left(A_{0<i<n-1},\underline{0},A_{n}\right)}
	\ar[rr]|{d^{k}}
		&
			& {\scriptstyle \left[n+1\right];\left(A_{0<i<k},A_{k},\left[0\right],A_{k<i<n-1},\underline{0},A_{n-1}\right)}
			\\
	{\scriptstyle \left[n-1\right];\left(A_{0<i<n}\right)}
	\ar[u]
	\ar[d]
		&
			& {\scriptstyle \left[n\right];\left(A_{0<i<k},A_{k},\left[0\right],A_{k<i<n}\right)}
			\ar[u]
			\ar[d]
				\\
	{\scriptstyle \left[n-1\right];\left(A_{0<i<n-1},\left[1\right]\otimes A_{n-1}\right)}
	\ar[rr]|{d^{k}}
		&
			&
				{\scriptstyle \left[n\right];\left(\left[0\right],A_{0<i<n-1},\left[1\right]\otimes A_{n-1}\right)}
					\\
	{\scriptstyle \left[n-1\right];\left(A_{0<i<n}\right)}
	\ar[u]
	\ar[d]
		&
			& {\scriptstyle \left[n\right];\left(A_{0<i<k},A_{k},\left[0\right],A_{k<i<n}\right)}
			\ar[u]
			\ar[d]
				\\
	{\scriptstyle \left[n\right];\left(A_{0<i<n},\underline{0}\right)}
	\ar[rr]|{d^{k}}
		&
			& {\scriptstyle \left[n+1\right];\left(A_{0<i<k},A_{k},\left[0\right],A_{k<i<n},\underline{0}\right)}
	}
	$$
}\bigskip{}

\caption{\label{fig:Diagram-inducing-the-gray-cylinder-inner-hyperfaces}Diagram
inducing the Gray cylinder of an inner hyperface}
\bigskip{}
\end{minipage}}
\end{figure}
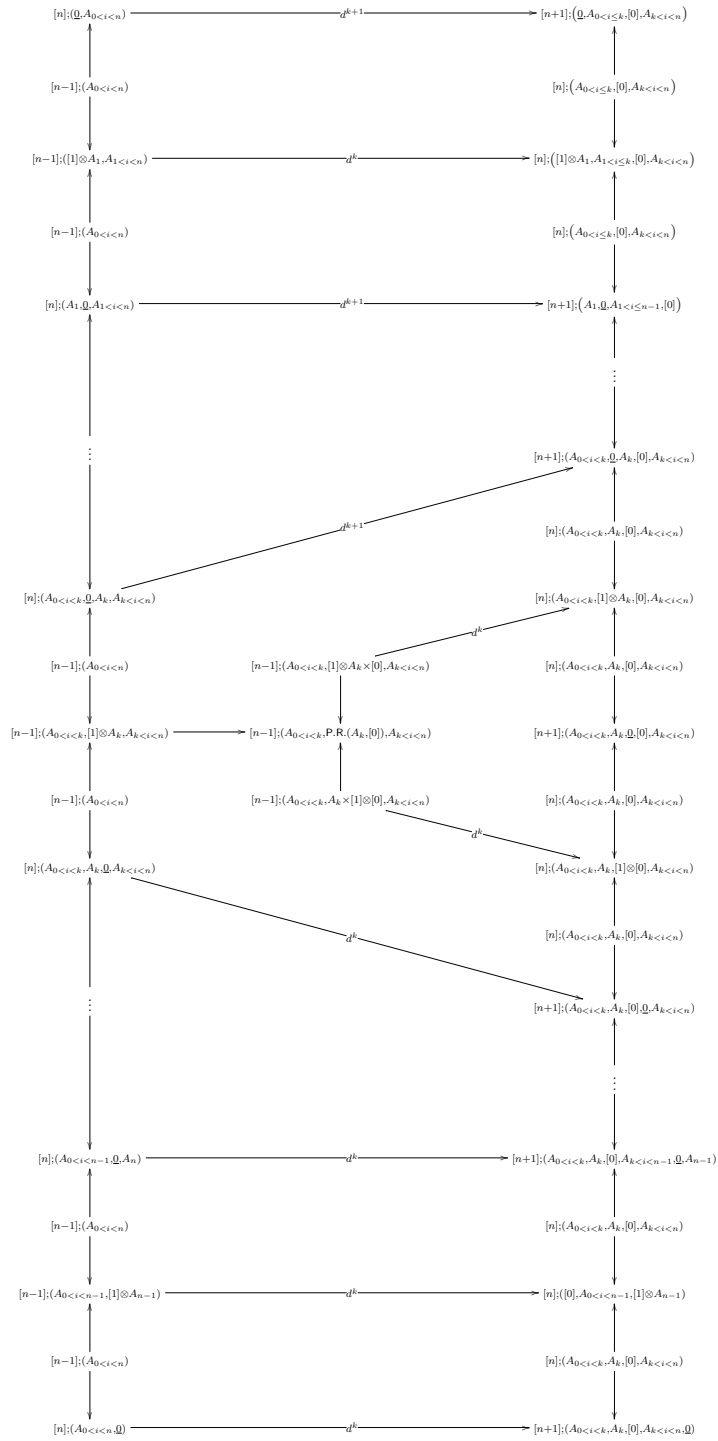

\end{cor}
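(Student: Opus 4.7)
The plan is to proceed by comparing the lax shuffle decompositions (Definition \ref{def:LG-on-objects}) of the source $[1]\otimes[n-1];(A_{0<i<n})$ and the target $[1]\otimes[n];(A_{0<i<k},A_{k},[0],A_{k<i<n})$, and to exhibit the map $[1]\otimes d^{k};(\id_{<k},(\id,!),\id_{>k})$ as the universal map between these colimits induced by the cone described in Figure \ref{fig:Diagram-inducing-the-gray-cylinder-inner-hyperfaces}. By the universal property of colimits, it then suffices to verify (i) that the displayed morphisms assemble into a cone from the source shuffle diagram under the target shuffle diagram, and (ii) that the induced map agrees with $[1]\otimes d^{k};(\id_{<k},(\id,!),\id_{>k})$.

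For step (i), most of the rows of Figure \ref{fig:Diagram-inducing-the-gray-cylinder-inner-hyperfaces} are handled uniformly: a shuffle summand of the source of the form $[n+1];(A_{\leq j-1},[0],A_{\geq j})$ or $[n-1];(A_{<j},[1]\otimes A_{j},A_{>j})$ with $j\neq k$ maps into the structurally identical summand of the target via the evident $d^{k}$ or $d^{k+1}$ on the outer simplex together with the identity $(\id,!)$ on the $k$-th factor; the requisite commutativity with the connecting $[n];(A_{\leq n})$-rungs reduces to the equation $d^{k+1}d^{k}=d^{k}d^{k}$ in $\t$ and the triviality of $(\id,!)$ on slots $i\neq k$. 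The new content concerns the summand $[n-1];(A_{<k},[1]\otimes A_{k},A_{>k})$ of the source and the two summands $[n];(A_{<k},[1]\otimes A_{k},[0],A_{>k})$ and $[n];(A_{<k},A_{k},[1]\otimes[0],A_{>k})$ of the target which arise from shuffling across positions $k$ and $k+1$. Here we invoke Lemma \ref{lem:PR-is-PR-isPR}, which provides the canonical factorization
\[
[1]\otimes A_{k}\longrightarrow\mathsf{P.R.}(A_{k},[0])\longleftarrow[1]\otimes[0]
\]
mediating between $([1]\otimes A_{k})\times[0]$ and $A_{k}\times([1]\otimes[0])$ through $A_{k}\times[0]$; embedding this span inside $[n-1];(A_{<k},(\_),A_{>k})$ and post-composing with the appropriate $d^{k}$ provides precisely the two middle columns of the figure.

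For step (ii), the comparison is most conveniently checked using the $\omega$-category-enriched description of Section \ref{sec:Shifted-Product-Rule-=00005Comega-categories}: both $[1]\otimes d^{k};\mathbf{g}$ and the colimit-induced map are determined by the underlying function $\id\times d^{k}:[1]\times[n-1]\to[1]\times[n]$ on objects, together with their action on Hom-$\omega$-categories, which by Lemma \ref{cor:=00005Comega-cat-descrption-of-lax-tensor} are products of shifted product rules $\mathsf{P.R.}((A_{i})_{i\in\langle x,z\rangle})$. The action of $[1]\otimes d^{k};(\id_{<k},(\id,!),\id_{>k})$ on these Hom-$\omega$-categories was worked out explicitly in the preceding subsection and sends $\mathsf{P.R.}((A_{i})_{i\in\langle x,z\rangle})$ into a product in which the slot at position $k$ is the shifted product rule $\mathsf{P.R.}(A_{k},[0])$; by inspection this is exactly the map induced on colimits by the cone of Figure \ref{fig:Diagram-inducing-the-gray-cylinder-inner-hyperfaces}. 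The main obstacle, and the source of the technical bulk, is this bookkeeping comparison in the $k$-th slot: verifying that the $\mathsf{P.R.}(A_{k},[0])$-valued middle rung really corresponds on Hom-$\omega$-categories to the restriction to the $k$-th factor of the explicit formula for $\mathsf{P.R.}(d^{k};(\id,!),\langle x,z\rangle)$, so that the two a priori different maps into $[1]\otimes[n];(A_{<k},A_{k},[0],A_{>k})$ agree. Once this local comparison is established, the remaining verifications are routine and left to the reader.
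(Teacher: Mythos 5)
Your proposal is correct and takes essentially the same route as the paper, which offers no separate proof of this corollary but presents it as a formal consequence of precisely the machinery you deploy: the lax shuffle decomposition of Definition \ref{def:LG-on-objects} applied to source and target, the span $\left(\left[1\right]\otimes A_{k}\right)\times\left[0\right]\leftarrow A_{k}\times\left[0\right]\rightarrow A_{k}\times\left(\left[1\right]\otimes\left[0\right]\right)$ computing $\mathsf{P.R.}\left(A_{k},\left[0\right]\right)$ in the $k$-th slot, and the enriched-Hom comparison via Lemma \ref{cor:=00005Comega-cat-descrption-of-lax-tensor}, with the same "formal bookkeeping left to the reader" at the same point the paper leaves it. The only blemish is a harmless index slip (the source's $\left[0\right]$-insertion summands have outer simplex $\left[n\right]$, not $\left[n+1\right]$), which does not affect the argument.
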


\begin{rem}
The case where we replace $\left(\id,!\right)$ :$\left(A_{k}\right)\longrightarrow\left(A_{k},\left[0\right]\right)$
with $\left(!,\id\right):\left(A_{k}\right)\longrightarrow\left(\left[0\right],A_{k}\right)$
is nearly the same; we leave it to the reader to make the change themselves
where required.
\end{rem}

\section{The Cartesian-Gray-Shift span and the Gray Cylinder}

We can now define an important span 
\[
\vcenter{\vbox{\xyR{3pc}\xyC{3pc}\xymatrix{ & \left[1\right]\otimes\left(\_\right)\ar[dl]_{\kappa}\ar[dr]^{\sigma}\\
\left[1\right]\times\left(\_\right) &  & \left[1\right];\left(\_\right)
}
}}
\]
of functors $\Theta\longrightarrow\wh{\Theta}$, again by recursion
on height.
\begin{defn}
Let 
\[
\kappa_{\left[0\right]}:\underbrace{\left[1\right]\otimes\left[0\right]}_{\left[1\right]}\longrightarrow\underbrace{\left[1\right]\times\left[0\right]}_{\left[1\right]}
\]
be the identity and let 
\[
\sigma_{\left[0\right]}:\underbrace{\left[1\right]\otimes\left[0\right]}_{\left[1\right]}\longrightarrow\underbrace{\left[1\right];\left[0\right]}_{\left[1\right]}
\]
be the identity as well. Then, by recursion on the height of cells,
we define the maps $\kappa_{T}$ and $\sigma_{T}$ as follows. 

Let $n\in\N$ and $j\in\left[n\right]$ and define the map $\sf{split}_{\left[n\right]}^{j}$
by the following expression.
\[
\vcenter{\vbox{\xyR{0pc}\xyC{3pc}\xymatrix{\mathsf{split}_{\left[n\right]}^{j}:\left[n\right]\ar[r] & \left[1\right]\\
i<j\ar@{|->}[r] & 0\\
i\geq j\ar@{|->}[r] & 1
}
}}
\]
For all $\left[n\right];\left(A_{1},\dots,A_{n}\right)$ of $\Theta$,
the following diagram commutes (proven below as Proposition \ref{prop:General Case GRay Span}).
\begin{equation}
\xyR{3pc}\xyC{3pc}\xymatrix{{\scriptstyle \left[n+1\right];\left(0,A_{1},A_{2},\dots,A_{n}\right)} &  & {\scriptstyle \left[n+1\right];\left(0,A_{1},A_{2},\dots,A_{n}\right)}\ar[ll]|-{\scriptscriptstyle \id}\ar[rr]|-{\scriptscriptstyle \sf{split}_{\left[n+1\right]}^{1};\left(!\right)} &  & {\scriptstyle \left[1\right];\left(0\right)}\ar[dd]|-{\scriptstyle \id;\left(\left\{ 1\right\} \right)}\\
 &  & {\scriptstyle \left[n\right];\left(A_{1},A_{2},\dots,A_{n}\right)}\ar[u]|-{\scriptstyle d^{1};\left(\left(!,\id\right),\id,\dots,\id\right)}\ar[d]|-{\scriptstyle \id;\left(\left\{ 1\right\} \otimes A_{1},\id,\dots,\id\right)}\\
{\scriptstyle \left[n\right];\left(A_{1},A_{2},\dots,A_{n}\right)}\ar[uu]|-{\scriptscriptstyle d^{1};\left(\left(!,\id\right),\id,\dots,\id\right)}\ar[dd]|-{\scriptscriptstyle d^{1};\left(\left(\id,!\right),\id,\dots,\id\right)} &  & {\scriptstyle \left[n\right];\left(\left[1\right]\otimes A_{1},A_{2},\dots,A_{n}\right)}\ar[ll]|-{\scriptstyle \id;\left(\pr_{2}\circ\kappa_{A_{k}},\id,\dots,\id\right)}\ar[rr]|-{\scriptstyle {\scriptscriptstyle \sf{split}_{\left[n+1\right]}^{1};\left(\sigma_{A_{1}}\right)}} &  & {\scriptstyle \left[1\right];\left[1\right];\left(A_{1}\right)}\\
 & \mathrm{} & {\scriptstyle \left[n\right];\left(A_{1},A_{2},\dots,A_{n}\right)}\ar[u]|-{\scriptstyle \id;\left(\left\{ 0\right\} \otimes A_{1},!,\dots,!\right)}\ar[d]|-{\scriptstyle d^{1};\left(\left(\id,!\right),\id,\dots,\id\right)}\\
{\scriptstyle \left[n+1\right];\left(A_{1},0,A_{2},\dots,A_{n}\right)}\ar[d] &  & {\scriptstyle \left[n+1\right];\left(A_{1},0,A_{2},\dots,A_{n}\right)}\ar[ll]|-{\scriptstyle \id}\ar[rr]|-{\scriptstyle {\scriptscriptstyle \sf{split}_{\left[n+1\right]}^{2};\left(!\right)}}\ar[d] &  & {\scriptstyle \left[1\right];\left[0\right]}\ar[uu]|-{\scriptscriptstyle \id;\left(\left\{ 0\right\} \right)}\ar[d]\\
{\scriptstyle \vdots} &  & {\scriptstyle \vdots} &  & \vdots\\
{\scriptstyle \left[n+1\right];\left(A_{1},\dots,A_{n-1},0,A_{n}\right)}\ar[u] &  & {\scriptstyle \left[n+1\right];\left(A_{1},\dots,A_{n-1},0,A_{n}\right)}\ar[ll]|-{\id}\ar[rr]|-{\scriptscriptstyle \sf{split}_{\left[n+1\right]}^{n};\left(!\right)}\ar[u] &  & \ar[u]{\scriptstyle \left[1\right];\left(0\right)}\ar[dd]|-{\scriptscriptstyle \id;\left(\left\{ 1\right\} \right)}\\
 & \mathrm{} & {\scriptstyle \left[n\right];\left(A_{1},\dots,A_{n-1},A_{n}\right)}\ar[u]|-{\scriptstyle d^{n};\left(\id,\dots,\id,\left(!,\id\right)\right)}\ar[d]|-{\scriptstyle \id;\left(\id,\dots,\id,\left\{ 1\right\} \otimes A_{n}\right)}\\
{\scriptstyle \left[n\right];\left(A_{1},\dots,A_{n-1}A_{n}\right)}\ar[uu]|-{\scriptscriptstyle d^{n};\left(\id,\dots,\id,\left(!,\id\right)\right)}\ar[dd]|-{\scriptscriptstyle d^{n};\left(\id,\dots,\id,\left(\id,!\right)\right)} &  & {\scriptstyle \left[n\right];\left(A_{1},\dots,A_{n-1},\left[1\right]\otimes A_{n}\right)}\ar[ll]|-{\scriptstyle \id;\left(\id,\dots,\id,\pr_{2}\circ\kappa_{A_{n}}\right)}\ar[rr]|-{\scriptscriptstyle \sf{split}_{\left[n+1\right]}^{n+1};\left(\sigma_{A_{n}}\right)} &  & {\scriptstyle \left[1\right];\left[1\right];\left(A_{n}\right)}\\
 & \mathrm{} & {\scriptstyle \left[n\right];\left(A_{1},\dots,A_{n}\right)}\ar[u]|-{\scriptscriptstyle \id;\left(\left\{ 0\right\} \otimes A_{1},!,\dots,!\right)}\ar[d]|-{\scriptscriptstyle d^{n};\left(\id,\dots,\id,\left(\id,!\right)\right)}\\
{\scriptstyle \left[n+1\right];\left(A_{1},\dots,A_{n-1},A_{n},0\right)} &  & {\scriptstyle \left[n+1\right];\left(A_{1},\dots,A_{n-1},A_{n},0\right)}\ar[ll]|-{\scriptscriptstyle \id}\ar[rr]|-{\scriptscriptstyle \sf{split}_{\left[n+1\right]}^{n+1};\left(!\right)} &  & {\scriptstyle \left[1\right];\left[0\right]}\ar[uu]|-{\scriptscriptstyle \id;\left(\left\{ 0\right\} \right)}
}
\label{eq: general case gray span diagram-1-1}
\end{equation}

Moreover, since:
\begin{itemize}
\item the colimit of the first column is $\left[1\right]\times\left[n\right];\left(A_{1},A_{2},\dots,A_{n}\right)$;
\item the colimit of the second column is $\left[1\right]\otimes\left[n\right];\left(A_{1},A_{2},\dots,A_{n}\right)$;
and
\item the colimit on the right is the globular sum decomposition for $\left[1\right];\left[n\right];\left(A_{1},A_{2},\dots,A_{n}\right)$;
\end{itemize}
this diagram induces maps $\kappa_{\left[n\right];\left(A_{i}\right)}$
and $\sigma_{\left[n\right];\left(A_{i}\right)}$ (the solid arrows)
for which the diagram (solid and dashed arrows) commutes as follows.
\begin{equation}
\xyR{3pc}\xyC{9pc}\xymatrix{ & {\scriptstyle \left[n\right];\left(A_{1},A_{2},\dots,A_{n}\right)}\ar@{-->}[dl]|-{\scriptstyle \left\{ 1\right\} \times\left[n\right];\left(A_{1},A_{2},\dots,A_{n}\right)}\ar@{-->}[d]|-{\scriptstyle \left\{ 1\right\} \otimes\left[n\right];\left(A_{1},A_{2},\dots,A_{n}\right)}\ar@{-->}[dr]|-{\scriptstyle \left\{ 1\right\} }\\
{\scriptstyle \left[1\right]\times\left[n\right];\left(A_{1},A_{2},\dots,A_{n}\right)} & {\scriptstyle \left[1\right]\otimes\left[n\right];\left(A_{1},A_{2},\dots,A_{n}\right)}\ar[l]|-{\kappa_{\left[n\right];\left(A_{1},A_{2},\dots,A_{n}\right)}}\ar[r]|-{\sigma_{\left[n\right];\left(A_{1},A_{2},\dots,A_{n}\right)}} & {\scriptstyle \left[1\right];\left[n\right];\left(A_{1},A_{2},\dots,A_{n}\right)}\\
 & {\scriptstyle \left[n\right];\left(A_{1},A_{2},\dots,A_{n}\right)}\ar@{-->}[ul]|-{\scriptstyle \left\{ 0\right\} \times\left[n\right];\left(A_{1},A_{2},\dots,A_{n}\right)}\ar@{-->}[u]|-{\scriptstyle \left\{ 0\right\} \otimes\left[n\right];\left(A_{1},A_{2},\dots,A_{n}\right)}\ar@{-->}[ur]|-{\scriptstyle \left\{ 0\right\} }
}
\label{eq: general case folding diamond-1-1}
\end{equation}
\end{defn}

\begin{lem}
\label{lem:Simplicial Case Gray Span}For all $n\in\N$, the diagram
\begin{equation}
\xyR{3pc}\xyC{3pc}\xymatrix{{\scriptstyle \left[n+1\right]} &  & {\scriptstyle \left[n+1\right]}\ar[ll]|-{\id}\ar[rr]|-{\scriptscriptstyle \sf{split}_{\left[n+1\right]}^{1};\left(!\right)} &  & {\scriptstyle \left[1\right]}\ar[dd]|-{\scriptscriptstyle \id;\left(\left\{ 1\right\} \right)}\\
 &  & {\scriptstyle \left[n\right]}\ar[u]|-{d^{1}}\ar[d]|-{\scriptscriptstyle \id;\left(\left\{ 1\right\} \otimes0,!,\dots,!\right)}\\
{\scriptstyle \left[n\right]}\ar[uu]|-{\scriptscriptstyle d^{1};\left(\left(!,\id\right),\id,\dots,\id\right)}\ar[dd]|-{\scriptscriptstyle d^{1};\left(\left(\id,!\right),\id,\dots,\id\right)} &  & {\scriptstyle \left[n\right];\left(\left[1\right]\otimes0,0,\dots,0\right)}\ar[ll]|-{\scriptscriptstyle \id;\left(\pr_{2}\circ\kappa_{0},\id,\dots,\id\right)}\ar[rr]|-{\scriptscriptstyle \sf{split}_{\left[n+1\right]}^{1};\left(\sigma_{\left[0\right]}\right)} &  & {\scriptstyle \left[1\right];\left[1\right]}\\
 &  & {\scriptstyle \left[n\right]}\ar[u]|-{\scriptscriptstyle \id;\left(\left\{ 0\right\} \otimes0,!,\dots,!\right)}\ar[d]|-{\scriptscriptstyle d^{1}}\\
{\scriptstyle \left[n+1\right]}\ar[d] &  & {\scriptstyle \left[n+1\right]}\ar[ll]|-{\id}\ar[rr]|-{\scriptscriptstyle \sf{split}_{\left[n+1\right]}^{2};\left(!\right)}\ar[d] &  & {\scriptstyle \left[1\right]}\ar[uu]|-{\scriptscriptstyle \id;\left(\left\{ 0\right\} \right)}\ar[d]\\
{\scriptstyle \vdots} &  & {\scriptstyle \vdots} &  & \vdots\\
{\scriptstyle \left[n+1\right]}\ar[u] &  & {\scriptstyle \left[n+1\right]}\ar[ll]|-{\id}\ar[rr]|-{\scriptscriptstyle \sf{split}_{\left[n+1\right]}^{n};\left(!\right)}\ar[u] &  & {\scriptstyle \left[1\right]}\ar[dd]|-{\scriptscriptstyle \id;\left(\left\{ 1\right\} \right)}\ar[u]\\
 &  & {\scriptstyle \left[n\right]}\ar[d]|-{\scriptscriptstyle \id;\left(!,\dots,!,\left\{ 1\right\} \otimes0\right)}\ar[u]|-{d^{n}}\\
{\scriptstyle \left[n\right]}\ar[uu]|-{\scriptscriptstyle d^{n};\left(\id,\dots,\id,\left(!,\id\right)\right)}\ar[dd]|-{\scriptscriptstyle d^{n}:\left(\id,\dots,\id,\left(\id,!\right)\right)} &  & {\scriptstyle \left[n\right];\left(0,\dots,0,\left[1\right]\otimes0\right)}\ar[ll]|-{\scriptscriptstyle \id;\left(\id,\dots,\id,\pr_{2}\circ\kappa_{0}\right)}\ar[rr]|-{\scriptscriptstyle \sf{split}_{\left[n+1\right]}^{n+1};\left(\sigma_{A_{n}}\right)} &  & {\scriptstyle \left[1\right];\left[1\right]}\\
 &  & {\scriptstyle \left[n\right]}\ar[d]|-{\scriptscriptstyle d^{n}}\ar[u]|-{\scriptscriptstyle \id;\left(\id,\dots,\id,\left\{ 0\right\} \otimes0\right)}\\
{\scriptstyle \left[n+1\right]} &  & {\scriptstyle \left[n+1\right]}\ar[rr]|-{\scriptscriptstyle \sf{split}_{\left[n+1\right]}^{n+1};\left(!\right)}\ar[ll]|-{\scriptscriptstyle \id} &  & {\scriptstyle \left[1\right]}\ar[uu]|-{\scriptscriptstyle \id;\left(\left\{ 0\right\} \right)}
}
\label{eq: simplicial case of gray span}
\end{equation}
commutes. Moreover, since:
\begin{itemize}
\item the colimit of the first column is $\left[1\right]\times\left[n\right]$;
\item the colimit of the second column is $\left[1\right]\otimes\left[n\right]$;
and
\item the colimit on the right is the globular sum decomposition for $\left[1\right];\left[n\right]$;
\end{itemize}
this diagram induces maps (the solid arrows) for which the diagram
(solid and dashed arrows) commutes as follows.
\begin{equation}
\xyR{3pc}\xyC{9pc}\xymatrix{ & {\scriptstyle \left[n\right]}\ar@{-->}[dl]|-{\scriptstyle \left\{ 1\right\} \times\left[n\right]}\ar@{-->}[d]|-{\scriptstyle \left\{ 1\right\} \otimes\left[n\right]}\ar@{-->}[dr]|-{\scriptstyle \left\{ 1\right\} }\\
{\scriptstyle \left[1\right]\times\left[n\right]} & {\scriptstyle \left[1\right]\otimes\left[n\right]}\ar[l]|-{\kappa_{\left[n\right]}}\ar[r]|-{\scriptstyle \sigma_{\left[n\right]}} & {\scriptstyle \left[1\right];\left[n\right]}\\
 & {\scriptstyle \left[n\right]}\ar@{-->}[ul]|-{\scriptstyle \left\{ 0\right\} \times\left[n\right]}\ar@{-->}[u]|-{\scriptstyle \left\{ 0\right\} \otimes\left[n\right]}\ar@{-->}[ur]|-{\scriptstyle \left\{ 0\right\} }
}
\label{eq: simplicial case folding diamond}
\end{equation}
\end{lem}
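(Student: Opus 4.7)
The plan is to verify the big diagram (\ref{eq: simplicial case of gray span}) commutes by reducing it to a finite collection of elementary commuting squares in $\t\int\t$, then pass to colimits. The simplicial case is easier than the general one because every constituent $A_i$ is $[0]$, so $[1]\otimes[0]=[1]$, the map $\kappa_{[0]}$ is the identity (whence $\pr_2\circ\kappa_{[0]}$ is the target endpoint inclusion of $[1]$), and $\sigma_{[0]}$ is the identity. With those substitutions every horizontal arrow in the middle column of (\ref{eq: simplicial case of gray span}) becomes an \emph{explicit} simplicial operator, and the proof becomes an exercise in chasing order-preserving maps.

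First I would slice the diagram into three families of panels: (i) the endpoint panels indexed by $k=1$ and $k=n+1$ involving $\left\{0\right\}$ and $\left\{1\right\}$ of $[1]$; (ii) the middle-row panels for each $1\le k\le n+1$ involving $[n];\left(0,\dots,\left[1\right]\otimes 0,\dots,0\right)$; and (iii) the joining triangles that connect two adjacent panels through a shared copy of $[n]$. For (i) the commutativity amounts to the identities $\sf{split}^{j}_{[n+1]}\circ d^{1} = \left\{1\right\}$ and $\sf{split}^{j}_{[n+1]}\circ d^{n+1} = \left\{0\right\}$ in $\t$, which follow directly from the definition of $\sf{split}$. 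For (ii), after the $\kappa_{[0]}=\sigma_{[0]}=\id$ simplifications, both horizontal composites collapse to instances of $d^{k}$ or $\sf{split}^{k}$, and the squares commute tautologically. For (iii), the triangles are wreath-product morphisms whose base in $\t$ is $d^k$ and whose fibre components are all identities or terminal maps $!\colon [0]\to [0]$, so commutativity is immediate.

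Second, I would take colimits column-wise. The left column is exactly the shuffle decomposition for $[1]\times[n]$ recalled in the Overview, the middle column is the lax shuffle decomposition, and the right column is the globular-sum decomposition of $[1];[n]$ as $[1]\underset{0}{\oplus}\cdots\underset{0}{\oplus}[1]$. Since each row map has been shown to commute with the diagram morphisms, universality of colimits supplies the unique induced morphisms $\kappa_{[n]}$ and $\sigma_{[n]}$ making (\ref{eq: simplicial case folding diamond}) commute on the solid arrows. The commutativity of the dashed endpoint triangles in (\ref{eq: simplicial case folding diamond}) is extracted by restricting the entire cocone picture to the distinguished copies of $[n]$ coming from the outer $d^1;\left(\left(!,\id\right),\id,\dots,\id\right)$ and $d^{n};\left(\id,\dots,\id,\left(\id,!\right)\right)$ face inclusions, which is where the $\left\{0\right\}$ and $\left\{1\right\}$ endpoint maps are being tracked.

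The main obstacle is purely notational: the diagram is visually dense, and one must be careful to distinguish the two roles of each shared $[n]$ vertex (as a source of a vertical span in the shuffle decomposition and as a source of a joining triangle) so that the induced maps on colimits are well-defined. Once this bookkeeping is organised, there is no conceptual difficulty; indeed, the argument is the specialisation to $A_i=[0]$ of the general combinatorial machinery already established for the lax shuffle decomposition and the shifted product rule, and provides the base case for the recursion that yields Proposition \ref{prop:General Case GRay Span}.
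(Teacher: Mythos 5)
Your overall architecture --- simplify via $\kappa_{\left[0\right]}=\sigma_{\left[0\right]}=\id$, reduce the big diagram to elementary commuting squares in $\t\int\t$, pass to colimits column-wise, then extract the dashed endpoint triangles --- is exactly the paper's route (the paper sorts the internal faces into four types and checks each by a short computation, then verifies two closing triangles for the folding diamond). But the key identities you cite for your family (i) are wrong, and they are load-bearing. You claim $\sf{split}_{\left[n+1\right]}^{j}\circ d^{1}=\left\{ 1\right\} $ and $\sf{split}_{\left[n+1\right]}^{j}\circ d^{n+1}=\left\{ 0\right\} $. The first is false for every $j$: since $d^{1}\left(0\right)=0$ and $\sf{split}_{\left[n+1\right]}^{j}\left(0\right)=0$ for all $j\geq1$, the composite takes the value $0$ at $0$ and so is never the constant map $\left\{ 1\right\} $; in fact $\sf{split}_{\left[n+1\right]}^{1}\circ d^{1}=\sf{split}_{\left[n\right]}^{1}$, which is not constant at all for $n\geq1$. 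The second holds only for $j=n+1$. You have conflated the \emph{gluing} faces $d^{1},\dots,d^{n}$ of the shuffle zigzag (which appear inside the big diagram) with the \emph{endpoint} faces $d^{0}$ and $d^{n+1}$ (which appear only in the folding-diamond check). The correct constancy identities, which the paper's two closing triangles verify, are $\sf{split}_{\left[n+1\right]}^{1}\circ d^{0}=\left\{ 1\right\} $ and $\sf{split}_{\left[n+1\right]}^{n+1}\circ d^{n+1}=\left\{ 0\right\} $. (Be aware the paper's second closing triangle misprints its split index as $\sf{split}_{\left[n+1\right]}^{1}$ where $\sf{split}_{\left[n+1\right]}^{n+1}$ is meant, so do not calibrate against that line.)

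Relatedly, your dismissal in family (ii) that the squares ``commute tautologically'' skips the only non-trivial computations on the right-hand side of the diagram: for each $1\leq k\leq n$ and for both gluing copies of $\left[n\right]$ attached to the $k$-th lax piece, one must check that the base maps agree, namely $\sf{split}_{\left[n+1\right]}^{k}\circ d^{k}=\sf{split}_{\left[n\right]}^{k}=\sf{split}_{\left[n+1\right]}^{k+1}\circ d^{k}$, and that the fibre components agree, namely $\sigma_{\left[0\right]}\circ\left(\left\{ 1\right\} \otimes\left[0\right]\right)=\left\{ 1\right\} $ and $\sigma_{\left[0\right]}\circ\left(\left\{ 0\right\} \otimes\left[0\right]\right)=\left\{ 0\right\} $ (immediate since $\sigma_{\left[0\right]}=\id_{\left[1\right]}$). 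These are exactly the paper's sorts (III) and (IV), and they are the identities your incorrect claims in (i) were presumably meant to supply; with them restored, your colimit passage and the restriction to the outer faces go through as in the paper. One further small slip: the right column computes $\left[1\right];\left[n\right]$ as the $1$-globular sum $\left[1\right];\left[1\right]\underset{\overline{1}}{\oplus}\cdots\underset{\overline{1}}{\oplus}\left[1\right];\left[1\right]$, not as $\left[1\right]\underset{0}{\oplus}\cdots\underset{0}{\oplus}\left[1\right]$, which is $\left[n\right]$ itself.
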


\begin{proof}
To show that Diagram \ref{eq: simplicial case of gray span} commutes
it suffices to check the commutativity of the squares (for $1\leq k\leq n$)
of four sorts; the two sorts
\[
\xyR{3pc}\xyC{3pc}\xymatrix{{\scriptstyle \left[n+1\right]} &  & {\scriptstyle \left[n+1\right]}\ar[ll]|-{\scriptstyle \id}\\
 & \mathrm{(I)} & {\scriptstyle \left[n\right]}\ar[u]|-{\scriptstyle d^{k}}\ar[d]|-{\scriptstyle \id;\left(!,\dots,!,\left\{ 1\right\} \otimes0,!,\dots,!\right)}\\
{\scriptstyle \left[n\right]}\ar[uu]|-{d^{k}}\ar[dd]|-{d^{k}} &  & {\scriptstyle \left[n\right];\left(0,\dots,0,\left[1\right]\otimes0,0,\dots,0\right)}\ar[ll]|-{\scriptstyle \id}\\
 & \mathrm{(II)} & {\scriptstyle \left[n\right]}\ar[u]|-{\scriptstyle \id;\left(!,\dots,!,\left\{ 0\right\} \otimes0,!,\dots,!\right)}\ar[d]|-{\scriptstyle d^{k}}\\
{\scriptstyle \left[n+1\right]} &  & {\scriptstyle \left[n+1\right]}\ar[ll]|-{\scriptstyle \id}
}
\]
and the two sorts
\[
\xyR{3pc}\xyC{3pc}\xymatrix{{\scriptstyle \left[n+1\right]}\ar[rr]|-{\sf{split}_{\left[n+1\right]}^{k}} &  & {\scriptstyle \left[1\right]}\ar[dd]|-{\id;\left(\left\{ 1\right\} \right)}\\
{\scriptstyle \left[n\right]}\ar[u]|-{d^{k}}\ar[d]|-{\id;\left(!,\dots,!,\left\{ 1\right\} \otimes0,!,\dots,!\right)} & \mathrm{(III)}\\
{\scriptstyle \left[n\right];\left(0,\dots,0,\left[1\right]\otimes0,0,\dots,0\right)}\ar[rr]|-{\sf{split}_{\left[n+1\right]}^{k};\left(\sigma_{0}\right)} &  & {\scriptstyle \left[1\right];\left[1\right]}\\
{\scriptstyle \left[n\right]}\ar[u]|-{\id;\left(!,\dots,!,\left\{ 0\right\} \otimes0,!,\dots,!\right)}\ar[d]|-{d^{k}} & \mathrm{(IV)}\\
{\scriptstyle \left[n+1\right]}\ar[rr]|-{\sf{split}_{\left[n+1\right]}^{k+1}} &  & {\scriptstyle \left[1\right]}\ar[uu]|-{\id;\left(\left\{ 0\right\} \right)}
}
\]
commute.

For sort (I) it suffices to observe that 
\begin{eqnarray*}
\id\circ d^{k} & = & d^{k}\circ\id\circ\id\\
 & \mathrm{and}\\
\left(!,!\right)\circ!\circ\left(\left\{ 1\right\} \otimes0\right) & = & \left(!\times!\right)\circ\left(!,!\right)
\end{eqnarray*}
and for sort (II) see that 
\begin{eqnarray*}
\id\circ d^{k} & = & d^{k}\circ\id\circ\id\\
 & \mathrm{and}\\
\left(!,!\right)\circ!\circ\left(\left\{ 0\right\} \otimes0\right) & = & \left(!\times!\right)\circ\left(!,!\right)
\end{eqnarray*}

For sort (III) it suffices to observe that 
\begin{eqnarray*}
\sf{split}_{\left[n+1\right]}^{k} & = & \sf{split}_{\left[n+2\right]}^{k}\circ d^{k}\\
 & \mathrm{and}\\
\sigma_{0}\circ\left\{ 1\right\} \otimes0 & = & \left\{ 1\right\} 
\end{eqnarray*}
since $\sigma_{0}$ is defined to be $\id_{\left[1\right]}$. Likewise,
for sort (IV) it suffices to observe that 
\begin{eqnarray*}
\sf{split}_{\left[n+1\right]}^{k} & = & \sf{split}_{\left[n+2\right]}^{k+1}\circ d^{k}\\
 & \mathrm{and}\\
\sigma_{0}\circ\left\{ 0\right\} \otimes0 & = & \left\{ 0\right\} 
\end{eqnarray*}
again since $\sigma_{0}=\id_{\left[1\right]}$.

Lastly, to check that Diagram \ref{eq: simplicial case folding diamond}
commutes, it suffices to observe that the diagrams 
\[
\xyR{3pc}\xyC{9pc}\xymatrix{ & {\scriptstyle \left[n\right]}\ar[dl]|-{d^{0}}\ar[d]|-{d^{0}}\ar[drr]|-{\left\{ 1\right\} }\\
{\scriptstyle \left[1+n\right]} & {\scriptstyle \left[1+n\right]}\ar[l]|-{\id}\ar[r]|-{\sf{split}_{\left[n+1\right]}^{1};\left(!\right)} & {\scriptstyle \left[1\right]}\ar[r]|-{\id;\left\{ 1\right\} } & {\scriptstyle \left[1\right];\left[1\right]}
}
\]
 and 
\[
\xyR{3pc}\xyC{9pc}\xymatrix{{\scriptstyle \left[n+1\right]} & {\scriptstyle \left[n+1\right]}\ar[l]|-{\id}\ar[r]|-{\sf{split}_{\left[n+1\right]}^{1};\left(!\right)} & {\scriptstyle \left[1\right]}\ar[r]|-{\id;\left\{ 0\right\} } & {\scriptstyle \left[1\right];\left[1\right]}\\
 & {\scriptstyle \left[n\right]}\ar[ul]|-{d^{n+1}}\ar[u]|-{d^{n+1}}\ar[urr]|-{\left\{ 0\right\} }
}
\]
commute.
\end{proof}
\begin{prop}
\label{prop:General Case GRay Span}For all $\left[n\right];\left(A_{1},\dots,A_{n}\right)$
of $\Theta$, the following diagram commutes.

\begin{equation}
\xyR{3pc}\xyC{3pc}\xymatrix{{\scriptstyle \left[n+1\right];\left(0,A_{1},A_{2},\dots,A_{n}\right)} &  & {\scriptstyle \left[n+1\right];\left(0,A_{1},A_{2},\dots,A_{n}\right)}\ar[ll]|-{\scriptscriptstyle \id}\ar[rr]|-{\scriptscriptstyle \sf{split}_{\left[n+1\right]}^{1};\left(!\right)} &  & {\scriptstyle \left[1\right];\left(0\right)}\ar[dd]|-{\scriptstyle \id;\left(\left\{ 1\right\} \right)}\\
 &  & {\scriptstyle \left[n\right];\left(A_{1},A_{2},\dots,A_{n}\right)}\ar[u]|-{\scriptstyle d^{1};\left(\left(!,\id\right),\id,\dots,\id\right)}\ar[d]|-{\scriptstyle \id;\left(\left\{ 1\right\} \otimes A_{1},\id,\dots,\id\right)}\\
{\scriptstyle \left[n\right];\left(A_{1},A_{2},\dots,A_{n}\right)}\ar[uu]|-{\scriptscriptstyle d^{1};\left(\left(!,\id\right),\id,\dots,\id\right)}\ar[dd]|-{\scriptscriptstyle d^{1};\left(\left(\id,!\right),\id,\dots,\id\right)} &  & {\scriptstyle \left[n\right];\left(\left[1\right]\otimes A_{1},A_{2},\dots,A_{n}\right)}\ar[ll]|-{\scriptstyle \id;\left(\pr_{2}\circ\kappa_{A_{k}},\id,\dots,\id\right)}\ar[rr]|-{\scriptstyle {\scriptscriptstyle \sf{split}_{\left[n+1\right]}^{1};\left(\sigma_{A_{1}}\right)}} &  & {\scriptstyle \left[1\right];\left[1\right];\left(A_{1}\right)}\\
 & \mathrm{} & {\scriptstyle \left[n\right];\left(A_{1},A_{2},\dots,A_{n}\right)}\ar[u]|-{\scriptstyle \id;\left(\left\{ 0\right\} \otimes A_{1},!,\dots,!\right)}\ar[d]|-{\scriptstyle d^{1};\left(\left(\id,!\right),\id,\dots,\id\right)}\\
{\scriptstyle \left[n+1\right];\left(A_{1},0,A_{2},\dots,A_{n}\right)}\ar[d] &  & {\scriptstyle \left[n+1\right];\left(A_{1},0,A_{2},\dots,A_{n}\right)}\ar[ll]|-{\scriptstyle \id}\ar[rr]|-{\scriptstyle {\scriptscriptstyle \sf{split}_{\left[n+1\right]}^{2};\left(!\right)}}\ar[d] &  & {\scriptstyle \left[1\right];\left[0\right]}\ar[uu]|-{\scriptscriptstyle \id;\left(\left\{ 0\right\} \right)}\ar[d]\\
{\scriptstyle \vdots} &  & {\scriptstyle \vdots} &  & \vdots\\
{\scriptstyle \left[n+1\right];\left(A_{1},\dots,A_{n-1},0,A_{n}\right)}\ar[u] &  & {\scriptstyle \left[n+1\right];\left(A_{1},\dots,A_{n-1},0,A_{n}\right)}\ar[ll]|-{\id}\ar[rr]|-{\scriptscriptstyle \sf{split}_{\left[n+1\right]}^{n};\left(!\right)}\ar[u] &  & \ar[u]{\scriptstyle \left[1\right];\left(0\right)}\ar[dd]|-{\scriptscriptstyle \id;\left(\left\{ 1\right\} \right)}\\
 & \mathrm{} & {\scriptstyle \left[n\right];\left(A_{1},\dots,A_{n-1},A_{n}\right)}\ar[u]|-{\scriptstyle d^{n};\left(\id,\dots,\id,\left(!,\id\right)\right)}\ar[d]|-{\scriptstyle \id;\left(\id,\dots,\id,\left\{ 1\right\} \otimes A_{n}\right)}\\
{\scriptstyle \left[n\right];\left(A_{1},\dots,A_{n-1}A_{n}\right)}\ar[uu]|-{\scriptscriptstyle d^{n};\left(\id,\dots,\id,\left(!,\id\right)\right)}\ar[dd]|-{\scriptscriptstyle d^{n};\left(\id,\dots,\id,\left(\id,!\right)\right)} &  & {\scriptstyle \left[n\right];\left(A_{1},\dots,A_{n-1},\left[1\right]\otimes A_{n}\right)}\ar[ll]|-{\scriptstyle \id;\left(\id,\dots,\id,\pr_{2}\circ\kappa_{A_{n}}\right)}\ar[rr]|-{\scriptscriptstyle \sf{split}_{\left[n+1\right]}^{n+1};\left(\sigma_{A_{n}}\right)} &  & {\scriptstyle \left[1\right];\left[1\right];\left(A_{n}\right)}\\
 & \mathrm{} & {\scriptstyle \left[n\right];\left(A_{1},\dots,A_{n}\right)}\ar[u]|-{\scriptscriptstyle \id;\left(\left\{ 0\right\} \otimes A_{1},!,\dots,!\right)}\ar[d]|-{\scriptscriptstyle d^{n};\left(\id,\dots,\id,\left(\id,!\right)\right)}\\
{\scriptstyle \left[n+1\right];\left(A_{1},\dots,A_{n-1},A_{n},0\right)} &  & {\scriptstyle \left[n+1\right];\left(A_{1},\dots,A_{n-1},A_{n},0\right)}\ar[ll]|-{\scriptscriptstyle \id}\ar[rr]|-{\scriptscriptstyle \sf{split}_{\left[n+1\right]}^{n+1};\left(!\right)} &  & {\scriptstyle \left[1\right];\left[0\right]}\ar[uu]|-{\scriptscriptstyle \id;\left(\left\{ 0\right\} \right)}
}
\label{eq: general case gray span diagram}
\end{equation}
Moreover, since:
\begin{itemize}
\item the colimit of the first column is $\left[1\right]\times\left[n\right];\left(A_{1},A_{2},\dots,A_{n}\right)$;
\item the colimit of the second column is $\left[1\right]\otimes\left[n\right];\left(A_{1},A_{2},\dots,A_{n}\right)$;
and
\item the colimit on the right is the globular sum decomposition for $\left[1\right];\left[n\right];\left(A_{1},A_{2},\dots,A_{n}\right)$;
\end{itemize}
this diagram induces maps (the solid arrows) for which the diagram
(solid and dashed arrows) commutes as follows.
\begin{equation}
\xyR{3pc}\xyC{9pc}\xymatrix{ & {\scriptstyle \left[n\right];\left(A_{1},A_{2},\dots,A_{n}\right)}\ar@{-->}[dl]|-{\left\{ 1\right\} \times\left[n\right];\left(A_{1},A_{2},\dots,A_{n}\right)}\ar@{-->}[d]|-{\left\{ 1\right\} \otimes\left[n\right];\left(A_{1},A_{2},\dots,A_{n}\right)}\ar@{-->}[dr]|-{\left\{ 1\right\} }\\
{\scriptstyle \left[1\right]\times\left[n\right];\left(A_{1},A_{2},\dots,A_{n}\right)} & {\scriptstyle \left[1\right]\otimes\left[n\right];\left(A_{1},A_{2},\dots,A_{n}\right)}\ar[l]|-{\kappa_{\left[n\right];\left(A_{1},A_{2},\dots,A_{n}\right)}}\ar[r]|-{\sigma_{\left[n\right];\left(A_{1},A_{2},\dots,A_{n}\right)}} & {\scriptstyle \left[1\right];\left[n\right];\left(A_{1},A_{2},\dots,A_{n}\right)}\\
 & {\scriptstyle \left[n\right];\left(A_{1},A_{2},\dots,A_{n}\right)}\ar@{-->}[ul]|-{\left\{ 0\right\} \times\left[n\right];\left(A_{1},A_{2},\dots,A_{n}\right)}\ar@{-->}[u]|-{\left\{ 0\right\} \otimes\left[n\right];\left(A_{1},A_{2},\dots,A_{n}\right)}\ar@{-->}[ur]|-{\left\{ 0\right\} }
}
\label{eq: general case folding diamond-1}
\end{equation}
\end{prop}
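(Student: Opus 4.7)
The plan is to prove Proposition \ref{prop:General Case GRay Span} by induction on the height of the cell $[n];(A_1,\dots,A_n)$, simultaneously establishing the commutativity of Diagram \ref{eq: general case gray span diagram} and the existence of the induced maps $\kappa_{[n];(A_i)}$ and $\sigma_{[n];(A_i)}$ satisfying the folding diamond Diagram \ref{eq: general case folding diamond-1}. The base case is the globe $\overline{0} = [0]$, where $\kappa_{[0]} = \sigma_{[0]} = \id_{[1]}$ by definition, and all required commutativities hold trivially. For the inductive step at a cell $[n];(A_1,\dots,A_n)$, we may assume that $\kappa_{A_i}$ and $\sigma_{A_i}$ have been constructed and satisfy their folding diamonds, since each $A_i$ has strictly smaller height.

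Following the template of Lemma \ref{lem:Simplicial Case Gray Span}, I would reduce the commutativity of the big diagram to checking four sorts of squares for each $1 \leq k \leq n$: (I) and (II) are the ``corner'' squares relating the leftmost column $[1]\times[n];(A_i)$ to the middle column $[1]\otimes[n];(A_i)$, built from horizontal hyperfaces $d^k$ of $\Theta$ paired with the maps $\{\varepsilon\}\otimes A_k$; while (III) and (IV) are the ``projection'' squares relating the middle column to the right column $[1];[n];(A_i)$. Squares of types (I) and (II) commute by exactly the same Segal-map verification as in the simplicial case, together with the purely formal identity that $(\mathrm{id},!)$ and $(!,\mathrm{id})$ compose correctly with $\{0\}\otimes A_k$ and $\{1\}\otimes A_k$ respectively — this reduces immediately to the folding diamond for $A_k$, namely that $\sigma_{A_k}\circ(\{\varepsilon\}\otimes A_k) = \{\varepsilon\}$ and $\kappa_{A_k}\circ(\{\varepsilon\}\otimes A_k) = \{\varepsilon\}\times A_k$, which holds by the inductive hypothesis.

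Squares of types (III) and (IV) are where the recursion does real work. The verification that $(\sf{split}^{k}_{[n+1]};(\sigma_{A_k}))\circ(\mathrm{id};(\dots,\{1\}\otimes A_k,\dots)) = (\id;\{1\})\circ \sf{split}^k_{[n+1]}$ follows from two facts: first, that the simplicial identity $\sf{split}^k_{[n+1]} = \sf{split}^k_{[n+2]}\circ d^k$ holds (established already in Lemma \ref{lem:Simplicial Case Gray Span}); second, that $\sigma_{A_k}\circ(\{1\}\otimes A_k) = \{1\}$, which is precisely the folding diamond for $A_k$ provided by the induction. The analogous argument handles the $\{0\}$-face as well as the matching computation for $\kappa$, using that $\pr_2\circ\kappa_{A_k}\circ(\{\varepsilon\}\otimes A_k)$ equals the appropriate inclusion into $A_k$ — again, a direct consequence of the inductive folding diamond after post-composing with $\pr_2$.

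The main obstacle will be bookkeeping rather than mathematics: the diagram is enormous and the recursive definitions of $\kappa$ and $\sigma$ mean that the correct statement of what needs to be verified is somewhat entangled with the recursive construction itself. I would handle this by carefully separating the inductive construction of the maps (which exists because the left, middle, and right columns are colimits of compatible diagrams, by the lax shuffle decomposition for the middle and the globular sum decomposition for the right) from the verification of the folding diamond, which reduces to checking that the restriction of the induced $\sigma_{[n];(A_i)}$ and $\kappa_{[n];(A_i)}$ along the endpoint inclusions $\{0\},\{1\}\otimes[n];(A_i)$ into the colimit agrees with the required composite — and this in turn follows because the endpoint inclusions factor through $[n+1];([0],A_1,\dots,A_n)$ and $[n+1];(A_1,\dots,A_n,[0])$ respectively, on which the relevant compositions are tautological by construction of the diagram.
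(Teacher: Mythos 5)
Your proposal is correct and takes essentially the same route as the paper's proof: both reduce Diagram \ref{eq: general case gray span diagram} to four sorts of component squares, discharge the only non-trivial identities, namely $\kappa_{A_{k}}\circ\left(\left\{ \varepsilon\right\} \otimes A_{k}\right)=\left\{ \varepsilon\right\} \times A_{k}$ (post-composed with $\pr_{2}$) for sorts (I)/(II) and $\sigma_{A_{k}}\circ\left(\left\{ \varepsilon\right\} \otimes A_{k}\right)=\left\{ \varepsilon\right\} $ for sorts (III)/(IV), by recursion on the height of $A_{k}$ bottoming out at Lemma \ref{lem:Simplicial Case Gray Span}, and then verify the folding diamond by observing that the endpoint inclusions factor through $\left[n+1\right];\left(\left[0\right],A_{1},\dots,A_{n}\right)$ and $\left[n+1\right];\left(A_{1},\dots,A_{n},\left[0\right]\right)$, where the required composites are immediate. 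Your explicit packaging as a simultaneous induction (commutativity of the big diagram plus the folding diamond at each height) merely makes precise what the paper leaves implicit in its ``by recursion to height $1$'' remarks.
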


\begin{proof}
To show that Diagram \ref{eq: general case gray span diagram} commutes
it suffices to check the commutativity of four sorts of squares, for
$1\leq k\leq n$: the two sorts
\[
\xyR{3pc}\xyC{3pc}\xymatrix{{\scriptstyle \left[n+1\right];\left(A_{1},\dots,A_{k-1},0,A_{k},A_{k+1},\dots,A_{n}\right)} &  & {\scriptstyle \left[n+1\right];\left(A_{1},\dots,A_{k-1},0,A_{k},A_{k+1},\dots,A_{n}\right)}\ar[ll]|-{\id}\\
 & \mathrm{(I)} & {\scriptstyle \left[n\right];\left(A_{1},\dots,A_{k-1},A_{k},A_{k+1},\dots,A_{n}\right)}\ar[u]|-{d^{1};\left(\id,\dots,\id,\left(!,\id\right),\id,\dots,\id\right)}\ar[d]|-{\id;\left(\id,\dots,\id,\left\{ 1\right\} \otimes A_{1},\id,\dots,\id\right)}\\
{\scriptstyle \left[n\right];\left(A_{1},\dots,A_{k-1},A_{k},A_{k+1},\dots,A_{n}\right)}\ar[uu]|-{d^{k};\left(\id,\dots,\id,\left(!,\id\right),\id,\dots,\id\right)}\ar[dd]|-{d^{1};\left(\id,\dots,\id,\left(\id,!\right),\id,\dots,\id\right)} &  & {\scriptstyle \left[n\right];\left(A_{1},\dots,A_{k-1},\left[1\right]\otimes A_{k},A_{k+1},\dots,A_{n}\right)}\ar[ll]|-{\id;\left(\id,\dots,\id,\kappa_{A_{k}},\id,\dots,\id\right)}\\
 & \mathrm{(II)} & {\scriptstyle \left[n\right];\left(A_{1},\dots,A_{n}\right)}\ar[u]|-{\id;\left(\left\{ 0\right\} \otimes A_{1},!,\dots,!\right)}\ar[d]|-{d^{1};\left(\id,\dots,\id,\left(\id,!\right),\id,\dots,\id\right)}\\
{\scriptstyle \left[n+1\right];\left(A_{1},\dots,A_{k-1},A_{k},0,A_{k+1},\dots,A_{n}\right)} &  & {\scriptstyle \left[n+1\right];\left(A_{1},\dots,A_{k-1},A_{k},0,A_{2},\dots,A_{n}\right)}\ar[ll]|-{\id}
}
\]
and the two sorts
\[
\vcenter{\vbox{\xyR{3pc}\xyC{3pc}\xymatrix{{\scriptstyle \left[n+1\right];\left(A_{1},\dots,A_{k-1},0,A_{k},A_{k+1}\dots,A_{n}\right)}\ar[rr]|-{\sf{split}_{\left[n+1\right]}^{k};\left(!\right)} &  & {\scriptstyle \left[1\right];\left(0\right)}\ar[dd]|-{\id;\left(\left\{ 1\right\} \right)}\\
{\scriptstyle \left[n\right];\left(A_{1},\dots,A_{k-1},A_{k},A_{k+1},\dots,A_{n}\right)}\ar[u]|-{d^{k};\left(\id,\dots,\id,\left(!,\id\right),\id,\dots,\id\right)}\ar[d]|-{\id;\left(\id,\dots,\id,\left\{ 1\right\} \otimes A_{k},\id,\dots,\id\right)} & \mathrm{(III)}\\
{\scriptstyle \left[n\right];\left(A_{1},\dots,A_{k-1},\left[1\right]\otimes A_{k},A_{k+1},\dots,A_{n}\right)}\ar[rr]|-{\sf{split}_{\left[n+1\right]}^{k};\left(\sigma_{A_{k}}\right)} &  & {\scriptstyle \left[1\right];\left[1\right];\left(A_{k}\right)}\\
{\scriptstyle \left[n\right];\left(A_{1},\dots,A_{k-n},A_{k},A_{k+1},\dots,A_{n}\right)}\ar[u]|-{\id;\left(\id,\dots,\id,\left\{ 0\right\} \otimes A_{1},\id,\dots,\id\right)}\ar[d]|-{d^{k};\left(\id,\dots,\id,\left(\id,!\right),\id,\dots,\id\right)} & \mathrm{(IV)}\\
{\scriptstyle \left[n+1\right];\left(A_{1},\dots,A_{k-1},A_{k},0,A_{k+1},\dots,A_{n}\right)}\ar[rr]|-{\sf{split}_{\left[n+1\right]}^{k+1};\left(!\right)} &  & {\scriptstyle \left[1\right];\left[0\right]}\ar[uu]|-{\id;\left(\left\{ 0\right\} \right)}
}
}}
\]
The commutation argument for sorts (I) and (II) are nearly identical.
See that squares of sort (I) commute as (left-hand side - counter-clockwise,
right-hand side - clockwise):

\begin{eqnarray*}
\id_{\left[n+1\right]}\circ d^{k} & = & \id_{\left[n+1\right]}\circ\id_{\left[n+1\right]}\circ d^{k}\\
 & \mathrm{and}\\
\id_{A_{1}}\circ\id_{A_{1}} & = & \id_{A_{1}}\circ\id_{A_{1}}\circ\id_{A_{1}}\\
 & \vdots\\
\id_{A_{k-1}}\circ\id_{A_{k-1}} & = & \id_{A_{k-1}}\circ\id_{A_{k-1}}\circ\id_{A_{k-1}}\\
\left(!\times\id_{A_{k}}\right)\circ\left(!,\id_{A_{k}}\right) & = & \left(!,\id\right)\circ\left(\kappa_{A_{k}}\right)\circ\left(\left\{ 1\right\} \otimes A_{k}\right)\\
\id_{A_{k+1}}\circ\id_{A_{k+1}} & = & \id_{A_{k+1}}\circ\id_{A_{k+1}}\circ\id_{A_{k+1}}\\
 & \vdots\\
\id_{A_{n}}\circ\id_{A_{n}} & = & \id_{A_{n}}\circ\id_{A_{n}}\circ\id_{A_{n}}
\end{eqnarray*}
where the only non-obvious equality will follow by recursion to the
case where $A_{k}$ is of height $1$ - Lemma \ref{lem:Simplicial Case Gray Span}.
Nearly the same computation provides the commutativity of the squares
of sort (II), with 
\[
\left(!\times\id_{A_{k}}\right)\circ\left(!,\id_{A_{k}}\right)=\left(!,\id\right)\circ\left(\kappa_{A_{k}}\right)\circ\left(\left\{ 0\right\} \otimes A_{k}\right)
\]
 in place of 
\[
\left(!\times\id_{A_{k}}\right)\circ\left(!,\id_{A_{k}}\right)=\left(!,\id\right)\circ\left(\kappa_{A_{k}}\right)\circ\left(\left\{ 1\right\} \otimes A_{k}\right)
\]
 is the computation. Again, this follows by recursion to the case
where $A_{k}$ is of height $1$ - Lemma \ref{lem:Simplicial Case Gray Span}.

The commutation for sorts (III) and (IV) are likewise nearly identical.
See that sort (III) commutes as
\begin{eqnarray*}
\id_{\left[1\right]}\circ\sf{split}_{\left[n+1\right]}^{k}\circ d^{k} & = & \sf{split}_{\left[n+1\right]}^{k}\\
 & \mathrm{and}\\
\left\{ 1\right\} \circ!\circ\left(!,\id_{A_{K}}\right) & = & \sigma_{A_{k}}\circ\left\{ 1\right\} \otimes A_{k}
\end{eqnarray*}
 by recursion to the case where $A_{k}$ is of height 1 Lemma \ref{lem:Simplicial Case Gray Span}.
The commutation for squares of sort (IV) is similar, as promised.
Indeed squares of sort (IV) commute as 
\begin{eqnarray*}
\id_{\left[1\right]}\circ\sf{split}_{\left[n+1\right]}^{k+1}\circ d^{k} & = & \sf{split}_{\left[n+1\right]}^{k}\\
 & \mathrm{and}\\
\left\{ 0\right\} \circ!\circ\left(!,\id_{A_{K}}\right) & = & \sigma_{A_{k}}\circ\left\{ 0\right\} \otimes A_{k}
\end{eqnarray*}
which again follows by recursion to $A_{k}$ of height 1 - Lemma \ref{lem:Simplicial Case Gray Span}.

Lastly, to check that Diagram \ref{eq: simplicial case folding diamond}
commutes, it suffices to observe that the diagrams
\[
\xyR{3pc}\xyC{6pc}\xymatrix{ & {\scriptstyle \left[n\right];\left(A_{1},A_{2},\dots,A_{n}\right)}\ar[dl]|-{d^{0};\left(\id,\id,\dots,\id\right)}\ar[d]|-{d^{0};\left(\id,\id,\dots,\id\right)}\ar[drr]|-{\left\{ 1\right\} }\\
{\scriptstyle \left[1+n\right];\left(0,A_{1},A_{2},\dots,A_{n}\right)} & {\scriptstyle \left[1+n\right]\left(0,A_{1},A_{2},\dots,A_{n}\right)}\ar[l]|-{\id}\ar[r]|-{\sf{split}_{\left[n+1\right]}^{1};\left(!\right)} & {\scriptstyle \left[1\right]}\ar[r]|-{\id;\left\{ 1\right\} } & {\scriptstyle \left[1\right];\left[1\right];\left(A_{1}\right)}
}
\]
 and 
\[
\xyR{3pc}\xyC{6pc}\xymatrix{{\scriptstyle \left[n+1\right];\left(A_{1},\dots A_{n-1},A_{n},0\right)} & {\scriptstyle \left[n+1\right];\left(A_{1},\dots A_{n-1},A_{n},0\right)}\ar[l]|-{\id}\ar[r]|-{\sf{split}_{\left[n+1\right]}^{n+1};\left(!\right)} & {\scriptstyle \left[1\right]}\ar[r]|-{\id;\left\{ 0\right\} } & {\scriptstyle \left[1\right];\left[1\right];\left(A_{n}\right)}\\
 & {\scriptstyle \left[n\right];\left(A_{1},\dots A_{n-1},A_{n}\right)}\ar[ul]|-{d^{n+1};\left(\id,\dots,\id,\id\right)}\ar[u]|-{d^{n+1};\left(\id,\dots,\id,\id\right)}\ar[urr]|-{\left\{ 0\right\} }
}
\]
commute.
\end{proof}

\appendix

\section{\label{sec:Steiner-Complexes-and-Steiner =00005Comega-categories}Steiner
Complexes and Steiner $\omega$-categories: Bases, Loop-free Bases,
etc.}

\global\long\def\defiff{\overset{\mathsf{def}}{\Leftrightarrow}}%

This appendix is a translation, done by Yuki Maehara and this author,
from French to English, of \cite{AraMaltsiniotis} Paragraphs 2.6
through 2.10.

\paragraph*{2.6}

A \textbf{basis} for an augmented directed complex $K$ is a graded
set $B=(B_{i})_{i\ge0}$ such that, for all $i\ge0$,
\begin{enumerate}
\item $B_{i}$ is a basis for the $\Z$-module $K_{i}$;
\item $B_{i}$ generates the sub-monoid $K_{i}^{*}$ of $K_{i}$.
\end{enumerate}
We will sometimes identify a basis $B=(B_{i})_{i\ge0}$ with the set
$\coprod_{i\ge0}B_{i}$. 

Let $K$ be an augmented directed complex. We define a preorder $\le$
on $K_{i}$ by setting 
\[
x\le y\quad\defiff\quad y-x\in K_{i}^{*}.
\]

It follows immediately that if $K$ admits a basis, the preorder is
in fact a partial order, and the elements of $B_{i}$ are the minimal
elements of $(K_{i}^{*}\setminus\{0\},\le)$. Thus if $K$ admits
a basis then that basis is unique.

We will say that an augmented directed complex $K$ is \textbf{based}
if it admits a (necessarily unique) basis.

\paragraph*{2.7 }

Fix an abelian group $A$ freely generated by a basis $B$. Let 
\[
x=\sum_{b\in B}x_{b}b
\]
 be an element in $A$. We define the \textbf{support} of $x$ to
be the set 
\[
\sf{supp}(x)=\{b\in B~|~x_{b}\neq0\}.
\]
 Denote by $A^{*}$ the sub-monoid of $A$ generated by $B$. We define
two elements $x_{+}$ and $x_{-}$ in $A^{*}$ by 
\[
x_{+}=\sum_{x_{b}>0}x_{b}b\quad\text{and}\quad x_{-}=-\sum_{x_{b}<0}x_{b}b.
\]
 We have $x=x_{+}-x_{-}$. 

In particular, if $K$ is an augmented directed complex admitting
a basis $B=(B_{i})_{i\ge0}$, for all $x$ in $K_{i}$ with $i\ge0$,
we have, in applying the preceding paragraph to the abelian group
$K_{i}$ given by the basis $B_{i}$, a notion of support of $x$
and elements $x_{+}$ and $x_{-}$ in $K_{i}^{*}$.

\paragraph*{2.8\label{par:2.8 - AraMaltsiniotisBracketForGenerationOfTables}}

Let $K$ be an augmented directed complex with a basis $B=(B_{i})_{i\ge0}$.
For $i\ge0$ and for $x\in K_{i}$, we define the matrix 
\[
\langle x\rangle=\tabld{\langle x\rangle}{i},
\]
 where $\langle x\rangle_{k}^{\epsilon}$ are defined by recursion
on $k$ from $i$ to $0$:
\begin{itemize}
\item $\langle x\rangle_{i}^{0}=x=\langle x\rangle_{i}^{1}$;
\item $\langle x\rangle_{k-1}^{0}=d(\langle x\rangle_{k}^{0})_{-}$ and
$\langle x\rangle_{k-1}^{1}=d(\langle x\rangle_{k}^{1})_{+}$ for
$0<k\le i$.
\end{itemize}
It is easy to see that this matrix is an $i$-arrow in $\nu(K)$ if
and only if, $x$ appears in $K_{i}^{*}$, and we have $e(\langle x\rangle_{0}^{0})=1=e(\langle x\rangle_{0}^{1})$.
We will also set $\langle x\rangle_{i}=x$ and $\langle x\rangle_{k}^{\epsilon}=0$
for $k>i$ and $\epsilon=0,1$. In the case when $\langle x\rangle$
is an $i$-arrows, this is compatible with the convention described
in Paragraph 2.4 of \cite{AraMaltsiniotis}. 

We will say that a basis $B$ for $K$ is \textbf{unital} if, for
all $i\ge0$ and for all $x\in B_{i}$, the matrix $\langle x\rangle$
is an $i$-arrow in $\nu(K)$, which amounts to $e(\langle x\rangle_{0}^{0})=1=e(\langle x\rangle_{0}^{1})$. 

We say that an augmented directed complex is \textbf{unitally based}
if it is based and its unique basis is unital. If an augmented directed
complex $K$ admits a unital basis, then for all elements $x$ ins
the basis for $K$, we call the cell $\langle x\rangle$ in $\nu(K)$
the \textbf{atom} associated to $x$.

\paragraph*{2.9}

Let $K$ be an augmented directed complex with a basis $B$. For $i\ge0$,
denote by $\le_{i}$ the smallest preorder on $B(=\coprod_{j}B_{j})$
satisfying: 
\[
x\le_{i}y\quad\text{if}\quad|x|>i,|y|>i,\text{ and }\sf{supp}(\langle x\rangle_{i}^{1})\cap\supp(\langle y\rangle_{i}^{0})\neq\varnothing.
\]
 We will say that the basis $B$ is \textbf{loop-free} if, for all
$i\ge0$, the preorder $\le_{i}$ is a partial order. 

We say an augmented directed complex \textbf{admits a loop-free basis}
if it admits a basis and its unique basis is loop-free.

\paragraph*{2.10}

We refer to an augmented directed complex with a unital, loop-free
basis as a \textbf{Steiner complex}. 

We call an $\omega$-category a \textbf{Steiner $\omega$-category}
if it is in the essential image of the functor $\nu:\CDA\to\StrCat$
restricted to the category of Steiner complexes. The following theorem
affirms that the functor $\nu$ induces an equivalence between the
categories of Steiner complexes and of Steiner $\omega$-categories.
\begin{thm}
(Steiner, 2.11 in \cite{AraMaltsiniotis}) For all Steiner complexes
$K$, the counit 
\[
\lambda(\nu(K))\to K
\]
 is an isomorphism. In particular, the restriction of the functor
$\nu:\CDA\to\StrCat$ to the full subcategory of Steiner complexes
is full-and-faithful.
\end{thm}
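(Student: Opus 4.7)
The plan is to prove the counit $\varepsilon_K : \lambda(\nu(K)) \to K$ is an isomorphism by constructing an explicit inverse via the atoms $\langle x \rangle$, and then derive full-and-faithfulness by a formal adjunction argument. For every basis element $x \in B_i$, the unital hypothesis guarantees that $\langle x \rangle$ is a genuine (non-degenerate) $i$-cell of $\nu(K)$, so it determines a generator $[\langle x \rangle] \in \lambda(\nu(K))_i$. I would first define a map of graded sets $\varphi : B \to \lambda(\nu(K))$ by $\varphi(x) = [\langle x \rangle]$, extend it $\Z$-linearly (using that $B_i$ is a $\Z$-basis for $K_i$), and then check separately that $\varphi$ is compatible with differentials (using $d\langle x\rangle^{\epsilon}_{k} = \langle x\rangle^1_{k-1} - \langle x\rangle^0_{k-1}$ together with the differential on $\lambda(\nu(K))$), with positivity (since $\varphi$ sends each basis element into the image of an atom, which is positive by construction), and with augmentation (because $e(\langle x\rangle^{\epsilon}_0) = 1$ by unitality). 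By construction $\varepsilon_K \circ \varphi = \id_K$.

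The heart of the argument, and the part that uses loop-freeness, is showing $\varphi \circ \varepsilon_K = \id_{\lambda(\nu(K))}$, i.e., that every non-degenerate $i$-cell $c = \tabld{c}{i}$ of $\nu(K)$ satisfies $[c] = \sum_{b \in \sf{supp}(c^0_i)} [\langle b\rangle]$ in $\lambda(\nu(K))_i$ (note that $c^0_i = c^1_i$ for a cell, and this common top-dimensional entry lies in $K_i^{*}$, so its support lies in $B_i$). To prove this, I would establish the following \emph{atomic decomposition lemma}: every $i$-cell $c$ of $\nu(K)$ can be written as an iterated composition (via the $\star_j$ operations of Definition \ref{def:Steiner's-lambda-and-nu}) of the atoms $\{\langle b\rangle : b \in \sf{supp}(c^0_i)\}$. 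The defining relation in $\lambda$, namely $[x \star_m y] = [x] + [y]$, then immediately forces $[c]$ to equal $\sum [\langle b\rangle]$, regardless of the order in which one composes.

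The hard part will be proving this decomposition lemma. The natural approach is induction on the cardinality of $\sf{supp}(c^0_i) \cup \sf{supp}(c^0_{i-1}) \cup \cdots$, peeling off one atom at a time. Concretely: pick a $\le_i$-minimal element $b \in \sf{supp}(c^0_i)$ (such an element exists precisely because $\le_i$ is a partial order, i.e., by loop-freeness), and show that one can write $c = \langle b\rangle \star_j c'$ for an appropriate $j$ and a cell $c'$ with smaller total support, by verifying that the candidate matrix $c'$ (obtained by subtracting $\langle b\rangle$ entries in the appropriate rows) still satisfies the cell axioms (1)--(4) of Definition \ref{def:Steiner's-lambda-and-nu}. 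Positivity of $c'$ is the delicate point: the minimality of $b$ in the $\le_i$-order is exactly what prevents $c^0_i - b$ from acquiring negative coefficients on supports that interact with $b$, and the loop-freeness of the lower preorders $\le_j$ for $j < i$ handles the corresponding issue in the lower rows of the matrix. Once the lemma is established the global isomorphism $\varphi \circ \varepsilon_K = \id$ is immediate on generators.

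Finally, for the full-and-faithfulness claim, let $K, L$ be Steiner complexes. The adjunction $\lambda \dashv \nu$ gives a bijection
\[
\StrCat(\nu(K),\nu(L)) \liso \CDA(\lambda\nu(K), L),
\]
and precomposition with the counit isomorphism $\varepsilon_K : \lambda\nu(K) \liso K$ identifies the right-hand side with $\CDA(K,L)$. The composite bijection is precisely $\nu$ on hom-sets, which concludes the proof.
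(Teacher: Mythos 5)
The paper offers no proof of this statement at all --- its ``proof'' is the citation ``see Theorem 5.6 of \cite{Steiner1}'' --- so your attempt is really being measured against Steiner's original argument, and in outline it reconstructs exactly that argument: an inverse to the counit defined on basis elements by $x\mapsto\left[\langle x\rangle\right]$, an atomic decomposition lemma as the genuine content, loop-freeness supplying the minimal elements that drive the peeling induction, and a purely formal adjunction argument for full-and-faithfulness. That last step you execute correctly: the composite bijection $\CDA(K,L)\to\CDA(\lambda\nu(K),L)\to\StrCat(\nu(K),\nu(L))$ is indeed $\nu$ on hom-sets, by the triangle identity $\nu(\varepsilon_{K})\circ\eta_{\nu(K)}=\id_{\nu(K)}$.

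Three points in the middle of your sketch need repair, one of them a genuine gap. First, multiplicities: the top entry $c_{i}^{0}=c_{i}^{1}=\sum_{b}n_{b}b$ of a cell lies in $K_{i}^{*}$ but the coefficients $n_{b}$ can exceed $1$, so the correct identity is $\left[c\right]=\sum_{b}n_{b}\left[\langle b\rangle\right]$, not a sum over the support; correspondingly your induction should run on the total weight of the table rather than on the cardinality of the supports (which need not drop when an atom is peeled from an entry of coefficient $\geq2$), and you also need $\left[i\left(w\right)\right]=0$ for identity cells --- which follows from $i\left(w\right)\star_{\left|w\right|}i\left(w\right)=i\left(w\right)$ and the relation $\left[x\star_{m}y\right]=\left[x\right]+\left[y\right]$ --- since the decomposition of a general cell involves whiskering by identities on lower-dimensional atoms. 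Second, and more seriously, your minimality criterion is vacuous as stated: by definition $x\le_{i}y$ only relates basis elements with $\left|x\right|>i$ and $\left|y\right|>i$, so $\le_{i}$ restricted to $\supp\left(c_{i}^{0}\right)\subseteq B_{i}$ is the trivial preorder and cannot select the atom to peel. The choice of which atom to split off, and along which dimension $j<i$ to cut, must be governed by the lower preorders $\le_{j}$ for $j<i$; organizing this correctly is precisely where Steiner's proof does its delicate work. Third, your plan to ``check separately'' that $\varphi$ is a chain map is circular in the order proposed: $d\left[\langle x\rangle\right]=\left[t\left(\langle x\rangle\right)\right]-\left[s\left(\langle x\rangle\right)\right]$, and identifying $\left[s\left(\langle x\rangle\right)\right]$ with $\varphi\left(d\left(x\right)_{-}\right)$ is itself an instance of the decomposition lemma applied to the cells $s\left(\langle x\rangle\right)$ and $t\left(\langle x\rangle\right)$, so the lemma must come first and the compatibilities of $\varphi$ be extracted from it. None of this derails the strategy, which is the same as that of the cited proof.
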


\begin{proof}
As cited in \cite{AraMaltsiniotis}, See Theorem 5.6 of \cite{Steiner1}.
\end{proof}
\begin{thm}
({[}Steiner{]}, 2.12 in \cite{AraMaltsiniotis}) Let $K$ be a Steiner
complex. Then the $\infty$-category $\nu(K)$ is freely generated
in the sense of polygraphs by the atoms $\langle x\rangle$ where
$x$ varies over the basis for $K$.
\end{thm}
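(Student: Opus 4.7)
The plan is to proceed by induction on dimension, building the polygraphic structure on $\nu(K)$ one layer at a time with the atoms $\langle x\rangle$ for $x\in B_n$ serving as the $n$-dimensional generators. First I would invoke the preceding Theorem~2.11 to identify $\nu(K)$ with $\nu(\lambda(\nu(K)))$, so that every cell of $\nu(K)$ is faithfully recorded by its table of basis coordinates and each atom $\langle x\rangle$ is honestly an $n$-cell of $\nu(K)$ — this is where the unitality hypothesis $e(\langle x\rangle_0^0)=1=e(\langle x\rangle_0^1)$ from paragraph~\ref{par:2.8 - AraMaltsiniotisBracketForGenerationOfTables} is consumed.

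The heart of the argument is an atomic decomposition statement. For any $n$-cell $a=\tabld{a}{n}$ of $\nu(K)$, set the mass $m(a)=\sum_{x\in B_n}c_x$, where $a_n^0=a_n^1=\sum c_x\,x$ is the unique expression in the basis $B_n$. If $m(a)=0$ then $a$ is a unit on an $(n-1)$-cell and the inductive hypothesis handles it. If $m(a)>0$, the loop-free hypothesis guarantees that $\sf{supp}(a_n^0)$ has a $\le_{n-1}$-maximal element $x$, and I would show that $a$ factors as $a'\star_{n-1}\langle x\rangle$ for a cell $a'$ of mass $m(a)-1$ whose tabular data can be read off by subtracting $\langle x\rangle$ in dimensions $\ge n-1$. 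Iterating expresses $a$ as an iterated $\star_{n-1}$-composite of the atoms $\langle x_1\rangle,\dots,\langle x_{m(a)}\rangle$, pasted over an $(n-1)$-dimensional substrate which by induction is itself a polygraphic composite of lower-dimensional atoms.

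For uniqueness as a polygraph, I would verify that any two atomic decompositions of the same cell are related by the interchange and associativity-unit relations which hold in every strict $\omega$-category. The key point is that if $x$ and $y$ are both $\le_{n-1}$-maximal in $\sf{supp}(a_n^0)$, then $\langle x\rangle$ and $\langle y\rangle$ may be peeled off in either order and the resulting composites agree via an interchange swap; loop-freeness of $\le_{n-1}$ rules out any cyclic obstruction to this local commutation. Packaged as the polygraphic universal property, this means that given an $\omega$-category $C$ together with an assignment of cells of $C$ to the atoms compatible with sources and targets, there is a unique $\omega$-functor $\nu(K)\to C$ realising the assignment; the functor is defined cell-by-cell via atomic decomposition, and its well-definedness reduces to the interchange coherence just described.

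The main obstacle will be this well-definedness check: one must verify that the value in $C$ does not depend on the sequence of $\le_{n-1}$-maximal elements chosen during decomposition, and moreover that this choice-independence is compatible with \emph{all} the compositions $\star_k$ for $0\le k<n$, not only $\star_{n-1}$. Handling these simultaneously requires an inductive hypothesis strong enough to record not just that $\nu(K)^{\le n-1}$ is freely generated but also how its cells interact with the sources and targets of the $n$-dimensional atoms across all lower codimensions. This joint bookkeeping is exactly the combined content of the unital and loop-free conditions, and carrying it through dimension by dimension is the essential technical labour of the proof.
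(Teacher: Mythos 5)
Your proposal cannot be matched against an argument in the paper, because the paper contains none: this theorem is imported verbatim from \cite{AraMaltsiniotis}, and its ``proof'' here is the single line ``See Theorem 6.1 of \cite{Steiner1}.'' So the only meaningful comparison is with Steiner's own proof, which your sketch resembles in outline (induction on dimension, peeling top-dimensional basis elements guided by the order, unitality consumed in making the atoms honest cells) but not in the places where the actual work happens.

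Three concrete gaps. First, the peeling claim $a=a'\star_{n-1}\langle x\rangle$ is false in general: the lower columns $\langle x\rangle_k^\epsilon$ of the atom are computed from $x$ alone via iterated $d(\cdot)_\pm$, and they will not agree with the corresponding columns $a_k^\epsilon$ of $a$ unless $a$ is already essentially atomic. What you can split off is a cell whose top entry is $x$ but whose lower entries are fatter; reducing such a cell to the atom requires whiskering by identities of lower-dimensional cells, i.e.\ a secondary induction interleaving \emph{all} the compositions $\star_k$ for $0\le k<n$, not only $\star_{n-1}$ --- this is a needed correction to your decomposition step, not merely part of the well-definedness check where you file it. Second, you never verify that the subtracted table $a'$ (equivalently, the intermediate $(n-1)$-column created by the splitting) satisfies the positivity constraints $x_k^\epsilon\in K_k^*$ defining membership in $\nu(K)$: subtracting group elements freely exits the positive cone, and it is precisely here --- not in the existence of $\le_{n-1}$-maximal elements, which is automatic for a partial order on a finite support --- that loop-freeness is actually consumed. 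Third, the freeness itself is asserted rather than proven: ``any two decompositions are related by interchange, and loop-freeness rules out cyclic obstructions'' is a statement whose verification is essentially the entire content of Steiner's Theorem 6.1, carried there by a preliminary theory of decompositions occupying his Sections 4--6; you yourself flag this well-definedness as the main obstacle and then leave it unresolved. As it stands the proposal identifies the right ingredients and the right induction scheme, but the load-bearing verifications --- validity of the peeled tables, the whiskered form of the factorization, and independence of choices --- are exactly the ones left undone.
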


\begin{proof}
See Theorem 6.1 of \cite{Steiner1}.
\end{proof}

\paragraph*{2.13}

Let $K$ be an augmented directed complex admitting a basis $B$.
Denote by $\le_{\N}$ the smallest preorder on $B$ satisfying 
\[
x\le_{\N}y\quad\text{if}\quad x\in\supp(d(y)_{-})\text{ or }y\in\supp(d(x)_{+}),
\]
 where, by convention, $d(b)=0$ if $b\in B_{0}$. We will say that
a basis $B$ is \textbf{\emph{strongly loop-free}} if the preorder
$\le_{N}$ is a partial order.
\begin{prop}
({[}Steiner{]}, 2.14 in \cite{AraMaltsiniotis}) Let $K$ be an augmented
directed complex with a basis $B$. If $B$ is strongly loop-free,
then it is also loop-free.
\end{prop}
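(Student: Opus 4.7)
The plan is to prove the stronger statement that for every $i\ge0$, the preorder $\le_i$ is contained in the preorder $\le_{\N}$ as relations on $B$; antisymmetry of $\le_{\N}$ then immediately forces antisymmetry of $\le_i$, whence (being already a preorder) $\le_i$ is a partial order. Since $\le_i$ is defined as the smallest preorder containing its generating relation, and $\le_{\N}$ is itself a preorder, it suffices to check the containment on generators: whenever $|x|>i$, $|y|>i$, and $\supp(\langle x\rangle_i^1)\cap\supp(\langle y\rangle_i^0)\ne\varnothing$, one has $x\le_{\N}y$.

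The key technical step is the following auxiliary claim, which I would prove first: for every basis element $x$ with $|x|=j>i$,
\[
b\in\supp(\langle x\rangle_i^1)\ \Longrightarrow\ x\le_{\N}b,\qquad b\in\supp(\langle x\rangle_i^0)\ \Longrightarrow\ b\le_{\N}x.
\]
Both implications are handled by descending induction on $i$ running from $j-1$ down to $0$. At the top step $i=j-1$ we have $\langle x\rangle_{j-1}^1=d(x)_+$, so any $b\in\supp(d(x)_+)$ satisfies $x\le_{\N}b$ directly from the second clause in the definition of $\le_{\N}$; symmetrically $\langle x\rangle_{j-1}^0=d(x)_-$, so any $b\in\supp(d(x)_-)$ satisfies $b\le_{\N}x$ from the first clause. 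For the inductive step, if $b\in\supp(\langle x\rangle_{i-1}^1)=\supp(d(\langle x\rangle_i^1)_+)$, then $b\in\supp(d(c)_+)$ for some $c\in\supp(\langle x\rangle_i^1)$; the generating relation gives $c\le_{\N}b$, and the inductive hypothesis gives $x\le_{\N}c$, so by transitivity $x\le_{\N}b$. The analogous argument handles the lower row, using $\langle x\rangle_{i-1}^0=d(\langle x\rangle_i^0)_-$ and the first clause of the definition of $\le_{\N}$.

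With the auxiliary claim in hand, the main inclusion $\le_i\subseteq\le_{\N}$ is immediate: if $x\le_i y$ is a generating instance, pick any $b\in\supp(\langle x\rangle_i^1)\cap\supp(\langle y\rangle_i^0)$; then $x\le_{\N}b$ and $b\le_{\N}y$ by the claim, hence $x\le_{\N}y$ by transitivity of $\le_{\N}$. Finally, given $x\le_i y$ and $y\le_i x$, the inclusion yields $x\le_{\N}y$ and $y\le_{\N}x$, so the assumed antisymmetry of $\le_{\N}$ gives $x=y$; this proves loop-freeness.

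I do not expect any significant obstacle: the only point that needs a moment of care is setting up the descending induction so that the two recursive clauses defining $\langle x\rangle_k^{\epsilon}$ feed into the two symmetric clauses defining $\le_{\N}$ in matching polarities ($(-)_+$ on the upper row paired with the $y\in\supp(d(x)_+)$ clause, and $(-)_-$ on the lower row paired with the $x\in\supp(d(y)_-)$ clause). Once that pairing is fixed, the argument is purely formal.
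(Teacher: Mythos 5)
Your proof is correct. Note that the paper itself gives no argument for this proposition --- it simply defers to Proposition 3.7 of Steiner's paper --- so your write-up supplies the proof the text leaves to the reference, and it is the expected one: establishing $\le_i\subseteq\le_{\N}$ on generators via a descending induction through the rows of the atom table $\langle x\rangle$, then transporting antisymmetry, is precisely how the implication is obtained. The only step I would ask you to make explicit is the passage, in your inductive step, from $b\in\supp\bigl(d(\langle x\rangle_i^1)_+\bigr)$ to the existence of some $c\in\supp(\langle x\rangle_i^1)$ with $b\in\supp(d(c)_+)$: this uses that $\langle x\rangle_i^1$ is a nonnegative integer combination of basis elements (its entries lie in the submonoid generated by $B_i$, since they are produced by the operations $(-)_+$ and $(-)_-$), so a strictly positive coefficient of $b$ in $d(\langle x\rangle_i^1)=\sum_c n_c\,d(c)$ forces a strictly positive coefficient of $b$ in $d(c)$ for at least one $c$ with $n_c>0$; cancellation among the $d(c)$ can only shrink $\supp\bigl(d(\langle x\rangle_i^1)_+\bigr)$, never enlarge it, which is the direction you need, and the dual remark handles the lower row with $(-)_-$. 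With that line added, the polarity pairing you describe (upper row with the clause $y\in\supp(d(x)_+)$, lower row with $x\in\supp(d(y)_-)$), the base case $i=|x|-1$, the degenerate case $|x|=0$ where there is nothing to prove, and the final appeal to antisymmetry of $\le_{\N}$ are all sound.
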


\begin{proof}
See Proposition 3.7 of {[}Steiner{]}.
\end{proof}

\paragraph*{2.15}

An augmented directed complex with a unital, strongly loop-free basis
will be called a \textbf{strong Steiner complex}. By virtue of the
preceding paragraph, a strong Steiner complex is a Steiner complex.
We denote by $\sf{StrSt\CDA}$ the full subcategory of $\CDA$ spanned
by the strong Steiner complexes. 

By a \textbf{strong Steiner $\omega$-category}, we mean an $\omega$-category
in the essential image of the functor $\nu:\CDA\to\StrCat$ restricted
to the strong Steiner complexes. By virtue of Theorem 2.11, the functor
$\nu$ induces an equivalence between the category of strong Steiner
complexes and the category of strong Steiner $\omega$-categories.

\end{document}